\declaretheoremstyle[%
spaceabove=-6pt,%
spacebelow=6pt,%
%headfont=\normalfont\itshape,%
postheadspace=0.5em,%
qed=\qedsymbol,%
headpunct={}
]{mystyle} 
\declaretheorem[name={Proof},style=mystyle,unnumbered,
]{Proof}
\tikzset{
	symbol/.style={
		draw=none,
		every to/.append style={
			edge node={node [sloped, allow upside down, auto=false]{$#1$}}}
	}
}
\def\mathbi#1{\textbf{\em #1}}
\renewenvironment{proof}{{ \textbf{Proof}.}}{\qed}
\newtheorem{Thm}{Theorem}[section]
\newtheorem{Lem}[Thm]{Lemma}
\newtheorem{lemma}[Thm]{Lemma}
\newtheorem{Def}[Thm]{Definition}
\newtheorem{Cor}[Thm]{Corollary}
\newtheorem{Prop}[Thm]{Proposition}
\newtheorem{Ex1}[Thm]{Example}
\newtheorem{Rem1}[Thm]{Remark}
\newtheorem{assumption}{Assumption}
\newcommand{\Ima}{\mathrm{Im}}
\newcommand{\coker}{\mathrm{coker}}
\newcommand{\cone}{\mathrm{Cone}}
\newcommand{\Hom}{\mathrm{Hom}}
\newcommand{\thick}{\mathrm{thick}}
\newcommand{\add}{\mathrm{add}}
\newcommand{\per}{\mathrm{per}}
\newcommand{\Ext}{\mathrm{Ext}}
\newcommand{\End}{\mathrm{End}}
\newcommand{\obj}{\mathrm{obj}}
\newcommand{\bijar}[1][]{%
	\ar[#1]
	\ar@<0.7ex>@{}[#1]|-*[@]{\sim}}
\newcommand{\pvd}{\mathrm{pvd}}
\newcommand{\ind}{\mathrm{ind}}
\newcommand{\copr}{\mathrm{copr}}
\newenvironment{Rem}{\begin{Rem1}\rm}{\end{Rem1}}
\newenvironment{Ex}{\begin{Ex1}\rm}{\end{Ex1}}
\newcommand{\Si}{\Sigma}
\newcommand{\La}{\Lambda}
\newcommand{\si}{\sigma}
\newcommand{\we}{\wedge}
\newcommand{\ten}{\otimes}
\newcommand{\lten}{\overset{\mathbf{L}}{\ten}}
\newcommand{\RHom}{\mathrm{\mathbf{R}Hom}}
\newcommand{\ra}{\rightarrow}
\newcommand{\iso}{\xrightarrow{_\sim}}
\newcommand{\id}{\mathbf{1}}
\renewcommand{\qedsymbol}{$\surd$}
\newcommand{\bmgamma}{\bm\Gamma}
\newcommand{\pr}{\mathrm{pr}}
\newcommand{\Gammabf}{\bm\Gamma}
\newcommand{\relGammabf}{\bm\Gamma}%%%%rel
\newcommand{\overGamma}{\overline{\bmgamma}}
\newcommand{\Gr}{\mathrm{Gr}}
\newcommand{\Mod}{\mathrm{Mod}}
\renewcommand{\mod}{\mathrm{mod}}
\newcommand{\overc}{\overline{\cc}}
\newcommand{\gpr}{\mathrm{gpr}}
\newcommand{\ca}{{\mathcal A}}
\newcommand{\cb}{{\mathcal B}}
\newcommand{\cc}{{\mathcal C}}
\newcommand{\cd}{{\mathcal D}}
\newcommand{\ce}{{\mathcal E}}
\newcommand{\cf}{{\mathcal F}}
\newcommand{\ch}{{\mathcal H}}
\newcommand{\ck}{{\mathcal K}}
\newcommand{\cm}{{\mathcal M}}
\newcommand{\cp}{{\mathcal P}}
\newcommand{\cs}{{\mathcal S}}
\newcommand{\ct}{{\mathcal T}}
\newcommand{\cu}{{\mathcal U}}
\newcommand{\cw}{{\mathcal W}}
\newcommand{\cx}{{\mathcal X}}
\newcommand{\cy}{{\mathcal Y}}
\newcommand{\cz}{{\mathcal Z}}
\newcommand{\T}{\mathbb{T}}
\renewcommand{\P}{\mathbb{P}}
\newcommand{\Q}{\mathbb{Q}}
\newcommand{\Z}{\mathbb{Z}}
\newcommand{\C}{\mathbb{C}}
\renewcommand{\phi}{\varphi}
\newcommand{\ko}{\;\;, }
\newcommand{\ie}{i.e.~}
\newcommand{\ol}[1]{\overline{#1}}
\newcommand{\ul}[1]{\underline{#1}}
\renewcommand{\hat}[1]{\widehat{#1}}
\renewcommand{\tilde}[1]{\widetilde{#1}}
\begin{document}
	
	% \title[short text for running head]{full title}
	\title[]{Relative cluster categories and Higgs categories\\ with infinite-dimensional morphism spaces}
	
	%    Only \author and \address are required; other information is
	%    optional.  Remove any unused author tags.
	
	%    author one information
	% \author[short version for running head]{name for top of paper}

	\author{Bernhard Keller and Yilin WU}
	
	\address{Universit\'e Paris Cit\'e and Sorbonne Université, CNRS, IMJ-PRG, F-75013 Paris, France}
	\email{bernhard.keller@imj-prg.fr}
	\urladdr{https://webusers.imj-prg.fr/~bernhard.keller/}

%	\address{
%		Department of Mathematics, East China Normal University,
%		Shanghai 200241, China\\
%		and
%		Universit\'e Paris Cit\'e\\
%		UFR de Math\'ematiques\\
%		Institut de Math\'ematiques de Jussieu--Paris Rive Gauche, IMJ-PRG \\
%		B\^{a}timent Sophie Germain\\
%		75205 Paris Cedex 13\\
%		France
%	}
%	\email{wuyilinecnuwudi@gmail.com\\
%		yilin.wu@imj-prg.fr}
\address{
	School of Mathematical Sciences\\
	University of Science and Technology of China\\
    Hefei 230026, Anhui\\
	P. R. China}
\email{wuyilinecnuwudi@gmail.com}
\urladdr{https://webusers.imj-prg.fr/~yilin.wu/}
	%    \subjclass is required.
	%\subjclass[2010]{18G35, 13F60, 16E35}
	
	%\date{\today}
	
	\dedicatory{}
	
	\keywords{Ice quiver with potential, relative Ginzburg algebra, Higgs category, cluster character, quasi-cluster homomorphism}
	
	%    Abstract is required.
\begin{abstract}
Cluster algebras {\em with coefficients} are important since they appear in nature as coordinate algebras of varieties like Grassmannians, double Bruhat cells, unipotent cells, \ldots\ . For such `Lie-theoretic' cluster algebras, 
Geiss--Leclerc--Schr\"oer have constructed categorifications using Frobenius exact categories. However,
large classes of cluster algebras with coefficients (for example, principal coefficients) cannot admit such Frobenius
categorifications. In previous work, the second-named author has constructed Higgs categories and relative cluster categories in the relative Jacobi-finite setting. Higgs categories generalize the Frobenius categories used by 
Geiss--Leclerc--Schr\"oer but are no longer exact categories in the sense of Quillen in general. They serve
to categorify cluster algebras with {\em non invertible coefficents} whereas relative cluster categories
serve to categorify their localizations at the coefficients.
		 
In this article, we construct the Higgs category and the relative cluster category in the 
relative Jacobi-infinite setting under suitable hypotheses. These cover for example the case
of Jensen--King--Su's Grassmannian cluster category and, more generally, the positroid cluster
categories of Pressland and Canakci--King--Pressland.
As in the relative Jacobi-finite case, the Higgs category is no longer exact  but still extriangulated in the 
sense of Nakaoka--Palu.
We also construct a cluster character refining Plamondon's and show that it allows to lift
cluster variables and clusters to the categorical level.

In the appendix, Chris Fraser and the first-named author categorify quasi-cluster morphisms using 
Frobenius categories (for example suitable Higgs categories). A recent application of this result is due 
to Matthew Pressland, who uses it to prove a conjecture by Muller--Speyer.
\end{abstract}
	
	\dedicatory{{\rm With an appendix by Chris Fraser and Bernhard Keller} \\[0.5cm]
	Dedicated to Professor Henning Krause on the occasion of his 60th birthday}
	
	\maketitle
	%    Text of article.
	
	\setcounter{tocdepth}{1}
	
	\tableofcontents
	
	\section{Introduction}	
Cluster categories were introduced in 2006 by Buan-Marsh-Reineke-Reiten-Todorov
\cite{buanClusterStructures2Calabi2009} in order to categorify acyclic 
cluster algebras  \cite{fominClusterAlgebrasFoundations2002a, fominsergeyTotalPositivityCluster, FominWilliamsZelevinsky16}
without  coefficients. Caldero and Chapoton used the geometry of 
quiver  Grassmannians to define the cluster character~\cite{Caldero-Chapot2006}, i.e. a decategorification map 
which yields a bijection from the set of isomorphism classes of indecomposable objects of the cluster category of a Dynkin quiver to the set of cluster variables in the associated cluster algebra. More generally, for (antisymmetric) cluster algebras
associated with acyclic quivers, Caldero-Keller \cite{Caldero-Keller2008} showed that the
Caldero-Chapoton map induces a bijection between the set of isomorphism classes of indecomposable rigid objects 
and the set of cluster variables.
	
In order to generalize the representation-theoretic approach to cluster algebras from acyclic quivers
to quivers with oriented cycles, Derksen--Weyman--Zelevinsky 
\cite{DerksenWeymanZelevinsky08, DerksenWeymanZelevinsky10} extended the
mutation operation from quivers to quivers with potential and their representations. 
In the case where the quiver with potential
is Jacobi-finite, Amiot~\cite{amiotClusterCategoriesAlgebras2009} generalized
the construction of the cluster category~\cite{amiotClusterCategoriesAlgebras2009}. 
The cluster character constructed by Palu in~\cite{paluClusterCharacters2Calabi2008} induces a
bijection \cite{CerulliKellerLabardiniPlamondon2013} from the isoclasses of the 
reachable rigid indecomposables of the 
(generalized) cluster category to the cluster variables of the associated cluster algebra.
Plamondon~\cite{plamondonCategoriesAmasseesAux2011} generalized Amiot's and
Palu's constructions to arbitrary quivers with potential.
	
Cluster algebras with coefficients are of great importance in geometric examples. They appear in nature as coordinate algebras of varieties like Grassmannians, double Bruhat cells, unipotent cells, \ldots\ cf.~for example \cite{Berenstein-Fomin-Zelevinsky2005,GLS2010,scottGrassmanniansClusterAlgebras2006}. The work of Geiss--Leclerc--Schr\"oer provides Frobenius exact categories which allow to categorify such cluster algebras in many cases~\cite{GLS2006,GLS2010}. 
In their approach, the Frobenius exact category is a full subcategory of the
category of modules over a certain preprojective algebra. Geiss--Leclerc--Schr\"oer's setting was
axiomatized by Fu--Keller \cite{Fu-Keller2010} using stably $2$-Calabi--Yau Frobenius categories.
They also observed that not all cluster algebras with coefficients admit such a categorification 
(for example, acyclic cluster algebras with principal coefficients do not, cf.~Remark~5.7 of  \cite{Fu-Keller2010}).

In order to extend Geiss--Leclerc--Schr\"oer's approach to larger classes of cluster algebras with coefficients,
the second-named author introduced relative cluster categories and Higgs categories in 
\cite{wuRelativeClusterCategories2021}. Higgs categories generalize the 
Frobenius categories used by Geiss-Leclerc-Schröer. They serve to categorify cluster algebras
with {\em non-invertible} coefficients whereas relative cluster categories serve to categorify
their {\em localizations at the coefficients}. The setting in \cite{wuRelativeClusterCategories2021} was
however restricted to the case of ice quivers with potential $(Q,F,W)$ whose associated relative Jacobi algebra is
finite-dimensional. In this article, our aim is to construct the associated Higgs category $ \ch(Q,F,W) $
and the relative cluster category $ \cc(Q,F,W) $ under much weaker assumptions~(cf.~Assumption~\ref{assumption}
and section~\ref{ss:reduced-Jacobi-finite-case}). 
We also construct a canonical cluster character in this setting and show that it does allow to lift the cluster
combinatorics to the categorical level.  Our cluster character generalizes Plamondon's 
\cite{plamondonCategoriesAmasseesAux2011} to the relative context. 

Let us state our main results more precisely:	
Let  $(Q,F,W)$ be an ice quiver with potential and $ \relGammabf=\Gammabf(Q,F,W) $ the associated relative Ginzburg algebra (cf.~section~\ref{ss:Ginzburg algebras}).
Let $ e=\sum_{i\in F}e_{i} $ be the idempotent associated with the set of frozen vertices. 
Let $ J(Q,F,W)=H^0(\relGammabf)$ be the corresponding relative Jacobian algebra.
We denote by $ \cp=\add(e\relGammabf) $ the closure under finite direct sums and summands
of $e\relGammabf$ in the perfect derived category $ \per\relGammabf $. The {\em relative cluster
category $\cc(Q,F,W)$} is defined as the idempotent completion of the quotient of $\per\Gammabf$ by the thick subcategory
generated by all simple $H^0(\Gammabf)$-modules associated with non frozen vertices of $Q$.
The {\em Higgs category $\ch(Q,F,W)$} is a certain extension closed full subcategory of $\cc(Q,F,W)$
(cf.~Definition~\ref{Def: Higgs cat} and Theorem~\ref{Higgs category is a Silting reduction}).
Let $ (\overline{Q},\overline{W}) $ be the quiver with potential obtained from $ (Q,F,W) $ by deleting the frozen part $ F $ and $ \mathbf{\Gamma}(\overline{Q},\overline{W}) $ the Ginzburg algebra of $ (\overline{Q},\overline{W}) $. Then we have a dg quotient morphism 
\[
p\colon\relGammabf(Q,F,W)\ra\mathbf{\Gamma}(\overline{Q},\overline{W}).
\]
It induces a triangulated quotient functor $ p^{*}\colon\cc(Q,F,W)\ra\cc(\overline{Q},\overline{W}) $, where $ \cc(\overline{Q},\overline{W}) $ (or $ \overline{\cc} $) is the associated generalized cluster category.
Let  $ \cd(Q,F,W)\subseteq\cc(Q,F,W) $ be the full subcategory of $ \cc(Q,F,W) $ whose objects are the
$ M $ in $ \cc(Q,F,W) $ whose image $ p^{*}(M) $ lies in Plamondon's category $ \cd(\overline{Q},\overline{W}) $ 
(see Subsection~\ref{Subsection: Plamondon's category}). In the following theorem, we abbreviate
$\Gammabf=\Gammabf(Q,F,W)$ and $\ch=\ch(Q,F,W)$.
	
	\begin{Thm}(Theorem~\ref{Thm: main results})
		Let $ (Q,F,W) $ be an ice quiver with potential such that $ \cp=\add(e\relGammabf) $ is functorially finite in $ \add(\relGammabf) $ and put $ \cp=\add(e\bmgamma) $.
		\begin{itemize}
			\item[1)] We have an equivalence of $ k $-categories
			$$ \ch/[\cp]\iso\cd(\overline{Q},\overline{W}) .$$
			\item[2)] If $ (\overline{Q},\overline{W}) $ is Jacobi-finite, then $ \ch$ equals the full subcategory
of $\cc$ formed by the objects $X$ such that $\Ext^i(X,P)=0=\Ext^i(P,X)$ for all $i>0$ and all $P\in\cp$. It is 
is a Frobenius extriangulated category with subcategory of projective-injective objects $ \cp$ and the equivalence in 
$ (1) $ preserves the extriangulated structure. We have the equalities
$$ \cd(\overline{Q},\overline{W})=\cc(\overline{Q},\overline{W}) $$ and
			$$ \cd(Q,F,W)=\cc(Q,F,W). $$ 
Moreover, $ \bmgamma $ is a canonical cluster-tilting object of $ \ch $ with endomorphism algebra 
\[
\End_{\ch}(\bmgamma)=H^{0}(\relGammabf)=J(Q,F,W).
\]
			\item[3)] Let $ \cm=\add(\bmgamma)\subseteq\ch $. If moreover $ \bmgamma $ is concentrated in degree 0, then the \emph{boundary algebra} $ B\!=\!\! eH^{0}(\relGammabf)e $ is $ \mathrm{fp}_{\infty} $-Gorenstein of injective dimension at most $3$ with respect to $ \relGammabf $ and the Higgs category $ \ch $ is equivalent to the category 
$ \mathrm{gpr}_{\infty}^{\leqslant3}(B,\cm)$. If moreover $\cm$ is right coherent, then $\ch$ is
equivalent to $\mathrm{gpr}_\infty(B)$ (cf.~section~\ref{ss:IKW-theorem}).
\item[4)]  Under the assumptions of $ 3) $, the exact sequence of triangulated categories
$$ 0\ra\pvd_{e}(\bmgamma)\ra\per\bmgamma\ra\cc(Q,F,W)\ra0 $$
is equivalent to
	$$ 0\ra\ck^{b}_{\ch-ac}(\cm)\ra\ck^{b}(\cm)\ra\cd^{b}(\ch)\ra0. $$
In particular, the relative cluster category $ \cc(Q,F,W) $ is equivalent to the bounded derived category $ \cd^{b}(\ch) $ 
of $ \ch $.
		\end{itemize}	
		
	\end{Thm}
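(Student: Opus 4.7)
The plan is to treat the four parts in sequence, using silting reduction to pass from $\cc(Q,F,W)$ to $\cc(\ol{Q},\ol{W})$ and then invoking the Jacobi-finite theory together with the generalized Iyama--Kalck--Wemyss theorem established earlier in the paper. For (1), the key observation is that the dg morphism $p\colon\relGammabf(Q,F,W)\ra\relGammabf(\ol{Q},\ol{W})$ annihilates the frozen idempotent, so the induced functor $p^{*}\colon\cc(Q,F,W)\ra\cc(\ol{Q},\ol{W})$ sends $\cp$ to zero and factors through $\cc(Q,F,W)/[\cp]$. I would restrict this factored functor to $\ch$, show that its image lies in Plamondon's $\cd(\ol{Q},\ol{W})$ by translating the Ext-finiteness condition defining $\ch$ into Plamondon's presentation condition, and establish fully-faithfulness using the Iyama--Yang silting reduction theorem (applicable because $\cp$ is functorially finite in $\add(\relGammabf)$ by hypothesis). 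Essential surjectivity would be obtained by lifting $\ol{\bmgamma}$-presentations of objects of $\cd(\ol{Q},\ol{W})$ along $p^{*}$ and taking cones in $\cc(Q,F,W)$.

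For (2), in the Jacobi-finite case Amiot's theorem gives $\cd(\ol{Q},\ol{W})=\cc(\ol{Q},\ol{W})$ with $\ol{\bmgamma}$ a cluster-tilting object. Combined with the definition of $\cd(Q,F,W)$ as the preimage of $\cd(\ol{Q},\ol{W})$ under $p^{*}$, this yields $\cd(Q,F,W)=\cc(Q,F,W)$ at once, and the Ext-vanishing description of $\ch$ is precisely what silting reduction produces in this cleaner setting. The Frobenius extriangulated structure with projective-injectives $\cp$ follows from the Nakaoka--Palu formalism applied to the relative 2-Calabi--Yau property, and $\bmgamma$ is cluster-tilting in $\ch$ because its image in $\ch/[\cp]\simeq\cc(\ol{Q},\ol{W})$ is the cluster-tilting object $\ol{\bmgamma}$ while $\cp\subseteq\add(\bmgamma)$.

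Parts (3) and (4) would be reduced to the generalized $\mathrm{fp}_{\infty}$ Iyama--Kalck--Wemyss theorem developed earlier in the paper. Once $\bmgamma$ is concentrated in degree zero, the boundary algebra $B=eH^{0}(\relGammabf)e$ is identified with $\End_{\ch}(e\bmgamma)$, and the relative 2-Calabi--Yau duality combined with the Frobenius structure from (2) should supply both the $\mathrm{fp}_{\infty}$-Gorenstein property and the bound on injective dimension. The equivalence $\ch\simeq\gpr_{\infty}^{\leqslant3}(B,\cm)$ would be realized by the functor $X\mapsto e\Hom_{\ch}(\bmgamma,X)$, with right coherence of $\cm$ allowing one to drop the cohomological bound. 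For (4), a Buchweitz-type description of $\cd^{b}(\ch)$ for Frobenius extriangulated categories (Nakaoka--Palu) matches the right-hand sequence with the silting-reduction sequence on the left term by term, using the identification $\per\bmgamma\simeq\ck^{b}(\cm)$ coming from (3).

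The principal difficulty lies in part (1): matching the finiteness condition that cuts out $\ch$ inside $\cc(Q,F,W)$ with Plamondon's finiteness condition cutting out $\cd(\ol{Q},\ol{W})$ inside $\cc(\ol{Q},\ol{W})$ is delicate because the former is phrased via Ext-vanishing against $\cp$ while the latter involves presentations by $\ol{\bmgamma}$, and verifying that $p^{*}$ identifies them exactly requires careful control of the $t$-structure truncations on $\per\bmgamma$ and $\per\ol{\bmgamma}$. A secondary technical obstacle is the Gorenstein argument for (3), where one must propagate the relative 2-Calabi--Yau duality to finiteness statements for $B$-modules without relying on finite-dimensionality of the global $\Hom$-spaces in $\ch$.
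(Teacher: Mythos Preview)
Your outline broadly matches the paper's architecture, but there is a genuine gap in part (2). You write that ``the Ext-vanishing description of $\ch$ is precisely what silting reduction produces in this cleaner setting.'' It is not. Iyama--Yang silting reduction only yields that the composite $\cy/[\cp]\hookrightarrow \cc/\thick(\cp)\simeq\cc(\ol{Q},\ol{W})$ is \emph{fully faithful}, where $\cy={}^{\perp}(\Si^{>0}\cp)\cap(\Si^{<0}\cp)^{\perp}$ is the Ext-vanishing subcategory; the paper explicitly remarks that density may fail because $\cp$ does not satisfy condition (P1) of \cite{iyamaSiltingReductionCalabi2018}. In particular, silting reduction gives no reason why an arbitrary object of $\cy$ should lie in $\ch$ (which is defined via $\pr^{F}_{\cc}\bmgamma\cap\copr^{F}_{\cc}\bmgamma$ together with Ext-finiteness, not by Ext-vanishing). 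The paper closes this gap by a separate and substantial argument: it uses the \emph{relative $t$-structure} on $\cd(\bmgamma)$ glued from the canonical $t$-structure on $\cd(\overline{\bmgamma})$ and the degenerate one on $\cd(e\bmgamma e)$, shows (using Jacobi-finiteness of $(\ol{Q},\ol{W})$) that this restricts to $\per\bmgamma$, and then given $X\in\cy$ applies iterated relative truncations together with an explicit Amiot-style construction (Proposition~\ref{Prop: denseness of relative truncation}) to produce a preimage of $X$ in $\ch$. This is the technical heart of (2), and your sketch does not supply it.

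A secondary misattribution: you place the $t$-structure difficulty in part (1), but part (1) is handled in the paper without $t$-structures at all, by directly lifting $\add(\overline{\bmgamma})$-presentations along $p^{*}$ (the functorial finiteness hypothesis is used to correct lifts so they land in $\pr^{F}_{\cd}\bmgamma$) and proving faithfulness via the co-$t$-structure on $\thick(\cp)$. The relative $t$-structure enters only for (2), and Jacobi-finiteness is precisely what makes the relative truncation restrict to $\per\bmgamma$. For (4), the paper invokes two specific lemmas of Palu \cite{Palu2009} rather than a general Nakaoka--Palu Buchweitz theorem; the Yoneda equivalence $\ck^{b}(\cm)\simeq\per\bmgamma$ identifies $\ck^{b}_{\ch\text{-}ac}(\cm)$ with $\per_{\underline{\cm}}\cm\simeq\pvd_{e}(\bmgamma)$ directly.
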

%	Here $ \cd(\overline{Q},\overline{W}) $ is Plamondon's category (see Definition~\ref{Def:Plamondon's cat}) and $ \cc(\overline{Q},\overline{W}) $ is the generalized cluster category. 
    
In section~\ref{s:cluster characters},  we generalize Fu-Keller's cluster character \cite{Fu-Keller2010} 
to a cluster character
\[
CC=X_{?}\colon\obj(\ch)\ra\mathbb{Q}[x_{r+1},\ldots,x_{n}][x_{1}^{\pm1},x_{2}^{\pm1},\ldots,x_{r}^{\pm1}]
\]
defined for Hom-infinite Higgs categories $\ch$. Thus, the Higgs category yields an additive categorification for cluster algebras with {\em non-invertible} coefficients and the cluster character is a decategorification map. The following theorem generalizes results obtained in \cite{Fu-Keller2010,CerulliKellerLabardiniPlamondon2013}. Let $\cu_{Q,F}^{+}$ be the corresponding upper cluster algebra and $\ca_{Q,F}$ the corresponding cluster algebra. Then the cluster character takes values in the upper cluster algebra $ \cu_{Q,F}^{+}\subseteq\mathbb{Q}[x_{r+1},\ldots,x_{n}][x^{\pm1}_{1},x_{2}^{\pm1},\ldots,x_{r}^{\pm1}] $.

\begin{Thm}(Theorem \ref{Thm: cluster bijection})
Let $ (Q,F) $ be an ice quiver and $ W $ a non-degenerate potential on $ Q $. Let
$ \ch $ be the associated Higgs category. Then the cluster character $ X_{?}\colon\obj(\ch)\ra \cu_{Q,F}^{+} $ induces a bijection
	$$ \{\rm\text{reachable rigid indecomposable objects of $\ch$}\}/\text{isom}\iso\{\text{cluster variables}\}\subseteq\cu_{Q,F}^{+} .$$ Under this bijection, the cluster-tilting objects reachable
	from T correspond to the clusters of $ \ca_{Q,F} $.
\end{Thm}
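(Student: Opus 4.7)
The strategy is to adapt the arguments of \cite{Caldero-Keller2008,Fu-Keller2010,CerulliKellerLabardiniPlamondon2013} to the relative Hom-infinite setting, using the structural results of Theorem~\ref{Thm: main results}.

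First, I would check directly from the definition of $X_?$ that the initial cluster-tilting object $T=\bmgamma$ is sent to the initial cluster of $\ca_{Q,F}$: each indecomposable summand $e_i\bmgamma$ yields the initial variable $x_i$. The technical heart of the proof is then a multiplication formula: for rigid objects $L,N\in\ch$ with $\dim_k\Ext^1_{\ch}(L,N)=1$ and the two non-split conflations with end terms $L$ and $N$, one has $X_L X_N = X_B+X_{B'}$ where $B,B'$ denote the middle terms. Once this formula is available, non-degeneracy of $W$ ensures that Iyama--Yoshino mutation in $\ch$ at any non-frozen indecomposable summand $T'_k$ of a reachable cluster-tilting object $T'$ is well-defined, that its endomorphism quiver mutates as a Fomin--Zelevinsky quiver, and that the exchange conflations translate into the exchange relations of $\ca_{Q,F}$. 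Induction on the length of a mutation sequence from $T$ to $T'$ then shows that reachable cluster-tilting objects are sent to clusters and their indecomposable non-frozen summands to cluster variables, yielding surjectivity.

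For injectivity, I would use the index with respect to $\bmgamma$. By the equivalence $\ch/[\cp]\iso\cd(\overline{Q},\overline{W})$ of Theorem~\ref{Thm: main results}(1) and Plamondon's analysis of $\cd(\overline{Q},\overline{W})$ in \cite{plamondonCategoriesAmasseesAux2011}, the index induces an injection from isomorphism classes of basic rigid objects of $\ch$ into $K_0(\add(\bmgamma))$. On the other hand, the index of a rigid object $M$ can be recovered from the Laurent expansion of $X_M$ via sign-coherence of $g$-vectors, which in this framework follows from the multiplication formula and the inductive construction above. Consequently, distinct reachable rigid indecomposables produce distinct cluster variables, which gives the first bijection. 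The second statement is immediate, since a basic cluster-tilting object is determined by the set of its indecomposable summands.

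The main obstacle is the multiplication formula in the Hom-infinite extriangulated setting. In the Jacobi-finite case \cite{Fu-Keller2010}, it reduces to a computation inside the finite-dimensional algebra $J(Q,F,W)$, whereas here objects of $\ch$ correspond via Theorem~\ref{Thm: main results}(3) to potentially infinite-dimensional Gorenstein projective modules over the boundary algebra $B=e\bmgamma e$. The crucial finiteness input is that for a rigid object $M$ the image $p^{*}(M)\in\cd(\overline{Q},\overline{W})$ is rigid and the associated submodule Grassmannians entering the definition of $X_?$ are of finite type, so that $X_M$ is a genuine Laurent polynomial. This should be handled by combining the functoriality of $p^{*}$ guaranteed by Theorem~\ref{Thm: main results} with Plamondon's techniques from \cite{plamondonCategoriesAmasseesAux2011}, supplemented with the identification of the relevant Ext-groups inside $\ch$ provided by Theorem~\ref{Thm: main results}(2).
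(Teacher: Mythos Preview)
Your surjectivity argument coincides with the paper's: both rely on the multiplication formula (Theorem~\ref{Thm: relative cc map}, proved in Section~\ref{Subsection: Multiplication formula}) to show that Iyama--Yoshino mutation in $\ch$ decategorifies to Fomin--Zelevinsky mutation, and then proceed by induction on mutation sequences from $T=\bmgamma$. Note, though, that your final paragraph treats the multiplication formula as an outstanding obstacle and invokes part~(3) of Theorem~\ref{Thm: main results}; in the paper the formula is already established separately as Theorem~\ref{Thm: relative cc map}, and part~(3) carries the extra hypothesis that $\bmgamma$ is concentrated in degree $0$, which is not assumed here and is not needed.

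For injectivity the paper takes a genuinely different route. You propose to read off $\ind_{\bmgamma}M$ from the Laurent expansion of $X_M$ and then invoke Plamondon's result that rigid objects are determined by their index (lifted through the equivalence $\ch/[\cp]\iso\cd(\overline{Q},\overline{W})$). The paper instead appeals to the \emph{proper Laurent monomial} criterion of \cite[Theorem~3.3]{CerulliKellerLabardiniPlamondon2013}: given $X_R=X_{R'}$, it mutates $R$ back to an initial summand $\Gamma_j$, so that $X_{\mu_{\mathbi{i}}(R')}=x_j$ is not a nontrivial combination of proper Laurent monomials; this forces $\mu_{\mathbi{i}}(p^*(R'))\in\add\overGamma$, hence $\mu_{\mathbi{i}}(p^*(R'))\cong\overGamma_j\cong\mu_{\mathbi{i}}(p^*(R))$, and then $R\cong R'$ via Corollary~\ref{Cor: equivalence of k-cat} and Krull--Schmidt. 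The paper's argument is shorter because the CKLP criterion directly characterises objects of $\add\overGamma$ in terms of the shape of their cluster character, bypassing any index-recovery step. Your approach is more structural but requires justifying that $\ind_{\bmgamma}M$ is extractable from $X_M$; the phrase ``sign-coherence of $g$-vectors'' is not quite the right tool---what you actually need is that the summand for $e=0$ contributes the monomial $x^{\ind_{\bmgamma}M}$ with coefficient~$1$ and that it is distinguished (e.g.\ as the unique term with trivial $F$-polynomial contribution), together with a check that the frozen component $\ind_{\bmgamma}^F$ separates indecomposables of $\cp$.
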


Let us point out that Wang--Wei--Zhang \cite{WangWeiZhang23} have recently constructed a canonical cluster character
in the framework of ($\Hom$-finite) $2$-Calabi--Yau Frobenius extriangulated categories,
cf.~also the forthcoming preprint by Grabowski--Pressland \cite{Grabowsk-Pressland2023}.

Now we assume that $ (\overline{Q},\overline{W}) $ is Jacobi-finite. By definition, the Higgs category $ \ch(Q,F,W) $ is a full subcategory of $ \cc(Q,F,W) $. The following theorem shows that the map $CC$ defined on $\ch(Q,F,W)$
canonically extends to a map
\[
CC_{loc}\colon\cc(Q,F,W)\ra\mathbb{Q}[x_{1}^{\pm1},\ldots,x_{r}^{\pm1},x_{r+1}^{\pm1},\ldots,x_{n}^{\pm1}]
\]
defined on the {\em whole} relative cluster category.
Thus, we can consider the triangulated category $ \cc(Q,F,W) $ as an additive categorification of a cluster algebra
with {\em invertible} coefficients and the map $ CC_{loc} $ as a decategorification map.

\begin{Thm}(Theorem~\ref{Thm: commutative diagram})
	Let $ (Q,F,W) $ be an ice quiver with potential such that $ \cp=\add(e\relGammabf) $ is functorially finite in $ \add(\relGammabf) $. We assume moreover that $ (\overline{Q},\overline{W}) $ is Jacobi-finite.
	Consider the following diagram, where
	$\ol{CC}$ is the cluster character constructed by Plamondon in~\cite{plamondonCategoriesAmasseesAux2011},
	\[
	\begin{tikzcd}
		\ch\arrow[dd,"p^{*}"]\arrow[r,hook]&\cc(Q,F,W)
		%\thick_{\cc}\langle\cp\rangle*\Si^{\mathbb{Z}}\ch
		\arrow[r,dashed,"CC_{loc}"]&\mathbb{Q}[x_{1}^{\pm1},\ldots,x_{r}^{\pm1},x_{r+1}^{\pm1},\ldots,x_{n}^{\pm1}]\arrow[dd,"{x_{i}\mapsto1,}\,\,\forall i>r"] \\
		\\
		\underline{\ch}\arrow[r,"\sim"]&\cc(\overline{Q},\overline{W})\arrow[r,"\overline{CC}"]&\mathbb{Q}[x_{1}^{\pm1},\ldots,x_{r}^{\pm1}].
	\end{tikzcd}	
	\]
	There is a unique map $$ CC_{loc}\colon\cc(Q,F,W)
	%\thick_{\cc}\langle\cp\rangle*\Si^{\mathbb{Z}}\ch
	\to\mathbb{Q}[x_{1}^{\pm1},\ldots,x_{r}^{\pm1},x_{r+1}^{\pm1},\ldots,x_{n}^{\pm1}] $$
	such that the above diagram commutes and 
	\begin{itemize}
		\item[1)] for each triangle in $ \cc(Q,F,W) $
		$$ P\ra X\ra M\ra\Si P $$ with $ P\in\thick_{\cc}(\cp) $,
		%and $ M\in\Si^{\mathbb{Z}}\ch $
		we have 
		$$ CC_{loc}(X)=CC_{loc}(P)\cdot CC_{loc}(M)\, ;$$
		\item[2)] the restriction $ CC_{loc}|_{\ch} $ is the cluster character $ X_{?} $ defined 
		in~Section~\ref{s:cluster characters}.
		%	\item[3)] for each object $ X\in\ch $ 
		%	without non-zero direct summands in $ \cp $, we have
		%	$$ CC_{loc}(X)=x^{\mathrm{g_{fr}}(X)}\cdot\overline{CC}(p^{*}(X)); $$
		\item[3)] for each object $ P $ in $ \thick_{\cc}(\cp)$, we have $ CC_{loc}(P)=x^{[P]} $, where $ [P]\in K_{0}(\per\bmgamma)\simeq\mathbb{Z}^{n} $.
	\end{itemize}
\end{Thm}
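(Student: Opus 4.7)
The plan is to define $CC_{loc}$ on every object $X \in \cc(Q,F,W)$ by means of an \emph{approximation triangle}
\[
P \ra X \ra M \ra \Si P
\]
with $P \in \thick_{\cc}(\cp)$ and $M \in \ch$, and to set $CC_{loc}(X) := x^{[P]} \cdot X_M$. Uniqueness is then automatic: conditions~(2), (3) and (4) together force precisely this formula as soon as such a triangle is available. The proof therefore reduces to (a)~producing these triangles, (b)~showing that the formula is independent of the choice, and (c)~verifying the four listed properties.

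For (a), the strategy is to invoke part~(2) of the earlier main theorem, which identifies $\ch$ with the full subcategory of $\cc$ formed by the objects left and right $\Ext^{>0}$-perpendicular to $\cp$. Since $\cp$ is functorially finite, a standard approximation/truncation argument in the triangulated category $\cc$ produces the desired triangle for every $X$, so that $(\thick_{\cc}(\cp), \ch)$ behaves as an orthogonal decomposition of $\cc$.

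For (b), given two approximation triangles $P_i \ra X \ra M_i \ra \Si P_i$ ($i=1,2$), apply the quotient functor $p^*$, which kills $\thick_{\cc}(\cp)$. This yields isomorphisms $p^*(M_1) \cong p^*(X) \cong p^*(M_2)$ in $\cc(\ol{Q},\ol{W})$. By part~(1) the induced functor $\ch/[\cp] \iso \cc(\ol{Q},\ol{W})$ is an equivalence, whence $M_1$ and $M_2$ coincide up to summands in $\cp$: one finds $Q_1, Q_2 \in \cp$ with $M_1 \oplus Q_1 \cong M_2 \oplus Q_2$. Combining multiplicativity of $X_{?}$ on direct sums (so that $X_{M \oplus Q} = X_M \cdot x^{[Q]}$ for $Q \in \cp$) with additivity of the K-theory class along triangles inside $\thick_{\cc}(\cp)$, one then deduces
\[
x^{[P_1]} \cdot X_{M_1} = x^{[P_2]} \cdot X_{M_2}.
\]

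For (c), properties~(3) (take $P=0$) and~(4) (take $M=0$) are immediate; property~(2) for an arbitrary triangle $P \ra X \ra M \ra \Si P$ with $P \in \thick_{\cc}(\cp)$ and $M$ not a priori in $\ch$ follows by applying~(a) to $M$ and splicing via the octahedral axiom. Condition~(1) reduces to the identity $\pi(X_M) = \ol{CC}(p^*(M))$ for $M \in \ch$, where $\pi$ denotes the specialization $x_i \mapsto 1$ for $i > r$; this amounts to matching the explicit formula of $X_{?}$ from Section~\ref{s:cluster characters} with Plamondon's formula, the two differing only by frozen-indexed factors killed by $\pi$. I expect step~(b) to be the main obstacle: lifting the isomorphism $p^*(M_1) \cong p^*(M_2)$ to an honest isomorphism $M_1 \oplus Q_1 \cong M_2 \oplus Q_2$ in $\ch$, and arranging the K-theoretic bookkeeping so that the auxiliary summands $Q_i$ exactly compensate between the $x^{[P_i]}$ factors and the $X_{M_i}$ factors.
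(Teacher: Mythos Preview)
Your approach has a genuine gap in step (a). You claim that every object $X \in \cc(Q,F,W)$ fits into a triangle
\[
P \ra X \ra M \ra \Si P
\]
with $P \in \thick_{\cc}(\cp)$ and $M \in \ch$, and you suggest that $(\thick_{\cc}(\cp), \ch)$ behaves as an orthogonal decomposition. But $\ch$ is not a triangulated subcategory of $\cc$: it equals ${}^{\perp}(\Si^{>0}\cp) \cap (\Si^{<0}\cp)^{\perp}$ and is not closed under shifts. What the paper actually establishes (Theorem~\ref{Thm: Jacobi-finite to Frobenius}) is only that there exist $l \in \Z$, $U \in \ch$ and $P \in \thick_{\cc}(\cp)$ with a triangle $P \ra X \ra \Si^{l}U \ra \Si P$. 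The shift $\Si^{l}$ is essential and cannot in general be removed, so your formula $CC_{loc}(X) := x^{[P]}\cdot X_{M}$ is not defined on all of $\cc$ as stated. Functorial finiteness of $\cp$ in $\add(\bmgamma)$ does not by itself produce approximations of arbitrary objects of $\cc$ by $\thick_{\cc}(\cp)$ landing in $\ch$.

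The paper handles this in two moves. For uniqueness, it first uses the Frobenius structure of $\ch$ to write, for $M \in \ch$ and each $n$, explicit formulas for $CC_{loc}(\Si^{\pm n}M)$ via iterated syzygy and cosyzygy triangles (each step peeling off a projective in $\cp$), so that property~(1) together with~(2) and~(3) determines $CC_{loc}$ on all shifts of $\ch$; then the triangle $P \ra X \ra \Si^{m}M \ra \Si P$ forces the value on $X$. For existence, rather than relying on any approximation triangle, the paper defines $CC_{loc}(X)$ directly by the closed formula $CC_{loc}(X) = x^{[X]^{F}} \cdot \overline{CC}(p^{*}(X))$, where $[X]^{F}$ is the frozen part of the class of a preimage of $\Si X$ under the SMC-reduction equivalence $\pi^{rel}_{\cw}\colon \cw \iso \cc(Q,F,W)$ (Definition~\ref{Def: index [P_X]}). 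This sidesteps the well-definedness issue you flagged in~(b) entirely: one checks properties~(1)--(3) directly from this formula using Remark~\ref{Rem:frozen index with per index} and the additivity of $[\,\cdot\,]^{F}$ along triangles with first term in $\thick_{\cc}(\cp)$. Your independence argument in~(b) would still be needed if you insisted on the approximation route, but as written it only establishes uniqueness on the subset of $\cc$ where such triangles exist, which you have not shown to be everything.
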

%Here $ \mathrm{g_{fr}} $ is the following map
%$$ \ch\hookrightarrow\pr_{\cc}\relGammabf\cap\copr_{\cc}\relGammabf\simeq\pr_{\cd}\relGammabf\cap\copr_{\cd}\relGammabf
%$$  $$\hookrightarrow\per\bmgamma\ra K_{0}(\per\bmgamma)\cong\mathbb{Z}^{n}\xrightarrow{\text{frozen part}}\mathbb{Z}^{n-r} .$$

This article is organized as follows. In Section~\ref{Section: ice quiver} we first recall the definitions of ice quivers with potential and the mutation operations. Then we give the construction of (complete) relative Ginzburg algebras. In Section~\ref{Section: Relative cluster categories and Higgs Categories}, we use Plamondon's technique to 
define the Higgs category. The relationship between the Higgs category and Plamondon's category is explained in Section~\ref{Subsection: Plamondon's category}. 

Let $ (Q,F,W) $ be an ice quiver with potential and $ \mu_{v}(Q,F,W)=(Q',F',W') $ its mutation at a vertex $v$. If $ v $ is an unfrozen vertex, we show that mutation at $ v $ yields an equivalence between the relative cluster categories 
of $(Q,F,W)$ and $(Q',F',W')$ 
(see Proposition~\ref{Prop: mutation at unfrozen}). If $ v $ is a frozen source or frozen sink, 
the mutation at $ v $ yields an equivalence between $ \cd(Q,F,W) $ and $ \cd(Q',F',W') $ 
(see Proposition~\ref{Rem: mutation at frozen}).

Section~\ref{Section: Cluster characters} is devoted to the construction of the cluster character $ CC=X_{?} $
 (with respect to $ \relGammabf $) on $ \ch(Q,F,W) $. We use an argument similar to Plamondon's to show the multiplication formula, cf.~Section~\ref{Subsection: Multiplication formula}. Then we extend our cluster character 
 to a map 
 \[
 CC_{loc}\colon\cc(Q,F,W)\ra\mathbb{Q}[x_{1}^{\pm1},\ldots,x_{r}^{\pm1},x_{r+1}^{\pm1},\ldots,x_{n}^{\pm1}].
 \]
 In Section~\ref{Section: Applications to quasi-cluster homomorphisms}, we show that the 
decategorification of the equivalence associated with the mutation at a frozen source (or sink) is a quasi-cluster isomorphism. In the final section~\ref{s:Postnikov diagrams}, we explain how the class of ice quivers with potential which come from Postnikov diagrams fits into the theory developed in this article. 

In appendix~\ref{appendix}, Chris Fraser and the first-named author use 
Frobenius categorifications (which can often be
constructed using the main results of this paper) to construct quasi-cluster isomorphisms. 
In \cite{presslandMuller-Speyer conjecture}, Matthew Pressland has recently applied these results to prove a 
conjecture by Muller--Speyer \cite[Rem.~4.7]{muller-speyer2017} linking the two canonical cluster structures 
on a positroid variety by a quasi-cluster isomorphism.

%\textcolor{red}{We would like to remark that Pressland~\cite{presslandMuller-Speyer conjecture} recently confirms a phenomenon observed by Muller--Speyer~\cite{muller-speyer2017}, which states that the source and target labelled cluster structures on a positroid variety quasi-coincide when the positroid is connected, by means of categorification.} 
	
\section*{Acknowledgments}
The second-named author thanks his Chinese supervisor Guodong Zhou for the constant support and encouragement during his career in mathematics. He would also like to thank Peigen Cao, Xiaofa Chen, Junyang Liu and Yu Wang for helpful discussions during his stay in Paris. Both authors are grateful to Matthew Pressland for his
interest and feedback. They thank Fang Li, Sibylle Schroll and Alireza Nasr-Isfahani for opportunities
to present some of the results of this paper. 
	
\section*{Notation and conventions}
Throughout this paper, $ k $ will denote an algebraically closed field of characteristic zero. We denote by $ D=\Hom_{k}(−, k) $ the $ k $-dual. All modules are right modules unless stated otherwise.
	
Let $ \ct $ be any triangulated category. For any subcategory $ \ct' $ of $ \ct $, we denote by $ \ind\ct' $ the set of isomorphism classes of indecomposable objects of $ \ct $ contained in $ \ct' $. Denote by $ \add\ct' $ the full subcategory of $ \ct $ whose objects are all direct summands of finite direct sums of objects in $ \ct' $. The subcategory $ \ct' $ is \emph{rigid} if for any two objects $ X $ and $ Y $ of $ \ct' $, we have $ \Hom_{\ct}(X,\Si Y)=0 $.
	
Let $ \cp $ be a subcategory of $ \ct $. We denote by $ [\cp] $ the ideal of morphisms in $ \ct $ factoring through an object of $ \cp $. Then the corresponding additive quotient category with respect to $ \cp $ is denoted by $ \ct/[\cp] $. Denote by $ \mathrm{tri}_{\ct}(\cp) $ the triangulated subcategory of $ \ct $ gennerated by $ \cp $, i.e.\ the smallest triangulated subcategory of $ \ct $ containing $ \cp $.

For collections $ \cx $ and $ \cy $ of objects in $ \ct $. we denote by $ \cx*\cy $ the collection of objects $ Z\in\ct $ appearing in a triangle $ X\ra Z\ra Y\ra\Si X $ with $ X\in\cx $ and $ Y\in\cy $.

	\section{Preliminaries}\label{Section: ice quiver}

	\subsection{Ice quivers with potential}
	\begin{Def}\rm
		A \emph{quiver} is a tuple $ Q=(Q_{0}, Q_{1},s,t) $, where $ Q_{0} $ and $ Q_{1} $ are sets, and $ s, t\colon Q_{1}\ra Q_{0} $ are functions. We think of the elements of $ Q_{0} $ as vertices and of those of $ Q_{1} $ as arrows, so that each $ \alpha\in Q_{1} $ is realised as an arrow $ \alpha\colon s(\alpha)\ra t(\alpha) $. We call $ Q $ \emph{finite} if $ Q_{0} $ and $ Q_{1} $ are finite sets.
	\end{Def}
	\begin{Def}\rm
		Let $ Q $ be a quiver. A quiver $ F=(F_{0}, F_{1}, s', t') $ is a \emph{subquiver} of $ Q $ if it is a quiver such that $ F_{0}\subseteq Q_{0} $, $ F_{1}\subseteq Q_{1} $ and the functions $ s' $ and $ t' $ are the restrictions of $ s $ and $ t $ to $ F_{1} $ . We say $ F $ is a \emph{full subquiver} if $ F_{1}=\{\alpha\in Q_{1}\colon s(\alpha),t(\alpha)\in F_{0} \} $, so that a full subquiver of $ Q $ is completely determined by its set of vertices.	
	\end{Def}
	\begin{Def}\rm
		An \emph{ice quiver} is a pair $ (Q,F) $, where $ Q $ is a finite quiver and $ F $ is a (not necessarily full) subquiver of $ Q $.
		We call $ F_{0} $, $ F_{1} $ and $ F $ the frozen vertices, arrows and subquiver respectively. We also call $ Q_{0}\setminus F_{0} $ and $ Q_{1}\setminus F_{1} $ the unfrozen vertices and arrows respectively.	
	\end{Def}
	
	Let $ k $ be a field. Let $ Q $ be a finite quiver.
	\begin{Def}\rm\label{Def: semisimple algebra of vertices}
		Let $ S $ be the semisimple $ k $-algebra $ \prod_{i\in Q_{0}}ke_{i} $. The vector space $ kQ_{1} $ naturally becomes an $ S $-bimodule. Then the $ \emph{complete path algebra} $ of $ Q $ is the completed tensor algebra
		$$ \widehat{kQ}=\widehat{T_{S}}(kQ_{1}) .$$ It has underlying vector space 
		$$ \prod_{d=0}^{\infty}(kQ_{1})^{\otimes_{S}^{d}} $$ and multiplication given by concatenation. The algebra $ \widehat{kQ} $ becomes a graded pseudocompact $ S $-algebra in the sense of~\cite{Van den Bergh2015}.
	\end{Def}

	\begin{Def}\rm\cite[Deﬁnition 2.8]{presslandMutationFrozenJacobian2020}
		A \emph{potential} on $ Q $ is an element $ W $ in $ H\!H_{0}(\widehat{kQ})=\widehat{kQ}/[\widehat{kQ},\widehat{kQ}] $ expressible as a (possibly infinite) linear combination of homogeneous elements of degree at least 2, such that any term involving a loop has degree at least 3. An \emph{ice quiver with potential} is a tuple $ (Q, F, W) $ in which $ (Q, F) $ is a finite ice quiver and $ W $ is a potential on $ Q $. If $ F=\emptyset $ is the empty quiver, then $ (Q,\emptyset,W)=(Q,W) $ is simply called a \emph{quiver with potential}. We say that $ W $ is \emph{irredundant} if each term of $ W $ includes at least one unfrozen arrow.
	\end{Def}
	
	A potential can be thought of as an infinite formal linear combination of cyclic paths in $ Q $ (of length at least 2), considered up to the equivalence relation on such cycles induced by
	$$ \alpha_{n}\cdots\alpha_{1}\sim\alpha_{n-1}\cdots\alpha_{1}\alpha_{n} .$$
	
	\begin{Def}\rm
		Let $ p=\alpha_{n}\cdots\alpha_{1} $ be a cyclic path, with each $ \alpha_{i}\in Q_{1} $, and let $ \alpha\in Q_{1} $ be any arrow. Then the \emph{cyclic derivative} of $ p $ with respect to $ \alpha $ is
		$$ \partial_{\alpha}p=\Sigma_{\alpha_{i}=\alpha}\alpha_{i-1}\cdots\alpha_{1}\cdots\alpha_{i+1} .$$ We extend $ \partial_{\alpha} $ by linearity and continuity. Then it determines a map $ H\!H_{0}(\widehat{kQ})\ra\widehat{kQ} $. For an ice quiver with potential $ (Q,F,W) $, we define the \emph{relative Jacobian algebra} $ J(Q,F,W) $ (or $ J_{rel} $) as
		$$ \widehat{kQ}/\overline{\langle\partial_{\alpha}W\colon\alpha\in Q_{1}\setminus F_{1}\rangle} .$$
		If $ F=\emptyset $, we call $ J(Q,W)=J(Q,\emptyset,W) $ the \emph{Jacobian algebra} of the quiver with potential $ (Q, W) $. An ice quiver with potential $ (Q,F,W) $ is called \emph{Jacobi-finite} if the relative Jacobian algebra $ J(Q,F,W) $ is finite-dimensional. Otherwise, we say it is \emph{Jacobi-infinite}.
	\end{Def}
	\begin{Def}\rm
	Let $ Q $ be a quiver. An ideal of $ \widehat{kQ} $ is called \emph{admissible} if it is contained in the square of the closed ideal generated by the arrows of $ Q $. We call an ice quiver with potential $ (Q, F, W) $ \emph{reduced} if $ W $ is irredundant and the Jacobian ideal of $ \widehat{kQ} $ determined by $ F $ and $ W $ is admissible. An ice quiver with potential $ (Q, F, W) $ is \emph{trivial} if its relative Jacobian algebra $ J(Q, F, W) $ is a product of copies of the base field $ k $.
	\end{Def}

\subsection{Mutation of ice quivers with potentials}\label{Subsection:mutations}\  

Two ice quivers with potential $ (Q,F,W) $ and $ (Q',F',W') $ are \emph{right equivalent} if $ Q_{0}=Q'_{0} $, $ F_{0}=F'_{0} $ and there exists an $ S $-algebra isomorphism $ \varphi\colon\widehat{kQ}\ra\widehat{kQ'} $ such that
	\begin{itemize}
		\item[(1)] $ \varphi|_{S}=\id_{S} $,
		\item[(2)] $ \varphi(\widehat{kF})=\widehat{kF'} $, where $ \widehat{kF} $ and $ \widehat{kF'} $ are treated in the the natural way as subalgebras of $ \widehat{kQ} $ and $ \widehat{kQ'} $ respectively, and
		\item[(3)] $ \varphi(W) $ equals $ W' $ in $ H\!H_{0}(\widehat{kQ'}) $.
	\end{itemize}
	In that case, the relative Jacobian algebras of the two ice quivers with potential are isomorphic (see~\cite[Proposition 3.10]{presslandMutationFrozenJacobian2020}).
	Let $ (Q,F,W) $ be an ice quiver with potential. By \cite[Theorem 3.6]{presslandMutationFrozenJacobian2020}, there exists a reduced ice quiver with potential $ (Q_{red},F_{red},W_{red}) $ such that $ J(Q,F,W)\cong J(Q_{red},F_{red},W_{red}) $. And $ (Q_{red},F_{red},W_{red}) $ is uniquely determined up to right equivalence by the right equivalence class of $ (Q, F, W) $. We call $ (Q_{red},F_{red},W_{red}) $ the \emph{reduction} of $ (Q,F,W) $.
	\subsubsection{Mutation at unfrozen vertices}
    Let $ v $ be an unfrozen vertex of $ Q $ such that no loops or 2-cycles of $ Q $ are incident with $ v $. The mutation at
    the vertex $ v $ is the new ice quiver with potential $ \mu_{v}(Q,F,W) $ obtained from $ (Q,F,W) $ (see~\cite{presslandMutationFrozenJacobian2020}) in the following way:
    \begin{itemize}
    	\item[(1)] For each pair of arrows $ \alpha\colon u\to v $ and $ \beta\colon v\to w $, add an unfrozen ‘composite' arrow $ [\beta\alpha]\colon u\to w $ to $ Q $.
    	\item[(2)] Reverse each arrow incident with $ v $. This gives a new ice quiver $ (Q',F) $.
    	
    	\item[(3)] Pick a representative $ \widetilde{W} $ of $ W $ in $ kQ $ such that no term of $ W $ begins at $ v $ (which is possible since there are no loops at $ v $). For each pair of arrows $ \alpha,\beta $ as in $ (1) $, replace each occurrence of $ \beta\alpha $ in $ \widetilde{W} $ by $ [\beta\alpha] $, and add the term $ [\beta\alpha]\alpha^{*}\beta^{*} $. This gives a new potential $ W' $.
    \end{itemize}
	The mutation $ \mu_{v}(Q,F,W) $ of $ (Q,F,W) $ at $ v $ is then defined to be the reduction of $ (Q',F,W') $.
	\subsubsection{Mutation at frozen vertices}
    Let $ (Q,F,W) $ be an ice quiver with potential. Let $ v $ be a frozen vertex.
    \begin{Def}\rm
    We say that $ v $ is a \emph{frozen source} of $ Q $ if $ v $ is a source vertex of $ F $ and there are no unfrozen arrows with source $ v $. Similarly, we say that $ v $ is a \emph{frozen sink} of $ Q $ if $ v $ is a sink vertex of $ F $ and there are no unfrozen arrows with target $ v $. For two vertices $ i $ and $ j $, we say that they have the \emph{same state} if they are both in $ F_{0} $ or $ Q_{0}\setminus F_{0} $. Otherwise, we say that they have \emph{different states}. Similarly, for two arrows in $ Q $, we say that they have the \emph{same state} if they are both in $ F_{1} $ or $ Q_{1}\setminus F_{1} $. Otherwise, we say that they have \emph{different states}.
\end{Def}
Let $ v $ be a frozen source or a frozen sink. The mutation at the vertex $ v $ is the new ice quiver with potential $ \mu_{v}(Q,F,W) $ obtained from $ (Q,F,W) $ (see~\cite{presslandMutationFrozenJacobian2020,wuCategorificationIceQuiver2021}) in the following way:
	\begin{itemize}
		\item[(1)] For each pair of arrows $ \alpha\colon u\to v $ and $ \beta\colon v\to w $, add an unfrozen ‘composite' arrow $ [\beta\alpha]\colon u\to w $ to $ Q $.
		\item[(2)] Replace each arrow $ \alpha\colon u\to v $ by an arrow $ \alpha^{*}\colon v\to u $ of the same state as $ \alpha $ and each arrow $ \beta\colon v\to w $ by an arrow $ \beta^{*}\colon w\to v $ of the same state as $ \beta $. This gives a new ice quiver $ (Q',F') $.
		\item[(3)] Pick a representative $ \widetilde{W} $ of $ W $ in $ kQ $ such that no term of $ W $ begins at $ v $ (which is possible since there are no loops at $ v $). For each pair of arrows $ \alpha,\beta $ as in $ (1) $, replace each occurrence of $ \beta\alpha $ in $ \widetilde{W} $ by $ [\beta\alpha] $, and add the term $ [\beta\alpha]\alpha^{*}\beta^{*} $. This gives a new potential $ W' $.
	\end{itemize}
	The mutation $ \mu_{v}(Q,F,W) $ of $ (Q,F,W) $ at $ v $ is then defined to be the reduction of $ (Q',F',W') $.

\subsection{Complete relative Ginzburg dg algebras}
\label{ss:Ginzburg algebras}
	\begin{Def}\rm
		Let $ (Q,F,W) $ be a finite ice quiver with potential. Let $ \widetilde{Q} $ be the graded quiver with the same vertices as $ Q $ and whose arrows are
		\begin{itemize}
			\item the arrows of $ Q $,
			\item an arrow $ a^{*}\colon j\to i $ of degree $ -1 $ for each arrow $ a $ of $ Q $ not belonging to $ F $,
			\item a loop $ t_{i}\colon i\to i $ of degree $ -2 $ for each vertex $ i $ of $ Q $ not belonging to $ F $.
		\end{itemize}
		
		Define the \emph{completed relative Ginzburg dg algebra} $ \bm{\Gamma}(Q,F,W) $ as the dg algebra whose underlying graded space is the completed graded path algebra $ \widehat{k\widetilde{Q}} $. Its differential is the unique $ k $-linear continuous endomorphism of degree 1 which satisfies the Leibniz rule
		\begin{align*}
			\xymatrix{
				d(uv)=d(u)v+(-1)^{p}ud(v)
			}
		\end{align*}
		for all homogeneous $ u $ of degree $ p $ and all $ v $, and takes the following values on the arrows of $ \widetilde{Q} $:
		\begin{itemize}
			\item $ d(a)=0 $ for each arrow $ a $ of $ Q $,
			\item $d(a^{*})=\partial_{a}W$ for each arrow $ a $ of $ Q $ not belonging to $ F $,
			\item $ d(t_{i})=e_{i}(\sum_{a\in Q_{1}}[a,a^{*}])e_{i} $ for each vertex $ i $ of $ Q $ not belonging to $ F $ where $ e_{i} $ is the lazy path corresponding to the vertex $ i $.
		\end{itemize}
		
	\end{Def}

\begin{Def}\rm
	Let $ F $ be any finite quiver. Let $  \widetilde{F} $ be the graded quiver with the same vertices as $ F $ and whose arrows are
	\begin{itemize}
		\item the arrows of $ F $,
		\item an arrow $ \tilde{a}:j\to i $ of degree $ 0 $ for each arrow $ a $ of $ F $,
		\item a loop $ r_{i}:i\to i $ of degree $ -1 $ for each vertex $ i $ of $ F $.
	\end{itemize}
	Define \emph{complete derived preprojective algebra} $ \bm\Pi_{2}(F) $ as the dg algebra whose underlying graded space is the completed graded path algebra $ \widehat{k\widetilde{F}} $. Its differential is the unique $ k $-linear continuous endomorphism of degree 1 which satisfies the Leibniz rule
	\begin{align*}
		\xymatrix{
			d(u\circ v)=d(u)\circ v+(-1)^{p}u\circ d(v)
		}
	\end{align*}
	for all homogeneous $ u $ of degree $ p $ and all $ v $, and takes the following values on the arrows of $ \widetilde{F} $:
	\begin{itemize}
		\item $ d(a)=0 $ for each arrow $ a $ of $ F $,
		\item $d(\tilde{a})=0 $ for each arrow $ a $ in $ F $,
		\item $ d(r_{i})=e_{i}(\sum_{a\in F_{1}}[a,\tilde{a}])e_{i} $ for each vertex $ i $ of $ F $, where $ e_{i} $ is the lazy path corresponding to the vertex $ i $.
	\end{itemize}
\end{Def}

	\begin{Lem}\rm
		With the above notations, $ J(Q,F,W) $ is isomorphic to $ \mathrm{H}^{0}(\bm{\Gamma}(Q,F,W)) $.
	\end{Lem}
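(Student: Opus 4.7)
The plan is to compute $H^0(\bm\Gamma(Q,F,W))$ directly from the definition by analyzing the degree $0$ and degree $(-1)$ parts of the underlying graded algebra $\widehat{k\widetilde Q}$ and the restriction of $d$ between them. The key observation is that the grading on $\widehat{k\widetilde Q}$ is multiplicative and homogeneous on paths, with each arrow $a$ of $Q$ contributing $0$, each $a^{*}$ contributing $-1$, and each $t_i$ contributing $-2$. Consequently the degree-$0$ subspace coincides with the completed path algebra $\widehat{kQ}\subseteq\widehat{k\widetilde Q}$, and the degree-$(-1)$ subspace consists precisely of (convergent) sums of paths of the form $x\,a^{*}\,y$ with $a\in Q_1\setminus F_1$ and $x,y$ paths in $Q$, since any $t_i$-factor would push the degree below $-1$.

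With these identifications in hand, I would apply the differential to a typical generator $x a^{*} y$ of $\bm\Gamma^{-1}$. Since $d$ vanishes on every arrow of $Q$, the Leibniz rule and continuity give $d(x a^{*} y)=x\,\partial_a W\,y$, so the image of $d^{-1}\colon\bm\Gamma^{-1}\to\bm\Gamma^{0}$ is exactly the closed two-sided ideal of $\widehat{kQ}$ generated by $\{\partial_a W : a\in Q_1\setminus F_1\}$. Taking the quotient,
\[
H^{0}(\bm\Gamma(Q,F,W))\;=\;\widehat{kQ}\big/\overline{\langle\partial_a W : a\in Q_1\setminus F_1\rangle}\;=\;J(Q,F,W),
\]
which is the desired isomorphism (induced by the inclusion $\widehat{kQ}\hookrightarrow\widehat{k\widetilde Q}$ followed by projection to $H^0$).

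The only genuinely delicate point is the interaction with the completion: one must argue that the image of $d^{-1}$ is already the \emph{closed} ideal, and not merely its algebraic version. This follows because $\bm\Gamma^{-1}$, viewed as a topological $\widehat{kQ}$-bimodule via the concatenation product, is itself complete (it is the completed $\widehat{kQ}$-bimodule freely generated by the symbols $a^{*}$, $a\in Q_1\setminus F_1$), so $d^{-1}$ is continuous and its image is closed. With that observation the identification with the relative Jacobian algebra in the sense of Definition~2.5 is immediate, and no further calculation is required.
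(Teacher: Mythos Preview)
Your argument is correct and is the standard direct computation; the paper states this lemma without proof (treating it as well-known), so there is nothing to compare. Your identification of $\bm\Gamma^{0}$ with $\widehat{kQ}$ and of $\bm\Gamma^{-1}$ with the completed free $\widehat{kQ}$-bimodule on the symbols $a^{*}$, together with the Leibniz computation $d(xa^{*}y)=x(\partial_{a}W)y$, is exactly right.

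One small sharpening: the implication ``$d^{-1}$ is continuous, hence its image is closed'' is not valid as stated---continuous linear maps between complete spaces need not have closed images. What actually makes the image closed here is that every element of the closed ideal $\overline{\langle\partial_{a}W\rangle}$ can be written as a \emph{convergent} (in the arrow-ideal topology) sum $\sum_{i}x_{i}(\partial_{a_{k_{i}}}W)y_{i}$ with $|x_{i}|+|y_{i}|\to\infty$, and then $\sum_{i}x_{i}a^{*}_{k_{i}}y_{i}$ converges in $\bm\Gamma^{-1}$ and maps to it under $d$. Equivalently, one may invoke the fact that morphisms in the abelian category of pseudocompact $S$-bimodules are strict, so images are automatically closed. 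Either way the conclusion stands, and with this small clarification your proof is complete.
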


	Let $ (Q,F,W) $ be a finite ice quiver with potential. Since $ W $ can be viewed as an element in $ HC_{0}(\widehat{kQ}) $, the class $ c=B(W) $ is an element in $ HH_{1}(\widehat{kQ}) $, where
	$$ B\colon H\!C_{0}(\widehat{kQ})\ra H\!H_{1}(\widehat{kQ}) $$ is Connes’ connecting map (see~\cite[Section 6.1]{kellerDeformedCalabiYau2011a}).
	
	We denote by $ G\colon\widehat{kF}\hookrightarrow\widehat{kQ} $ the canonical dg inclusion. Let $ H\!H_{0}(G) $ be the 0-th Hochschild homology of $ G $ (see~\cite[Section 2.4]{wuRelativeClusterCategories2021}). Then $ \xi=(0,c) $ is an element of $ H\!H_{0}(G) $. Applying the relative deformed 3-Calabi--Yau completion of $ G\colon kF\hookrightarrow kQ $ with respect to $ \xi $, we get the following dg functor (see~\cite[Section 7.2]{wuRelativeClusterCategories2021} and~\cite[Section 4]{wuCategorificationIceQuiver2021})
	$$ \bm{G}_{rel}\colon\bm{\Pi}_{2}(F)\ra\bm{\Gamma}(Q,F,W) .$$
	
	We call it \emph{Ginzburg functor} (see~\cite[Section 4]{wuCategorificationIceQuiver2021}). In the notations of~\cite[Section 4]{wuCategorificationIceQuiver2021}, it is given explicitly as follows:
	\begin{itemize}
		\item $ \bm G_{rel}(i)=i $ for each frozen vertex $ i\in F_{0} $,
		\item $ \bm G_{rel}(a)=a $ for each arrow $ a\in F_{1} $,
		\item $ \bm G_{rel}(\tilde{a})=-\partial_{a}W $ for each arrow $ a\in F_{1} $,
		\item $ \bm G_{rel}(r_{i})=e_{i}(\sum_{a\in Q_{1}\setminus F_{1}}[a,a^{*}])e_{i} $ for each frozen vertex $ i\in F_{0} $.
	\end{itemize}
	
	Let $ e=\sum_{i\in F}e_{i} $ be the idempotent associated with the set of frozen vertices and $ \pvd_{e}(\bm\bmgamma) $ the full subcategory of $ \mathrm{pvd}(\bm\bmgamma) $
	whose objects are dg modules $ M $whose restriction to $ e\relGammabf e $ is acyclic. In another words, $ \mathrm{pvd}_{e}(\bm\bmgamma) $ is equal to 
	$$ \thick_{\cd(\relGammabf)}\langle S_{i}\,|\,i\in Q_{0}\setminus F_{0}\rangle ,$$ i.e. the thick subcategory of $ \cd(\relGammabf) $ generated by all unfrozen simple modules.
	\begin{Prop}\label{Prop:relative 3-CY}\cite[Corollary 3.13]{wuRelativeClusterCategories2021}
		For any dg module $ N $ and any dg module $ M $ in $ \pvd_{e}(\relGammabf) $, there is a canonical isomorphism
		$$ \Hom_{\cd(\relGammabf)}(M,N)\cong D\Hom_{\cd(\relGammabf)}(N,\Si^{3}M) .$$
		
	\end{Prop}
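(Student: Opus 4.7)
The plan is to deduce this duality from the relative $3$-Calabi--Yau property of the Ginzburg functor $\bm{G}_{rel}\colon\bm\Pi_{2}(F)\to\relGammabf$. This property is built into $\relGammabf$ by construction, since $\relGammabf$ arises as the relative deformed $3$-Calabi--Yau completion of the inclusion $kF\hookrightarrow kQ$ with respect to the class $\xi=(0,c)$. Following the formalism of Brav--Dyckerhoff and Yeung, the existence of a left relative $3$-Calabi--Yau structure on $\bm{G}_{rel}$ translates into a canonical distinguished triangle of $\relGammabf$-bimodules
\[
\relGammabf\lten_{e\relGammabf e}\relGammabf \longrightarrow \relGammabf \longrightarrow \relGammabf^{\vee}[3],
\]
where $\relGammabf^{\vee}=\RHom_{\relGammabf^{e}}(\relGammabf,\relGammabf^{e})$ is the inverse dualizing bimodule and the first morphism is induced by the multiplication of $\relGammabf$. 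Constructing (or citing) this bimodule triangle is the heart of the argument.

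Next, I would apply the derived functor $M\lten_{\relGammabf}(-)$ to this triangle. The hypothesis $M\in\pvd_{e}(\relGammabf)$ means precisely that the restriction $Me$ is acyclic in $\cd(e\relGammabf e)$, so the leftmost term simplifies as
\[
M\lten_{\relGammabf}\bigl(\relGammabf\lten_{e\relGammabf e}\relGammabf\bigr) \simeq Me\lten_{e\relGammabf e}e\relGammabf \simeq 0,
\]
and the triangle collapses to a natural quasi-isomorphism $M\simeq M\lten_{\relGammabf}\relGammabf^{\vee}[3]$. Equivalently, the Serre functor of $\per(\relGammabf)$, given bimodule-theoretically by $-\lten_{\relGammabf}(\relGammabf^{\vee})^{-1}$, restricts to the shift $\Si^{3}$ on the subcategory $\pvd_{e}(\relGammabf)\subseteq\pvd(\relGammabf)$.

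The stated isomorphism is then an instance of standard Serre duality for homologically smooth dg algebras: for $M\in\pvd_{e}(\relGammabf)$ and any $N\in\cd(\relGammabf)$ one deduces the natural pairing
\[
\Hom_{\cd(\relGammabf)}(M,N)\simeq D\Hom_{\cd(\relGammabf)}(N,\Si^{3}M).
\]
The principal technical obstacle is the construction of the bimodule triangle in the first paragraph, which genuinely encodes the deformed relative Calabi--Yau data; once it is available, the remainder is formal bimodule bookkeeping. In the present setting this step is discharged by the general existence theorem for relative deformed Calabi--Yau completions applied to the pair $(kF\hookrightarrow kQ,\,\xi)$, which is precisely the input used to build $\relGammabf$ in Section~\ref{ss:Ginzburg algebras}.
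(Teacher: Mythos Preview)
The paper does not give a proof of this proposition; it is quoted as \cite[Corollary~3.13]{wuRelativeClusterCategories2021}. Your sketch is essentially the argument carried out in that reference: the relative Ginzburg algebra is by construction a relative deformed $3$-Calabi--Yau completion, which yields a bimodule triangle of the shape you describe; tensoring with $M\in\pvd_{e}(\relGammabf)$ kills the first term because $Me\simeq 0$, so $M\simeq M\lten_{\relGammabf}\relGammabf^{\vee}[3]$; and then one invokes the usual Serre duality pairing for homologically smooth dg algebras.

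Two minor points to tighten. First, the expression $\relGammabf\lten_{e\relGammabf e}\relGammabf$ is an abuse: $\relGammabf$ is not an $e\relGammabf e$-module, and what you mean is $\relGammabf e\lten_{e\relGammabf e}e\relGammabf$, whose tensor product with $M$ over $\relGammabf$ is $Me\lten_{e\relGammabf e}e\relGammabf$. Second, the Brav--Dyckerhoff/Yeung relative structure lives on $\bm{G}_{rel}\colon\bm\Pi_{2}(F)\to\relGammabf$, so the bimodule triangle it produces \emph{a priori} involves $\bm\Pi_{2}(F)$ rather than $e\relGammabf e$. Passing from one to the other---or, equivalently, checking that restriction along $\bm{G}_{rel}$ and restriction along $e\relGammabf e\hookrightarrow\relGammabf$ have the same kernel on $\pvd(\relGammabf)$---is a genuine (if easy) step that you have elided. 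With these adjustments your outline matches the proof in the cited source.
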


	\begin{Prop}\label{Homtopy Cofiber sequence of (Q,F,W)}\cite[Proposition 7.7]{wuRelativeClusterCategories2021}
		Let $ (Q,F,W) $ be a finite ice quiver with potential. Let $ \overline{Q} $ be the quiver obtained from $ Q $ by deleting all vertices in $ F $ and all arrows incident with vertices in $ F $. Let $ \overline{W} $ be the potential on $ \overline{Q} $ obtained by deleting all cycles passing through vertices of $ F $ in $ W $. Then $$ \bm{\Pi}_{2}(F)\xrightarrow{\bm G_{rel}} \bm{\Gamma}(Q,F,W)\to \bm{\Gamma}(\overline{Q},\overline{W}) $$ is a
		homotopy cofiber sequence of dg categories, where $ \bm{\Gamma}(\overline{Q},\overline{W}) $ is the Ginzburg algebra (see \cite[Section 6]{kellerDeformedCalabiYau2011a}) associated with the quiver with potential $ (\overline{Q},\overline{W}) $.
	\end{Prop}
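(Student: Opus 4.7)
The plan is to derive this homotopy cofiber sequence from the general functorial behavior of relative Calabi--Yau completions, rather than by a direct computation with generators and relations. Recall that by construction, the dg functor $\bm G_{rel}\colon\bm{\Pi}_2(F)\to\bm{\Gamma}(Q,F,W)$ is the relative deformed $3$-Calabi--Yau completion of the inclusion $G\colon kF\hookrightarrow kQ$ with respect to the class $\xi=(0,B(W))\in H\!H_0(G)$, while $\bm{\Pi}_2(F)$ is itself the absolute (undeformed) $2$-Calabi--Yau completion of $kF$. The main tool is the general principle that this construction is compatible with homotopy cofibers: starting from a morphism of dg categories and a relative Hochschild class, one obtains a morphism of the corresponding (absolute $n$-CY of the source) $\to$ (relative $n$-CY of the morphism), whose homotopy cofiber is the absolute $n$-CY completion of the homotopy cofiber of the original morphism at the induced class in Hochschild homology.

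First, I would compute the homotopy cofiber of the inclusion $G\colon kF\hookrightarrow kQ$ in the Morita homotopy category of dg categories. Since $kF$ is a full subcategory of $kQ$ supported on the frozen idempotent $e$, its Drinfeld quotient is Morita equivalent to $k\overline{Q}$: killing the objects indexed by $F_0$ annihilates both these objects and every arrow incident with them, leaving exactly the arrows of $\overline{Q}$. Next, I would trace the class $\xi=(0,B(W))\in H\!H_0(G)$ through the long exact sequence of Hochschild homology associated with the cofiber sequence $kF\to kQ\to k\overline{Q}$. The summand $0$ of $\xi$ in $H\!H_1(kF)$ ensures that $\xi$ descends to a class on the cofiber, and the surjection $kQ\twoheadrightarrow k\overline{Q}$ on cyclic homology sends the cyclic class $W$ to $\overline{W}$ by construction (cycles of $W$ passing through $F$ are precisely those that get deleted); applying Connes' $B$ operator, the induced class in $H\!H_1(k\overline{Q})$ is $B(\overline{W})$.

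With these ingredients in place, the functoriality of (deformed) Calabi--Yau completions gives the homotopy cofiber square
\[
\begin{tikzcd}
\bm{\Pi}_2(F)\arrow[r,"\bm G_{rel}"]\arrow[d] & \bm{\Gamma}(Q,F,W)\arrow[d] \\
k\arrow[r] & \bm{\Pi}_3(k\overline{Q},B(\overline{W}))
\end{tikzcd}
\]
where the bottom right entry is the $3$-Calabi--Yau completion of $k\overline{Q}$ deformed by the class $B(\overline{W})$. By definition, this latter dg algebra is precisely the Ginzburg dg algebra $\bm{\Gamma}(\overline{Q},\overline{W})$, so the required homotopy cofiber sequence follows. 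To finish, I would compare the resulting projection $\bm{\Gamma}(Q,F,W)\to\bm{\Gamma}(\overline{Q},\overline{W})$ with the evident quotient map killing the frozen vertices, their incident arrows, the corresponding starred generators $a^*$ and loops $t_i$; on generators of $\widetilde Q$ both maps agree, so they coincide as dg algebra morphisms.

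The main obstacle is the invocation of the general cofiber-stability theorem for relative Calabi--Yau completions: one must verify that deformed $n$-CY completions preserve homotopy cofibers and that the class $\xi$ genuinely induces the class $B(\overline{W})$ on the cofiber, which requires a careful analysis of the long exact sequence
\[
\cdots\to H\!H_1(kF)\to H\!H_1(G)\to H\!H_1(k\overline{Q})\to H\!H_0(kF)\to\cdots
\]
together with its cyclic version, and of how Connes' $B$ interacts with the splitting of the map $H\!H_0(G)\to H\!H_0(kQ)\oplus H\!H_1(kF)[1]$ used to define $\xi$. Once this comparison is set up, the rest of the argument is essentially formal.
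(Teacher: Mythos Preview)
The paper does not supply its own proof of this proposition: it is stated with the citation \cite[Proposition 7.7]{wuRelativeClusterCategories2021} and no argument is given in the text. So there is no proof here to compare against; your task is really to reconstruct the argument from the cited reference.

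Your high-level strategy --- deduce the cofiber sequence from a general ``cofiber-stability'' principle for deformed relative Calabi--Yau completions --- is in the right spirit, and indeed this is the viewpoint adopted in \cite{wuRelativeClusterCategories2021}. A few points deserve care, though. First, the inclusion $G\colon kF\hookrightarrow kQ$ is \emph{not} in general a full embedding of dg categories (the frozen subquiver $F$ need not be full in $Q$), so identifying its homotopy cofiber with $k\overline{Q}$ is not simply a matter of taking a Drinfeld quotient by a full subcategory; you should instead argue directly that the homotopy pushout of $kF\to kQ$ along $kF\to 0$ is Morita equivalent to $k\overline{Q}$, e.g.\ by exhibiting an explicit cofibrant replacement or by working at the level of derived categories of modules. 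Second, the ``main obstacle'' you flag --- that relative deformed $n$-CY completions preserve homotopy cofibers and that the class $\xi$ induces $B(\overline{W})$ on the cofiber --- is exactly the substantive content; in \cite{wuRelativeClusterCategories2021} this is handled by the machinery developed earlier in that paper, so if you are writing a self-contained argument you must either cite that result precisely or reprove it. Your sketch of the Hochschild/cyclic bookkeeping is correct in outline, but the exact sequence you write mixes absolute and relative Hochschild groups in a way that needs to be made precise (note $H\!H_0(G)$ is the relative group, which sits in a long exact sequence with the absolute $H\!H_*(kF)$ and $H\!H_*(kQ)$).
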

	
	For simplicity of notation, we write $ \bmgamma $ instead of $ \bmgamma(Q,F,W) $ and use $ \overGamma $ for $ \bm{\Gamma}(\overline{Q},\overline{W}) $. 
	
	Denote by $ p:\bmgamma\ra\overGamma $ the canonical quotient functor. Then $ p $ induces the usual triple of adjoint functors $ (p^{*},p_{*},p^{!}) $ between $ \cd(\overGamma) $ and $ \cd(\bmgamma) $, where $ p_{*} $ is the restriction functor, $ p^{*}=?\,\lten_{\bmgamma}\overGamma $ and $ p^{!}=\RHom_{\bmgamma}(\overGamma,?) $.

	\begin{Prop}
		Let $ e=\sum_{i\in F}e_{i} $ be the idempotent associated with the set of frozen vertices. We have an exact sequence of triangulated categories
		$$ \per(e\mathbf{\Gamma}_{rel}e)\ra\per(\mathbf{\Gamma}_{rel})\xrightarrow{p^{*}}\per(\mathbf{\Gamma}(\overline{Q},\overline{W})) .$$
	\end{Prop}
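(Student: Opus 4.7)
The plan is to establish three facts which together imply exactness: the left-hand functor is fully faithful with essential image $\thick_{\per(\bmgamma)}(e\bmgamma)$; the composition vanishes; and $p^{*}$ induces an equivalence from the Verdier quotient $\per(\bmgamma)/\thick(e\bmgamma)$ onto $\per(\overGamma)$, at least after passage to idempotent completions. The unlabelled left-hand functor is to be interpreted as the dg-Morita functor $-\otimes^{L}_{e\bmgamma e}e\bmgamma$, i.e.~the left adjoint of the idempotent truncation $M\mapsto Me$.

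Full faithfulness follows from the standard dg-Morita principle: since $e\bmgamma$ is compact in $\cd(\bmgamma)$ (as a direct summand of the free module) and $\End_{\bmgamma}(e\bmgamma)=e\bmgamma e$ canonically as dg algebras, the functor $-\otimes^{L}_{e\bmgamma e}e\bmgamma$ is fully faithful with essential image $\thick_{\per(\bmgamma)}(e\bmgamma)$. Vanishing of the composition is then immediate: $\overGamma$ is obtained from $\bmgamma$ by deleting all frozen vertices and their incident arrows, so $e\overGamma=0$ as a right $\overGamma$-module, and hence for every $X\in\per(e\bmgamma e)$ we have
\[
p^{*}(X\otimes^{L}_{e\bmgamma e}e\bmgamma)\;\cong\;X\otimes^{L}_{e\bmgamma e}(e\bmgamma\otimes^{L}_{\bmgamma}\overGamma)\;\cong\;X\otimes^{L}_{e\bmgamma e}e\overGamma\;=\;0.
\]

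The heart of the argument, and the expected main obstacle, is identifying the kernel of $p^{*}$ with $\thick(e\bmgamma)$ and showing that $p^{*}$ is a Verdier quotient up to direct summands. I would invoke the general theorem from dg-category theory (due to Keller) asserting that a homotopy cofiber sequence of small dg categories induces a Verdier localization sequence on the corresponding perfect derived categories, up to idempotent completion. Applied to the homotopy cofiber sequence $\bm{\Pi}_{2}(F)\xrightarrow{\bm G_{rel}}\bmgamma\to\overGamma$ supplied by Proposition~\ref{Homtopy Cofiber sequence of (Q,F,W)}, this yields that $\ker(p^{*})$ equals the thick closure of the essential image of $\bm G_{rel}^{*}\colon\per(\bm\Pi_{2}(F))\to\per(\bmgamma)$. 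Since $\bm G_{rel}$ sends $e_{i}$ to $e_{i}$ for each $i\in F$, this essential image generates the same thick subcategory as $e\bmgamma=\bigoplus_{i\in F}e_{i}\bmgamma$, giving the desired identification. Essential surjectivity of $p^{*}$ up to summands is automatic from $p^{*}(e_{i}\bmgamma)=e_{i}\overGamma$ for $i\notin F$, which exhausts the indecomposable projectives of $\overGamma$; the residual subtlety lies entirely in the careful tracking of idempotent completions, and in verifying that the cofiber-sequence input to Keller's theorem meets its hypotheses in the pseudocompact/completed setting of the relative Ginzburg algebras.
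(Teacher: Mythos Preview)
Your proof is correct and rests on the same underlying machinery as the paper's, namely Drinfeld--Keller dg quotient theory together with a check of essential surjectivity to upgrade ``equivalence up to summands'' to an honest equivalence. The packaging differs slightly: the paper does not route through $\bm\Pi_{2}(F)$ or the cofiber sequence of Proposition~\ref{Homtopy Cofiber sequence of (Q,F,W)} at all. Instead it views $\bmgamma$ as a dg category with objects the vertices of $Q$, takes $\cb$ to be the full dg subcategory on the frozen vertices (so that $\per\cb=\per(e\bmgamma e)$ directly), and proves a clean general lemma: for any full dg subcategory $\cb\subseteq\ca$ such that $p^{*}\colon\add H^{0}(\ca)\to\add H^{0}(\ca/\cb)$ is essentially surjective, the functor $\per\ca/\per\cb\to\per(\ca/\cb)$ is an equivalence. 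Your route via $\bm\Pi_{2}(F)$ is equally valid and has the small advantage of making the identification $\ker p^{*}=\thick(e\bmgamma)$ explicit through the cofiber sequence, at the cost of the extra observation that $\bm G_{rel}^{*}$ sends the representables of $\bm\Pi_{2}(F)$ exactly to the $e_{i}\bmgamma$ for $i\in F_{0}$; the paper's route is a shade more direct because full dg subcategories already have $\per\cb\hookrightarrow\per\ca$ fully faithful for free.
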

	\begin{proof}
		This follows from the lemma below based on~\cite[Theorem 3.1]{drinfeldDGQuotientsDG2004}.
	\end{proof}

	\begin{Lem}
		Let $ \ca $ be a dg category and $ \cb $ a full dg subcategory. Suppose that the functor $$ p^{*}\colon\add (H^{0}(\ca))\ra\add (H^{0}(\ca/\cb)) $$ is essentially surjective. Then we have an equivalence of triangulated categories
		$$ \per\ca/\per\cb\iso\per\ca/\cb,$$ where $ \ca/\cb $ is the Drinfeld dg quotient (see~\cite{drinfeldDGQuotientsDG2004}) of $ \ca $ by $ \cb $. 
	\end{Lem}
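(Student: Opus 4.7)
The plan is to reduce the statement to its analogue at the level of unbounded derived categories, which is precisely Drinfeld's theorem, and then to descend to compact $=$ perfect objects, invoking the hypothesis only at the final step to obtain idempotent-completeness.

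First I would apply \cite[Theorem~3.1]{drinfeldDGQuotientsDG2004} to the inclusion $\cb\hookrightarrow\ca$: the dg quotient functor $p\colon\ca\to\ca/\cb$ induces a triangle equivalence
\[
p_{!}\colon\cd(\ca)/\thick_{\cd(\ca)}(\cb)\iso\cd(\ca/\cb),
\]
where $\cb$ is identified with the full subcategory of $\cd(\ca)$ formed by representables over objects of $\cb$. Since $p_{!}$ carries representables to representables, it restricts to a triangulated functor $\per\ca\to\per(\ca/\cb)$ that annihilates $\per\cb$, and hence factors through an induced functor
\[
\tilde{p}\colon\per\ca/\per\cb\to\per(\ca/\cb).
\]
Applying the Thomason--Neeman localisation theorem to the equivalence above, $\tilde{p}$ is fully faithful and its essential image is a \emph{dense} triangulated subcategory of $\per(\ca/\cb)$, so that $\tilde{p}$ becomes an equivalence after idempotent completion.

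The remaining—and only non-formal—step is to show that the essential image of $\tilde{p}$ is already closed under direct summands. This is where the hypothesis enters. The image contains every representable $(\ca/\cb)(-,X)$ with $X\in\ca$, and the assumption that $\add(H^{0}(\ca))\to\add(H^{0}(\ca/\cb))$ is essentially surjective provides, for every idempotent decomposition of such a representable in $H^{0}(\ca/\cb)$, a lifting to a summand of a representable in $\add(H^{0}(\ca))$, which is automatically perfect over $\ca$. Combined with the fully faithfulness of $\tilde{p}$ and a standard d\'evissage of perfect dg modules as iterated cones on shifts of representables, this propagates closure under summands to all of $\per(\ca/\cb)$. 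The main obstacle I expect is carefully matching the derived idempotents that arise in the d\'evissage with the $H^{0}$-level idempotents supplied by the hypothesis; the full faithfulness of $\tilde{p}$ is the key tool for translating between the two.
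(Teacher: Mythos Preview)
Your approach is essentially the paper's: invoke Drinfeld's theorem to see that the induced functor $\per\ca/\per\cb \to \per(\ca/\cb)$ is an equivalence up to direct summands, then use the hypothesis to place all retracts of representables in the essential image. For the final step the paper is more direct than your proposed d\'evissage: it simply observes that $\per(\ca/\cb)$ is generated \emph{as a triangulated category} (not merely as a thick subcategory) by the retracts of the representables $(\ca/\cb)(?,x)$, so any triangulated subcategory containing these retracts is already all of $\per(\ca/\cb)$---no intermediate idempotent-matching is required.
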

\begin{proof}
	The perfect derived category $ \per(\ca/\cb) $ is generated, as triangulated category, by the retracts of the representable functor $ (\ca/\cb)(?,x) $, $ x\in\ca $. By~\cite[Theorem 3.1]{drinfeldDGQuotientsDG2004}, the triangulated functor $ p^{*}\colon\per\ca/\per\cb\ra\per\ca/\cb $ is an equivalence up to direct summands. To show that it is dense, it suffices to check that its image contains the retract of the representable functor $ (\ca/\cb)(?,x) $, $ x\in\ca $. Then it follows from our assumption.
\end{proof}

	\section{Relative cluster categories and Higgs Categories}\label{Section: Relative cluster categories and Higgs Categories}
	Let $ (Q,F,W) $ be an ice quiver with potential. %We assume that $ (Q,F,W) $ is Jacobi-infinite.
	Denote by $ \bm\Gamma $ the associated complete relative Ginzburg dg algebra $  \bm\bmgamma(Q,F,W) $. Let $ e=\sum_{i\in F}e_{i} $ be the idempotent associated with the set of frozen vertices and $ \cp $ the additive subcategory $ \add(e\bmgamma) $ of $ \per\bmgamma $.
	\begin{Def}\rm
		The \emph{relative cluster category} $ \cc(Q,F,W) $ (or $ \cc $) of $ (Q,F,W) $ is defined as the idempotent completion of the Verdier quotient of triangulated categories $$ \mathrm{per}(\bm\Gamma)/\mathrm{pvd}_{e}(\bm\Gamma) .$$
	If $ F=\emptyset $, the \emph{cluster category} associated with $ (Q,W) $ is defined as $ \cc(Q,\emptyset,W) $ and we denote it by $ \cc(Q,W) $.
	\end{Def}
	
	\begin{Rem}
		If $ (Q,F,W) $ is Jacobi-finite, then the Verdier quotient $ \mathrm{per}(\bm\Gamma)/\mathrm{pvd}_{e}(\bm\Gamma) $ is idempotent complete (see~\cite[Corollary 4.15]{wuRelativeClusterCategories2021}) and $ (\overline{Q},\overline{W}) $ is also Jacobi-finite (see~\cite[Proposition 4.20]{wuRelativeClusterCategories2021}). The Verdier quotient $ \mathrm{per}(\bm\Gamma(\overline{Q},\overline{W}))/\mathrm{pvd}(\bm\Gamma(\overline{Q},\overline{W})) $ is also idempotent complete (see~\cite{amiotClusterCategoriesAlgebras2009}).
	\end{Rem}
	
	%For simplicity of the notations, let $ \overline{\bmgamma} $ be the complete Ginzburg algebra $ \bmgamma(\overline{Q},\overline{W}) $ associated with $ (\overline{Q},\overline{W}) $.
	
	We denote by $ \overline{\cd} $ the unbounded derived category of $ \bmgamma(\overline{Q},\overline{W}) $ and by $ \overc $ the cluster category $ \cc(\overline{Q},\overline{W}) $ associated with $ (\overline{Q},\overline{W}) $. 
	%We will also denote by $ \cc $ (respectively, $ \cc $) the relative cluster category associated with $ (Q,F,W) $ (respectively, the cluster category associated with $ (\overline{Q},\overline{W}) $).
	
	\begin{Prop}\cite[Corollary 4.22]{wuRelativeClusterCategories2021}
		We have the following commutative diagram
		\[
		\begin{tikzcd}
			&\thick(\cp)\arrow[r,equal]\arrow[d,hook]&\thick(\cp)\arrow[d,hook]\\
		\pvd_{e}(\bmgamma)\arrow[r,hook]\arrow[d,"\cong"]&\per\bmgamma\arrow[r,two heads,"\pi^{rel}"]\arrow[d,two heads,"p^{*}"]&\cc(Q,F,W)\arrow[d,two heads,"p^{*}"]\\
		\pvd(\overGamma)\arrow[r,hook]&\per\overGamma\arrow[r,two heads,"\pi"]&\cc(\overline{Q},\overline{W}),
		\end{tikzcd}
		\]
		where the columns and rows are exact sequences of triangulated categories.
	\end{Prop}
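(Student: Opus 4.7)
The diagram collapses into three pieces that need to be assembled: the two rows (which are essentially definitional), the left and middle columns (which I can read off from what has already been established), and the right column (which is what I actually have to produce, together with commutativity).

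First, I would dispose of the rows. The middle row is an exact sequence of triangulated categories by the very definition of the relative cluster category (taking idempotent completion is compatible with the localization statement here), and the bottom row is the analogous exact sequence defining Amiot's cluster category $\overline{\cc}$ for $(\overline{Q},\overline{W})$. Next, the middle column: the exact sequence
\[
\per(e\bmgamma e)\ra\per\bmgamma\xrightarrow{p^{*}}\per\overGamma
\]
established in the proposition immediately preceding the statement identifies the kernel of $p^{*}$ with the thick subcategory of $\per\bmgamma$ generated by the image of $e\bmgamma e$, which is exactly $\thick(\cp)=\thick(e\bmgamma)$. So the middle column is exact. The left column is the asserted equivalence $\pvd_{e}(\bmgamma)\iso\pvd(\overGamma)$; this is essentially a restatement of the definition $\pvd_{e}(\bmgamma)=\thick_{\cd(\bmgamma)}\langle S_{i}\mid i\in Q_{0}\setminus F_{0}\rangle$ combined with the fact that, under the dg quotient $p\colon\bmgamma\ra\overGamma$, these unfrozen simples are precisely (up to the obvious identification) the simples of $\overGamma$, so restriction along $p$ gives an equivalence on the thick subcategories generated by them.

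Now I would turn to the right column. The universal property of Verdier quotients provides a canonical triangle functor $p^{*}\colon\cc(Q,F,W)\ra\overc$ making the lower-right square commute: indeed $p^{*}\colon\per\bmgamma\ra\per\overGamma$ sends $\pvd_{e}(\bmgamma)$ into $\pvd(\overGamma)$ (by the left-column equivalence), hence factors through the quotients, and this factorization is compatible with idempotent completion. Commutativity of the lower-left square is automatic (just restriction of the equivalence). To get that the right column is exact, I would run the usual $3\times 3$-style argument: given the exactness of the two rows and of the two left-hand columns, the Verdier quotient
\[
\cc(Q,F,W)/\thick_{\cc}(\cp)
\]
can be computed as the double quotient $(\per\bmgamma/\pvd_{e}(\bmgamma))/\thick_{\cc}(\cp)$, which equals $\per\bmgamma/(\pvd_{e}(\bmgamma)*\thick(\cp))$ and, reorganizing, equals $(\per\bmgamma/\thick(\cp))/\pvd(\overGamma)=\per\overGamma/\pvd(\overGamma)=\overc$, via the middle column and the left column equivalence. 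Passing to idempotent completions on both sides gives the desired exact sequence.

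The main obstacle will be the last step: verifying cleanly that the iterated Verdier quotient really computes $\overc$, i.e.\ that $p^{*}\colon\cc(Q,F,W)\ra\overc$ is essentially surjective and its kernel is precisely $\thick_{\cc}(\cp)$. Essential surjectivity up to summands is immediate from the essential surjectivity of $p^{*}\colon\per\bmgamma\ra\per\overGamma$ and it is here that one must use the idempotent completion in the target. The kernel identification is the subtler point: one inclusion is obvious, and the other amounts to saying that if $X\in\per\bmgamma$ satisfies $p^{*}(X)\in\pvd(\overGamma)$, then $X$ lies in the extension closure $\pvd_{e}(\bmgamma)*\thick(\cp)$; this is exactly what the equivalence of the left column, combined with the homotopy cofiber sequence $\bm\Pi_{2}(F)\xrightarrow{\bm G_{rel}}\bmgamma\ra\overGamma$, delivers after rotating the relevant triangle. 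Once this is in place the diagram commutes and each of its rows and columns is exact, as required.
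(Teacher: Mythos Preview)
The paper does not actually prove this proposition: it is quoted verbatim as \cite[Corollary 4.22]{wuRelativeClusterCategories2021} and no argument is supplied here. So there is no ``paper's own proof'' to compare against; your proposal is an independent reconstruction.

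Your reconstruction is correct in outline and in substance. The rows are definitional, the middle column is exactly the preceding proposition, and the left-hand equivalence $\pvd_{e}(\bmgamma)\iso\pvd(\overGamma)$ is the observation that $p_{*}$ identifies the thick subcategory of $\cd(\overGamma)$ generated by all simples with the thick subcategory of $\cd(\bmgamma)$ generated by the unfrozen simples. The right column then falls out of the standard iterated-quotient identity for Verdier localizations: writing $\cs=\thick(\cp)$ and $\cn=\pvd_{e}(\bmgamma)$, one has
\[
(\per\bmgamma/\cn)\big/\big(\langle\cs,\cn\rangle/\cn\big)\;\simeq\;\per\bmgamma/\langle\cs,\cn\rangle\;\simeq\;(\per\bmgamma/\cs)\big/\big(\langle\cs,\cn\rangle/\cs\big),
\]
and the right-hand side is $\per\overGamma/\pvd(\overGamma)$ because $\cs$ is precisely the kernel of $p^{*}$ and the image of $\cn$ in $\per\overGamma$ is $\pvd(\overGamma)$.

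One comment on the ``main obstacle'' you flag at the end: you phrase the kernel identification as needing to show that any $X\in\per\bmgamma$ with $p^{*}(X)\in\pvd(\overGamma)$ lies in $\pvd_{e}(\bmgamma)*\thick(\cp)$, and you appeal to the homotopy cofiber sequence to produce a suitable triangle. That step is not actually needed, and as written it is a little shaky (you would first have to produce a morphism from an object of $\pvd_{e}(\bmgamma)$ to $X$, which is not automatic). The iterated-quotient identity above bypasses this entirely: it only requires that $\thick(\cp)$ is the kernel of $p^{*}\colon\per\bmgamma\to\per\overGamma$ (the middle column) and that $p^{*}$ carries $\pvd_{e}(\bmgamma)$ essentially surjectively onto $\pvd(\overGamma)$ (the left column). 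No explicit extension in $\per\bmgamma$ has to be constructed.
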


	\begin{Prop}\cite[Lemma 2.17]{kellerYangDerived equivalences}
		The perfect derived categories $ \per\relGammabf $ and $ \per\overGamma $ are Krull–Schmidt categories.
	\end{Prop}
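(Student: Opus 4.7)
The proposition asserts Krull--Schmidt-ness of the triangulated categories $\per\bmgamma$ and $\per\overGamma$. Since both $\bmgamma=\bmgamma(Q,F,W)$ and $\overGamma=\bmgamma(\overline{Q},\overline{W})$ are complete dg algebras whose underlying graded algebras are completed graded path algebras concentrated in non-positive degrees, they fit the standard framework and one can treat them uniformly. The plan is to reduce Krull--Schmidt-ness in $\per$ to the semiperfectness of $H^{0}$ of the dg algebra, using that $\per$ is an idempotent complete triangulated category whose generator decomposes into a direct sum of indecomposables with local endomorphism rings.

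The first step is to exhibit enough idempotents and locality on $H^{0}$. Write $A$ for either $\bmgamma$ or $\overGamma$. Since $A$ is connective (i.e.\ $H^{i}(A)=0$ for $i>0$), we have $H^{0}(A)=\Hom_{\per A}(A,A)$, and by definition of the relative Jacobian algebra $H^{0}(\bmgamma)=J(Q,F,W)$, respectively $H^{0}(\overGamma)=J(\overline{Q},\overline{W})$. In both cases $H^{0}(A)$ is a quotient of the complete path algebra $\widehat{kQ}$ (or $\widehat{k\overline{Q}}$) by the closure of an admissible-type Jacobian ideal. The idempotents $e_{i}$, $i\in Q_{0}$, are primitive orthogonal in $H^{0}(A)$ with $1=\sum_{i}e_{i}$, and each corner ring $e_{i}H^{0}(A)e_{i}$ is a complete local $k$-algebra, because the arrows lie in the Jacobson radical thanks to the completion. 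Hence $H^{0}(A)$ is a basic semiperfect (in fact complete semilocal) $k$-algebra.

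Next I would lift this decomposition to the dg algebra. Since $A$ is connective, the canonical map $A\to H^{0}(A)$ induces a surjection $A_{0}\twoheadrightarrow H^{0}(A)$ of ordinary algebras, and idempotents lift along nilpotent/complete ideals; so the primitive orthogonal decomposition $1=\sum e_{i}$ lifts to the same formula inside $A$. This yields a direct-sum decomposition $A=\bigoplus_{i} e_{i}A$ in $\per A$, and by construction
\[
\End_{\per A}(e_{i}A)=e_{i}H^{0}(A)e_{i}
\]
is a complete local ring. Therefore the $e_{i}A$ are indecomposable objects of $\per A$ with local endomorphism rings, and they generate $\per A$ as a thick subcategory.

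Finally, I would deduce Krull--Schmidt-ness for $\per A$. By construction $\per A$ is idempotent complete (it is by definition the thick subcategory generated by $A$, or equivalently the idempotent completion of $\mathrm{tri}(A)$). A standard fact (e.g.\ Krause's criterion) asserts that an idempotent complete additive category is Krull--Schmidt as soon as every object has a semiperfect endomorphism ring, and for a triangulated category this is enough to check on a compact generator whose summands have local endomorphism rings. Combining the two previous steps, every object $X\in\per A$ has $\End_{\per A}(X)$ semiperfect, and so $\per A$ is Krull--Schmidt; applying this to both $A=\bmgamma$ and $A=\overGamma$ finishes the proof. The main point to verify carefully (and the only subtle one) is the lifting of idempotents from $H^{0}(A)$ to $A$ in the connective dg setting, together with the propagation of the semiperfect property from the generator to arbitrary objects of $\per A$; the rest is formal.
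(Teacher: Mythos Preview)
Your overall strategy---reduce to Krause's criterion by showing that $\per A$ is idempotent complete and that every object has a semiperfect endomorphism ring---is sound, and your analysis of $H^{0}(A)$ is correct. However, the argument contains a genuine gap at exactly the point you flag as subtle: the ``propagation of the semiperfect property from the generator to arbitrary objects of $\per A$'' is \emph{not} formal and does not follow from the facts you have established. Knowing that each summand $e_{i}A$ has a local endomorphism ring does not, in a general triangulated category, imply that cones, extensions, or arbitrary objects in the thick subcategory they generate have semiperfect endomorphism rings. Semiperfectness is not preserved under extensions in triangles, and there is no abstract principle to the effect that ``Krull--Schmidt on a generator implies Krull--Schmidt on its thick closure''. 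Without some additional structure you cannot conclude that $\End_{\per A}(X)$ is semiperfect for a general perfect $X$.

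The paper simply cites \cite[Lemma~2.17]{kellerYangDerived equivalences}, and the argument there relies crucially on the pseudo-compact structure. Since $A$ is a pseudo-compact dg algebra over $S=\prod_{i}ke_{i}$ (via the $\mathfrak{m}$-adic topology on the completed path algebra), the $\Hom$-complex between any two perfect dg $A$-modules is again pseudo-compact; in particular $\End_{\per A}(X)$ is a pseudo-compact $k$-algebra for every $X\in\per A$, and pseudo-compact $k$-algebras are automatically semiperfect (idempotents lift along the radical, and the quotient by the radical is finite semisimple). This is the missing ingredient in your argument. Once you invoke pseudo-compactness, the idempotent-lifting step you worry about also becomes trivial (the $e_{i}$ already sit inside $A$); the real work lies in controlling endomorphism rings of \emph{arbitrary} perfect objects, and for that the topology is essential.
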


	Let $ \cd^{\leqslant0} $ (and $ \cd^{\geqslant0} $ respectively) be the full subcategory of $ \cd(\bmgamma) $ whose objects are those $ X $ whose homology is concentrated in non-positive (and non-negative, respectively) degrees. Since $ \bmgamma $ is connective, the pair $ (\cd^{\leqslant0},\cd^{\geqslant0}) $ is a canonical $ t $-structure on $ \cd(\bmgamma) $. Similarly, we define $ \overline{\cd}^{\leqslant0} $ and $ \overline{\cd}^{\geqslant0} $. And the pair $ (\overline{\cd}^{\leqslant0},\overline{\cd}^{\geqslant0}) $ is a canonical $ t $-structure on $ \overline{\cd} $.

	On $ \overline{\cd} $, we take the canonical t-structure $ (\overline{\cd}^{\leqslant0},\overline{\cd}^{\geqslant0}) $ with heart $ \heartsuit=\mathrm{Mod}(J(\overline{Q},\overline{W})) $ and on $ \mathcal{D}(e\bmgamma e) $, we take the trivial t-structure  whose left aisle is $ \mathcal{D}(e\bmgamma e) $. 
	
	We denote by $ (i^{*},i_{*},i^{!}) $ the usual triple of adjoint functors between $ \cd(e\bmgamma e) $ and $ \cd(\bmgamma) $ induced by the dg inclusion $ i\colon e\relGammabf e\hookrightarrow\relGammabf $.

	\begin{Prop}\label{Relative t-structure}\cite[Corollary 4.4]{wuRelativeClusterCategories2021}
		There is a t-structure in $ \mathcal{D}(\bmgamma) $ obtained by gluing the canonical t-structure on $ \mathcal{D}(\overGamma) $ with the trivial t-structure on $ \mathcal{D}(e\bmgamma e) $ through a recollement diagram. 
		
		We denote by $ (\mathcal{D}_{rel}^{\leqslant n},	\mathcal{D}_{rel}^{\geqslant n}) $ the glued t-structure on $ \cd(\bmgamma) $. For each $ n\in\mathbb{Z} $, 
		\begin{align*}
			\xymatrix{
				\mathcal{D}_{rel}^{\leqslant n}=\{X\in\mathcal{D}(\bmgamma)|H^{l}(p^{*}X)=0,\,\forall l>n\},
			}
		\end{align*}
		\begin{align*}
			\xymatrix{
				\mathcal{D}_{rel}^{\geqslant n}=\{X\in\mathcal{D}(\bmgamma)|i_{*}(X)=0,H^{l}(p^{!}X)\cong H^{l}(X)=0,\,\forall l<n\}
			}
		\end{align*}
		and the heart $ \heartsuit^{rel} $ of this glued $ t $-structure is equivalent to $ \mathrm{Mod}_{e}H^{0}(\relGammabf) $. We will call $ (\mathcal{D}_{rel}^{\leqslant n},	\mathcal{D}_{rel}^{\geqslant n}) $ the \emph{relative t-structure} on $ \mathcal{D}(\bmgamma) $. We illustrate this glued t-structure in the following picture
		\begin{align*}
			\begin{tikzpicture}[scale=1.6]
				\draw [thick,red] (0,1.8)--(5,1.8);
				\draw [thick,blue] (0,0.9)--(5,0.9);
				\draw [thick,blue] (0,0)--(5,0);
				\draw[thick,red](2.5,1.8)--(2.5,0.9);
				\draw[decorate,decoration={brace,amplitude=10pt},xshift=-31pt,yshift=0pt](0,0)--(0,1.8)node [black,midway,xshift=-1cm] {\footnotesize $\cd(\bmgamma)$};
				\draw[decorate,blue,decoration={brace,amplitude=10pt},xshift=-2pt,yshift=0pt](0,0)--(0,0.9)node [black,midway,xshift=-1.1cm] {\footnotesize $\cd(e\bmgamma e)$};
				\draw[decorate,blue,decoration={brace,amplitude=10pt},xshift=-2pt,yshift=0pt](0,0.9)--(0,1.8)node [black,midway,xshift=-0.7cm] {\footnotesize $\overline{\cd}$};
				\draw[pattern=north east lines, pattern color=blue] (0,0) rectangle (5,0.9);
				\draw[pattern=north east lines, pattern color=blue] (0,0.9) rectangle (2.5,1.8);
				\draw[pattern=north west lines, pattern color=red] (2.5,0.9) rectangle (5,1.8);
			\end{tikzpicture}\,,
		\end{align*} 
		where the blue region represents the subcategory $ \cd_{rel}^{\leqslant0} $ and the red region represents the subcategory $ \cd_{rel}^{\geqslant0} $.
		
	\end{Prop}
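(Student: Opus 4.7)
The plan is to reduce the statement to the standard Beilinson--Bernstein--Deligne (BBD) construction of glued t-structures along a recollement. First, I would use the homotopy cofiber sequence of Proposition~\ref{Homtopy Cofiber sequence of (Q,F,W)} together with the idempotent $e\in\bmgamma$ to build a recollement
$$
\overline{\cd} = \cd(\overGamma) \xrightarrow{\,p_*\,} \cd(\bmgamma) \xrightarrow{\,i_*\,} \cd(e\bmgamma e),
$$
in which $p_*$ (restriction along $p\colon\bmgamma\to\overGamma$) is fully faithful with left adjoint $p^*=?\lten_\bmgamma\overGamma$ and right adjoint $p^!=\RHom_\bmgamma(\overGamma,?)$, while $i_*\colon M\mapsto Me$ is a Verdier localization with its own pair of adjoints. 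The essential point is that the image of $p_*$ coincides with $\ker(i_*)$, which is precisely the derived vanishing $\overGamma\lten_\bmgamma e\bmgamma\simeq 0$ recorded by the cofiber sequence.

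Next, I would apply BBD gluing to this recollement, with the canonical t-structure on $\overline{\cd}$ and the degenerate t-structure on $\cd(e\bmgamma e)$ whose left aisle is everything (so the heart is zero). The general BBD formulas
\begin{align*}
\cd_{rel}^{\leq 0} &= \{X\in\cd(\bmgamma) : p^* X \in \overline{\cd}^{\leq 0}\text{ and } i_* X \in \cd(e\bmgamma e)^{\leq 0}\}, \\
\cd_{rel}^{\geq 0} &= \{X\in\cd(\bmgamma) : p^! X \in \overline{\cd}^{\geq 0}\text{ and } i_* X \in \cd(e\bmgamma e)^{\geq 0}\}
\end{align*}
then simplify exactly as claimed: the condition on $i_* X$ in the first line is vacuous, while the one in the second collapses to $i_* X = 0$. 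The auxiliary isomorphism $H^l(p^! X)\cong H^l(X)$ visible in the right-aisle formula is the observation that, once $i_* X = 0$, one has $X\simeq p_* Y$, and hence $p^! X \simeq p^! p_* Y \simeq Y$ by full faithfulness of $p_*$; as $\bmgamma$-complexes, $p^! X$ and $X$ then share the same underlying object.

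Finally, I would identify the heart. By BBD, the glued heart sits in a short exact sequence of abelian categories with $\heartsuit=\Mod J(\overline{Q},\overline{W})$ as Serre subcategory and the zero heart on the $\cd(e\bmgamma e)$-side as quotient, so $p_*$ induces an equivalence $\Mod J(\overline{Q},\overline{W})\iso\heartsuit^{rel}$. Via the surjection $H^0(\bmgamma)\twoheadrightarrow H^0(\overGamma)=J(\overline{Q},\overline{W})$ whose kernel is the two-sided ideal generated by $e$, this places $\heartsuit^{rel}$ inside $\Mod H^0(\bmgamma)$ as the full subcategory of modules on which $e$ acts by zero, i.e. $\Mod_{e}H^{0}(\bmgamma)$.

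The main obstacle is the rigorous construction of the recollement itself, in particular verifying that the essential image of $p_*$ coincides with $\ker(i_*)$; this reduces to the derived vanishing $\overGamma\lten_\bmgamma e\bmgamma\simeq 0$, which is the content of the cofiber sequence of Proposition~\ref{Homtopy Cofiber sequence of (Q,F,W)}. Once that input is in place, both the BBD gluing and the identification of the heart are formal, and the pictorial description follows by tracking the aisles under $p^*$ and $i_*$.
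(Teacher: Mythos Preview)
The paper does not actually prove this proposition; it is quoted without proof from the second author's earlier work \cite[Corollary 4.4]{wuRelativeClusterCategories2021}. Your approach---constructing the recollement $\cd(\overGamma)\hookrightarrow\cd(\bmgamma)\to\cd(e\bmgamma e)$ from the homotopy cofiber sequence and the idempotent $e$, then applying BBD gluing with the canonical t-structure on $\overline{\cd}$ and the degenerate one on $\cd(e\bmgamma e)$---is the standard and correct route, and your simplifications of the aisle formulas and identification of the heart are accurate; this is almost certainly the argument carried out in the cited reference.
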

	
	\begin{Def}\rm\label{Relative truncation functor}
		We define the \emph{relative truncation functor} $ \tau^{rel}_{>n} $ to be the following composition
		$$
		\xymatrix{
			\tau^{rel}_{> n}\colon\mathcal{D}(\relGammabf)\ar[r]^-{p^{*}}&\mathcal{D}(\overGamma)\ar[r]^{\tau_{> n}}&\mathcal{D}(\overGamma)\ar[r]^{p_{*}}&\mathcal{D}(\relGammabf).
		}$$
	\end{Def}
	Thus, for any $ X\in\mathcal{D}(\relGammabf) $, we have a canonical triangle in $ \cd(\relGammabf) $
	$$ \tau_{\leqslant n}^{rel}X\to X\to \tau_{>n}^{rel}\to\Si\tau_{\leqslant n}^{rel}X $$
	such that $ \tau_{\leqslant n}^{rel}X $ belongs to $ \mathcal{D}_{rel}^{\leqslant n} $ and $ \tau^{rel}_{>n}(X)=p_{*}(\tau_{>n}(p^{*}X)) $ belongs to $ \mathcal{D}_{rel}^{\geqslant n+1} $.
	
 	\begin{Cor}\label{Cor: relatove truncation functor}
		If $ (\overline{Q},\overline{W}) $ is Jacobi-finite, the relative $ t $-structure on $ \cd(\bmgamma) $ restricts to the perfect derived category $ \per\bmgamma $. 
	\end{Cor}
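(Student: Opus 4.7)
The plan is to reduce the claim, via Definition~\ref{Relative truncation functor}, to the observation that in the Jacobi-finite regime every perfect $\overGamma$-module is automatically pvd, and then transport the truncation along the restriction-of-scalars functor $p_{*}$. Fix $X\in\per\bmgamma$ and $n\in\Z$. Since $\tau^{rel}_{\leq n}(X)$ and $\tau^{rel}_{>n}(X)$ sit in a distinguished triangle with middle term $X$, and $\per\bmgamma$ is thick in $\cd(\bmgamma)$, it suffices to show $\tau^{rel}_{>n}(X)\in\per\bmgamma$.

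The first step is to place $p^{*}X$ inside $\pvd(\overGamma)$. The extension-of-scalars functor $p^{*}$ preserves perfectness, so $p^{*}X\in\per\overGamma$; since $\overGamma$ is connective, $p^{*}X$ has bounded cohomology and each $H^{i}(p^{*}X)$ is finitely generated over $H^{0}(\overGamma)=J(\overline{Q},\overline{W})$. The Jacobi-finiteness hypothesis makes $J(\overline{Q},\overline{W})$ finite-dimensional, so these cohomologies are all finite-dimensional, giving $p^{*}X\in\pvd(\overGamma)$ and consequently $\tau_{>n}(p^{*}X)\in\pvd(\overGamma)$.

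The second step is to descend through $p_{*}$. Because $p\colon\bmgamma\to\overGamma$ kills the frozen idempotents, for any $Y\in\pvd(\overGamma)$ we have $(p_{*}Y)\cdot e=Y\cdot p(e)=0$, so the restriction of $p_{*}Y$ to $e\bmgamma e$ is acyclic and $p_{*}Y$ belongs to $\pvd_{e}(\bmgamma)$. Applied to $Y=\tau_{>n}(p^{*}X)$, this yields $\tau^{rel}_{>n}(X)=p_{*}\tau_{>n}(p^{*}X)\in\pvd_{e}(\bmgamma)$. Invoking the inclusion $\pvd_{e}(\bmgamma)\subseteq\per\bmgamma$ already built into the setup (it is what makes the Verdier quotient $\cc(Q,F,W)=\per\bmgamma/\pvd_{e}(\bmgamma)$ well defined), we obtain $\tau^{rel}_{>n}(X)\in\per\bmgamma$, and $\tau^{rel}_{\leq n}(X)\in\per\bmgamma$ follows from the truncation triangle.

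The only nontrivial input is the containment $\pvd_{e}(\bmgamma)\subseteq\per\bmgamma$, which in our framework rests on the existence of explicit length-three Koszul-type resolutions of the unfrozen simples $S_{i}$ by summands of $\bmgamma$, built from the dual arrows $a^{*}$ and the loops $t_{i}$ that the Ginzburg construction supplies precisely at non-frozen vertices. Once this is granted, the argument above is a formal consequence of the recollement defining the relative $t$-structure combined with the standard fact (due to Amiot) that Jacobi-finiteness forces perfect $\overGamma$-modules to have finite-dimensional cohomology.
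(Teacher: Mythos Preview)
Your overall strategy matches the paper's: show that $\tau^{rel}_{>n}(X)=p_{*}\tau_{>n}(p^{*}X)$ lies in $\pvd_{e}(\bmgamma)\subseteq\per\bmgamma$ and conclude via the truncation triangle. However, the first step contains a genuine error.

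You claim that connectivity of $\overGamma$ forces $p^{*}X$ to have bounded cohomology, and hence $p^{*}X\in\pvd(\overGamma)$. This is false. Connectivity only gives cohomology bounded \emph{above}; perfect complexes over a connective dg algebra can have cohomology unbounded below. Already $\overGamma$ itself is a counterexample: for $\overline{Q}$ a single vertex with no arrows and $\overline{W}=0$, we have $\overGamma\cong k[t]$ with $|t|=-2$, so $H^{-2i}(\overGamma)=k$ for all $i\geq 0$ and $\overGamma\notin\pvd(\overGamma)$. Your secondary justification --- that each $H^{i}(p^{*}X)$ is finitely generated over $H^{0}(\overGamma)$ purely from connectivity --- is also unsubstantiated; this is not a formal consequence of connectivity.

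The repair is small but essential, and it is exactly what the paper does: one only needs $\tau_{>n}(p^{*}X)\in\pvd(\overGamma)$, not $p^{*}X$ itself. The truncation bounds the cohomology below, connectivity bounds it above, and each individual $H^{l}(p^{*}X)=\Hom_{\cd(\overGamma)}(\overGamma,\Sigma^{l}p^{*}X)$ is finite-dimensional because $\per\overGamma$ is Hom-finite when $(\overline{Q},\overline{W})$ is Jacobi-finite (this is the result of Amiot and Kalck--Yang that the paper cites as \cite[Proposition~2.5]{kalckRelativeSingularityCategories2016}). That Hom-finiteness is the real input here, not a bare finite-generation claim. Once you replace your first step with this, your second step and conclusion go through verbatim.
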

    \begin{proof}
    Let $ X $ be an object in $ \per\bmgamma $. Consider the canonical triangle with respect to the relative $ t $-structure on $ \cd(\Gamma) $
    $$ \tau_{\leqslant 0}^{rel}X\ra X\ra\tau_{>0}^{rel}X\ra\Si\tau^{rel}_{\leqslant 0}X .$$
    
    Since $ (\overline{Q},\overline{W}) $ is Jacobi-finite, the perfect derived category $ \per\overGamma $ is Home-finite (see~\cite[Proposition 2.5]{kalckRelativeSingularityCategories2016}). Thus, the space
    $$ H^{l}(\tau_{>0}^{rel}X)=\Hom_{\cd{\overGamma}}(\overGamma,\Si^{l}\tau_{>0}p^{*}X) $$
    equals zero or $ H^{l}(\tau_{>0}p^{*}(X)) $ which is finite-dimensional and vanishes for all $ |l|\geqslant0 $. Thus, the object $ \tau_{>0}^{rel}X $ is in $ \pvd(\bmgamma) $ and so in $ \per\bmgamma $. This shows that the relative $ t $-structure on $ \cd(\relGammabf) $ restricts to $ \per\bmgamma $.
    \end{proof}

\subsection{Silting reduction}
Recall that a full subcategory $ \cp $ of a triangulated category $ \ct $ is \emph{presilting} if $ \Hom_{\ct}(\cp,\Si^{i}\cp)=0 $ for any $ i > 0 $. It is \emph{silting} if in addition $ \ct=\thick\cp $. It is clear that $ \cp=\add(e\bmgamma) $ is a presilting subcategory of $ \per\bmgamma $.

Let $ \cz $ be the following subcategory of $ \per\bmgamma $
$$ \cz={}^{\perp}(\Si^{>0}\cp)\cap(\Si^{<0}\cp)^{\perp}. $$

\begin{Prop}\cite[Lemma 3.4]{iyamaSiltingReductionCalabi2018}\label{Prop:silting reduction-fully faithful}
	The composition $ \cz\subset\per\bmgamma\xrightarrow{p^{*}}\per\overGamma $ induces a fully faithful embedding
	$$ p^{*}\colon\cz/[\cp]\hookrightarrow\per\bmgamma/\thick(\cp)\iso\per\overGamma. $$
\end{Prop}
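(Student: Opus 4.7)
The plan is to invoke the general silting reduction framework of Iyama--Yang, and what really needs to be verified is that $\cp = \add(e\bmgamma)$ satisfies its hypotheses in our setting, namely that $\cp$ is a presilting subcategory of $\per\bmgamma$. This is immediate: since $\bmgamma$ is connective, for $i>0$ we have
\[
\Hom_{\per\bmgamma}(e\bmgamma, \Si^{i} e\bmgamma) \;=\; H^{i}(e\bmgamma e) \;=\; 0,
\]
and the vanishing extends to all of $\cp$ by additivity. The target $\per\bmgamma/\thick(\cp)$ is identified with $\per\overGamma$ via $p^{*}$ by the lemma proved just above, so it suffices to prove that the composition $\cz \hookrightarrow \per\bmgamma \twoheadrightarrow \per\bmgamma/\thick(\cp)$ induces a fully faithful functor after passing to the quotient by $[\cp]$.

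The main step is to show fullness. Given $X, Y \in \cz$, a morphism in the Verdier quotient is represented by a roof $X \xleftarrow{s} X' \xrightarrow{g} Y$ with $P := \cone(s) \in \thick(\cp)$. I would filter $P$ by a finite Postnikov tower whose subquotients lie in $\add(\Si^{j}\cp)$ for various $j \in \Z$: since $\cp$ is presilting and $\thick(\cp)$ is its thick closure, every object of $\thick(\cp)$ admits such a filtration. Applying $\Hom(X, -)$ and $\Hom(-, Y)$ to this tower, the hypotheses $X \in {}^{\perp}(\Si^{>0}\cp)$ and $Y \in (\Si^{<0}\cp)^{\perp}$ kill the obstructions stage by stage, letting one modify the roof by a zigzag factoring through $\cp$-summands until it becomes equivalent to an honest morphism $X \to Y$ in $\per\bmgamma$.

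For faithfulness modulo $[\cp]$, suppose $g \colon X \to Y$ in $\per\bmgamma$ is sent to zero in $\per\bmgamma/\thick(\cp)$. Then $g$ factors as $X \to P' \to Y$ for some $P' \in \thick(\cp)$. Running the dual filtration argument on $P'$ and invoking the same two vanishing conditions, one shows inductively that this factorization can be refined to one passing through an object of $\cp$, so that $g \in [\cp](X,Y)$. Conversely any morphism factoring through $\cp \subseteq \thick(\cp)$ is zero in the Verdier quotient, so the kernel is exactly $[\cp]$.

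The main obstacle is the bookkeeping in the Postnikov filtration of an arbitrary object of $\thick(\cp)$: one must keep track of both positive and negative shifts of $\cp$ simultaneously and check that each refinement step genuinely reduces the complexity of the roof. This is precisely what Iyama--Yang carry out in \cite[Lemma~3.4]{iyamaSiltingReductionCalabi2018}, and once their hypotheses are checked (as above) the result follows directly from theirs; no additional argument specific to the relative Ginzburg setting is required.
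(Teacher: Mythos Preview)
Your proposal is correct and follows the same approach as the paper: the paper simply notes that $\cp=\add(e\bmgamma)$ is presilting (which you verify via connectivity of $\bmgamma$) and then cites \cite[Lemma~3.4]{iyamaSiltingReductionCalabi2018} without further argument. Your sketch of the fullness/faithfulness argument is an accurate outline of what Iyama--Yang do, so there is nothing to add.
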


Similarly, the category $ \cp=\add (e\bmgamma)\subset\cc=\cc(Q,F,W) $ is a presilting subcategory of $ \cc $. Let $ \cy $ be the following subcategory of $ \cc $
$$ \cy=^{\perp}\!(\Si^{>0}\cp)\cap(\Si^{<0}\cp)^{\perp}. $$

\begin{Prop}\cite[Lemma 3.4]{iyamaSiltingReductionCalabi2018}\label{Prop:silting reduction-fully faithful in cc}
	The composition  $ \cy\subset\cc\xrightarrow{p^{*}}\cc(\overline{Q},\overline{W}) $ induces a fully faithful embedding
	$$ p^{*}\colon\cy/[\cp]\hookrightarrow\cc/\thick(\cp)\iso\cc(\overline{Q},\overline{W}). $$
\end{Prop}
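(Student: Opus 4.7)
The plan is to mirror the strategy used for the preceding Proposition \ref{Prop:silting reduction-fully faithful} but now in the relative cluster category $\cc$ rather than in $\per\bmgamma$. The statement is a direct application of Iyama--Yang's silting reduction \cite{iyamaSiltingReductionCalabi2018}, Lemma 3.4, to the presilting subcategory $\cp=\add(e\bmgamma)\subset\cc$, combined with the identification of the Verdier quotient $\cc/\thick(\cp)$ with $\cc(\overline{Q},\overline{W})$.

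First I would check that $\cp$ is genuinely a presilting subcategory of $\cc$. Since $\cp$ is presilting in $\per\bmgamma$, the only point to verify is that the Hom-spaces
$$\Hom_{\cc}(e\bmgamma,\Sigma^{i}e\bmgamma),\qquad i>0,$$
vanish. This is done by comparing with $\per\bmgamma$: the localization sequence displayed in the earlier commutative diagram, together with relative $3$-Calabi--Yau duality (Proposition \ref{Prop:relative 3-CY}), shows that the objects of $\pvd_e(\bmgamma)$ have no nontrivial extensions in either direction with objects of $\cp$, so these Hom-spaces coincide with the ones computed in $\per\bmgamma$, which vanish by assumption.

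With $\cp$ presilting in $\cc$, Iyama--Yang's argument applies verbatim. The essential content is: for $X,Y\in\cy$, a morphism $f\colon X\to Y$ in $\cc$ whose image in $\cc/\thick(\cp)$ vanishes necessarily factors through an object of $\thick(\cp)$, and then the perpendicularity conditions $\Hom_{\cc}(X,\Sigma^{<0}\cp)=0=\Hom_{\cc}(\Sigma^{>0}\cp,Y)$ defining $\cy$ allow one to peel off the triangles building up $\thick(\cp)$ from $\cp$ and reduce inductively to a factorization through $\cp$ itself. Conversely, any morphism factoring through $\cp$ is killed in the Verdier quotient. Hence the induced functor $\cy/[\cp]\to\cc/\thick(\cp)$ is fully faithful.

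Finally, I would identify the target $\cc/\thick(\cp)$ with $\cc(\overline{Q},\overline{W})$; this is exactly the content of the right-hand column of the commutative diagram from \cite[Corollary 4.22]{wuRelativeClusterCategories2021} recalled earlier in the section, where the row $\pvd(\overGamma)\hookrightarrow\per\overGamma\twoheadrightarrow\cc(\overline{Q},\overline{W})$ is obtained as the quotient of the corresponding row for $\bmgamma$ by the common column $\thick(\cp)$. Composing with this identification yields the desired embedding $p^{*}\colon\cy/[\cp]\hookrightarrow\cc(\overline{Q},\overline{W})$. The only genuine point to check is the presilting property in $\cc$; once that is in place, the rest is a formal consequence of Iyama--Yang's lemma and the localization square already established.
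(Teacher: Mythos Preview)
Your approach is essentially the same as the paper's. The paper gives no proof of its own: it simply asserts, in the sentence preceding the statement, that $\cp=\add(e\bmgamma)$ is presilting in $\cc$, and then cites \cite[Lemma~3.4]{iyamaSiltingReductionCalabi2018} for the conclusion. You are filling in precisely the step the paper leaves implicit, namely the verification that $\cp$ remains presilting after passing from $\per\bmgamma$ to the quotient $\cc$; your use of the orthogonality between $\cp$ and $\pvd_e(\bmgamma)$ (the first direction by definition of $\pvd_e$, the second by the relative $3$-Calabi--Yau duality of Proposition~\ref{Prop:relative 3-CY}) is the right way to see this.
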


\begin{Rem}
	Since $ \cp $ doesn't satisfy the condition $ (P1) $ in \cite[Section 3.1]{iyamaSiltingReductionCalabi2018}, the functor $ p^{*} $ in Proposition~\ref{Prop:silting reduction-fully faithful} and Proposition~\ref{Prop:silting reduction-fully faithful in cc} may not be dense.
\end{Rem}

\subsection{SMC reduction}\label{Section: SMC}
Let $ \cs $ be the subcategory of $ \per\relGammabf $ formed by the modules $ S_{i} $ associated with unfrozen vertices $ i\in Q_{0}\setminus F_{0} $. Then $ \cs $ is a simple-minded collection (\cite[Definition 2.4]{Jin2019}) of $ \pvd_{e}(\bm{\Gamma}) $ and a pre-simple-minded collection of $ \per(\bm{\Gamma}) $.

Consider the following subcategory of $ \per\relGammabf $
$$ \cw=(\Si^{\geqslant0}\cs)^{\perp}\cap{ }^{\perp}(\Si^{\leqslant0}\cs).$$

\begin{Prop}\cite[Theorem 3.1]{Jin2019}\label{Prop: SMC fully faithful}
	The composition $ \cw\subseteq\per(\bm{\Gamma})\xrightarrow{\pi^{rel}}\cc(Q,F,W) $ induces a fully faithful embedding
	$$ \pi^{rel}_{\cw}\colon\cw\hookrightarrow\cc(Q,F,W). $$
\end{Prop}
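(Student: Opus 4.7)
The plan is to apply Jin's general simple-minded collection (SMC) reduction theorem to the pair $(\per(\bmgamma), \pvd_{e}(\bmgamma))$ with SMC $\cs = \{S_i : i \in Q_0 \setminus F_0\}$. Since the relative cluster category $\cc(Q,F,W)$ is by definition the idempotent completion of the Verdier quotient $\per(\bmgamma)/\pvd_{e}(\bmgamma)$, and the canonical functor into the idempotent completion is fully faithful, it suffices to show that the composition $\cw \hookrightarrow \per(\bmgamma) \to \per(\bmgamma)/\pvd_{e}(\bmgamma)$ is fully faithful.

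First I would check that $\cs$ is a simple-minded collection in $\pvd_{e}(\bmgamma)$. The equality $\thick(\cs) = \pvd_{e}(\bmgamma)$ is the defining property recalled in Section~\ref{Section: SMC}. The remaining SMC axioms, namely that each $S_i$ is a brick with $\End(S_i) = k$ and that $\Hom(S_i, \Si^{\leq 0} S_j) = 0$ for all $i,j \in Q_0 \setminus F_0$, follow from $\bmgamma$ being a connective dg algebra with $H^0(\bmgamma) = J(Q,F,W)$, so the $S_i$ are the simple modules of the relative Jacobian algebra supported at the unfrozen vertices, concentrated in cohomological degree zero.

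Next, the SMC $\cs$ determines a bounded $t$-structure on $\pvd_{e}(\bmgamma)$ whose aisle $\pvd_{e}(\bmgamma)^{\leq 0}$ is the extension closure of $\Si^{\geq 0}\cs$ and whose coaisle $\pvd_{e}(\bmgamma)^{\geq 0}$ is the extension closure of $\Si^{\leq 0}\cs$. Dévissage along this $t$-structure, starting from the defining Hom-vanishings of $\cw$, yields
\[
\Hom_{\per\bmgamma}(X, \pvd_{e}(\bmgamma)^{\leq 0}) = 0 \quad \text{and} \quad \Hom_{\per\bmgamma}(\pvd_{e}(\bmgamma)^{\geq 0}, X) = 0
\]
for every $X \in \cw$. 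These two vanishings are the input required by Jin's argument: a morphism $X \to Y$ in the Verdier quotient is represented by a roof $X \xleftarrow{s} X' \to Y$ with $\cone(s) \in \pvd_{e}(\bmgamma)$, and truncating $\cone(s)$ along the simple-minded $t$-structure together with the two vanishings above forces the roof to come from a unique honest morphism $X \to Y$ in $\per(\bmgamma)$, yielding both fullness and faithfulness.

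The main obstacle, were one to prove this by hand rather than appeal to Jin's theorem, would be the dévissage of the third step: one has to ensure that the simple-minded $t$-structure on $\pvd_{e}(\bmgamma)$, which in general differs from the canonical $t$-structure restricted from $\per(\bmgamma)$, interacts with $\cw$ in the way required by the two Hom-vanishings, and then keep track of how truncations propagate through compositions of roofs. Once the fully faithful embedding $\cw \hookrightarrow \per(\bmgamma)/\pvd_{e}(\bmgamma)$ is secured, composition with the (fully faithful) idempotent completion functor delivers $\pi^{rel}_{\cw}\colon \cw \hookrightarrow \cc(Q,F,W)$.
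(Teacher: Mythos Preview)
Your proposal is correct and matches the paper's approach: the paper simply cites \cite[Theorem 3.1]{Jin2019} without further argument, and you have correctly identified that the hypotheses of Jin's theorem hold here because $\cs$ is a simple-minded collection in $\pvd_e(\bmgamma)$ (as the paper states just before the proposition) and hence a pre-simple-minded collection in $\per(\bmgamma)$. Your additional remarks about the idempotent completion and the d\'evissage along the SMC $t$-structure are accurate elaborations of why Jin's argument goes through in this setting.
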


\begin{Rem}
	Since $ \cs $ doesn't satisfy the condition $ (R1) $ in \cite[Theorem 3.1]{Jin2019}, the functor $ \pi^{rel}_{\cw} $ above may not be dense.
\end{Rem}

\begin{Lem}\label{Lem: stable finite to functorially finite simples}
	If $ (\overline{Q},\overline{W}) $ is Jacobi-finite, then $ \cs $ is functorially finite in $ \per\bmgamma $.
\end{Lem}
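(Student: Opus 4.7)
The plan is to show that every object $X\in\per\bmgamma$ admits both a right and a left $\add\cs$-approximation. Since $\cs$ is a finite collection of pairwise non-isomorphic indecomposable objects, this reduces to verifying that $\Hom_{\per\bmgamma}(X,S_j)$ and $\Hom_{\per\bmgamma}(S_j,X)$ are finite-dimensional for each unfrozen vertex $j\in Q_0\setminus F_0$; the evaluation maps then produce the desired approximations.

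The first Hom space I would compute by adjunction. For unfrozen $j$, the simple $\bmgamma$-module $S_j$ coincides with $p_{*}(S_j^{\overGamma})$, where $S_j^{\overGamma}$ is the simple $\overGamma$-module at vertex $j$ (this is visible from the homotopy cofiber sequence of Proposition~\ref{Homtopy Cofiber sequence of (Q,F,W)}, since $p$ is the identity on the unfrozen vertices). The adjunction $(p^{*},p_{*})$ therefore gives
\[
\Hom_{\cd(\bmgamma)}(X,S_j)\;\cong\;\Hom_{\cd(\overGamma)}(p^{*}X,\,S_j^{\overGamma}).
\]
Because $p^{*}=?\,\lten_{\bmgamma}\overGamma$ preserves perfection, $p^{*}X$ lies in $\per\overGamma$. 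Since $(\overline{Q},\overline{W})$ is Jacobi-finite, \cite[Proposition 2.5]{kalckRelativeSingularityCategories2016} ensures that $\per\overGamma$ is Hom-finite, so the right-hand side is finite-dimensional.

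For the second Hom space, I would invoke the relative $3$-Calabi--Yau property of Proposition~\ref{Prop:relative 3-CY}, which applies because $S_j\in\pvd_{e}(\bmgamma)$ and yields
\[
\Hom_{\cd(\bmgamma)}(S_j,X)\;\cong\;D\,\Hom_{\cd(\bmgamma)}(X,\Si^{3}S_j).
\]
Since $\Si^{3}S_j$ is again (up to a shift) the restriction along $p$ of a simple $\overGamma$-module, the right-hand side is the $k$-dual of a Hom space of the type treated above, hence finite-dimensional.

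I do not anticipate a substantive obstacle: the only points that require a moment of care are checking that $p^{*}$ preserves perfect objects (immediate from its tensor-product description) and that $p_{*}S_j^{\overGamma}\cong S_j$ for unfrozen $j$. Once the two Hom-finitenesses are in hand, the canonical evaluation maps
\[
\bigoplus_{j\in Q_{0}\setminus F_{0}} S_j^{\oplus\dim\Hom(S_j,X)}\;\longrightarrow\;X
\qquad\text{and}\qquad
X\;\longrightarrow\;\bigoplus_{j\in Q_{0}\setminus F_{0}} S_j^{\oplus\dim\Hom(X,S_j)}
\]
provide the right and left $\add\cs$-approximations, establishing the functorial finiteness of $\cs$ in $\per\bmgamma$.
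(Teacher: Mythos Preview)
Your proof is correct. Both your approach and the paper's ultimately rest on the Hom-finiteness of $\per\overGamma$ (valid by Jacobi-finiteness of $(\overline{Q},\overline{W})$), but the routes differ. The paper first invokes an external argument (analogous to \cite[Lemma~4.18]{wuRelativeClusterCategories2021}) to conclude that $\overline{\cs}$ is functorially finite in $\per\overGamma$, then transports a given left $\overline{\cs}$-approximation $f'\colon p^{*}X\to S'$ to a left $\cs$-approximation $X\xrightarrow{\epsilon_X} p_{*}p^{*}X\xrightarrow{p_{*}f'} p_{*}S'$ via the unit of $(p^{*},p_{*})$; contravariant finiteness is then handled ``dually'' (implicitly via the adjunction $(p_{*},p^{!})$). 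Your argument is more self-contained: you bypass the external reference by computing $\Hom_{\cd(\bmgamma)}(X,S_j)$ directly via adjunction, and you handle the contravariant side through the relative $3$-Calabi--Yau duality of Proposition~\ref{Prop:relative 3-CY} rather than a second adjunction. Your route buys simplicity and independence from \cite{wuRelativeClusterCategories2021}; the paper's route is slightly more categorical in that it manipulates approximation morphisms directly rather than passing through dimension counts and evaluation maps.
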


\begin{proof}
	Let $ \overline{\cs} $ be the the subcategory of $ \per\overGamma $ formed by all the simple $ \overGamma $-modules. Since $ (\overline{Q},\overline{W}) $ is Jacobi-finite, by a similar argument in~\cite[Lemma 4.18]{wuRelativeClusterCategories2021}, $ \overline{\cs} $ is functorially finite in $ \per\overGamma $. The triple of adjoint functors
	\[
	\begin{tikzcd}
		\cd(\bmgamma)\arrow[r,shift left=2,"p^{*}"]\arrow[r,shift left=-2,"p^{!}",swap]&\cd(\overGamma)\arrow[l,shift left=0,"p_{*}"description]
	\end{tikzcd}
	\]
	induced by $ p:\bmgamma\ra\overGamma $ induces a $ k $-linear equivalence $ p^{*}:\cs\iso\overline{\cs} $.
	
	Let $ X $ be an object of $ \per\bmgamma $. Since $ \overline{\cs} $ is covariantly finite in $ \per\overGamma $, there exists a left $ \overline{\cs} $-morphism $ f':p^{*}(X)\ra S' $ in $ \per\overGamma $. Let $ f $ be the following composition
	$$ f\colon X\xrightarrow{\epsilon_{X}}p_{*}p^{*}X\xrightarrow{p_{*}(f')}p_{*}S' ,$$ where $ \epsilon_{X} $ is the unite of the adjunction $ (p^{*},p_{*}) $. It is easy to check that $ f $ is a left $ \cs $-approximation of $ X $ in $ \per\bmgamma $. Thus, $ \cs $ is covariantly finite in $ \per\bmgamma $.
	
	Dually, we show that $ \cs $ is contravariantly finite in $ \per\bmgamma $.
	
\end{proof}

By the above Lemma, if $ (\overline{Q},\overline{W}) $ is Jacobi-finite, the category $ \cs $ also satisfies the condition $ (R1) $ in~\cite[Theorem 3.1]{Jin2019}.
\begin{Prop}\label{Prop: stable finite to dense}
	If $ (\overline{Q},\overline{W}) $ is Jacobi-finite, the fully faithful embedding functor
	$$ \pi^{rel}_{\cw}\colon\cw\hookrightarrow\cc(Q,F,W). $$ is dense. Therefore we get a $ k $-linear equivalence
	$$ \pi^{rel}_{\cw}\colon\cw\iso\cc(Q,F,Q). $$
\end{Prop}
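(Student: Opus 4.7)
The plan is to deduce density as an immediate consequence of \cite[Theorem 3.1]{Jin2019}, now that the Jacobi-finite hypothesis on $(\overline{Q},\overline{W})$ supplies the missing ingredient via Lemma~\ref{Lem: stable finite to functorially finite simples}. Full faithfulness of $\pi^{rel}_{\cw}$ was established in Proposition~\ref{Prop: SMC fully faithful}, and the remark following it identifies the sole obstacle to density: the presilting collection $\cs$ might fail condition (R1) of Jin's theorem, i.e.\ functorial finiteness in $\per\bmgamma$.

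First I would collect the hypotheses of Jin's theorem. As noted at the beginning of Section~\ref{Section: SMC}, $\cs$ is a simple-minded collection of $\pvd_{e}(\bmgamma)$ and a pre-simple-minded collection of $\per\bmgamma$. Lemma~\ref{Lem: stable finite to functorially finite simples} supplies condition (R1) under our Jacobi-finite assumption. Applying \cite[Theorem 3.1]{Jin2019} then yields a triangle equivalence
\[
\cw \iso \per\bmgamma / \thick_{\per\bmgamma}(\cs).
\]

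Next I would identify this quotient with the relative cluster category. By the commutative diagram recalled after the definition of $\cc(Q,F,W)$, the restriction functor induces an equivalence $\pvd_{e}(\bmgamma) \iso \pvd(\overGamma)$. Under the Jacobi-finite assumption on $(\overline{Q},\overline{W})$, one has $\pvd(\overGamma)\subseteq\per\overGamma$, and via the equivalence this forces $\cs\subseteq\per\bmgamma$, whence $\thick_{\per\bmgamma}(\cs)=\pvd_{e}(\bmgamma)$. The equivalence above therefore reads $\cw \iso \per\bmgamma/\pvd_{e}(\bmgamma)$. Since $\cw$ is a full subcategory of the Krull--Schmidt category $\per\bmgamma$, it is automatically idempotent complete, so the target of this equivalence is idempotent complete and thus coincides with its idempotent completion $\cc(Q,F,W)$. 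Chasing the definitions identifies the resulting equivalence with $\pi^{rel}_{\cw}$, which is therefore dense.

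The main obstacle, functorial finiteness of $\cs$ in $\per\bmgamma$, has already been handled by the preceding lemma; the remaining work is purely bookkeeping that matches the Verdier quotient produced by Jin's theorem with the idempotent-completed relative cluster category, and it is automatic from the Krull--Schmidt property of $\per\bmgamma$ together with the inclusion $\pvd_{e}(\bmgamma)\subseteq\per\bmgamma$ coming from the Jacobi-finiteness of $(\overline{Q},\overline{W})$.
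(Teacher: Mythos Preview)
Your proposal is correct and follows essentially the same route as the paper: the sentence preceding the proposition already indicates that Lemma~\ref{Lem: stable finite to functorially finite simples} furnishes condition (R1), after which \cite[Theorem 3.1]{Jin2019} yields the equivalence. Your additional check that $\cw$ is idempotent complete (hence the Verdier quotient coincides with its idempotent completion $\cc(Q,F,W)$) is a detail the paper leaves implicit but which is indeed needed to pass from Jin's equivalence with $\per\bmgamma/\thick(\cs)$ to the stated equivalence with $\cc(Q,F,W)$.
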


\subsection{Higgs categories}
Let $ \ct $ be any triangulated category. Let $ \ct' $ be a full subcategory of $ \ct $. We denote by $ \pr_{\ct}\ct' $ the full subcategory of $ \ct $ whose objects are cones of morphisms in $ \add\ct' $. Similarly, we denote by $ \copr_{\ct}\ct' $ the full subcategory of $ \ct $ whose objects are those $ X $ such that $ \Si X $ is in $ \pr_{\ct}\ct' $. If $ \ct'=\add T $ for some object $ T\in\ct $, the categories $ \pr_{\ct}\ct' $ and $ \copr_{\ct}\ct' $ will be simply denoted by $ \pr_{\ct}T $ and $ \copr_{\ct}T $ respectively.
\begin{Lem}\cite[Lemma 2.11]{plamondonCategoriesAmasseesAux2011}\label{Lemma: pr is extension closed}
We have 
$$ \pr_{\cd}\bmgamma=\cd^{\leqslant0}\cap{}^{\perp}\cd^{\leqslant-2}\cap\per\bmgamma. $$
Thus, the category $ \pr_{\cd}\bmgamma $ is an extension closed subcategory of $ \per\bmgamma $.
	
\end{Lem}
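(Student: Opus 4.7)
The plan is to prove the two inclusions defining the equality, after which the extension-closure claim will follow automatically since each of the three conditions $\cd^{\leqslant 0}$, ${}^{\perp}\cd^{\leqslant -2}$ and $\per\bmgamma$ defines an extension-closed subcategory.

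For the inclusion $\pr_{\cd}\bmgamma \subseteq \cd^{\leqslant 0}\cap{}^{\perp}\cd^{\leqslant -2}\cap\per\bmgamma$, I start from a triangle $P_1\to P_0\to X\to \Sigma P_1$ with $P_0,P_1\in\add\bmgamma$. Perfectness of $X$ is immediate. Since $\bmgamma$ is connective, $P_0,P_1\in\cd^{\leqslant 0}$, and the long exact cohomology sequence gives $X\in\cd^{\leqslant 0}$. For the left-orthogonality, let $Y\in\cd^{\leqslant -2}$; then $\Hom_{\cd}(P_0,Y)=0$ and $\Hom_{\cd}(\Sigma P_1,Y)=0$ since $\Hom_{\cd}(\bmgamma,\Sigma^{n}Y)=H^{-n}(Y)$ vanishes for $n=0,-1$ when $Y\in\cd^{\leqslant -2}$. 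Applying $\Hom_{\cd}(-,Y)$ to the triangle forces $\Hom_{\cd}(X,Y)=0$.

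For the reverse inclusion, take $X\in\cd^{\leqslant 0}\cap{}^{\perp}\cd^{\leqslant -2}\cap\per\bmgamma$. The first step is to produce $f\colon P_0\to X$ with $P_0\in\add\bmgamma$ such that the induced map $\Hom_{\cd}(\bmgamma,P_0)\to\Hom_{\cd}(\bmgamma,X)$, i.e.\ $H^0(P_0)\to H^0(X)$, is surjective; here I use that $H^0(X)$ is a finitely generated $H^0(\bmgamma)$-module because $X$ is perfect and can be built by finitely many cones and summands from copies of $\bmgamma$. Completing $f$ to a triangle $P_0\xrightarrow{f}X\to C\to \Sigma P_0$, the surjectivity of $H^0(f)$ together with $X,P_0\in\cd^{\leqslant 0}$ and the long exact cohomology sequence yield $C\in\cd^{\leqslant -1}$. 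Setting $P_1\coloneqq C[-1]$, we obtain the candidate triangle $P_1\to P_0\to X\to \Sigma P_1$, and it remains to check $P_1\in\add\bmgamma$.

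The main obstacle is precisely this last step. I will first show $P_1\in\cd^{\leqslant 0}\cap{}^{\perp}\cd^{\leqslant -2}\cap\per\bmgamma$: the first and third conditions are clear, and for the orthogonality I apply $\Hom_{\cd}(-,Y)$ with $Y\in\cd^{\leqslant -2}$ to the triangle; using $\Hom_{\cd}(X,Y)=0$ and $\Hom_{\cd}(\Sigma P_0,Y)=H^{-1}(Y)=0$ gives $\Hom_{\cd}(C,Y)=0$, equivalently $\Hom_{\cd}(P_1,Y[1])=0$ for every $Y\in\cd^{\leqslant -2}$, i.e.\ $P_1\in{}^{\perp}\cd^{\leqslant -2}$. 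The key upgrade is to observe that the particular $P_1$ arising this way is \emph{cohomologically concentrated in degree $0$}: the choice of $f$ minimal on $H^0$ yields that the connecting map $H^{-1}(X)\to H^0(P_1)$ in the long exact cohomology sequence identifies $H^{-1}(X)$ with a submodule of $H^0(P_1)$, and the combination of the two conditions $\cd^{\leqslant 0}$ and ${}^{\perp}\cd^{\leqslant -2}$ rules out any cohomology of $P_1$ in negative degrees (any such cohomology would witness a nontrivial map to a shift of $Y\in\cd^{\leqslant -2}$). A perfect complex over the connective dg algebra $\bmgamma$ with cohomology concentrated in degree $0$ and satisfying the Ext-vanishing against $\cd^{\leqslant -2}$ must be a summand of $\bmgamma^{m}$ for some $m$, hence lies in $\add\bmgamma$; this can be seen by choosing a minimal presentation $\bmgamma^{m}\twoheadrightarrow H^0(P_1)$ and lifting it to $\bmgamma^{m}\to P_1$, whose cone lies in $\cd^{\leqslant -1}\cap{}^{\perp}\cd^{\leqslant -2}\cap \per\bmgamma=0$.

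Finally, the extension-closure statement is immediate: each of $\cd^{\leqslant 0}$ (a t-structure aisle), ${}^{\perp}\cd^{\leqslant -2}$ (a left orthogonal) and $\per\bmgamma$ (a thick subcategory) is closed under extensions, so their intersection is, and by the proved characterization $\pr_{\cd}\bmgamma$ is extension closed in $\per\bmgamma$.
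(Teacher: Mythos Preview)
Your overall strategy matches the standard argument (originating with Amiot and Plamondon, which the paper simply cites), but the final step of the reverse inclusion contains two genuine errors.

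First, the claim that the particular $P_1$ you construct is cohomologically concentrated in degree~$0$ is false. The object $\bmgamma$ itself lies in $\cd^{\leqslant 0}\cap{}^{\perp}\cd^{\leqslant -1}\cap\per\bmgamma$ yet typically has nonzero cohomology in negative degrees. Nonvanishing of $H^{-i}(P_1)$ witnesses a map \emph{into} $P_1$ from $\Sigma^{i}\bmgamma$, not a map \emph{out of} $P_1$ into some object of $\cd^{\leqslant -2}$; so the orthogonality condition cannot force the negative cohomology to vanish.

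Second, the equality $\cd^{\leqslant -1}\cap{}^{\perp}\cd^{\leqslant -2}\cap\per\bmgamma=0$ is false: the object $\Sigma\bmgamma$ belongs to all three subcategories. Hence your cone need not vanish.

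The repair is short and uses what you have already computed, once a shift error is corrected. From $\Hom_{\cd}(C,Y)=0$ for all $Y\in\cd^{\leqslant -2}$ you obtain $\Hom_{\cd}(P_1,\Sigma^{-1}Y)=0$, i.e.\ $\Hom_{\cd}(P_1,Z)=0$ for all $Z\in\cd^{\leqslant -1}$; thus $P_1\in{}^{\perp}\cd^{\leqslant -1}$, which is \emph{stronger} than the ${}^{\perp}\cd^{\leqslant -2}$ you recorded (you wrote $Y[1]$ where $Y[-1]$ was meant). Now pick $g\colon Q_0\to P_1$ with $Q_0\in\add\bmgamma$ and $H^0(g)$ surjective; the cone $D$ lies in $\cd^{\leqslant -1}$. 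Since $P_1\in{}^{\perp}\cd^{\leqslant -1}$, the map $P_1\to D$ is zero, so $g$ is a split epimorphism and $P_1$ is a summand of $Q_0\in\add\bmgamma$. The point is that one shows the \emph{map} to the cone vanishes, not that the cone itself vanishes.
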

	
\begin{Prop}\label{Prop:fully faithful of pr}
	The quotient functor $ \pi^{rel}\colon\per\bmgamma\ra\cc $ restricts to an equivalence of $ k $-linear categories $ \pr_{\cd}\bmgamma\iso\pr_{\cc}\bmgamma $.
	\end{Prop}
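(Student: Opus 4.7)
The statement is a relative analogue of Plamondon's Proposition~2.9 in \cite{plamondonCategoriesAmasseesAux2011}, and my plan is to adapt his proof, substituting the relative 3-Calabi--Yau duality (Proposition~\ref{Prop:relative 3-CY}) for the absolute one.

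First, the containment $\pi^{rel}(\pr_{\cd}\bmgamma) \subseteq \pr_{\cc}\bmgamma$ is immediate: for $X \in \pr_{\cd}\bmgamma$ presented by a triangle $P_1 \to P_0 \to X \to \Sigma P_1$ in $\per\bmgamma$ with $P_i \in \add\bmgamma$, applying the triangulated functor $\pi^{rel}$ yields a triangle of the same shape in $\cc$, so that $\pi^{rel}(X) \in \pr_{\cc}\bmgamma$.

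For fully faithfulness, fix $X, Y \in \pr_{\cd}\bmgamma$ and apply $\Hom_{\per\bmgamma}(-,Y)$ and $\Hom_\cc(-, \pi^{rel}Y)$ to the defining triangle of $X$, obtaining long exact sequences linked by the natural comparison maps. A five-lemma argument, iterated using a defining triangle of $Y$ as well, reduces the claim to the base case $\Hom_{\per\bmgamma}(P, \Sigma^k Q) \iso \Hom_\cc(\pi^{rel}P, \Sigma^k \pi^{rel}Q)$ for all $P, Q \in \add\bmgamma$ and all $k \in \mathbb{Z}$. Both sides vanish for $k > 0$ since $\bmgamma$ is concentrated in non-positive degrees. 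For $k \leq 0$, I write $\Hom_\cc(\pi^{rel}P, \Sigma^k \pi^{rel}Q)$ as a colimit over backward roofs $P \xleftarrow{s} P' \to \Sigma^k Q$ with $N := \cone(s) \in \pvd_e(\bmgamma)$; the colimit stabilizes at $\Hom_{\per\bmgamma}(P, \Sigma^k Q)$ provided $\Hom_{\per\bmgamma}(N, \Sigma^k Q) = 0 = \Hom_{\per\bmgamma}(N, \Sigma^{k+1}Q)$ for the $N$'s in question. Applying the relative 3-Calabi--Yau duality, these Hom-spaces are identified with $D\Hom_{\per\bmgamma}(Q, \Sigma^{3-k}N)$ and $D\Hom_{\per\bmgamma}(Q, \Sigma^{2-k}N)$; since $\add\bmgamma \subseteq \pr_{\cd}\bmgamma \subseteq {}^{\perp}\cd^{\leq -2}$ by Lemma~\ref{Lemma: pr is extension closed}, they vanish whenever $\Sigma^{3-k}N, \Sigma^{2-k}N \in \cd^{\leq -2}$. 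The canonical $t$-structure on $\cd(\bmgamma)$ preserves $\pvd_e(\bmgamma)$ (since truncation commutes with multiplication by the idempotent $e$), so a cofinality argument replacing $N$ by a suitable truncation inside $\pvd_e(\bmgamma)$ reduces the general case to this one.

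Essential surjectivity then follows by lifting: given $Y \in \pr_{\cc}\bmgamma$ with defining triangle $P_1 \xrightarrow{\bar f} P_0 \to Y \to \Sigma P_1$ in $\cc$, the fully faithful part lifts $\bar f$ to a morphism $f \in \Hom_{\per\bmgamma}(P_1, P_0)$, whose cone in $\per\bmgamma$ is an object of $\pr_{\cd}\bmgamma$ mapping to $Y$ under $\pi^{rel}$ by uniqueness of cones. The main technical obstacle is the cofinality/truncation step in the Hom comparison: one must argue that the backward-roof colimit computing morphisms in $\cc$ can be restricted to a cofinal subfamily whose cones satisfy the cohomological bound enforced by the CY-dualized vanishing, without altering the colimit.
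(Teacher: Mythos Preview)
Your five-lemma reduction contains a genuine error. You assert that for $P,Q\in\add\bmgamma$ and $k>0$, both $\Hom_{\per\bmgamma}(P,\Sigma^kQ)$ and $\Hom_\cc(P,\Sigma^kQ)$ vanish ``since $\bmgamma$ is concentrated in non-positive degrees''. Connectivity of $\bmgamma$ only controls the $\per\bmgamma$-side; it says nothing about the Verdier quotient $\cc$. Indeed the claim fails already for $k=2$: in the non-relative Jacobi-finite case, $2$-Calabi--Yau duality gives $\Hom_\cc(\bmgamma,\Sigma^2\bmgamma)\cong D\End_\cc(\bmgamma)\neq 0$. Tracing your double five-lemma, only $k\in\{-1,0,1\}$ are actually needed, but the case $k=1$ on the $\cc$-side still requires the full roof-and-truncation machinery you reserve for $k\leq 0$, not merely connectivity.

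The paper avoids the five-lemma detour altogether and works directly with arbitrary $X,Y\in\pr_\cd\bmgamma$, using the characterization $\pr_\cd\bmgamma=\cd^{\leq 0}\cap{}^\perp\cd^{\leq -2}\cap\per\bmgamma$ of Lemma~\ref{Lemma: pr is extension closed}. Faithfulness: a map $X\to Y$ vanishing in $\cc$ factors through some $M\in\pvd_e(\bmgamma)$, hence through $\tau_{\leq 1}M\in\pvd_e(\bmgamma)$ since $X\in\cd^{\leq 0}$; relative $3$-CY duality then gives $\Hom(\tau_{\leq 1}M,Y)\cong D\Hom(Y,\Sigma^3\tau_{\leq 1}M)=0$ because $Y\in{}^\perp\cd^{\leq -2}$. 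Fullness uses \emph{left} fractions: given $X\to Y'\xleftarrow{s}Y$ with cone $N$, one shows $\Hom(\tau_{\leq 0}N,\Sigma Y)=0$ by the same CY argument, factors the connecting map $N\to\Sigma Y$ through $\tau_{\geq 1}N$, and thereby produces an equivalent fraction $X\to Y''\xleftarrow{t}Y$ with cone $\tau_{\geq 1}N\in\cd^{\geq 1}$; then $\Hom(X,\tau_{\geq 1}N)=0$ (as $X\in\cd^{\leq 0}$) forces $X\to Y''$ to factor through $t$. Note this is not a cofinality argument in the colimit indexing---the new cone $\tau_{\geq 1}N$ is a quotient of $N$, not an extension---which is precisely why the step you yourself flag as the main obstacle is delicate in your formulation. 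Your essential-surjectivity argument is correct and matches the paper.
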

	\begin{proof}
		Firstly, we show that $ \pi^{rel}\colon\pr_{\cd}\bmgamma\ra\pr_{\cc}\bmgamma $ is fully faithful.

		Let $ X $ and $ Y $ be two objects in $ \pr_{\cd}\bmgamma $. Hence $ X $ and $ Y $ lie in $ \cd^{\leqslant0}(\bmgamma) $.	Suppose that a morphism $ f\colon X\ra Y $ is sent to zero in $ \cc $, i.e. $ f $ factors as 
		$$ X\xrightarrow{g}M\xrightarrow{h}Y $$ with $ M $ in $ \pvd_{e}(\bmgamma) $. Since $ X=\tau_{\leqslant1}X $, $ g $ factors through $ \tau_{\leqslant1}M $, which is still in $ \pvd_{e}(\bmgamma) $. By Proposition~\ref{Prop:relative 3-CY}, we have an isomorphism
		$$ D\Hom_{\bmgamma}(\tau_{\leqslant1}M,Y)\cong\Hom_{\cd(\bmgamma)}(Y,\Si^{3}\tau_{\leqslant1}M) .$$ The space $ \Hom_{\cd(\bmgamma)}(Y,\Si^{3}\tau_{\leqslant1}M) $ vanishes, since $ Y $ belongs to $ ^{\perp}\cd^{\leqslant-2} $. Thus, the morphism $ f $ is zero. This shows that $ \pi^{rel}\colon\pr_{\cd}\bmgamma\ra\pr_{\cc}\bmgamma $ is faithful. Let $ f'\colon X\ra Y $ be a morphism in $ \pr_{\cc}\bmgamma $. Suppose that it can be represented by the following fraction
		$$ X\xrightarrow{f}Y'\xleftarrow{s}Y ,$$ where the cone of $ s $ is an object $ N $ of $ \pvd_{e}\relGammabf $. Consider the following diagram
		\[
		\begin{tikzcd}
			&Y\arrow[r,equal]\arrow[d,"s"]&Y\arrow[d,"t"]\\
			X\arrow[r,"f"]&Y'\arrow[r,"g"]\arrow[d]&Y''\arrow[d]\\
			\tau_{\leqslant0}N\arrow[r]&N\arrow[r]\arrow[d]&\tau_{\geqslant1}N\arrow[d,"h"]\\
			&\Si Y\arrow[r,equal]&\Si Y
		\end{tikzcd}
		\]
		Since $ N $ is in $ \pvd_{e}\relGammabf $, $ \tau_{\leqslant0}N $ is also in $ \pvd_{e}\relGammabf $.
		Then the space $ \Hom_{\cd(\bmgamma)}(\tau_{\leqslant0}N,\Si Y) $ is isomorphic to $ D\Hom_{\cd(\bmgamma)}(Y,\Si^{2}\tau_{\leqslant0}N) $ because of Proposition~\ref{Prop:relative 3-CY}. And this space vanishes since $ \Hom_{\cd(\relGammabf)}(Y,\cd^{\leqslant-2}) $ vanishes. 
		
		Thus there exists a morphism $ h\colon\tau_{\geqslant1}N\ra\Si Y $ such that the lower right square of the above diagram commutes. We extend $ h $ into a distinguished triangle which is the rightmost column of the diagram. 
		Thus we have a fraction
		$$ X\xrightarrow{gf}Y''\xleftarrow{t}Y $$ which is equal to $$ X\xrightarrow{f}Y'\xleftarrow{s}Y .$$ But the space $ \Hom_{\cd(\relGammabf)}(X,\tau_{\geqslant1}N) $ is zero since $ X $ is in $ \cd^{\leqslant0} $ and $ \tau_{\geqslant1}N $ is in $ \cd^{\geqslant1} $. Therefore, there exists a morphism $ l\colon X\ra Y $ such that $ gf=tl $. It is easy to see that the fraction $$ X\xrightarrow{gf}Y''\xleftarrow{t}Y $$ is the image of $ l\colon X\ra Y $ under the functor $ \pi^{rel}\colon\pr_{\cd}\bmgamma\ra\pr_{\cc}\bmgamma $. Thus, we have shown that $ \pi^{rel}|_{\pr_{\cd}\bmgamma} $ is fully faithful. 
		
		It remains to be shown that it is dense. Let $ Z $ be an object of $ \pr_{\cc}\relGammabf $. Then $ Z $ admits an $ \add\relGammabf $-presentation 
		$$ T_{1}\xrightarrow{f'} T_{0}\ra Z\ra\Si T_{1} $$ in $ \cc $. Since the functor $ \pi^{rel}\colon\pr_{\cd}\bmgamma\ra\pr_{\cc}\bmgamma $ is fully faithful, we can lift the morphism $ f'\colon T_{1}\ra T_{0} $ to a morphism $ f\colon P_{1}\ra P_{0} $ in $ \pr_{\mathcal{D}}\relGammabf $ with $ P_{1} $ and $ P_{0} $ in $ \add\relGammabf $. Its cone is sent to $ Z $ in $ \cc $. This finishes the proof.
	\end{proof}

    \bigskip
    An object $ X $ is in $ \copr_{\cd}\bmgamma $ if and only if $ \Si X $ is in $ \pr_{\cd}\bmgamma $. Therefore we have the following dual statement of Proposition~\ref{Prop:fully faithful of pr}.
	\begin{Cor}\label{Cor:fully faithful of copr}
		The quotient functor $ \pi^{rel}\colon\per\bmgamma\ra\cc $ restricts to an equivalence of $ k $-linear categories $ \copr_{\cd}\bmgamma\iso\copr_{\cc}\bmgamma $.
	\end{Cor}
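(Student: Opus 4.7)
The plan is to deduce Corollary~\ref{Cor:fully faithful of copr} directly from Proposition~\ref{Prop:fully faithful of pr} via the suspension functor. By the very definition reproduced just before the statement, an object $X$ of $\per\bmgamma$ lies in $\copr_{\cd}\bmgamma$ if and only if $\Si X$ lies in $\pr_{\cd}\bmgamma$, and analogously in $\cc$. Hence the autoequivalence $\Si$ of $\per\bmgamma$ restricts to a $k$-linear equivalence $\Si\colon\copr_{\cd}\bmgamma\iso\pr_{\cd}\bmgamma$ (with inverse $\Si^{-1}$), and likewise $\Si\colon\copr_{\cc}\bmgamma\iso\pr_{\cc}\bmgamma$ is an equivalence.

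Next I would use that $\pi^{rel}\colon\per\bmgamma\ra\cc$ is a triangulated functor and therefore commutes with $\Si$. This yields a commutative square
\[
\begin{tikzcd}
\copr_{\cd}\bmgamma \arrow[r,"\pi^{rel}"] \arrow[d,"\Si"'] & \copr_{\cc}\bmgamma \arrow[d,"\Si"] \\
\pr_{\cd}\bmgamma \arrow[r,"\pi^{rel}"] & \pr_{\cc}\bmgamma
\end{tikzcd}
\]
in which both vertical arrows are $k$-linear equivalences and the bottom horizontal arrow is the equivalence supplied by Proposition~\ref{Prop:fully faithful of pr}. By the two-out-of-three property, the top horizontal arrow is an equivalence as well, which is precisely the content of the corollary.

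No serious obstacle is expected, since the argument is entirely formal once Proposition~\ref{Prop:fully faithful of pr} is in hand. If one preferred a direct proof parallel to that of Proposition~\ref{Prop:fully faithful of pr}, the only adjustment would be to observe that $\copr_{\cd}\bmgamma$ admits the shifted $t$-structure description $\cd^{\leqslant-1}\cap{}^{\perp}\cd^{\leqslant-3}\cap\per\bmgamma$, obtained by applying $\Si^{-1}$ to Lemma~\ref{Lemma: pr is extension closed}, and then to run the same faithfulness and fullness arguments using Proposition~\ref{Prop:relative 3-CY} to kill the relevant $\Hom(Y,\Si^{3}\tau_{\leqslant n}M)$ groups for the appropriately shifted truncation index.
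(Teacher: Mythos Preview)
The proposal is correct and follows essentially the same approach as the paper: the paper introduces the corollary by noting that $X\in\copr_{\cd}\bmgamma$ iff $\Si X\in\pr_{\cd}\bmgamma$ and declares the result the dual of Proposition~\ref{Prop:fully faithful of pr}, which is precisely your suspension-commutes-with-$\pi^{rel}$ reduction.
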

	
	Let $ \pr^{F}_{\cd}\bmgamma $ be the following subcategory of $ \pr_{\cd}\bmgamma $
	$$ \{\cone(X_{1}\xrightarrow{f} X_{0})\,|\,X_{i}\in \add(\bmgamma)\,\text{and $ \Hom_{\cd}(f, I) $ is surjective for any object $ I\in\cp $}\}. $$
	Clearly, we have $ \pr^{F}_{\cd}\bmgamma=\pr_{\cd}\bmgamma\cap\cz=\pr_{\cd}\bmgamma\cap^{\perp}\!(\Si^{>0}\cp)\cap(\Si^{<0}\cp)^{\perp} $, where $ \cp=\add(e\relGammabf) $.

	Dually, we define $ \copr_{\cd}^{F}\bmgamma $ as the following subcategory of $ \copr_{\cd}\bmgamma $
	$$ \{\Si^{-1}\cone(X_{0}\xrightarrow{f} X_{1})\,|\,X_{i}\in \add(\bmgamma)\,\text{and $ \Hom_{\cd}(P,f) $ is surjective for any object $ P\in\cp $}\}. $$
	And we have $ \copr^{F}_{\cd}\bmgamma=\copr_{\cd}\bmgamma\cap\cz $.
	
	Similarly, we define subcategories $$ \pr_{\cc}^{F}\bmgamma=\pr_{\cc}\bmgamma\cap\cy $$ and 
	$$ \copr_{\cc}^{F}\bmgamma=\copr_{\cc}\bmgamma\cap\cy $$ of $ \cc $, where $$ \cy=^{\perp}\!(\Si^{>0}\cp)\cap(\Si^{<0}\cp)^{\perp}\subseteq\cc. $$
\begin{Rem}
	It is easy to see that $ \pr_{\cd}\bm{\Gamma} $ is a full subcategory of $ \Si^{-1}\cw $, where $ \cw=(\Si^{\geqslant0}\cs)^{\perp}\cap{}{ }^{\perp}(\Si^{\leqslant0}\cs) $ and $ \cs=\thick(S_{i}\,|\,i\in Q_{0}\setminus F_{0}) $.
\end{Rem}
	\begin{Prop}\label{Prop: equivalence between pr-copr}
		The quotient functor $ \pi^{rel}\colon\per\bmgamma\ra\cc $ induces equivalences of $ k $-linear categories
		$ \pr^{F}_{\cd}\bmgamma\iso\pr_{\cc}^{F}\bmgamma $ and $ \copr^{F}_{\cd}\bmgamma\iso\copr_{\cc}^{F}\bmgamma $. 
		
%		Moreover, it induces an equivalence of $ k $-linear categories $$ \pi^{rel}:\pr^{F}_{\cd}\bmgamma\cap\copr^{F}_{\cd}\bmgamma\iso\pr_{\cc}^{F}\bmgamma\cap\copr_{\cc}^{F}\bmgamma $$
	\end{Prop}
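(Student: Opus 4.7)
Since $\pr^{F}_{\cd}\bmgamma=\pr_{\cd}\bmgamma\cap\cz$ and $\pr^{F}_{\cc}\bmgamma=\pr_{\cc}\bmgamma\cap\cy$, and Proposition~\ref{Prop:fully faithful of pr} already provides an equivalence $\pi^{rel}\colon\pr_{\cd}\bmgamma\iso\pr_{\cc}\bmgamma$, fully faithfulness is automatic by restriction. So only the membership conditions need to be matched: I want to show that for every $X\in\pr_{\cd}\bmgamma$, the condition $X\in\cz$ is equivalent to $\pi^{rel}(X)\in\cy$. The $\copr$ version will then follow by the same argument applied to Corollary~\ref{Cor:fully faithful of copr}.

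The central input is a $\Hom$-comparison lemma: for every $X\in\per\bmgamma$, every $I\in\cp$ and every $i\in\Z$, the canonical maps
\[
\Hom_{\cd}(X,\Si^{i}I)\ra\Hom_{\cc}(X,\Si^{i}I) \quad\text{and}\quad \Hom_{\cd}(\Si^{i}I,X)\ra\Hom_{\cc}(\Si^{i}I,X)
\]
are bijective. To prove this, I would first observe that $\cp$ is completely $\Hom$-orthogonal to $\pvd_{e}(\bmgamma)$ at every shift. Using Proposition~\ref{Prop:relative 3-CY}, for any $K\in\pvd_{e}(\bmgamma)$ the Calabi--Yau duality together with the identification $\Hom_{\cd}(e\bmgamma,Z)=H^{0}(eZ)$ gives
\[
\Hom_{\cd}(K,\Si^{i}I)\cong D\Hom_{\cd}(I,\Si^{3-i}K)=DH^{3-i}(eK)=0
\]
because $eK$ is acyclic; dually $\Hom_{\cd}(\Si^{i}I,K)=H^{-i}(eK)=0$. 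Representing $\Hom_{\cc}(X,\Si^{i}I)$ as a colimit of $\Hom_{\cd}(X_{s},\Si^{i}I)$ over roofs $X_{s}\ra X$ with cone $K\in\pvd_{e}(\bmgamma)$, the long exact sequence of $\Hom_{\cd}(-,\Si^{i}I)$ applied to $X_{s}\ra X\ra K\ra\Si X_{s}$ then collapses to an isomorphism $\Hom_{\cd}(X,\Si^{i}I)\iso\Hom_{\cd}(X_{s},\Si^{i}I)$ since all the $\Hom$ groups from $\Si^{j}K$ to $\Si^{i}I$ vanish. The colimit is therefore constant, and the other direction is strictly dual.

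With this comparison in hand, the defining conditions of $\cz\subseteq\per\bmgamma$ and $\cy\subseteq\cc$ are matched termwise under $\pi^{rel}$: an object $X\in\pr_{\cd}\bmgamma$ lies in $\cz$ iff $\pi^{rel}(X)$ lies in $\cy$. Combined with Proposition~\ref{Prop:fully faithful of pr}, this produces the first equivalence, and the second equivalence $\copr^{F}_{\cd}\bmgamma\iso\copr^{F}_{\cc}\bmgamma$ is obtained identically from Corollary~\ref{Cor:fully faithful of copr}. The main obstacle is the $\Hom$-comparison lemma, but the $3$-Calabi--Yau orthogonality of $\pvd_{e}(\bmgamma)$ with $\cp$ makes it essentially formal; the one minor remaining point is that the idempotent completion in the definition of $\cc$ does not alter these $\Hom$ groups, which is automatic since $X$ and the $\Si^{i}I$ already lie in the image of the bare Verdier quotient.
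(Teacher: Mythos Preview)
Your proposal is correct and follows essentially the same route as the paper. The paper's proof simply asserts the isomorphisms $\Hom_{\cc}(\Si^{<0}\cp,X)\simeq\Hom_{\cd}(\Si^{<0}\cp,X')$ and $\Hom_{\cc}(X,\Si^{>0}\cp)\simeq\Hom_{\cd}(X',\Si^{>0}\cp)$ without justification and then concludes; your $\Hom$-comparison lemma, obtained from the complete orthogonality between $\pvd_{e}(\bmgamma)$ and $\thick(\cp)$ via Proposition~\ref{Prop:relative 3-CY}, is precisely the missing verification of that step, so your write-up is a more detailed version of the same argument (one trivial slip: for right modules one has $\Hom_{\cd}(e\bmgamma,Z)=H^{0}(Ze)$ rather than $H^{0}(eZ)$, but $Ze$ is exactly what is acyclic for $Z\in\pvd_{e}(\bmgamma)$, so nothing changes).
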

	\begin{proof}
		Let $ X $ be an object of $ \pr_{\cc}^{F}\bmgamma\subseteq\pr_{\cc}\bmgamma $. By Proposition~\ref{Prop:fully faithful of pr}, there is an object $ X'\in\pr_{\cd}\bmgamma $ such that $ \pi^{rel}(X')\cong X $. Since $ \Hom_{\cc}(\Si^{<0}\cp,X)\simeq\Hom_{\cd}(\Si^{<0}\cp,X') $ and $ \Hom_{\cc}(X,\Si^{>0}\cp)\simeq\Hom_{\cd}(X',\Si^{>0}\cp) $, we see that $ X' $ is in $ \pr^{F}_{\cd}\bmgamma\ $. Thus, $ \pi^{rel} $ induces equivalences of $ k $-linear categories $ \pr^{F}_{\cd}\bmgamma\iso\pr_{\cc}^{F}\bmgamma $ and $ \copr^{F}_{\cd}\bmgamma\iso\copr_{\cc}^{F}\bmgamma $.
		
%	Let $ Y $ be an object of $ \pr_{\cc}^{F}\bmgamma\cap\copr_{\cc}^{F}\bmgamma $. There exists objects $ Y_{1}\in\pr^{F}_{\cd}\bmgamma $ and $ Y_{2} $ in $ \copr^{F}_{\cd}\bmgamma $ such that $ \pi^{rel}(Y_{1})=Y $ and $ \pi^{rel}(Y_{2})=Y $.
\end{proof}

\bigskip
The following result relates morphisms in the relative cluster category and in the derived category.
\begin{Prop}\cite[Proposition 2.19]{plamondonCategoriesAmasseesAux2011}
	Let $ X $ and $ Y $ be objects of $ \pr_{\cd}\bmgamma $ such that $ \Hom_{\cd}(X,\Si Y) $ is finite-dimensional. Then there is an exact sequence of vector spaces
	$$ 0\ra\Hom_{\cd}(X,\Si Y)\ra\Hom_{\cc}(X,\Si Y)\ra D\Hom_{\cd}(Y,\Si X)\ra0 .$$
\end{Prop}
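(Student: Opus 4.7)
The plan is to transpose Plamondon's proof of~\cite[Proposition 2.19]{plamondonCategoriesAmasseesAux2011} from the absolute to the relative setting. The essential substitution is that, wherever Plamondon invokes 3-Calabi--Yau duality on $\pvd(\Gamma)$, I invoke the relative 3-Calabi--Yau duality of Proposition~\ref{Prop:relative 3-CY} on $\pvd_e(\bmgamma)$. Throughout I use the characterisation $\pr_\cd\bmgamma=\cd^{\leqslant0}\cap{}^\perp\cd^{\leqslant-2}\cap\per\bmgamma$ of Lemma~\ref{Lemma: pr is extension closed}, together with the observation that canonical truncation preserves $\pvd_e(\bmgamma)$ because restriction to $e\bmgamma e$ commutes with $\tau_{\leqslant 0}$.

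For injectivity of the first map, I would take $f\colon X\to\Si Y$ in $\cd$ whose image in $\cc$ vanishes. Then $f$ factors through some $M\in\pvd_e(\bmgamma)$, and since $X\in\cd^{\leqslant 0}$ the resulting morphism $X\to M$ lifts to $X\to\tau_{\leqslant 0}M$, with $\tau_{\leqslant 0}M\in\pvd_e(\bmgamma)$. Relative 3-CY duality identifies $\Hom_\cd(\tau_{\leqslant 0}M,\Si Y)$ with $D\Hom_\cd(Y,\Si^{2}\tau_{\leqslant 0}M)$, which vanishes because $\Si^{2}\tau_{\leqslant 0}M\in\cd^{\leqslant-2}$ while $Y\in{}^\perp\cd^{\leqslant-2}$.

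To construct the map $\Psi\colon\Hom_\cc(X,\Si Y)\to D\Hom_\cd(Y,\Si X)$, I would represent $\varphi$ by a right fraction $X\xrightarrow{g}Y''\xleftarrow{s}\Si Y$ with $N:=\cone(s)\in\pvd_e(\bmgamma)$, sitting in a triangle $\Si Y\xrightarrow{s}Y''\xrightarrow{\pi}N\xrightarrow{\partial}\Si^{2}Y$. The obstruction $\pi g\in\Hom_\cd(X,N)$ measures the failure of $g$ to lift through $s$, and for $\psi\in\Hom_\cd(Y,\Si X)$ I define $\Psi(\varphi)(\psi)$ as the value of the non-degenerate 3-CY pairing $\Hom_\cd(X,N)\otimes\Hom_\cd(N,\Si^{3} X)\to k$ evaluated at the pair $(\pi g,\;\Si^{2}\psi\circ\partial)$. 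Well-definedness with respect to the equivalence of fractions follows from a standard octahedral chase. Exactness in the middle is then immediate: $\Psi(\varphi)=0$ forces $\pi g=0$ by non-degeneracy of the pairing, hence $g$ lifts through $s$ and $\varphi$ lies in the image of $\Hom_\cd(X,\Si Y)$.

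The main obstacle will be surjectivity of $\Psi$, which is where the finite-dimensionality of $\Hom_\cd(X,\Si Y)$ becomes essential. Given $\xi\in D\Hom_\cd(Y,\Si X)$, the goal is to exhibit a single fraction realising it. Following Plamondon's strategy, I would construct a universal object $N\in\pvd_e(\bmgamma)$ together with a connecting morphism $N\to\Si^{2}Y$ detecting $\Hom_\cd(Y,\Si X)$ faithfully under the 3-CY pairing, and then lift $\xi$ to a morphism $X\to N$ completing the fraction; the finite-dimensionality hypothesis ensures the universal construction stabilises after finitely many steps. The technical care I anticipate is to verify that the objects arising along the way remain inside $\pvd_e(\bmgamma)\subseteq\per\bmgamma$ rather than the larger $\pvd(\bmgamma)$ appearing in the absolute case; this reflects the role of the frozen structure and must be monitored at each octahedral step of Plamondon's argument.
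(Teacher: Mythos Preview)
Your proposal is correct and matches the paper's approach exactly: the paper's own proof consists of the single sentence ``The proof follows the lines of that of~\cite[Proposition 2.19]{plamondonCategoriesAmasseesAux2011},'' and you have supplied precisely the transposition of Plamondon's argument that this sentence refers to, with the relative 3-Calabi--Yau duality of Proposition~\ref{Prop:relative 3-CY} replacing the absolute one and $\pvd_e(\bmgamma)$ replacing $\pvd(\Gamma)$. Your identification of the need to track membership in $\pvd_e(\bmgamma)$ under truncation is the only point requiring care beyond Plamondon's original, and you have noted it.
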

\begin{proof}
	The proof follows the lines of that~\cite[Proposition 2.19]{plamondonCategoriesAmasseesAux2011}.
\end{proof}
\begin{Def}\rm\cite{wuRelativeClusterCategories2021}\label{Def: Higgs cat}
We define the \emph{Higgs category} $ \ch(Q,F,W) $ (or $ \ch $) as the full subcategory of $ \pr^{F}_{\cc}\bm{\Gamma}\cap\copr^{F}_{\cc}\bmgamma $ whose objects are those $ X $ such that $ \Hom_{\cc}(\Si^{-1}\bmgamma,X) $ is finite-dimensional.
\end{Def}

%Recall that $ \thick_{\cc}\langle\cp\rangle*\Si^{\mathbb{Z}}\ch $ is the full subcategory of $ \cc(Q,F,W) $ defined as
%\begin{equation*}
%	\begin{split}
%		\thick_{\cc}\langle\cp\rangle*\Si^{\mathbb{Z}}\ch&=\{X\in\cc(Q,F,W)\,|\,\text{there is a triangle $ P\ra X\ra H\ra\Si P $ with}\\
%		&\quad\quad\text{$ P\in\thick_{\cc}\langle\cp\rangle $ and $H\in\Si^{\mathbb{Z}}\ch $}\}.
%	\end{split}
%\end{equation*}
%\begin{Def}\rm\label{Defn: categoey D(Q,F,W)}
%	We define the category $ \cd(Q,F,W)\subseteq\cc(Q,F,W) $ as the smallest extension closed subcategory containing $ \thick_{\cc}\langle\cp\rangle*\Si^{\mathbb{Z}}\ch $.
%\end{Def}

Recall that the dg quotient functor $ p\colon\relGammabf(Q,F,W)\ra\mathbf{\Gamma}(\overline{Q},\overline{W}) $ induces the following Verdier quotient
$$ p^{*}\colon\cc(Q,F,W)\twoheadrightarrow\cc(\overline{Q},\overline{W}). $$

\begin{Def}\rm\label{Defn: categoey D(Q,F,W)}
	We define the category $ \cd(Q,F,W)\subseteq\cc(Q,F,W) $ as the full subcategory of $ \cc(Q,F,W) $ whose objects are those objects $ M $ of $ \cc(Q,F,W) $ such that $ p^{*}(M) $ lies in Plamondon's category $ \cd(\overline{Q},\overline{W}) $ (see Subsection~\ref{Subsection: Plamondon's category}). 
\end{Def}
By Proposition~\ref{Prop: plamondon cat is extriangulated}, we see that $ \cd(Q,F,W) $ is an extension closed subcategory of $ \cc(Q,F,W) $. Hence it has an extriangulated structure in the sense of Nakaoka-Palu~\cite{nakaokaExtriangulatedCategoriesHovey2019},
cf.~also \cite{Palu2023}.
\begin{Lem}\label{Lem: Higgs is idempotent complete}
	The Higgs category $ \ch(Q,F,W) $ is idempotent complete.
\end{Lem}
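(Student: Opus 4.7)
\medskip

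\noindent\textbf{Proof plan.}
By construction, the relative cluster category $\cc = \cc(Q,F,W)$ is defined as the idempotent completion of $\per(\bmgamma)/\pvd_e(\bmgamma)$, and is therefore idempotent complete. Since $\ch$ is a full subcategory of $\cc$, it suffices to prove that $\ch$ is closed under direct summands in $\cc$: then any idempotent $e\in\End_\ch(X)$ splits in $\cc$, yielding a decomposition $X\cong Y\oplus Z$, and the closure property guarantees $Y,Z\in\ch$. The plan is to verify this summand-closure separately for each of the three defining conditions of $\ch$.

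The finite-dimensionality condition $\dim_k\Hom_\cc(\Si^{-1}\bmgamma,X)<\infty$ is obviously preserved under summands, and the subcategory $\cy={}^\perp(\Si^{>0}\cp)\cap(\Si^{<0}\cp)^\perp$ is defined by the vanishing of Hom-spaces, so it is closed under summands as well. It therefore remains to show that $\pr_\cc\bmgamma$ and $\copr_\cc\bmgamma$ are closed under summands in $\cc$.

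For this, I would transport the question to the derived category via the equivalence $\pi^{rel}\colon\pr_\cd\bmgamma\iso\pr_\cc\bmgamma$ of Proposition~\ref{Prop:fully faithful of pr}. Given $X\in\pr_\cc\bmgamma$ with a summand decomposition $X\cong Y\oplus Z$ in $\cc$, lift $X$ to an object $X'\in\pr_\cd\bmgamma$ with $\pi^{rel}(X')\cong X$. Full faithfulness identifies $\End_{\per\bmgamma}(X')$ with $\End_\cc(X)$, so the idempotent giving the splitting pulls back to an idempotent on $X'$. Since $\per\bmgamma$ is idempotent complete (Krull--Schmidt by the cited proposition of Keller--Yang), this idempotent splits, giving $X'\cong Y'\oplus Z'$ in $\per\bmgamma$. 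Now Lemma~\ref{Lemma: pr is extension closed} presents $\pr_\cd\bmgamma$ as the intersection of $\cd^{\leqslant0}$, ${}^\perp\cd^{\leqslant-2}$ and $\per\bmgamma$, each of which is closed under direct summands (the first by the long exact sequence of cohomology, the second by Hom-vanishing, and the third by idempotent completeness of the perfect derived category). Thus $Y',Z'\in\pr_\cd\bmgamma$, and applying $\pi^{rel}$ we obtain $Y\cong\pi^{rel}(Y')$, $Z\cong\pi^{rel}(Z')$ in $\pr_\cc\bmgamma$, as desired. The dual statement for $\copr_\cc\bmgamma$ follows immediately from Corollary~\ref{Cor:fully faithful of copr} by the same argument applied to $\Si X$.

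Combining the three closure properties yields that $\ch$ is closed under direct summands in the idempotent complete category $\cc$, hence is itself idempotent complete. There is no substantial obstacle: the entire argument is a mechanical transfer along the fully faithful functor $\pi^{rel}$, and the only point requiring minimal care is verifying that the idempotent really does lift from $\cc$ to $\per\bmgamma$, which is automatic from full faithfulness on the subcategory $\pr_\cd\bmgamma$.
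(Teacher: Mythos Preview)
Your proof is correct and follows essentially the same approach as the paper: both arguments reduce idempotent completeness of $\ch$ to closure under direct summands in $\cc$, check this separately for $\cy$, for $\pr_\cc\bmgamma$ (via the equivalence with $\pr_\cd\bmgamma$ from Proposition~\ref{Prop:fully faithful of pr} combined with the description in Lemma~\ref{Lemma: pr is extension closed}), and dually for $\copr_\cc\bmgamma$. You spell out in more detail why each component of the intersection $\cd^{\leqslant0}\cap{}^\perp\cd^{\leqslant-2}\cap\per\bmgamma$ is closed under summands and explicitly note that the finite-dimensionality condition is stable under summands, whereas the paper leaves these points implicit; otherwise the arguments coincide.
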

\begin{proof}
	The Higgs category is a full subcategory of $ \pr^{F}_{\cc}\relGammabf\cap\copr^{F}_{\cc}\bmgamma=\pr_{\cc}\relGammabf\cap\copr_{\cc}\bmgamma\cap\cy $, where $ \cy=^{\perp}\!(\Si^{>0}\cp)\cap(\Si^{<0}\cp)^{\perp}\subseteq\cc $. By definition, the relative cluster category $ \cc $ is idempotent complete. Then $ \cy $ is also idempotent complete. 
	
	By Lemma~\ref{Lemma: pr is extension closed} and Proposition~\ref{Prop:fully faithful of pr}, $ \pr_{\cc}\relGammabf $ is idempotent complete. Hence so is $ \copr_{\cc}\relGammabf $. This shows that $ \pr^{F}_{\cc}\relGammabf\cap\copr^{F}_{\cc}\bmgamma $ is idempotent complete. Thus, the Higgs category $ \ch(Q,F,W) $ is idempotent complete.
\end{proof}

\subsection{Modules}\label{Subsction: Modules}
Consider the functors $$ R=\Hom_{\cc}(\Si^{-1}\bmgamma,?)\colon\cc\ra\Mod J_{rel} $$ and $$ G=\Hom_{\cc}(?,\Si\bmgamma)\colon(\cc)^{op}\ra\Mod J_{rel}^{op} ,$$
where $ \Mod J_{rel} $ is the category of right $ J_{rel} $-modules. 

\begin{Prop}\cite[Lemma 3.2]{plamondonCategoriesAmasseesAux2011}\label{Prop: modules}
Let $ X $ and $ Y $ be objects in $ \cc $.
\begin{itemize}
	\item[1)] If $ X $ lies in $ \copr_{\cc}\bmgamma $, then $ R $ induces an isomorphism
	$$ \Hom_{\cc}(X,Y)/[\bmgamma] \ra\Hom_{J_{rel}}(RX,RY). $$
	\item[2)] If $ Y $ lies in $ \pr_{\cc}\bmgamma $, then $ G $ induces an isomorphism
	$$ \Hom_{\cc}(X,Y)/[\bmgamma] \ra\Hom_{J_{rel}^{op}}(GY,GX). $$
	\item[3)] $ R $ induces an equivalence of categories
	$$ \copr_{\cc}\bmgamma/[\bmgamma]\ra\mathrm{mod} J_{rel} ,$$
	where $ \mathrm{mod} J_{rel} $ denotes the category of finitely presented $ J_{rel} $-modules.
	\item[4)] Any finite-dimensional $ J_{rel} $-module can be lifted through $ R $ to an object in $ \pr_{\cc}\bmgamma\cap\copr_{\cc}\bmgamma $. Any short exact sequence of finite-dimensional $ J_{rel} $-modules can be lifted through $ R $ to a triangle of $ \cc $, whose three terms are in $ \pr_{\cc}\bmgamma\cap\copr_{\cc}\bmgamma $.
\end{itemize}
\end{Prop}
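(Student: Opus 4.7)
The plan is to adapt Plamondon's proof of~\cite[Lemma~3.2]{plamondonCategoriesAmasseesAux2011} to the relative setting. The essential technical inputs are Proposition~\ref{Prop:fully faithful of pr} and Corollary~\ref{Cor:fully faithful of copr}, which allow me to lift morphisms and triangles in $\pr_\cc\bmgamma$ (resp.\ $\copr_\cc\bmgamma$) back to $\per\bmgamma$; the fundamental exact sequence relating $\Hom_\cc$ and $\Hom_\cd$; and the relative $3$-Calabi--Yau duality on $\pvd_e(\bmgamma)$ from Proposition~\ref{Prop:relative 3-CY}. For part~(1), I start from $X \in \copr_\cc\bmgamma$ and choose, via Corollary~\ref{Cor:fully faithful of copr}, a conflation $X \to T_1 \to T_0 \to \Si X$ with $T_0, T_1 \in \add\bmgamma$ lifted to $\copr_\cd\bmgamma$. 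The first substantive step is to identify the restriction of $R$ to $\add\bmgamma$ with (a version of) the Yoneda embedding into finitely generated projective $J_{rel}$-modules; in particular, to establish an isomorphism $R(\bmgamma) \cong J_{rel}$ of right $J_{rel}$-modules. Once this is done, applying $R$ to the triangle yields a projective presentation $RT_0 \to RT_1 \to RX \to 0$ in $\Mod J_{rel}$. I then compare the long exact sequences obtained by applying $\Hom_\cc(-,Y)$ to the triangle and $\Hom_{J_{rel}}(-, RY)$ to this presentation; noting that a morphism $X \to Y$ factors through $\add\bmgamma$ if and only if it lies in the image of $\Hom_\cc(T_1, Y) \to \Hom_\cc(X, Y)$, since $X \to T_1$ serves as a left $\add\bmgamma$-approximation up to summands already killed by~$[\bmgamma]$, the five-lemma delivers the desired isomorphism.

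Part~(2) is entirely dual, using a presentation $T_1 \to T_0 \to Y \to \Si T_1$ of $Y \in \pr_\cc\bmgamma$ lifted via Proposition~\ref{Prop:fully faithful of pr}, with $G$ in place of $R$. Part~(3) then follows formally: full faithfulness of $R\colon \copr_\cc\bmgamma/[\bmgamma] \to \mathrm{mod}\,J_{rel}$ is part~(1), while essential surjectivity is obtained by lifting a projective presentation $Q_0 \to Q_1 \to M \to 0$ of any finitely presented $M$ to a morphism $T_0 \to T_1$ in $\add\bmgamma \subseteq \cc$ using the full faithfulness of $R$ on $\add\bmgamma$, and taking its cocone to produce $X \in \copr_\cc\bmgamma$ with $RX \cong M$. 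For part~(4), a finite-dimensional module $M$ is lifted as in part~(3) to an object $X \in \copr_\cc\bmgamma$; finite-dimensionality combined with the relative $3$-Calabi--Yau duality on $\pvd_e(\bmgamma)$ forces the relevant Ext's to vanish and shows that $X$ also lies in $\pr_\cc\bmgamma$. A short exact sequence $0 \to L \to M \to N \to 0$ in $\mathrm{mod}\,J_{rel}$ is then lifted term by term, the connecting morphisms are lifted by part~(1), and the resulting composite is completed into a triangle in $\cc$; its middle term agrees with the chosen lift of $M$ by part~(1) and lies in $\pr_\cc\bmgamma \cap \copr_\cc\bmgamma$ by extension closedness (Lemma~\ref{Lemma: pr is extension closed} and its dual).

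The main obstacle I anticipate is the identification of $R$ on $\add\bmgamma$ in part~(1), and particularly of $R(\bmgamma) \cong J_{rel}$ as right $J_{rel}$-modules. Unlike Plamondon's $2$-Calabi--Yau cluster category, the relative cluster category $\cc$ carries no global Calabi--Yau duality, so one cannot appeal to such a duality directly. Instead, one must carefully exploit the relative t-structure from Proposition~\ref{Relative t-structure}: the derived contribution to $\Hom_\cc(\Si^{-1}\bmgamma, \bmgamma)$ computes $H^0(\bmgamma) = J_{rel}$, while the dual contribution arising from the localization at $\pvd_e(\bmgamma)$ must be shown to vanish by the $3$-CY formula applied in the correct degree. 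Once this normalization is carried out, Plamondon's diagrammatic arguments carry over without essential change.
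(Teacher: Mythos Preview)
Your outline for parts (1)--(3) is broadly on the right track and mirrors Plamondon's argument, but the ``main obstacle'' you anticipate is based on a miscomputation. You write that you need $R(\bmgamma)\cong J_{rel}$, but in fact $R(\bmgamma)=\Hom_{\cc}(\Si^{-1}\bmgamma,\bmgamma)=\Ext^1_{\cc}(\bmgamma,\bmgamma)=0$ by rigidity (this follows from the short exact sequence after Proposition~\ref{Prop: equivalence between pr-copr} with $X=Y=\bmgamma$, since $\Hom_{\cd}(\bmgamma,\Si\bmgamma)=H^1(\bmgamma)=0$). What you actually need is $\End_{\cc}(\bmgamma)\cong J_{rel}$, equivalently $R(\Si^{-1}\bmgamma)\cong J_{rel}$, and this is an immediate consequence of Proposition~\ref{Prop:fully faithful of pr}: the functor $\pi^{rel}$ is fully faithful on $\pr_{\cd}\bmgamma$, so $\End_{\cc}(\bmgamma)\cong\End_{\cd}(\bmgamma)=H^0(\bmgamma)=J_{rel}$. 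No Calabi--Yau duality enters here. Correspondingly, in your triangle $X\to T_1\to T_0\to\Si X$ (equivalently $\Si^{-1}T_1\to\Si^{-1}T_0\to X\to T_1$), the projective presentation of $RX$ is $R(\Si^{-1}T_1)\to R(\Si^{-1}T_0)\to RX\to 0$, not ``$RT_0\to RT_1\to RX\to 0$'' as you write; the terms $RT_i$ vanish.

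For part (4), your approach diverges from the paper's and has a genuine gap. The paper does not lift an arbitrary finite-dimensional module directly and then invoke duality; instead it proceeds by induction, first checking that each simple $S_i$ lifts to $\pr_{\cc}\bmgamma\cap\copr_{\cc}\bmgamma$ and then showing closure under extensions. For the extension step, given lifts $\overline{L},\overline{N}\in\pr_{\cc}\bmgamma\cap\copr_{\cc}\bmgamma$ of $L$ and $N$, one constructs a nine-diagram from $(\add\Si^{-1}\bmgamma)$-presentations of $\overline{L}$ and $\overline{N}$ to obtain a lift $\overline{M}\in\copr_{\cc}\bmgamma$; then, since $\Si^{-1}\overline{N}\in\copr_{\cc}\bmgamma$ and $R(\overline{L})=L$, part (1) forces the connecting map $\Si^{-1}\overline{N}\to\overline{L}$ to lie in $[\bmgamma]$, and one concludes $\overline{M}\in\pr_{\cc}\bmgamma$ via Lemma~\ref{Lem: X,Y,Z lie in pr}. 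Your proposed shortcut---``finite-dimensionality combined with the relative $3$-Calabi--Yau duality on $\pvd_e(\bmgamma)$ forces the relevant Ext's to vanish''---does not work as stated: Proposition~\ref{Prop:relative 3-CY} applies only to objects of $\pvd_e(\bmgamma)$, i.e.\ those whose restriction to $e\bmgamma e$ is acyclic, whereas a general finite-dimensional $J_{rel}$-module need not satisfy this. Moreover, membership in $\pr_{\cc}\bmgamma$ is not characterised by an Ext-vanishing condition in $\cc$; Lemma~\ref{Lemma: pr is extension closed} is a statement in $\per\bmgamma$, and your appeal to ``extension closedness'' in $\cc$ would require precisely the kind of lifting argument the paper carries out explicitly.
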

\begin{proof}
	The proof follows the lines of that of~\cite[Lemma 3.2]{plamondonCategoriesAmasseesAux2011}. We give the proof of $ (4) $. It is easy to see that we can lift the simple modules at each vertex to an object of $ \pr_{\cc}\bmgamma\cap\copr_{\cc}\bmgamma $. Let $ M $ be a finite dimensional $ J_{rel} $-module. Then $ M $ is nilpotent and it can be obtained from the simple modules by repeated extensions. Thus, it is enough to show this property is preserved under extensions in $ \mathrm{mod}J_{rel} $. 
	
	Let $ 0\ra L\ra M\ra N\ra0 $ be a short exact sequence where $ L $ and $ N $ are finite-dimensional. Suppose that $ L $ and $ M $ admit lifts $ \overline{L} $ and $ \overline{N} $ in $ \pr_{\cc}\bmgamma\cap\copr_{\cc}\bmgamma $, respectively. Let 
	$$ P_{1}^{L}\ra P_{0}^{L}\ra L\ra0\quad\text{and}\quad P_{1}^{N}\ra P_{0}^{N}\ra N\ra0  $$ be projective presentations of $ L $ and $ N $ , respectively. Then we have the following diagram
	\[
	\begin{tikzcd}
		0\arrow[r]&P_{1}^{L}\arrow[r]\arrow[d]&P_{1}^{L}\oplus P_{1}^{N}\arrow[r]\arrow[d]&P_{1}^{N}\arrow[r]\arrow[d]&0\\
		0\arrow[r]&P_{0}^{L}\arrow[r]\arrow[d]&P_{0}^{L}\oplus P_{0}^{N}\arrow[r]\arrow[d]&P_{0}^{N}\arrow[r]\arrow[d]&0\\
		0\arrow[r]&L\arrow[d]\arrow[r]&M\arrow[d]\arrow[r]&N\arrow[d]\arrow[r]&0\\
		&0&0&0,
	\end{tikzcd}
	\]
where the upper two rows are split.
By part $ (3) $, we lift the upper left square to a commutative diagram in $ \add(\Si^{-1}\relGammabf)\subseteq\cc $
\[
\begin{tikzcd}
	T_{1}^{L}\arrow[r]\arrow[d]&T_{1}^{L}\oplus T_{1}^{N}\arrow[d]\\
	T_{0}^{L}\arrow[r]&T_{0}^{L}\oplus T_{0}^{N}.
\end{tikzcd}
\]
The above diagram embeds in a nine-diagram in $ \cc $ as follows
\[
\begin{tikzcd}
	T_{1}^{L}\arrow[r]\arrow[d]&T_{1}^{L}\oplus T_{1}^{N}\arrow[d]\arrow[r]&T_{1}^{N}\arrow[r]\arrow[d]&\Si T_{1}^{L}\\
	T_{0}^{L}\arrow[r]\arrow[d]&T_{0}^{L}\oplus T_{0}^{N}\arrow[r]\arrow[d]&T_{0}^{N}\arrow[r]\arrow[d]&\Si T_{0}^{L}\\
	\overline{L}\arrow[r]\arrow[d]&\overline{M}\arrow[r]\arrow[d]&\overline{N}\arrow[r]\arrow[d]&\Si\overline{L}\\
	\Si T_{1}^{L}\arrow[r]&\Si T_{1}^{L}\oplus\Si T_{1}^{N}\arrow[r]&\Si T_{1}^{N}.
\end{tikzcd}
\]
Thus, $ \overline{M} $ is a lift of $ M $ in $ \pr_{\cc}(\Si^{-1}\relGammabf)=\copr_{\cc}(\relGammabf) $. Since $ \overline{N} $ is in $ \pr_{\cc}\relGammabf $, $ \Si^{-1}\overline{N} $ lies in $ \copr_{\cc}\relGammabf $. By part (1), the morphism $ \Si^{-1}\overline{N}\ra\overline{L} $ is in $ [\relGammabf] $. According to Lemma~\ref{Lem: X,Y,Z lie in pr} below, $ \overline{M} $ is also in $ \pr_{\cc}\relGammabf $.
\end{proof}

\begin{Lem}\cite[Lemma 3.4]{plamondonCategoriesAmasseesAux2011}\label{Lem: X,Y,Z lie in pr}
	Let $ X\ra Y\ra Z\xrightarrow{\epsilon}\Si X $ be a triangle in $ \cc $ such that $ \epsilon $ lies in $ [\relGammabf] $. If two of $ X $, $ Y $ and $ Z $ lie in $ \copr_{\cc}\relGammabf $, so does the third one.
\end{Lem}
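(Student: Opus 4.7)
The plan is to imitate Plamondon's argument (Lemma 3.4 of \cite{plamondonCategoriesAmasseesAux2011}): exploit the factorization of $\epsilon$ through $\add(\relGammabf)$ to compare the given triangle with an auxiliary triangle built from $P\in\add(\relGammabf)$, then invoke the octahedral/$3\times 3$ lemma to redistribute the copresentation among the vertices. Concretely, write $\epsilon=\beta\alpha$ with $\alpha\colon Z\to P$, $\beta\colon P\to\Si X$ and $P\in\add(\relGammabf)$, and complete $\beta$ to a triangle $X\to V\to P\xrightarrow{\beta}\Si X$. Since $\beta\alpha=\epsilon$ and $\epsilon$ is the connecting map of the original triangle, TR3 produces a morphism of triangles
\[
\begin{tikzcd}
X \ar[r]\ar[d,equal] & Y\ar[r]\ar[d,"\tilde\alpha"] & Z\ar[r,"\epsilon"]\ar[d,"\alpha"] & \Si X\ar[d,equal]\\
X \ar[r] & V\ar[r] & P\ar[r,"\beta"] & \Si X.
\end{tikzcd}
\]
Applying the $3\times 3$ lemma to this map of triangles, the cones of the four columns themselves fit into a triangle; the two outer cones vanish, forcing $\cone(\tilde\alpha)\cong\cone(\alpha)=:C$, and yielding the triangles
\[
Y\xrightarrow{\tilde\alpha}V\to C\to\Si Y \quad\text{and}\quad Z\xrightarrow{\alpha}P\to C\to\Si Z.
\]

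Next I would transport the whole discussion into $\cd$ using Corollary~\ref{Cor:fully faithful of copr}, which gives an equivalence $\copr_{\cd}\bmgamma\iso\copr_{\cc}\bmgamma$. The point is that the dual of Lemma~\ref{Lemma: pr is extension closed} identifies $\copr_{\cd}\bmgamma$ as the intersection $\cd^{\leqslant-1}\cap{}^{\perp}\cd^{\leqslant-3}\cap\per\bmgamma$, hence as an extension-closed subcategory of $\cd$. Combined with the fact that $P\in\add(\relGammabf)\subseteq\copr_{\cd}\bmgamma\cap\add(\relGammabf)$, the triangle $X\to V\to P\xrightarrow{\beta}\Si X$ lifts through the equivalence to a genuine $\cd$-triangle (after lifting $\beta$ along the fully faithful embeddings), which gives $V\in\copr_{\cc}\bmgamma$ whenever $X\in\copr_{\cc}\bmgamma$, and dually $V\in\copr_{\cc}\bmgamma$ whenever $\Si X\in\pr_{\cc}\bmgamma$, etc.

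With this ``two-out-of-three-in-$\cd$'' toolkit I can then dispatch each of the three cases using the two triangles produced above. For instance, if $X,Y\in\copr_{\cc}\bmgamma$ I first conclude $V\in\copr_{\cc}\bmgamma$ from the auxiliary triangle and $X\in\copr$, $P\in\add(\relGammabf)$; then from $Y\to V\to C\to\Si Y$ the equivalence plus extension-closedness force $C$ to have its copresentation supported on $\add(\relGammabf)$, which combined with the triangle $Z\to P\to C\to\Si Z$ exhibits the required copresentation of $Z$. The remaining two cases are symmetric, using the same two triangles and the same lifting/extension-closedness argument, but reading off the copresentation of the unknown vertex from the appropriate row.

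The delicate point of the proof, and where I expect the main obstacle, is in lifting the morphism $\beta\colon P\to\Si X$ (and the ensuing triangle) from $\cc$ to $\cd$ in a way compatible with the copresentation of $X$: morphisms in $\cc$ are fractions in $\per\bmgamma$ modulo $\pvd_e(\bmgamma)$, so one must check that the $3\times 3$ diagram constructed abstractly in $\cc$ already arises from a genuine diagram in $\cd$ with all the relevant vertices in $\copr_{\cd}\bmgamma\cup\add(\relGammabf)$; this is exactly the kind of compatibility for which the equivalences of Proposition~\ref{Prop:fully faithful of pr} and Corollary~\ref{Cor:fully faithful of copr} are designed, but it requires some care to verify.
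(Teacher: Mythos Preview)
Your proposal takes essentially the same approach as the paper, which simply says that Plamondon's proof of \cite[Lemma~3.4]{plamondonCategoriesAmasseesAux2011} carries over unchanged; your plan to factor $\epsilon$ through $\add(\relGammabf)$, run the octahedral/$3\times3$ axiom, and appeal to the equivalences of Proposition~\ref{Prop:fully faithful of pr} and Corollary~\ref{Cor:fully faithful of copr} together with extension-closedness of $\pr_{\cd}\bmgamma$ (Lemma~\ref{Lemma: pr is extension closed}) is exactly Plamondon's strategy. One small correction: shifting $\pr_{\cd}\bmgamma=\cd^{\leqslant0}\cap{}^{\perp}\cd^{\leqslant-2}$ by $\Sigma^{-1}$ gives $\copr_{\cd}\bmgamma=\cd^{\leqslant1}\cap{}^{\perp}\cd^{\leqslant-1}\cap\per\bmgamma$, not $\cd^{\leqslant-1}\cap{}^{\perp}\cd^{\leqslant-3}$; and in practice you need not lift the whole $3\times3$ diagram to $\cd$ coherently---it suffices, case by case, to lift the single morphism between the two known objects (e.g.\ $\alpha$ and $\beta$ separately when $X,Z\in\copr_{\cc}\bmgamma$) and form the triangle in $\cd$, which sidesteps the compatibility worry you flag at the end.
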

\begin{proof}
	The proof of~\cite[Lemma 3.4]{plamondonCategoriesAmasseesAux2011} also works for our situation. 
\end{proof}

\begin{Prop}\label{Higgs is extriangulated}\label{Prop: H is extension closed}
The Higgs category $ \ch(Q,F,W) $ is an extension closed subcategory of $ \cc $. Thus, it becomes an extriangulated category in the sense of Nakaoka-Palu~\cite{nakaokaExtriangulatedCategoriesHovey2019}, cf.~also \cite{Palu2023}.
\end{Prop}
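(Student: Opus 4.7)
The strategy is to write
\[
\ch \;=\; \pr_{\cc}\bmgamma \,\cap\, \copr_{\cc}\bmgamma \,\cap\, \cy \,\cap\, \cf
\]
where $\cy = {}^{\perp}(\Sigma^{>0}\cp)\cap(\Sigma^{<0}\cp)^{\perp}$ and $\cf$ denotes the class of $X\in\cc$ with $\Hom_{\cc}(\Sigma^{-1}\bmgamma,X)$ finite-dimensional, and then to check that each of the four subcategories on the right is stable under extensions in $\cc$. The last two are easy: given a triangle $X\to Y\to Z\to\Sigma X$ with $X,Z$ in $\cy$ (resp.\ in $\cf$), applying $\Hom_{\cc}(\cp,-)$ and $\Hom_{\cc}(-,\cp)$ (resp.\ $\Hom_{\cc}(\Sigma^{-1}\bmgamma,-)$) yields long exact sequences in which the outer terms vanish (resp.\ are finite-dimensional), so the middle term has the same property by exactness and the five-lemma.

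The real work is the extension-closedness of $\pr_{\cc}\bmgamma$, and its dual for $\copr_{\cc}\bmgamma$. Here I would adapt Plamondon's proof of the analogous statement in~\cite{plamondonCategoriesAmasseesAux2011}. Given a triangle $X\to Y\to Z\to\Sigma X$ in $\cc$ with $X,Z\in\pr_{\cc}\bmgamma$, use Proposition~\ref{Prop:fully faithful of pr} to choose preimages $\tilde X,\tilde Z\in\pr_{\cd}\bmgamma$ and fix $\add(\bmgamma)$-presentations $T_1^X\to T_0^X\to\tilde X\to\Sigma T_1^X$ and $T_1^Z\to T_0^Z\to\tilde Z\to\Sigma T_1^Z$ in $\per\bmgamma$. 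Pull the connecting morphism $Z\to\Sigma X$ back through $T_0^Z\to Z$ and lift along $\Sigma T_0^X\to\Sigma\tilde X$; this lift exists because the obstruction to lifting lies in $\Hom_{\cc}(T_0^Z,\Sigma^2 T_1^X)$, which vanishes using the $\cd^{\leqslant 0}$-description of $\pr_{\cd}\bmgamma$ from Lemma~\ref{Lemma: pr is extension closed} together with the calculation in the proof of Proposition~\ref{Prop:fully faithful of pr}. A standard application of the octahedral axiom, exactly parallel to the horseshoe construction used in the proof of part (4) of Proposition~\ref{Prop: modules}, then assembles a triangle
\[
T_1^X\oplus T_1^Z \longrightarrow T_0^X\oplus T_0^Z \longrightarrow \tilde Y \longrightarrow \Sigma(T_1^X\oplus T_1^Z)
\]
in $\per\bmgamma$ whose image under $\pi^{rel}$ is isomorphic to $X\to Y\to Z\to\Sigma X$. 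Since $\pr_{\cd}\bmgamma$ is extension-closed in $\per\bmgamma$ by Lemma~\ref{Lemma: pr is extension closed}, $\tilde Y\in\pr_{\cd}\bmgamma$, hence $Y\cong\pi^{rel}(\tilde Y)\in\pr_{\cc}\bmgamma$. The $\copr$ case is dual.

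The main obstacle is precisely the lifting step: the morphism $Z\to\Sigma X$ lives in $\cc$, not in $\per\bmgamma$, and $\Sigma\tilde X$ need not lie in $\pr_{\cd}\bmgamma$, so one cannot directly invoke the fullness of $\pi^{rel}\colon\pr_{\cd}\bmgamma\hookrightarrow\pr_{\cc}\bmgamma$. It is the projectivity of $T_0^Z\in\add\bmgamma$ in $\per\bmgamma$, combined with the vanishing supplied by Proposition~\ref{Prop:relative 3-CY} (which kills morphisms from objects of $\pr_{\cd}\bmgamma$ into $\Sigma^{\geqslant 2}\cd^{\leqslant 0}$), that allows the lift to be produced and its ambiguity to be killed after projection to $\cc$. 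Once extension-closedness is established, the induced extriangulated structure on $\ch$ is automatic from \cite[Remark~2.18]{nakaokaExtriangulatedCategoriesHovey2019}: conflations are exactly the triangles of $\cc$ whose three terms all lie in $\ch$.
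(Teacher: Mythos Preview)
Your decomposition $\ch = \pr_{\cc}\bmgamma \cap \copr_{\cc}\bmgamma \cap \cy \cap \cf$ is correct, and the arguments for $\cy$ and $\cf$ go through. The genuine gap is in the treatment of $\pr_{\cc}\bmgamma$ (and dually $\copr_{\cc}\bmgamma$). The obstruction you isolate lives in $\Hom_{\cc}(T_0^Z,\Sigma^2 T_1^X)$, and this space does \emph{not} vanish. All the ingredients you invoke --- Lemma~\ref{Lemma: pr is extension closed}, Proposition~\ref{Prop:fully faithful of pr}, Proposition~\ref{Prop:relative 3-CY} --- control morphisms in $\per\bmgamma$, not in $\cc$; the full faithfulness of $\pi^{rel}$ on $\pr_{\cd}\bmgamma$ says nothing about $\Hom_{\cc}(\pr_{\cd}\bmgamma,\Sigma^2\pr_{\cd}\bmgamma)$, since $\Sigma^2\bmgamma\notin\pr_{\cd}\bmgamma$. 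Concretely, take $T_0^Z=T_1^X=\bmgamma$ and apply the short exact sequence stated just before Definition~\ref{Def: Higgs cat} with $X=\bmgamma$ and $Y=\Sigma\bmgamma\in\pr_{\cd}\bmgamma$: since $\Hom_{\cd}(\bmgamma,\Sigma^2\bmgamma)=H^2(\bmgamma)=0$, one obtains $\Hom_{\cc}(\bmgamma,\Sigma^2\bmgamma)\cong D\Hom_{\cd}(\bmgamma,\bmgamma)=DJ_{rel}\neq 0$. (Already for $F=\emptyset$ and $Q$ a single vertex, $\Sigma^2\cong\id$ on $\cc$ and the space equals $\End_{\cc}(\bmgamma)=k$.) So the lift does not exist in general, the octahedral construction cannot be carried out, and in fact there is no reason to expect $\pr_{\cc}\bmgamma$ to be extension closed in $\cc$ unconditionally --- Lemma~\ref{Lem: X,Y,Z lie in pr} only gives this when the connecting morphism lies in $[\bmgamma]$.

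The paper does not attempt to prove that $\pr_{\cc}\bmgamma$ is extension closed. After observing that $R(Y)$ is finite-dimensional, it invokes Proposition~\ref{Prop: modules}(4) to produce a \emph{reference object} $Y'\in\ch$ with $R(Y')\cong R(Y)$. It then establishes $Y\in\copr_{\cc}\bmgamma$ via a horseshoe and compares $Y$ with $Y'$ inside $\copr_{\cc}\bmgamma/[\bmgamma]\simeq\mathrm{mod}\,J_{rel}$ (Proposition~\ref{Prop: modules}(3)): since $R(Y)\cong R(Y')$, one gets $Y\oplus T\cong Y'\oplus T'$ with $T,T'\in\add\bmgamma$, and idempotent completeness (Lemma~\ref{Lem: Higgs is idempotent complete}) forces $Y\in\ch$. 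The crucial point you are missing is this passage through $\mathrm{mod}\,J_{rel}$: membership of $Y$ in $\pr_{\cc}\bmgamma$ and in $\cy$ is obtained \emph{indirectly}, as a consequence of $Y$ being a direct summand of $Y'\oplus T'\in\ch$, not via a separate extension-closedness argument for those subcategories.
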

\begin{proof}
	Let $ X\ra Y\ra Z\ra\Si X $ be a triangle in $ \cc $ such that $ X,Z\in\ch(Q,F,W) $. We need to show that $ Y $ also lies in $ \ch(Q,F,W) $.
	
	Applying the functor $ R=\Hom_{\cc}(\Si^{-1}\bmgamma,?)\colon\cc\ra\Mod J_{rel} $, we get a long exact sequence
	$$ \cdots\ra R(X)\ra R(Y)\ra R(Z)\ra\Hom_{\cc}(\relGammabf,\Si^{2}X)=0\ra\cdots .$$
	Since $ X $ lies in $ \pr_{\cc}\relGammabf $, we see that $ \Hom_{\cc}(\relGammabf,\Si^{2}X) $ vanishes. By the definition of the Higgs category, the vector spaces $ R(X) $ and $ R(Z) $ are finite dimensional. Thus, $ R(Y) $ is also finite dimensional. Then by $ 4) $ of Proposition~\ref{Prop: modules}, there exists an object $ Y'\in\ch(Q,F,W) $ such that $ R(Y')\cong R(Y) $. 
	
	We next show that $ Y $ also lies in $ \copr_{\cc}\relGammabf=\pr_{\cc}(\Si^{-1}\relGammabf) $. Let $ \Si^{-1}T_{1}^{X}\ra\Si^{-1}T_{0}^{X}\ra X\ra\Si T_{1}^{X} $ and $ \Si^{-1}T_{1}^{Z}\ra\Si^{-1}T_{0}^{Z}\ra Z\ra\Si T_{1}^{Z} $ be $ \add(\Si^{-1}\relGammabf) $-presentations of $ X $ and $ Z $, respectively. Since $ \Hom_{\cc}(\Si^{-1}T_{0}^{Z},\Si X)=0 $, the composition $ \Si^{-1}T_{0}^{Z}\ra Z\ra\Si X $ factors through $ Y $. This induces a commutative square
	\[
	\begin{tikzcd}
		\Si^{-1}T_{0}^{X}\oplus\Si^{-1}T_{0}^{Z}\arrow[r]\arrow[d]&\Si^{-1}T_{0}^{Z}\arrow[d]\\
		Y\arrow[r]&Z.
	\end{tikzcd}
	\]
	It can be completed into a nine-diagram
	\[
	\begin{tikzcd}
		\Si^{-1}T_{1}^{X}\arrow[r]\arrow[d]&\Si^{-1}T_{1}^{X}\oplus\Si^{-1}T_{1}^{Z}\arrow[r]\arrow[d]&\Si^{-1}T_{1}^{Z}\arrow[d]\arrow[r]&T_{1}^{X}\\
		\Si^{-1}T_{0}^{X}\arrow[r]\arrow[d]&\Si^{-1}T_{0}^{X}\oplus\Si^{-1}T_{0}^{Z}\arrow[r]\arrow[d]&\Si^{-1}T_{0}^{Z}\arrow[d]\arrow[r]&T_{0}^{X}\\
		X\arrow[r]\arrow[d]&Y\arrow[r]\arrow[d]&Z\arrow[d]\arrow[r]&\Si X\\
		T_{1}^{X}&T_{1}^{X}\oplus T_{1}^{Z}&T_{1}^{Z}.
	\end{tikzcd}
	\]
This shows that $ Y $ is in $ \copr_{\cc}\relGammabf $. By $ 3) $ of Proposition~\ref{Prop: modules} and the fact that $ R(Y)\cong R(Y') $, there exist objects $ T $ and $ T' $ in $ \add(\relGammabf) $ such that $ Y\oplus T\cong Y'\oplus T' $ in $ \copr_{\cc}\relGammabf $. By Lemma~\ref{Lem: Higgs is idempotent complete}, the object $ Y $ lies in the Higgs category $ \ch(Q,F,W) $.
	
\end{proof}

\subsection{Mutations induce equivalences}\label{Subsection: Mutations induce equivalences}
Let $ (Q,F,W) $ be an ice quiver with potential. Let $ \bmgamma' $ be the complete relative Ginzburg dg algebra of $ \mu_{v}(Q,F,W)=(Q',F',W') $. For any vertex $ j $ of $ Q $, let $ \bmgamma_{i}=e_{i}\bmgamma $ and $ \bmgamma'_{i}=e_{i}\bmgamma' $. 

\subsubsection{Mutation at unfrozen vertices}\

Let $ v $ be an unfrozen vertex of $ Q $ not involved in any oriented cycle of length 2. As seen in Subsection~\ref{Subsection:mutations}, one can mutate $ (Q,F,W) $ at the vertex $ v $. We assume that $ v $ is the source of at least one arrow.

\begin{Thm}\cite[Theorem 5.3]{wuCategorificationIceQuiver2021}
	\begin{itemize}
		\item[1)] There is a triangle equivalence $ \Phi_{+} $ from $ \cd(\bmgamma') $ to $ \cd(\bmgamma) $ sending $ \bmgamma'_{i} $ to $ \bmgamma_{i} $ if $ i\neq v $ and to the cone $ \bmgamma^{*}_{v} $ of the morphism $$ \bmgamma_{v}\longrightarrow\bigoplus_{\alpha\in Q_{1},s(\alpha)=v}\bmgamma_{t(\alpha)} $$ whose components are given by left multiplication by $ \alpha $ if $ i=v $. The functor $ \Phi_{+} $ restricts to triangle equivalences from $ \per\bmgamma' $ to $ \per\bmgamma $ and from $ \pvd(\bmgamma') $ to $ \pvd(\bmgamma) $.
		\item[2)] The following diagram commutes
		\[
		\begin{tikzcd}
			&&\cd(\bm\Gamma'_{rel})\arrow[dd,"\Phi_{+}"]\\
			\cd(\bm\Pi_{2}(F))\arrow[urr,"(\bm{G}'_{rel})^{*}"]\arrow[drr,swap,"(\bm{G}_{rel})^{*}"]&&\\
			&&\cd(\bm\bmgamma).
		\end{tikzcd}
		\]
	\end{itemize}
\end{Thm}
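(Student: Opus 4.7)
The plan is to model the proof on the Keller--Yang derived equivalence for Ginzburg dg algebras, adapted to the relative (frozen) setting. First I would define, for the unfrozen vertex $v$, the candidate silting object
\[
T \;=\; \bmgamma_v^{*} \;\oplus\; \bigoplus_{i\neq v}\bmgamma_i \quad\in\quad \per\bmgamma ,
\]
where $\bmgamma_v^{*}$ is the cone of the morphism $\bmgamma_v \to \bigoplus_{s(\alpha)=v}\bmgamma_{t(\alpha)}$ with components given by left multiplication by $\alpha$. The first step is to show that $T$ is a silting object, i.e.\ that $\Hom_{\cd(\bmgamma)}(T,\Sigma^iT)=0$ for $i>0$ and that $T$ generates $\per\bmgamma$. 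The $\Hom$ vanishing is a direct computation using the triangle defining $\bmgamma_v^*$ combined with the fact that there are no loops at $v$ and the dg structure on $\bmgamma$; generation follows because one recovers $\bmgamma_v$ from $\bmgamma_v^*$ and the $\bmgamma_{t(\alpha)}$ up to extensions in the opposite direction (the hypothesis that $v$ has at least one outgoing arrow ensures the triangle is nondegenerate in the required sense).

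The second step is to identify the dg endomorphism algebra. One computes a (minimal) model for $\RHom_{\bmgamma}(T,T)$ and checks, by matching arrows, cyclic derivatives and the higher-degree generators, that it is quasi-isomorphic to $\bmgamma'=\bmgamma(Q',F',W')$. This is the heart of the argument and the main obstacle: one must verify that the ``composite arrow'' and ``$\alpha^*\beta^*[\beta\alpha]$'' terms appearing in the mutated potential $W'$ come out exactly from the $A_\infty$-multiplications among the summands of $T$, and that the degree~$-1$ and degree~$-2$ generators transform correctly under the triangle. Once this is done, a standard tilting argument (via $\Phi_+ = ?\lten_{\bmgamma'}T$) produces the triangle equivalence $\Phi_+\colon\cd(\bmgamma')\iso\cd(\bmgamma)$ sending $\bmgamma'_i$ to $\bmgamma_i$ for $i\neq v$ and $\bmgamma'_v$ to $\bmgamma_v^*$.

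The restrictions to $\per$ and $\pvd$ are then automatic: $\Phi_+$ preserves compactness because $T$ is perfect, so it restricts to an equivalence $\per\bmgamma'\iso\per\bmgamma$; and it preserves $\pvd$ because an object $M$ has cohomology of finite total dimension over $\bmgamma$ if and only if $\RHom_\bmgamma(T,M)$ has the same property over $\bmgamma'$ (one checks this summand by summand, noting each $\bmgamma_i^*$ is a finite extension of $\bmgamma_j$'s).

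For part (2), since $v$ is unfrozen, the inclusion $\bm\Pi_2(F)\hookrightarrow\bmgamma$ and $\bm\Pi_2(F)\hookrightarrow\bmgamma'$ involve only frozen data, which is preserved verbatim by the mutation rule. Under $\Phi_+$, the frozen summands $\bmgamma'_i$ ($i\in F_0$) are sent to $\bmgamma_i$, so on objects the triangle of functors commutes. To check commutativity on arrows and on the degree $0$, $-1$ generators $a$, $\tilde a$, $r_i$ of $\bm\Pi_2(F)$, one uses the explicit formulas for $\bm G_{rel}$ and $\bm G'_{rel}$ recalled in Section~\ref{ss:Ginzburg algebras}: the arrows $a\in F_1$ are literally unchanged; $-\partial_a W$ and $-\partial_a W'$ differ by exactly the modifications made in the mutation (replacement of $\beta\alpha$ by $[\beta\alpha]$ and addition of $[\beta\alpha]\alpha^*\beta^*$), which under $\Phi_+$ correspond via the defining triangle of $\bmgamma_v^*$; and the $r_i$-images match because the sum $\sum_{\alpha}[\alpha,\alpha^*]$ is invariant under the reversal of arrows at $v$ when expressed through $\bmgamma_v^*$. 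This step is straightforward once the identification of $\End(T)$ with $\bmgamma'$ is fixed, so the real work is concentrated in the endomorphism algebra computation above.
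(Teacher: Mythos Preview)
The paper does not prove this theorem at all: it is imported verbatim from \cite[Theorem~5.3]{wuCategorificationIceQuiver2021}, with no argument given in the present article. Your outline is nonetheless the correct one, and it is precisely the strategy carried out in that reference, which in turn adapts the Keller--Yang argument \cite{kellerYangDerived equivalences} to the relative setting: build the tilting bimodule $T=\bmgamma_v^*\oplus\bigoplus_{i\neq v}\bmgamma_i$, compute its dg endomorphism algebra and identify it with $\bmgamma'$, and then the derived tensor product $?\lten_{\bmgamma'}T$ gives $\Phi_+$. So there is nothing to compare against in this paper, but your sketch matches the source.

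One small point worth flagging: you correctly note that the endomorphism computation is the real work, but you should be aware that in the complete (pseudocompact) setting one does not typically compute a minimal $A_\infty$-model directly. Instead, the cited proof constructs an explicit dg bimodule representing $T$ and an explicit dg algebra morphism $\bmgamma'\to\mathrm{REnd}_{\bmgamma}(T)$ by hand, then checks it is a quasi-isomorphism degree by degree. Your description of part~(2) is accurate; the commutativity really is immediate once the explicit bimodule description of $\Phi_+$ is in place, since the frozen summands are untouched by the construction.
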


\begin{Rem}\cite[Remark 5.5]{wuCategorificationIceQuiver2021}
If $ v $ is the target of at least one arrow, there is also a triangle equivalence $ \Phi_{-}\colon\cd(\bm\Gamma'_{rel}) \ra\cd(\bm\bmgamma) $ which, for $ j\neq v $, sends the $ \bm\Gamma'_{j} $ to $ \bm\Gamma_{j} $ and for $ j=v $, to the shifted cone 
	$$ \Si^{-1}(\bigoplus_{\beta\in Q_{1};t(\beta)=v}\bm\Gamma_{s(\beta)}\ra \bm\Gamma_{v}),$$ where we have a summand $ \bm\Gamma_{s(\beta)} $ for each arrow $ \beta $ of $ Q $ with target $ i $ and the corresponding component of the morphism is left multiplication by $ \beta $. Moreover, the two equivalences $ \Phi_{+} $ and $ \Phi_{-} $ are related by the twist functor $ t_{S_{v}} $ with respect to the 3-spherical object $ S_{v} $, i.e. $ \Phi_{-}=t_{S_{v}}\circ\Phi_{+} $. For each object $ X $ in $ \cd(\bm{\Gamma}_{rel}) $, the object $ t_{S_{v}}(X) $ is given by the following triangle
	$$ \RHom(S_{v},X)\ten_{k}S_{v}\ra X\ra t_{S_{v}}(X)\ra\Si\RHom(S_{v},X)\ten_{k}S_{v} .$$
\end{Rem}

\bigskip

%We will denote by $ S_{j} $ and $ S'_{j} $ respectively the simple modules over $ \relGammabf $ and $ \relGammabf' $ attached to the vertex $ j $.
%\begin{Lem}\cite[Lemma 3.12]{kellerDerivedEquivalencesMutations2011a}
%	Let $ j $ be a vertex of $ Q $. If $ j=v $, we have an isomorphism in $ \cd(\relGammabf) $
%	$$ \Phi_{+}(S'_{v})\cong\Si S_{v} .$$
%	If $ j\neq v $, then $ \Phi_{+}(S'_{j}) $ is isomorphic in $ \cd(\relGammabf) $ to the cone of the canonical map
%	$$ \Si^{-1}S_{j}\ra\Hom_{\cd(\relGammabf)}(\Si^{-1}S_{j},S_{v})\ten_{k}S_{v}. $$
%\end{Lem}

\begin{Prop}\label{Prop: mutation at unfrozen}
	The functors
	\[
	\begin{tikzcd}
		\Phi_{\pm}\colon\per\bmgamma'\arrow[r,shift left=0.9,"\sim"]\arrow[r,shift right=0.9,"\sim",swap]&\per\bmgamma
	\end{tikzcd}
	\]
	induce equivalences $ \Phi_{\pm}\colon \pvd_{e}\bmgamma'\iso\pvd_{e}\bmgamma $ and 
	\[
	\begin{tikzcd}
		\Phi_{\pm}\colon(\cc)'\coloneqq\cc(Q',F',W')\arrow[r,shift left=0.9,"\sim"]\arrow[r,shift right=0.9,"\sim",swap]&\cc.
	\end{tikzcd}
	\]
	We have $ \Phi_{+}\simeq\Phi_{-}\coloneqq\Phi $ and $ \Phi $ induces an equivalence $ \ch'\coloneqq\ch(Q',F',W')\iso\ch(Q,F,W) $ such that the following diagram commutes
	\[
	\begin{tikzcd}
		\per\bmgamma'\arrow[r,two heads]\arrow[d,shift left=-2,"\Phi_{+}",swap]\arrow[d,shift right=-1,"\Phi_{-}"]&(\cc)'\arrow[d,"\Phi"]&\ch'\arrow[l,symbol=\subset]\arrow[d,"\Phi"]\\
		\per\bmgamma\arrow[r,two heads]&\cc&\ch\arrow[l,symbol=\subset]
	\end{tikzcd}
	\]
\end{Prop}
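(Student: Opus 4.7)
The plan proceeds in four movements, with the difficulty concentrated in the last.

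\textbf{First}, I would show that $\Phi_\pm: \per\bmgamma' \iso \per\bmgamma$ restricts to an equivalence $\pvd_e(\bmgamma') \iso \pvd_e(\bmgamma)$. Since $\pvd_e(\bmgamma) = \thick\langle S_i : i \in Q_0 \setminus F_0 \rangle$ and since $v$ is unfrozen with $F_0 = F'_0$, it suffices to compute that $\Phi_+$ sends each unfrozen simple $S'_i$ to an object of $\pvd_e(\bmgamma)$. By standard formulas for mutation of simple-minded collections, $\Phi_+(S'_v) \cong \Sigma S_v$ and, for $i \neq v$ unfrozen, $\Phi_+(S'_i)$ is either $S_i$ or an extension of $S_i$ by copies of $S_v$; all these lie in $\pvd_e(\bmgamma)$, and the inverse direction is symmetric. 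Since $\Phi_\pm$ preserves both $\per$ and $\pvd_e$, it descends to a triangle equivalence on $\per/\pvd_e$, hence on its idempotent completion, yielding $\Phi_\pm: \cc' \iso \cc$.

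\textbf{Second}, I would establish $\Phi_+ \simeq \Phi_-$ in $\cc$ using the identity $\Phi_- \simeq t_{S_v} \circ \Phi_+$ from the preceding Remark. For any $X \in \cd(\bmgamma)$ the defining triangle of the spherical twist
\[
\RHom_{\cd}(S_v,X)\otimes_k S_v \longrightarrow X \longrightarrow t_{S_v}(X) \longrightarrow \Sigma\bigl(\RHom_{\cd}(S_v,X)\otimes_k S_v\bigr)
\]
has leftmost term in $\pvd_e(\bmgamma)$ (since $v$ is unfrozen), which becomes zero in $\cc$. Hence $t_{S_v} \simeq \id$ on $\cc$, giving $\Phi_+ \simeq \Phi_- =: \Phi$.

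\textbf{Third (the main obstacle)}, I would show that $\Phi$ restricts to an equivalence $\ch' \iso \ch$ and that the displayed square commutes. Since $v \notin F_0$, one has $\Phi_+(\bmgamma'_i) = \bmgamma_i$ for every $i \in F_0$, so $\Phi$ preserves $\cp = \add(e\bmgamma)$ and hence the subcategory $\cy = {}^\perp(\Sigma^{>0}\cp) \cap (\Sigma^{<0}\cp)^\perp$. The delicate issue is that $\Phi(\bmgamma')$ equals not $\bmgamma$ but the mutated cluster-tilting object $\bigoplus_{i\neq v}\bmgamma_i \oplus \bmgamma^*_v$, so $\Phi$ does not visibly preserve $\pr_\cc\bmgamma$. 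I would resolve this by arguing that $\pr^F_\cc\bmgamma \cap \copr^F_\cc\bmgamma$ admits a description purely in terms of $\cp$ and the glued $t$-structure on $\cc$ (combining the intrinsic characterization of $\pr_\cd\bmgamma$ from Lemma~\ref{Lemma: pr is extension closed} with the silting reduction $\cz/[\cp] \hookrightarrow \per\overGamma$ of Proposition~\ref{Prop:silting reduction-fully faithful}), which is then automatically preserved by any triangle equivalence fixing $\cp$. Alternatively, one can argue directly using the exchange triangle $\bmgamma_v \to \bigoplus_{\alpha: v \to \bullet}\bmgamma_{t(\alpha)} \to \bmgamma^*_v \to \Sigma\bmgamma_v$ that $\pr_\cc\bmgamma$ and $\pr_\cc \Phi(\bmgamma')$ coincide in $\cc$, and dually for $\copr$. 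The finite-dimensionality condition $\dim\Hom_\cc(\Sigma^{-1}\bmgamma, X) < \infty$ then transfers since $\bmgamma$ and $\Phi(\bmgamma')$ differ only at the unfrozen summand $\bmgamma_v$, whose contribution is controlled by the exchange triangle. Commutativity of the displayed diagram is then immediate from the construction of $\Phi$.
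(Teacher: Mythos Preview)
Your proposal is correct and aligns closely with the paper's proof. Two minor differences worth noting: for the restriction to $\pvd_e$, the paper bypasses any computation with simples by using the characterization $\pvd_e(\bmgamma) = \pvd(\bmgamma) \cap (\oplus_{i\in F_0}\bmgamma_i)^\perp$, which is immediately preserved since $\Phi_\pm$ fixes each frozen $\bmgamma_i$; and for the Higgs-category step, the paper adopts precisely your ``alternative'' route, invoking \cite[Proposition~2.7]{plamondonCategoriesAmasseesAux2011} to obtain $\pr_\cc\bmgamma = \pr_\cc(\mu_v(\bmgamma))$ directly, rather than attempting an intrinsic $t$-structure characterization. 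Your argument for $\Phi_+\simeq\Phi_-$ via the global twist triangle is in fact slightly cleaner than the paper's, which only checks the isomorphism on the generators $\bmgamma'_j$.
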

\begin{proof}
	We know that $ \Phi_{\pm} $ induces triangle equivalences $ \pvd(\relGammabf')\iso\pvd(\relGammabf) .$ It is easy to see that we have identities
	$$ \pvd_{e}(\relGammabf)=\pvd(\relGammabf)\cap(\oplus_{i\in F_{0}}\bmgamma_{i})^{\perp_{\per\relGammabf}} $$
	and
	$$ \pvd_{e}(\relGammabf')=\pvd(\relGammabf')\cap(\oplus_{i\in F'_{0}}\bmgamma'_{i})^{\perp_{\per\relGammabf'}} .$$
	Thus, $ \Phi_{\pm} $ induces equivalences $ \Phi_{\pm}\colon \pvd_{e}\bmgamma'\iso\pvd_{e}\bmgamma $ and 
	\[
	\begin{tikzcd}
		\Phi_{\pm}\colon(\cc)'\coloneqq\cc(\mu_{v}(Q,F,W))\arrow[r,shift left=0.9,"\sim"]\arrow[r,shift right=0.9,"\sim",swap]&\cc.
	\end{tikzcd}
	\]
	
	On the level of objects, when $ j\neq v $, it is clear that $ \Phi_{+}(\bmgamma'_{j})=\Phi_{-}(\bmgamma_{j})=\bmgamma_{j} $.
	When $ j=v $, then $ \Phi_{+}(\bmgamma'_{j}) $ is the cone $ \bmgamma^{*}_{v} $ of the morphism $$ \bmgamma_{v}\longrightarrow\bigoplus_{\alpha\in Q_{1},s(\alpha)=v}\bmgamma_{t(\alpha)} $$ whose components are given by left multiplication by $ \alpha $. Let $ X_{v} $ be this mapping cone. It is easy to see that $ \RHom(S_{v},X_{v})\ten_{k}S_{v} $ belongs to $ \pvd_{e}(\relGammabf) $. Thus, for any vertex $ j\in Q_{0} $, we have an isomorphism
	$$ \Phi_{+}(\bmgamma'_{j})\cong\Phi_{-}(\bmgamma'_{j}) $$
	in the relative cluster category $ \cc $. This shows that $ \Phi_{+}\simeq\Phi_{-} \coloneqq\Phi\colon(\cc)'\ra\cc $.
	
	By \cite[Proposition 2.7]{plamondonCategoriesAmasseesAux2011}, we know that $$ \pr_{\cc}\relGammabf=\pr_{\cc}(\mu_{v}(\relGammabf)), $$
	where $ \mu_{v}(\relGammabf) $ is the $ \relGammabf' $-$ \relGammabf $-bimodule $ 
	\bmgamma^{*}_{v}\oplus\oplus_{j\neq v}\bmgamma_{j} $. Then it is clear that $ \Phi $ induces an equivalence $$ \Phi\colon\pr_{(\cc)'}\relGammabf'\iso\pr_{\cc}(\mu_{v}(\relGammabf)) .$$ Hence, we have an equivalence $ \Phi\colon\pr_{(\cc)'}\relGammabf'\iso\pr_{\cc}\relGammabf $. Similarly, we have an equivalence $$ \Phi\colon\copr_{(\cc)'}\relGammabf'\iso\copr_{\cc}\relGammabf .$$
	
	Let $ \cp'=\add(e'\relGammabf') $ with $ e'=\sum_{i\in F'_{0}}e_{i} $. Then we define $ \cy' $ to be the following full subcategory of $ (\cc)' $
	$$ \cy'={}^{\perp}(\Si^{>0}\cp')\cap(\Si^{<0}\cp')^{\perp}. $$
	
	Hence $ \Phi $ induces an equivalence
	$$ \pr_{(\cc)'}(\relGammabf')\cap\copr_{(\cc)'}(\relGammabf')\cap\cy'\iso\pr_{\cc}\relGammabf\cap\copr_{\cc}\relGammabf\cap\cy .$$
	
	Thus, by the definition of the Higgs category, we have an equivalence
	$$ \Phi\colon\ch'\iso\ch .$$
	
\end{proof}

\subsubsection{Mutation at frozen vertices}\

Now let $ v $ be a frozen source. As seen in Subsection~\ref{Subsection:mutations}, one can mutate $ (Q,F,W) $ at $ v $. Write $ (Q',F',W')=\mu_{v}(Q,F,W) $. Let $ \bm\bmgamma=\bm\bmgamma_{rel}(Q,F,W) $ and $ \bm\Gamma'=\bm\bmgamma_{rel}(Q',F',W') $ be the complete relative Ginzburg dg algebras associated to $ (Q,F,W) $ and $ (Q',F',W') $ respectively. For a vertex $ i $, let $ \bm\Gamma_{i}=e_{i}\bm\bmgamma $ and $ \bm\Gamma'_{i}=e_{i}\bm\Gamma' $.

\begin{Thm}\cite[Theorem 6.8]{wuCategorificationIceQuiver2021}\label{Thm: mutation at frozen source}
	We have a triangle equivalence
	$$ \Psi_{+}\colon\cd(\bm\Gamma')\ra\cd(\bm\bmgamma) ,$$
	which sends the $ \bm\Gamma'_{i} $ to $ \mathbf\Gamma_{i} $ for $ i\neq v $ and $ \mathbf\Gamma_{v} $ to the cone
	$$ \cone(\bm\Gamma_{v}\ra\bigoplus_{\alpha}\bm\Gamma_{t(\alpha)}),$$ where we have a summand $ \bm\Gamma_{t(\alpha)} $ for each arrow $ \alpha $ of $ F $ with source $ v $ and the corresponding component of the map is the left multiplication by $ \alpha $. The functor $ \Psi_{+} $ restricts to triangle equivalences from $ \per(\bm\Gamma') $ to $ \per(\bm\bmgamma) $ and from $ \pvd(\bm\Gamma') $ to $ \pvd(\bm\bmgamma) $. Moreover, the following square commutes up to isomorphism
	\begin{equation*}\label{Diagram:left ice mutation}
		\begin{tikzcd}
			\cd(\bm\Pi_{2}(F'))\arrow[r,"G'^{*}"]\arrow[d,swap,"\mathrm{can}"]&\cd(\bm\Gamma')\arrow[d,"\Psi_{+}"]\\
			\cd(\bm\Pi_{2}(F))\arrow[d,swap,"t^{-1}_{S_{v}}"]&\cd(\bm\bmgamma)\arrow[d,equal]\\
			\cd(\bm\Pi_{2}(F))\arrow[r,swap,"G^{*}"]&\cd(\bm\bmgamma),
		\end{tikzcd}
	\end{equation*}
	where $\mathrm{can}$ is the canonical functor induced by an identification between $ \bm\Pi_{2}(F') $ and $ \bm\Pi_{2}(F) $ and $ t^{-1}_{S_{v}} $ is the inverse twist functor with respect to the $ 2 $-spherical object $ S_{v} $, which gives rise to a triangle
	$$ t^{-1}_{S_{v}}(X)\ra X\ra\Hom_{k}(\RHom_{\bm\Pi_{2}(F)}(X,S_{v}),S_{v})\ra\Si t^{-1}_{S_{v}}(X) $$ for each object $ X $ of $ \cd(\bm\Pi_{2}(F)) $.
\end{Thm}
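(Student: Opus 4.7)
The plan is to mimic Keller--Yang's strategy for mutation of (complete) Ginzburg dg algebras, adapted to the frozen/relative setting. First I would introduce the candidate tilting object
\[
T=\bigoplus_{i\neq v}\bmgamma_i\,\oplus\,T_v^*,\qquad T_v^*=\cone\!\Bigl(\bmgamma_v\xrightarrow{(\alpha)_\alpha}\bigoplus_{\alpha\in F_1,\,s(\alpha)=v}\bmgamma_{t(\alpha)}\Bigr),
\]
in $\per\bmgamma$. Since $v$ is a frozen source and no unfrozen arrow has source $v$, the defining map uses only arrows of $F$, so $T_v^*$ really is the correct replacement at the frozen vertex $v$. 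The first technical step is to show that $T$ is silting in $\per\bmgamma$: one has $\Hom(T,\Sigma^{>0}T)=0$ (a direct check using that $\bmgamma$ is connective and the explicit description of $T_v^*$) and $\thick(T)=\per\bmgamma$, since $\bmgamma_v$ can be recovered from $T_v^*$ and the other $\bmgamma_i$'s by the defining triangle.

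Next I would compute the dg endomorphism algebra $B=\RHom_\bmgamma(T,T)$ and exhibit a quasi-isomorphism $B\simeq\bmgamma'$. This is the main combinatorial/homological work: one writes down a natural dg morphism $\bmgamma'\to B$ sending the arrows of $Q'$ to the obvious maps (in particular the new composite arrows $[\beta\alpha]$ correspond to components of $T_v^*$), the dual arrows $a^*$ to representatives coming from the Koszul dual description of $\bmgamma$, and verifies that the relations coming from $W'=\widetilde W+\sum[\beta\alpha]\alpha^*\beta^*$ and from the Leibniz rule on loops $t_i$ are respected. The computation is essentially the same as in Keller--Yang, except one must keep track of which arrows are frozen; the key new point is that the degree $0$ loops $r_i$ of $\bm\Pi_2(F)$ in place of the usual Ginzburg loops need to be matched correctly. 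Once $B\simeq\bmgamma'$ is established, the tilting theorem provides the desired triangle equivalence
\[
\Psi_+=?\,\lten_{\bmgamma'}T\colon\cd(\bmgamma')\iso\cd(\bmgamma),
\]
sending $\bmgamma'_i\mapsto \bmgamma_i$ for $i\neq v$ and $\bmgamma'_v\mapsto T_v^*$.

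To obtain the restrictions to $\per$ and $\pvd$, I would argue that $T$ is compact so $\Psi_+$ preserves compact objects, hence restricts to $\per\bmgamma'\iso\per\bmgamma$; and $\Psi_+$ commutes with $\RHom$ into $\pvd$, hence takes dg modules of finite total dimension to the same, giving $\pvd\bmgamma'\iso\pvd\bmgamma$. Finally, for the commutative square with the Ginzburg functors, I would interpret both $\bmgamma$ and $\bmgamma'$ as relative deformed $3$-Calabi--Yau completions of $\bm\Pi_2(F)\hookrightarrow kQ$ and $\bm\Pi_2(F')\hookrightarrow kQ'$ (using Proposition~\ref{Homtopy Cofiber sequence of (Q,F,W)} and the construction of $\bm G_{rel}$). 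The identification $\bm\Pi_2(F')\cong\bm\Pi_2(F)$ is tautological since $F'$ differs from $F$ only by reversing arrows at $v$, and the $2$-Calabi--Yau property of $\bm\Pi_2(F)$ makes this reversal realizable by the inverse twist $t^{-1}_{S_v}$ at the $2$-spherical object $S_v$. Commutativity is then a naturality statement for the relative Calabi--Yau completion with respect to the automorphism $t^{-1}_{S_v}$, and I would verify it on the generators $e_i$ and the arrows of $F$, where it reduces to the explicit formula for $T_v^*$.

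The hardest part, by a clear margin, is the quasi-isomorphism $\RHom_\bmgamma(T,T)\simeq\bmgamma'$, because one must exhibit an explicit cofibrant resolution of $T$ and chase the differentials of $\bmgamma$ through it to recover exactly the relations $\partial_{a'}W'$ and the loop differentials $d(t_i)=e_i\sum[a,a^*]e_i$ of the mutated algebra; the intertwining with $t^{-1}_{S_v}$ is then essentially forced, but bookkeeping signs and the contribution of the potential terms $[\beta\alpha]\alpha^*\beta^*$ at the frozen vertex $v$ is delicate.
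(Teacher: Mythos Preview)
This theorem is not proved in the present paper: it is quoted verbatim from \cite[Theorem~6.8]{wuCategorificationIceQuiver2021}, so there is no proof here to compare your proposal against. Your outline is nevertheless the correct one and matches the strategy of the cited reference, which adapts the Keller--Yang tilting argument \cite{kellerYangDerived equivalences} to the relative Ginzburg algebra: one builds the tilting object $T$ as you describe, identifies $\RHom_{\bmgamma}(T,T)$ with $\bmgamma'$ by an explicit dg algebra computation, and deduces the equivalence $\Psi_+=?\lten_{\bmgamma'}T$ together with its restrictions to $\per$ and $\pvd$. One small correction: the loops $r_i$ in $\bm\Pi_2(F)$ are of degree $-1$, not degree $0$, and it is the degree $-2$ loops $t_i$ that are absent at frozen vertices; this is precisely what makes the relative Ginzburg algebra differ from the absolute one and must be tracked carefully in the endomorphism computation.
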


%Mutation at frozen source doesn't change the unfrozen part of $ (Q',F',W') $. After deleting the frozen part, it is equal to $ (\overline{Q},\overline{W}) $. 
%\begin{Cor}\label{Lem: mutation at frozen induces idendity}
%	We have the following commutative diagram of triangulated categories
%	\[
%	\begin{tikzcd}
%		\cc(Q',F',W')\arrow[rr,"\Phi_{+}"]\arrow[dr,"p^{'*}",swap]&&\cc(Q,F,W)\arrow[dl,"p^{*}"]\\
%		&\cc(\overline{Q},\overline{W})&
%	\end{tikzcd}
%	\]
%	
%\end{Cor}
%\begin{proof}
%	By \cite[Theorem 6.8]{wuCategorificationIceQuiver2021}, the equivalence $ \Phi_{+}:\cc(Q',F',W')\ra\cc(Q,F,W) $ is induced by
%	
%	
%	
%\end{proof}

\begin{Prop}\label{Rem: mutation at frozen}
	The triangle equivalence $ \Psi_{+}\colon\per\bmgamma'\ra\per\bmgamma $ induces an equivalence $ \pvd_{e}(\bmgamma')\ra\pvd_{e}(\bmgamma) $. Moreover we have a commutative diagram
	\[
	\begin{tikzcd}
		\per\bm{\Pi}_{2}(F')\arrow[r]\arrow[d,"t^{-1}_{S_{v}}\circ\mathrm{can}",swap]&\per\bmgamma'\arrow[r]\arrow[d,"\Psi_{+}"]&\cc(Q',F',W')\arrow[d,"\Psi_{+}"]\\
		\per\bm{\Pi}_{2}(F)\arrow[r]&\per\bmgamma\arrow[r]&\cc(Q,F,W).
	\end{tikzcd}
	\]												
	The functor $ \Psi_{+}\colon\cc(Q',F',W')\ra\cc(Q,F,W) $ does not take $ \ch' $ to $ \ch $. But it takes $ \cd(Q',F',W') $ to $ \cd(Q,F,W) $. 
\end{Prop}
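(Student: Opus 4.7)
The plan is to treat the three claims in turn, exploiting the fact that mutation at a frozen source modifies only the frozen part of the combinatorial data.

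For the equivalence on $\pvd_e$, I would characterise $\pvd_e(\bmgamma)=\pvd(\bmgamma)\cap\bigl(\bigoplus_{i\in F_0}\bmgamma_i\bigr)^{\perp_{\per\bmgamma}}$ by orthogonality to the thick closure of the frozen projectives, and analogously for $\bmgamma'$ (note that the mutation at a frozen vertex leaves $F_0$ unchanged, so $F'_0=F_0$). Since Theorem~\ref{Thm: mutation at frozen source} already gives the restriction $\Psi_+\colon\pvd(\bmgamma')\iso\pvd(\bmgamma)$, it suffices to verify that $\Psi_+$ sends $\thick(e'\bmgamma')$ onto $\thick(e\bmgamma)$. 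On generators, $\Psi_+(\bmgamma'_i)=\bmgamma_i$ for each frozen $i\neq v$, while $\Psi_+(\bmgamma'_v)$ is a cone on a morphism between direct sums of $\bmgamma_v$ and the $\bmgamma_{t(\alpha)}$ with $v$ and every $t(\alpha)$ frozen; conversely, the defining triangle exhibits $\bmgamma_v$ as an object of the thick closure of these images, so the two thick closures agree.

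For the commutativity of the diagram, the left-hand square is exactly the commutative square of Theorem~\ref{Thm: mutation at frozen source}. The right-hand square is then obtained by descent: the horizontal arrows are the Verdier quotients by $\pvd_e(\bmgamma)$ and $\pvd_e(\bmgamma')$, and the preceding step shows that $\Psi_+$ is compatible with these kernels.

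For the third claim, I would first observe that $\overline{Q'}=\overline{Q}$ and $\overline{W'}=\overline{W}$ by a direct inspection of the mutation rule: at a frozen source $v$ the procedure only reverses arrows incident with $v$ and adjoins composite arrows $[\beta\alpha]$; since $v$ is frozen and carries no unfrozen outgoing arrows, each new composite terminates at a frozen vertex, so neither the unfrozen subquiver nor the $F_0$-avoiding cycles of $W$ are affected. Under the resulting identification $\overGamma'\iso\overGamma$, checking on the generators $\bmgamma'_i$ gives $p^*\circ\Psi_+\simeq p'^*$: for $i\neq v$ both sides equal $e_i\overGamma$; for $i=v$ both sides vanish because $\Psi_+(\bmgamma'_v)\in\thick(e\bmgamma)=\ker(p^*)$ and $\bmgamma'_v\in\ker(p'^*)$. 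Hence any $M\in\cc(Q',F',W')$ with $p'^*(M)\in\cd(\overline{Q},\overline{W})$ satisfies $p^*(\Psi_+(M))\in\cd(\overline{Q},\overline{W})$, which gives $\Psi_+(\cd(Q',F',W'))\subseteq\cd(Q,F,W)$.

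To see that $\Psi_+$ does not restrict to a functor $\ch'\to\ch$, note that $\bmgamma'\in\ch'$, while $\Psi_+(\bmgamma')=\bigoplus_{j\neq v}\bmgamma_j\oplus\bmgamma^*_v$ has a summand $\bmgamma^*_v$ that lies in $\thick(\cp)$ but generally not in $\copr_{\cc}\bmgamma$, since $\Si^{-1}\bmgamma^*_v$ need not admit an $\add\bmgamma$-presentation in $\cc$. The main obstacle I expect is exactly this asymmetry with the unfrozen case of Proposition~\ref{Prop: mutation at unfrozen}: there the relation $\Phi_+\simeq\Phi_-$ in $\cc$ forced $\mu_v\bmgamma\in\pr_{\cc}\bmgamma\cap\copr_{\cc}\bmgamma$, and no analogous identification is available at a frozen source, so the failure of $\Psi_+(\bmgamma')$ to lie in $\ch$ should be confirmed on a concrete small example (e.g.\ a mutation sequence where $\bmgamma^*_v$ acquires infinite-dimensional $\Hom_{\cc}(\Si^{-1}\bmgamma,-)$), and this is the most delicate point of the argument.
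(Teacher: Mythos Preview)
Your treatment of the first two claims and of the inclusion $\Psi_+(\cd(Q',F',W'))\subseteq\cd(Q,F,W)$ is essentially the paper's argument. The paper identifies $p^*\circ\Psi_+\simeq p'^*$ via the bimodule computation $(?\lten_{\bmgamma'}U)\lten_{\bmgamma}\overGamma\cong\,?\lten_{\bmgamma'}\overGamma$, while you check it on the generators $\bmgamma'_i$; these are equivalent verifications.

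There is, however, a genuine gap in your argument that $\Psi_+$ does not take $\ch'$ to $\ch$. You locate the obstruction in $\copr_{\cc}\bmgamma$ or in the finite-dimensionality of $\Hom_{\cc}(\Si^{-1}\bmgamma,-)$, and defer to an unspecified example. This is the wrong place to look, and the argument you sketch is not a proof. The paper gives a direct and general obstruction: from the defining triangle
\[
\bmgamma_v\xrightarrow{(\alpha)}\bigoplus_{\alpha\in F_1:s(\alpha)=v}\bmgamma_{t(\alpha)}\ra\bmgamma^*_v\ra\Si\bmgamma_v
\]
one applies $\Hom_{\cc}(-,\bmgamma_v)$ and observes that the induced map
\[
(\alpha)^*\colon\Hom_{\cc}\Bigl(\bigoplus_{\alpha}\bmgamma_{t(\alpha)},\bmgamma_v\Bigr)\ra\Hom_{\cc}(\bmgamma_v,\bmgamma_v)
\]
is not surjective: since $v$ is a frozen \emph{source}, there are no arrows in $Q$ with target $v$ and source among the $t(\alpha)$, so the identity of $\bmgamma_v$ cannot factor through $(\alpha)$. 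Hence $\Hom_{\cc}(\bmgamma^*_v,\Si\bmgamma_v)\neq 0$, so $\bmgamma^*_v\notin{}^{\perp}(\Si^{>0}\cp)$ and a fortiori $\bmgamma^*_v\notin\ch$. This pins down the failure at the level of the condition $\cy={}^{\perp}(\Si^{>0}\cp)\cap(\Si^{<0}\cp)^{\perp}$ entering the definition of $\pr^F_{\cc}\bmgamma$, not at the level of $\copr_{\cc}\bmgamma$ or of the finiteness condition, and it requires no auxiliary example.
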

	\begin{proof}
		The equivalence $ \Psi_{+}:\cd(\bmgamma')\ra\cd(\bmgamma) $ is the derived tensor product (see~\cite[Theorem 6.8]{wuCategorificationIceQuiver2021}) $$ ?\lten_{\bm\Gamma'}U ,$$
		where the $ \bm\Gamma'$-$\bm\Gamma $-bimodule $ U $ is given by 
		$$ U=\displaystyle\bigoplus_{j\neq v}\bm\Gamma_{j}\oplus U_{v} $$ with $ U_{v}=\cone(\bm\Gamma_{v}\xrightarrow{(\alpha)}\displaystyle\bigoplus_{\alpha\in F_{1}:s(\alpha)=v}\bm\Gamma_{t(\alpha)}) $. 
		It is clear that $ \Psi_{+} $ induces an equivalence $ \Psi_{+}\colon\per\bmgamma'\ra\per\bmgamma $.
		By using a similar computation in~\cite[Lemma 3.12]{kellerYangDerived equivalences}, the functor $ \Psi_{+} $ induces an equivalence $ \Psi_{+}\colon\pvd_{e}(\bmgamma')\ra\pvd_{e}(\bmgamma) $. Similarly, the equivalence $ t^{-1}_{S_{v}}\circ\mathrm{can} $ induces an equivalence $$ t^{-1}_{S_{v}}\circ\mathrm{can}\colon\per\bm{\Pi}_{2}(F')\ra\per\bm{\Pi}_{2}(F) .$$
		Thus, we have the following commutative diagram
		\[
		\begin{tikzcd}
			\per\bm{\Pi}_{2}(F')\arrow[r]\arrow[d,"t^{-1}_{S_{v}}\circ\mathrm{can}",swap]&\per\bmgamma'\arrow[r]\arrow[d,"\Psi_{+}"]&\cc(Q',F',W')\arrow[d,"\Psi_{+}"]\\
			\per\bm{\Pi}_{2}(F)\arrow[r]&\per\bmgamma\arrow[r]&\cc(Q,F,W).
		\end{tikzcd}
		\]			
		The object $ \bmgamma'_{v} $ lies in $ \ch'\subseteq\cc(Q',F',W') $. The object $ \Psi_{+}(\bmgamma'_{v}) $ is equal to $ \cone(\bm\Gamma_{v}\xrightarrow{(\alpha)}\displaystyle\bigoplus_{\alpha\in F_{1}:s(\alpha)=v}\bm\Gamma_{t(\alpha)}) $. Since $ v $ is a frozen source, the induce map 
		$$ (\alpha)^{*}\colon\Hom_{\cc}(\displaystyle\bigoplus_{\alpha\in F_{1}:s(\alpha)=v}\bm\Gamma_{t(\alpha)},\bmgamma_{v})\ra\Hom_{\cc}(\bmgamma_{v},\bmgamma_{v}) $$
		is not surjective. This shows that $ \Psi_{+}(\bmgamma'_{v}) $ doesn't lie in $ {}^{\perp}(\Si^{>0}\cp) $. Thus, the functor $ \Psi_{+} $ does not take $ \ch' $ to $ \ch $.
		
		It isn't hard to see that mutation at $ v $ doesn't change the unfrozen part of $ (Q',F',W') $. After deleting the frozen part, the quiver with potential $ (\overline{Q'},\overline{W'}) $ is equal to $ (\overline{Q},\overline{W}) $. The functor $ p^{*}:\cc(Q,F,W)\ra\cc(\overline{Q},\overline{W}) $ is induced by the derived tensor product $ ?\lten_{\bmgamma}\overGamma\colon\per\bmgamma\ra\overGamma $. And $ p'^{*}:\cc(Q',F',W')\ra\cc(\overline{Q},\overline{W}) $ is induced by $ ?\lten_{\bmgamma'}\overGamma $.
		
		Then we have
		\begin{equation*}
			\begin{split}
				(?\lten_{\bmgamma'}U)\lten_{\bmgamma}\overGamma&\cong?\lten_{\bmgamma'}(U\lten_{\bmgamma}\overGamma)\\
				&\cong?\lten_{\bmgamma'}\overGamma.
			\end{split}
		\end{equation*}
		This computation gives us the following commutative square
		\[
		\begin{tikzcd}
			\cc(Q',F',W')\arrow[r,"\Psi_{+}"]\arrow[d,"p'^{*}"]&\cc(Q,F,W)\arrow[d,"p^{*}"]\\
			\cc(\overline{Q'},\overline{W'})\arrow[r,equal]&\cc(\overline{Q},\overline{W}).
		\end{tikzcd}
		\]
		For each object $ M $ of $ \cd(Q',F',W') $, we have $ p^{*}(\Psi_{+}(M))\cong p'^{*}(M)\in\cd(\overline{Q},\overline{W}) $. Thus, $ \Psi_{+} $ takes $ \cd(Q',F',W') $ to $ \cd(Q,F,W) $. 
%		It is clear that we have $ \Psi_{+}(\cp')\subseteq\thick_{\cc}(\cp)*\Si\ch $ and $ \Psi_{+}(\bmgamma'_{i})\in\ch $ for $ i\neq v $. Thus, $ \Psi_{+} $ takes $ \thick_{\cc}(\cp) $ to $ \cd(Q,F,W) $. It is enough to show that $ \Psi_{+} $ maps $ \ch' $ to $ \cd(Q,F,W) $. 
%		
%		Let $ X' $ be an object of $ \ch' $. Assume that $ T'_{1}\ra T'_{0}\ra X'\ra\Si T'_{1} $ is an $ \add\relGammabf' $-presentation of $ X' $. It is easy to check that $ \Psi_{+}(T'_{0}) $ and $ \Psi_{+}(\Si T'_{1}) $ are in $ \cd(Q,F,W) $. Thus, $ \Psi_{+}(X') $ lies in $ \cd(Q,F,W) $.
		
	\end{proof}
	
	\bigskip
\begin{Rem}
	Dually, let $ v $ be a frozen sink. One can mutate $ (Q,F,W) $ at $ v $. Write $ (Q',F',W')=\mu_{v}(Q,F,W) $. Let $ \bm\bmgamma=\bm\bmgamma_{rel}(Q,F,W) $ and $ \bm\Gamma'=\bm\bmgamma_{rel}(Q',F',W') $ be the complete relative Ginzburg dg algebras associated to $ (Q,F,W) $ and $ (Q',F',W') $ respectively. We also have a commutative diagram (see~\cite[Theorem 6.9]{wuCategorificationIceQuiver2021})
	\[
	\begin{tikzcd}
		\per\bm{\Pi}_{2}(F')\arrow[r]\arrow[d,"t_{S_{v}}\circ\mathrm{can}",swap]&\per\bmgamma'\arrow[r]\arrow[d,"\Psi_{-}"]&\cc(Q',F',W')\arrow[d,"\Psi_{-}"]\\
		\per\bm{\Pi}_{2}(F)\arrow[r]&\per\bmgamma\arrow[r]&\cc(Q,F,W).
	\end{tikzcd}
	\]												
	The functor $ \Psi_{-}\colon\cc(Q',F',W')\ra\cc(Q,F,W) $ does not take $ \ch' $ to $ \ch $. But it takes $ \cd(Q',F',W') $ to $ \cd(Q,F,W) $.
	
\end{Rem}

\subsection{Iyama--Kalck--Wemyss--Yang's theorem without the Noetherian hypothesis}
\label{ss:IKW-theorem}
Let $ \ce $ be a Frobenius category and $ \cp\subseteq\ce $ the subcategory of projective-injective objects. For each $ P\in\cp $, we put $ P^{\we}=\cp(?,P)\colon\cp^{op}\ra\Mod k $. We denote by $ \Mod\cp $ the category of right $ \cp $-modules. Then we have a functor
$$ H\colon\ce\ra\Mod\cp $$ which takes $ X $ to $ \ce(?,X)|_{\cp} $. Recall that a module is {\em pseudo-coherent}
if it admits a (possibly infinite) resolution by finitely generated projective modules.
\begin{Lem}\label{Lemma: P is fp Gro}
\begin{itemize}
	\item[1)] For any object $ X $ in $ \ce $, the right $ \cp $-module $ H(X) $ is pseudo-coherent.
	\item[2)] For any objects $ X\in\ce $ and $ P\in\cp $, we have  $ \Ext_{\cp}^{i}(H(X),P^{\we})=0 $, $ \forall i>0 $.
\end{itemize}
\end{Lem}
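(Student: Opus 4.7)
The plan is to build a projective resolution of $H(X)$ as a $\cp$-module by applying $H$ to a projective resolution of $X$ inside the Frobenius category $\ce$. The two defining features of $\cp$ will drive both parts: first, every $P \in \cp$ is simultaneously projective \emph{and} injective in $\ce$, so $\ce(P,-)$ and $\ce(-,P)$ are both exact on conflations; second, Yoneda yields $\Hom_\cp(P^\wedge, Q^\wedge) \cong \cp(P,Q) = \ce(P,Q)$ for all $P,Q \in \cp$.

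First, since $\ce$ is Frobenius, choose inductively a conflation $\Omega^{i+1} X \rightarrowtail P_i \twoheadrightarrow \Omega^i X$ with $P_i \in \cp$ and $\Omega^0 X = X$. This assembles into a long exact complex
\[
\cdots \to P_2 \to P_1 \to P_0 \to X \to 0
\]
in $\ce$, with the differentials factoring as $P_{i+1} \twoheadrightarrow \Omega^{i+1} X \rightarrowtail P_i$. Now apply $H = \ce(?,-)|_\cp$. For each $P \in \cp$, the functor $\ce(P,-)$ is exact on conflations because $P$ is \emph{projective} in $\ce$; evaluated on the conflation $\Omega^{i+1} X \rightarrowtail P_i \twoheadrightarrow \Omega^i X$, this produces a short exact sequence of $\cp$-modules
\[
0 \to H(\Omega^{i+1} X) \to H(P_i) \to H(\Omega^i X) \to 0.
\]
Splicing these yields an exact sequence $\cdots \to H(P_2) \to H(P_1) \to H(P_0) \to H(X) \to 0$, and by Yoneda each $H(P_i) = P_i^\wedge$ is a finitely generated projective $\cp$-module. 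This establishes (1).

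For (2), compute $\Ext^i_\cp(H(X), P^\wedge)$ using the projective resolution $P_\bullet^\wedge \to H(X)$ just constructed. Applying $\Hom_\cp(-, P^\wedge)$ and invoking Yoneda, one identifies the complex computing $\Ext^*_\cp(H(X), P^\wedge)$ with the complex $\ce(P_\bullet, P)$. It remains to show this complex has vanishing cohomology in positive degrees. Here we use that $P$ is \emph{injective} in $\ce$: applying $\ce(-, P)$ to each conflation $\Omega^{i+1} X \rightarrowtail P_i \twoheadrightarrow \Omega^i X$ gives a short exact sequence
\[
0 \to \ce(\Omega^i X, P) \to \ce(P_i, P) \to \ce(\Omega^{i+1} X, P) \to 0,
\]
and splicing these together shows that $\ce(P_\bullet, P)$ is a resolution of $\ce(X, P)$, so higher cohomology vanishes, proving (2).

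There is no serious obstacle here; the argument is essentially a two-sided use of the fact that $\cp$ consists of projective--injective objects, combined with Yoneda. The only thing to be a bit careful about is to verify that the splicing of short exact sequences truly yields an exact sequence at each term of the complex (a standard diagram chase), and to record the Yoneda identification $\Hom_\cp(P_i^\wedge, P^\wedge) = \ce(P_i, P)$ at the level of complexes so that the differentials are induced by the chosen maps $P_{i+1} \to P_i$.
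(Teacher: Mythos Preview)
Your proof is correct and follows essentially the same approach as the paper: build a $\cp$-projective resolution of $H(X)$ by applying $H$ to a projective resolution of $X$ in $\ce$ (using that each $P\in\cp$ is projective), then compute $\Ext^*_\cp(H(X),P^\wedge)$ via Yoneda as the cohomology of $\ce(P_\bullet,P)$, which vanishes in positive degrees because $P$ is injective. Your write-up is in fact more explicit than the paper's about the splicing and the Yoneda identification.
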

\begin{proof}
1) Choose conflations
$$ \Omega^{i} X\rightarrowtail P_{i-1}\twoheadrightarrow \Omega^{i-1} X\quad \text{for $ i\geqslant1 $ with proj-inj $ P_{i-1} $}.$$
Their images under $ \Hom_{\ce}(P,?) $ are exact, for all $P$ in $\cp$. So the following exact sequence
$$ \ra H(P_{i})\ra\cdots\ra H(P_{1})\ra H(P_{0})\ra H(X)\ra0 $$ is a resolution for $ H(X)\in\Mod\cp $.

2) Choose a resolution $ \ldots\ra P_{2}\ra P_{1}\ra P_{0}\ra X\ra0 $ as in $ 1) $. Its image under $ H $ is a resolution of $ H(X) $ in $ \Mod\cp $. Since $ P $ is also injective, the homologies of
\[
\begin{tikzcd}
	\Hom(HP_{0},HP)\arrow[r]\arrow[d,"\simeq"]&\Hom(HP_{1},HP)\arrow[r]\arrow[d,"\simeq"]&\Hom(HP_{2},HP)\arrow[r]\arrow[d,"\simeq"]&\cdots\\
	\Hom(P_{0},P)&\Hom(P_{1},P)&\Hom(P_{2},P)&
\end{tikzcd}
\]
vanish in all degrees $ i>0 $.
\end{proof}	

\bigskip
\bigskip
Let $ \cm\subseteq\ce $ be a full additive subcategory stable under direct factors such that $ \cp\subseteq\cm $ and 
$\cm$ is of global dimension $\leqslant n$. An object $ X $ of $ \Mod\cp $ is called \emph{$ \cm $-pseudo-coherent} if $ X $ is pseudo-coherent and $ \tilde{X}=X\ten_{\cp}\cm $ is perfect (equivalently: admits a bounded resolution by finitely generated
projective modules).
%
%\begin{Lem}\label{Lemma: fp resolution}
%Let $ X $ be an object of $ \Mod\cp $ such that $ \tilde{X}:=X\ten_{\cp}\cm $ admits a finitely generated $ \cm $-resolution. Then there is a resolution
%$$ 0\ra HM_{n}\xrightarrow{Hd_{n}} HM_{n-1}\xrightarrow{Hd_{n-1}}\cdots\ra HM_{0}\ra X\ra 0 $$ with $ M_{i}\in\cm $ for all $ 0\leqslant i\leqslant n $.
%\end{Lem}
%\begin{proof}
%Choose a projective presentation $ P_{1}^{\we}\ra P_{0}^{\we}\ra X\ra0 $. Then $ \tilde{X}\in\Mod\cm $ is equal to the cokernel of the following map
%$$ \cm(?,P_{1})\ra\cm(?,P_{0})\ra\tilde{X}\ra0 .$$
%Since $ \mathrm{gldim}\cm\leqslant n $, we choose a projective resolution of 
%
%
%$$ 0\ra M_{n}^{\we} \ra M_{n-1}^{\we}\ra\cdots \ra M_{0}^{\we}\ra \tilde{X}\ra0. $$
%Since $ \cm(?,P_{1})\ra\cm(?,P_{0}) $ has projective components, the morphism $ \cm(?,P_{0})\ra\tilde{X} $ fits into a commutative diagram
%\[
%\begin{tikzcd}
%	&&&0\arrow[r]\arrow[d]&\cm(?,P_{1})\arrow[r]\arrow[d]&\cm(?,P_{0})\arrow[r]\arrow[d]&\tilde{X}\arrow[r]\arrow[d,equal]&0\\
%	0\arrow[r]&M_{n}^{\we}\arrow[r]&\cdots\arrow[r]&M_{2}^{\we}\arrow[r]&M_{1}^{\we}\arrow[r]&M_{0}^{\we}\arrow[r]&\tilde{X}\arrow[r]&0.
%\end{tikzcd}
%\]
%Then we deduce that $ \tilde{X}|_{\cp}\cong X $ and that we have an exact sequence in $ \Mod\cp $
%$$ 0\ra HM_{n}\xrightarrow{Hd_{n}} HM_{n-1}\xrightarrow{Hd_{n-1}}\cdots\ra HM_{0}\ra X\ra 0 .$$
%\end{proof}

Let $ \mathrm{fp}_{\infty}\cp $ be the full subcategory of $ \Mod\cp $ whose objects are the 
pseudo-coherent $ \cp $-modules. We denote by 
\[
\mathrm{gpr}_{\infty}(\cp)=\{X\in\mathrm{fp}_{\infty}\cp\,|\,\Ext_{\cp}^{i}(X,P^{\we})=0\,\,\text{for any $ i>0 $ 
and $ P\in\cp $}\},
\]
the category of \emph{$\mathrm{fp}_{\infty} $-Gorenstein projective modules $ X $ over $ \cp $}.
We say that $ \cp $ is \emph{$\mathrm{fp}_{\infty} $-Gorenstein of dimension at most $ n $
with respect to $\cm$} if $ \Ext^{i}_{\cp}(X,P^{\we})=0 $ for all $ i\geqslant n+1 $ and all $ \cm $-pseudo-coherent $ X\in\Mod\cp $.
% and similarly for $ \cp^{op} $.  
If $ \cp $ is $\mathrm{fp}_{\infty} $-Gorenstein of dimension at most $ n $
with respect to $\cm$, we denote by 
$ \mathrm{gpr}_{\infty}(\cp, \cm) $ the full subcategory of $ \mathrm{gpr}_{\infty}(\cp) $ whose objects are the $ \cm $-pseudo-coherent $ \cp $-modules.

\begin{Thm}\label{Thm: structure thm without Noetherian}
	Let $ \ce $ be an idempotent complete Frobenius category with $ \mathrm{proj}\,\ce=\cp $. Assume that there exists a full additive subcategory $ \cm\subseteq\ce $ which is stable under direct factors such that $ \cp\subseteq\cm $ and $ \mathrm{gldim}\cm\leqslant n $.
	The category $ \cp $ is $ \mathrm{fp}_{\infty} $-Gorenstein of dimension at most $ n $ 
	with respect to $\cm$ and the functor $ H\colon\ce\ra\Mod\cp $ induces an equivalence
	$$ \ce\iso\mathrm{gpr}_{\infty}(\cp, \cm). $$
	If moreover $\cm$ is coherent, then $\mathrm{gpr}_{\infty}(\cp, \cm)$ equals 
	$ \mathrm{gpr}_{\infty}(\cp) $.
\end{Thm}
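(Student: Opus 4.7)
The plan is to adapt the argument of Iyama--Kalck--Wemyss--Yang in the Noetherian Frobenius-category setting to the present $\mathrm{fp}_{\infty}$-framework, organising the proof around the adjoint pair $(L,R)$ attached to the inclusion $\cp\hookrightarrow\cm$, where $L=-\otimes_{\cp}\cm$ is the left Kan extension and $R$ is restriction. The key compatibility is that $L$ sends the representable $P^{\wedge}$ to $\cm(?,P)$ for $P\in\cp$ and that $R\,\cm(?,M)=H(M)$ for every $M\in\cm$.

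\emph{Step 1 (Gorensteinness).} Given an $\cm$-pseudo-coherent $X\in\Mod\cp$, the hypothesis that $LX$ is perfect combined with $\mathrm{gldim}\cm\leqslant n$ produces a length-$\leqslant n$ projective resolution
\[
0\to\cm(?,M_{n})\to\cdots\to\cm(?,M_{0})\to LX\to 0
\]
in $\Mod\cm$. Restricting via $R$ (which is exact) yields an exact sequence $0\to H(M_{n})\to\cdots\to H(M_{0})\to RLX\to 0$ in $\Mod\cp$; for the modules relevant to us the unit $X\to RLX$ is an isomorphism. Since each $H(M_{i})$ satisfies $\Ext_{\cp}^{>0}(H(M_{i}),P^{\wedge})=0$ by Lemma~\ref{Lemma: P is fp Gro}, dimension-shifting along this resolution forces $\Ext_{\cp}^{i}(X,P^{\wedge})=0$ for $i>n$, establishing the Gorenstein claim.

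\emph{Steps 2--3 (Equivalence).} For full faithfulness, take conflations $\Omega^{2}X\rightarrowtail P_{1}\twoheadrightarrow\Omega X$ and $\Omega X\rightarrowtail P_{0}\twoheadrightarrow X$ in $\ce$. Left exactness of $\ce(?,Y)$ on conflations identifies $\ce(X,Y)$ with $\ker(\ce(P_{0},Y)\to\ce(P_{1},Y))$, and Yoneda identifies $\Hom_{\cp}(H(X),H(Y))$ with the same kernel. To see that $H$ lands in $\mathrm{gpr}_{\infty}(\cp,\cm)$, identify $LH(X)$ with $\cm(?,X)$; this is a finitely presented $\cm$-module and therefore has projective dimension $\leqslant n$, hence is perfect. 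For essential surjectivity, start with $Y\in\mathrm{gpr}_{\infty}(\cp,\cm)$ and a perfect resolution of $LY$ by representables $\cm(?,M_{i})$. Use full faithfulness together with Yoneda to realise the differentials as a complex $M_{n}\to\cdots\to M_{0}$ in $\cm\subseteq\ce$; then splice the associated conflations together inside the Frobenius category $\ce$, using the Gorenstein property to keep syzygies in the class $\mathrm{gpr}_{\infty}(\cp,\cm)$ and idempotent completeness of $\ce$ to absorb the resulting direct summands. The output $X\in\ce$ then satisfies $H(X)\cong Y$.

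\emph{Coherence upgrade and main obstacle.} When $\cm$ is right coherent, pseudo-coherent $\cm$-modules are closed under kernels, so whenever $X$ is pseudo-coherent over $\cp$ the module $LX$ is automatically pseudo-coherent over $\cm$, and $\mathrm{gldim}\cm\leqslant n$ then upgrades this to perfectness. Hence $\mathrm{gpr}_{\infty}(\cp,\cm)=\mathrm{gpr}_{\infty}(\cp)$. The main technical obstacle is the essential-surjectivity step: without the Noetherian assumption of the original theorem one cannot simply iterate projective covers, so one must verify at each stage that the truncated syzygy of a partial lift still lies in $\mathrm{gpr}_{\infty}(\cp,\cm)$. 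The bound $\mathrm{gldim}\cm\leqslant n$ is what guarantees that this inductive process terminates after $n$ steps.
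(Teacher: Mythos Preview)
Your overall strategy matches the paper's: prove Gorensteinness by restricting a finite $\cm$-resolution of $\tilde{X}$ to $\cp$, invoke Lemma~\ref{Lemma: P is fp Gro}, then handle the equivalence by full faithfulness plus a lifting construction. Step~1 and the coherence upgrade are essentially the paper's arguments, and your explicit full-faithfulness computation is fine (the paper simply calls it ``standard'').

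There are, however, two genuine gaps.

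\textbf{(1) The identification $LH(X)\cong \cm(?,X)$ is not valid.} For $X\in\ce$ the natural composition map $H(X)\otimes_\cp\cm\to\ce(?,X)|_\cm$ is an isomorphism on objects of $\cp$, but there is no reason for it to be an isomorphism on arbitrary $M\in\cm$: the cokernel side requires $\Hom_\ce(M,P_0)\to\Hom_\ce(M,X)$ to be surjective for a deflation $P_0\twoheadrightarrow X$, which would force $\Ext^1_\ce(M,\Omega X)=0$, and nothing guarantees this. (In ring language: for $B=eAe$ the map $Ye\otimes_B eA\to Y$ is rarely an isomorphism.) So your argument that $H$ lands in $\gpr_\infty(\cp,\cm)$ collapses. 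The paper does not argue this step separately either; it is implicit in the two-sided description of the essential image.

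\textbf{(2) The essential-surjectivity step is where the real work is, and ``splice the associated conflations'' skips it.} Having a complex $M_n\to\cdots\to M_0$ in $\cm$ whose image under $H$ is acyclic does \emph{not} give you conflations in $\ce$ to splice; you must manufacture them. The paper's mechanism is: observe that the complex $0\to HM_n\to\cdots\to HM_0\to X\to 0$ is exact in the exact category of Gorenstein-projective $\cp$-modules (the kernels stay GP), hence applying $\Hom_\cp(-,HP)$ keeps it exact for every $P\in\cp$; by Yoneda this says each $\Hom_\ce(f_i,P)$ is surjective. Then \cite[Lemma~2.6]{Kalck-Iyama-Wemyss-Yang2014} is applied repeatedly: a morphism in $\ce$ with this surjectivity property is an inflation, so one peels off conflations one at a time and obtains an $\ce$-acyclic complex ending in some $X'\in\ce$ with $H(X')\cong X$. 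Your phrase ``using the Gorenstein property to keep syzygies in the class'' is pointing at the right fact, but you never explain how it converts a complex into conflations; and the appeal to idempotent completeness ``to absorb direct summands'' is not needed and suggests you were imagining a different (and incomplete) inductive lift. Without the KIWY lemma or an equivalent, the density argument does not go through.
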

\begin{proof}
	Let $ L $ be an $ \cm $-pseudo-coherent $ \cp $-module. Since the global dimension of $ \cm $ is at most $ n $, there exists a finitely generated projective resolution in $ \Mod\cm $
	$$ 0\ra M^\we_{n}\ra M^\we_{n-1}\ra\cdots\ra M^\we_{0}\ra\tilde{L}\ra0 .$$
If we restrict this resolution to $ \cp $, we see that $L$ has a resolution of length $\leq n$ by objects 
of the form $HM$, $M\in \cm$.  By Lemma~\ref{Lemma: P is fp Gro}, we have $\Ext^p_{\cp}(HM, P^\wedge)=0$ 
for all $p>0$, all $M\in\cm$ and all $P\in \cp$. We conclude that $ \Ext^{i}_{\cp}(L,P^{\we})=0 $ for all 
$ i\geqslant n+1 $. 
%The dual argument shows the situation in $ \Mod\cp^{op} $.
	It is standard that $ H $ is fully faithful. Let $ X $ be an object of 
$ \mathrm{gpr}_{\infty}(\cp,\cm) $. Choose a projective resolution with finitely generated terms
	$$ 0\ra M_{n}^{\we} \ra M_{n-1}^{\we}\ra\cdots \ra M_{0}^{\we}\ra \tilde{X}\ra0. $$
	Then we have a complex
	\begin{equation}\label{Complex in E}
		0\ra M_{n}\xrightarrow{f_{n}}\cdots\xrightarrow{f_{0}}M_{0}
    \end{equation}
	in $ \ce $ with $ M_{i}\in\cm $ such that the complex
\begin{equation} \label{Complex in ModP}
 0\ra HM_{n}\xrightarrow{Hf_{n}} HM_{n-1}\xrightarrow{Hf_{n-1}}\cdots\ra HM_{0}\ra X\ra 0 
 \end{equation} 
is exact. It is even exact in the exact category of Gorenstein projective $\cp$-modules
	since $X$ and the $HM_i$ are Gorenstein projective and the category of
	Gorenstein projective $\cp$-modules is stable under forming kernels.
	
	For any object $ P $ of $ \cp $, we have a commutative diagram
	\[
	\begin{tikzcd}
		(M_{0},P)\arrow[r]\arrow[d,"\simeq"]&(M_{1},P)\arrow[r]\arrow[d,"\simeq"]&\cdots\arrow[r]&(M_{n},P)\arrow[r]\arrow[d,"\simeq"]&0\\
		(H(M_{0}),H(P))\arrow[r]&(H(M_{1}),H(P))\arrow[r]&\cdots\arrow[r]&(H(M_{n}),H(P))\arrow[r]&0.
	\end{tikzcd}
	\]
The lower sequence is exact since the complex (\ref{Complex in ModP}) is exact in the exact category of
Gorenstein projective $\cp$-modules.
Applying~\cite[Lemma 2.6]{Kalck-Iyama-Wemyss-Yang2014} repeatedly to the complex~(\ref{Complex in E}), 
we get an $ \ce $-acyclic complex in $ \ce $ (i.e. obtained by splicing conflations of $ \ce $)
	$$ 0\ra M_{n}\xrightarrow{f_{n}}\cdots\xrightarrow{f_{0}}M_{0}\ra X'\ra0 $$ and conclude
that $ H(X')\cong X $. It follows that $ H $ induces an equivalence
	$$ \ce\iso\mathrm{gpr}_{\infty}(\cp,\cm). $$
Clearly, each $\cp$-module in $X$ in $\mathrm{gpr}_\infty(\cp)$ is finitely presented and therefore the
induced module $X\ten_\cp \cm$ is finitely presented over $\cm$. If $\cm$ is right coherent, an $\cm$-module
is finitely presented iff it is perfect and thus each module in $\gpr_\infty(\cp)$ lies in $\gpr_\infty(\cp,\cm)$.
\end{proof}

\section{Relationship with Plamondon's category $ \cd(\overline{Q},\overline{W}) $}\label{Section: Relationship with Plamondon's categor}
Let $ (Q,F,W) $ be an ice quiver with potential. From now on, we make the following technical assumption:
\begin{assumption}\label{assumption}
	The additive subcategory $ \cp=\add(e\relGammabf) $ is functorially finite in $ \add(\relGammabf) $.
\end{assumption}

\begin{Rem}
	The above assumption is equivalent to the following conditions:
	\begin{itemize}
		\item $ \Hom_{\per\bmgamma}(e\relGammabf,e_{i}\relGammabf) $ is a finitely generated right $ \End_{\per\bmgamma}(e\relGammabf) $-module, and
		\item $ \Hom_{\per\bmgamma}(e_{i}\relGammabf,e\relGammabf) $ is a finitely generated left $ \End_{\per\bmgamma}(e\relGammabf) $-module for all vertices $ i\in Q_{0}\setminus F_{0} $.
	\end{itemize}
\end{Rem}
\begin{Rem}
	Suppose $ \add(\bmgamma)=\ct $ is a cluster-titling subcategory in a stably 2-Calabi--Yau Frobenius category $ \ce $ whose subcategory of pro-injectives is $ \cp\subseteq\ct $. Then the above assumption clearly holds. One of our aims is to construct such a Frobenius category $ \ce $ when $ (\overline{Q},\overline{W}) $ is Jacobi-finite. 
\end{Rem}
\begin{Ex}
	The assumption does not hold for the example below.
	\[
	\begin{tikzcd}
		&\color{blue}\boxed{2}\arrow[dr]&\\
		\color{blue}\boxed{1}\arrow[ur,blue]&&3\arrow[ll]&W=0.
	\end{tikzcd}
	\]
\end{Ex}

\subsection{Plamondon's category $ \cd(Q,W) $}\label{Subsection: Plamondon's category}
In~\cite{plamondonCategoriesAmasseesAux2011}, Plamondon generalized Amiot's (\cite{amiotClusterCategoriesAlgebras2009}) construction of generalized cluster categories to the case of any quiver with potential. Let $ (Q,W) $ be any quiver with potential and $ \Gammabf(Q,W) $ the associated Ginzburg algebra. The (generalized) cluster category (see~\cite{amiotClusterCategoriesAlgebras2009}) of $ (Q,W) $ is defined as the idempotent completion of the triangulated quotient
$$ \cc(Q,W)=\per\bmgamma(Q,W)/\pvd(\bmgamma(Q,W)). $$

\begin{Def}\rm\cite[Definition 3.9]{plamondonCategoriesAmasseesAux2011}\label{Def:Plamondon's cat}
The subcategory $ \cd(Q,W) $ is the full subcategory of $ \pr_{\cc}\bmgamma(Q,W)\cap\copr_{\cc}\bmgamma(Q,W) $	 whose objects are those $ X $ such that $ \Ext^{1}_{\cc_{(Q,W)}}(\bmgamma(Q,W),X) $ is finite-dimensional.
\end{Def}
     
In our situation, let $ \overline{Q} $ be the quiver obtained from $ Q $ by deleting all vertices in $ F $ and all arrows incident with vertices in $ F $. Let $ \overline{W} $ be the potential on $ \overline{Q} $ obtained by deleting all cycles passing through vertices of $ F $ in $ W $. Let $ \overGamma $ be the Ginzburg algebra of $ (\overline{Q},\overline{W}) $. Then we have $$ \cd(\overline{Q},\overline{W})=\{X\in\pr_{\cc}\overGamma\cap\copr_{\cc}\overGamma\,|\,\Ext^{1}_{\cc(\overline{Q},\overline{W})}(\overGamma,X)\,\,\text{is finite-dimensional}\} .$$
\begin{Prop}\label{Prop: plamondon cat is extriangulated}
The category $ \cd(\overline{Q},\overline{W}) $ is an extension closed subcategory of $ \cc(\overline{Q},\overline{W}) $. Thus, it becomes an extriangulated category in the sense of Nakaoka-Palu~\cite{nakaokaExtriangulatedCategoriesHovey2019}.
\end{Prop}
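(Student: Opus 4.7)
The plan is to verify that each of the three defining conditions of $\cd(\overline{Q},\overline{W})$ is preserved under extensions in $\cc(\overline{Q},\overline{W})$, following the same strategy as in the just-proved extension-closedness of the Higgs category (Proposition~\ref{Prop: H is extension closed}) applied to the ``absolute'' situation where the frozen part is empty. Let
\[
X\ra Y\ra Z\ra \Si X
\]
be a triangle in $\cc(\overline{Q},\overline{W})$ with $X,Z\in\cd(\overline{Q},\overline{W})$; we must show $Y\in\cd(\overline{Q},\overline{W})$.

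First, applying the cohomological functor $\Hom_{\cc(\overline{Q},\overline{W})}(\overGamma,\Si\,?)$ produces a long exact sequence in which the outer terms $\Ext^{1}_{\cc(\overline{Q},\overline{W})}(\overGamma,X)$ and $\Ext^{1}_{\cc(\overline{Q},\overline{W})}(\overGamma,Z)$ are finite-dimensional by hypothesis, hence so is $\Ext^{1}_{\cc(\overline{Q},\overline{W})}(\overGamma,Y)$. Next, using the analogue of Proposition~\ref{Prop: modules}(4) for $\overGamma$ (which is Plamondon's version in \cite{plamondonCategoriesAmasseesAux2011}), the finite-dimensional $J(\overline{Q},\overline{W})$-module $R(Y)$ lifts to an object $Y'\in\pr_{\cc}\overGamma\cap\copr_{\cc}\overGamma$ with $R(Y')\cong R(Y)$.

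Second, I would show that $Y$ itself lies in $\copr_{\cc}\overGamma$ (and dually in $\pr_{\cc}\overGamma$) by the nine-diagram argument used in the proof of Proposition~\ref{Prop: H is extension closed}: starting from $\add(\Si^{-1}\overGamma)$-presentations of $X$ and $Z$, the composition $\Si^{-1}T_{0}^{Z}\to Z\to \Si X$ vanishes (as $\Hom_{\cc}(\Si^{-1}T_{0}^{Z},\Si X)=0$ since $X\in\copr_{\cc}\overGamma$), and this lifts through $Y\to Z$ to yield a commutative square which completes into a $3\times 3$ diagram exhibiting an $\add(\Si^{-1}\overGamma)$-presentation of $Y$. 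The $\pr_{\cc}\overGamma$ case is dual.

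Finally, by the analogue of Proposition~\ref{Prop: modules}(3) for $\overGamma$, the isomorphism $R(Y)\cong R(Y')$ in $\mod J(\overline{Q},\overline{W})$ lifts to an isomorphism $Y\oplus T\cong Y'\oplus T'$ in $\copr_{\cc}\overGamma/[\overGamma]$ for some $T,T'\in\add(\overGamma)$. Since $Y'\oplus T'\in\pr_{\cc}\overGamma\cap\copr_{\cc}\overGamma$, the same holds for $Y\oplus T$, and by idempotent-completeness of $\cc(\overline{Q},\overline{W})$ the direct summand $Y$ belongs to $\pr_{\cc}\overGamma\cap\copr_{\cc}\overGamma$, concluding that $Y\in\cd(\overline{Q},\overline{W})$. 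The extriangulated structure is then inherited from $\cc(\overline{Q},\overline{W})$ by a standard application of \cite{nakaokaExtriangulatedCategoriesHovey2019}. The only genuinely delicate point is the nine-diagram step, where one must be careful that the induced morphism lands in $\add(\Si^{-1}\overGamma)$; everything else is a mechanical translation of the Higgs-category argument to the ``empty frozen part'' setting.
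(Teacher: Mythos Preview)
Your proposal is correct and is exactly the paper's approach: the paper's proof is the single line ``It follows from Proposition~\ref{Prop: H is extension closed} for the case when $F$ is empty,'' and you have simply unpacked that specialization in detail. One tiny slip: the vanishing $\Hom_{\cc}(\Si^{-1}T_{0}^{Z},\Si X)=0$ follows from $X\in\pr_{\cc}\overGamma$ (so that $\Hom(\overGamma,\Si^{2}X)=0$), not from $X\in\copr_{\cc}\overGamma$ as you wrote---but since $X$ lies in both by hypothesis, this does not affect the argument.
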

\begin{proof}
	It follows from Proposition~\ref{Prop: H is extension closed} for the case when $ F $ is empty.
\end{proof}

\bigskip

Recall that the functor $ p^{*}\colon\per\bmgamma\ra\per\overGamma $ is the extension of scalars along the dg 
quotient functor $ p\colon\bmgamma\ra\overGamma=\bmgamma(\overline{Q},\overline{W}) $.

\begin{Prop}\label{Prop:pr is an equivalence}
	The functor $ p^{*}\colon\per\bmgamma\ra\per\overGamma $ induces an equivalence of $ k $-linear categories
	$$ \pr_{\cd}^{F}\relGammabf/[\cp]\ra\pr_{\overline{\cd}}\overGamma ,$$ where $ \cd=\cd(\bmgamma(Q,F,W)) $ and $ \overline{\cd}=\cd(\bmgamma(\overline{Q},\overline{W})) $.
\end{Prop}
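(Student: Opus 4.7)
The plan is to split the claim into fully faithfulness and essential surjectivity: the former is an immediate specialization of the silting-reduction embedding in Proposition~\ref{Prop:silting reduction-fully faithful}, while the latter is a lifting argument that crucially uses Assumption~\ref{assumption}.

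First I would record the tautology $\pr^{F}_{\cd}\relGammabf=\pr_{\cd}\relGammabf\cap\cz$, which gives $\pr^{F}_{\cd}\relGammabf\subseteq\cz$; since $\cp\subseteq\pr^{F}_{\cd}\relGammabf$ (take the presentation $0\to P$ for $P\in\cp$), the induced functor on $\pr^{F}_{\cd}\relGammabf/[\cp]$ is just the restriction of the fully faithful embedding $\cz/[\cp]\hookrightarrow\per\overGamma$ from Proposition~\ref{Prop:silting reduction-fully faithful}, and is therefore fully faithful. Its image lands inside $\pr_{\overline{\cd}}\overGamma$: a presentation $T_{1}\to T_{0}\to X\to\Si T_{1}$ with $T_{i}\in\add(\relGammabf)$ is sent by the triangle functor $p^{*}$ to a triangle whose first two terms lie in $\add(\overGamma)$, since $p^{*}$ kills frozen projective summands and sends unfrozen ones to the corresponding $\overGamma$-summands.

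For essential surjectivity I would start from $\overline{X}\in\pr_{\overline{\cd}}\overGamma$ with a defining presentation $\overline{T}_{1}\xrightarrow{\overline{f}}\overline{T}_{0}\to\overline{X}\to\Si\overline{T}_{1}$ in $\per\overGamma$. Each $\overline{T}_{i}$ lifts to a $T_{i}\in\add(\relGammabf)$ with only unfrozen indecomposable summands, so that $p^{*}T_{i}\cong\overline{T}_{i}$. Connectivity of $\relGammabf$ ensures that both $\Hom_{\per\relGammabf}(T_{1},T_{0})$ and $\Hom_{\per\overGamma}(p^{*}T_{1},p^{*}T_{0})$ are concentrated in degree zero and decompose, block by block, as $e_{j}H^{0}(\relGammabf)e_{k}$ respectively $e_{j}H^{0}(\overGamma)e_{k}$, with $j,k\in Q_{0}\setminus F_{0}$. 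The canonical map $H^{0}(\relGammabf)\twoheadrightarrow H^{0}(\overGamma)$ is surjective on these blocks because every path of $\overline{Q}$ is literally a path of $Q$; hence $\overline{f}$ admits a lift $f\colon T_{1}\to T_{0}$.

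The obstacle now is that $\cone(f)$ need not lie in $\pr^{F}_{\cd}\relGammabf$, and this is precisely where Assumption~\ref{assumption} is decisive. I would exploit functorial finiteness to choose a left $\cp$-approximation $g\colon T_{1}\to I$, replace $f$ by $f'=\binom{f}{g}\colon T_{1}\to T_{0}\oplus I$, and set $X'=\cone(f')$. For any $P\in\cp$, the approximation property forces every $h\colon T_{1}\to P$ to factor as $yg$, so $\Hom(f',P)$ is surjective, equivalently $\Hom(X',\Si^{i}P)=0$ for all $i>0$. The condition $X'\in(\Si^{<0}\cp)^{\perp}$ follows from the long exact sequence attached to the triangle together with the connectivity vanishing $H^{j}(\relGammabf)=0$ for $j\geqslant 1$. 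Thus $X'\in\pr^{F}_{\cd}\relGammabf$, and $p^{*}I=0$ identifies $p^{*}(T_{0}\oplus I)$ with $\overline{T}_{0}$ in such a way that $p^{*}f'=\overline{f}$, giving $p^{*}X'\simeq\overline{X}$ and closing the argument.
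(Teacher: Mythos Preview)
Your proof is correct and follows essentially the same strategy as the paper. The essential surjectivity argument is identical: lift the presenting morphism and then enlarge the target by a left $\cp$-approximation (using Assumption~\ref{assumption}) to force the cone into $\pr^{F}_{\cd}\relGammabf$. The one difference is that for fully faithfulness you invoke Proposition~\ref{Prop:silting reduction-fully faithful} directly, whereas the paper gives a self-contained argument---proving fullness by lifting a commutative square of presentations, and faithfulness via the co-$t$-structure on $\thick(\cp)$---which in effect reproduces the Iyama--Yang proof of that proposition in this special case; your shortcut is legitimate and more economical.
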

\begin{proof}
	Let $ X $ be an object in $ \pr_{\overline{\cd}}\overGamma\subseteq\per\overline{\bmgamma} $. By definition, $ X $ fits into the following triangle in $ \per\overline{\bmgamma} $
	$$ P_{1}\xrightarrow{\alpha} P_{0}\ra X\ra \Si P_{1} $$ with $ P_{0},P_{1}\in\add\overline{\bmgamma} $. Since we have an equivalence of additive categories
	$$ p^{*}\colon\add\bmgamma/[\cp]\iso\add\overline{\bmgamma} ,$$ there exists a morphism $ \beta'\colon M_{1}\ra M_{0}' $ in $ \add\bmgamma $ such that $ p^{*}(M_{1})=P_{1} $, $ p^{*}(M_{0}')=P_{0} $ and $ p^{*}(\beta')=\alpha $. Let $ \gamma\colon M_{1}\ra Q_{0} $ be a left $ \cp $-approximation of $ M_{1} $. We define $$ (M_{1}\xrightarrow{\beta}M_{0})\coloneqq(M_{1}\xrightarrow{[\beta',\,\gamma]^{t}}M_{0}'\oplus Q_{0}) .$$
	Then we still have $ p^{*}(\beta)=\alpha $. Moreover, $ \Hom(\beta,I) $ is surjective for any object $ I $ in $ \cp $. Thus, the object $ U\coloneqq\cone(\beta) $ is in $ \cf^{rel} $ and $ p^{*}(U)\cong X $. Hence, the functor $ p^{*}\colon\cf^{rel}\ra\cf $ is dense.

	Next we show that $ p^{*}\colon\pr_{\cd}^{F}\relGammabf\ra\pr_{\overline{\cd}}\overGamma $ is full. Let $ f\colon X\ra Y $ be a morphism in $ \pr_{\overline{\cd}}\overGamma $. We have the following diagram in $ \per\overline{\bmgamma} $
	\[
	\begin{tikzcd}
		P_{1}\arrow[r,"a"]\arrow[d,"g",dashed]&P_{0}\arrow[d,"e",dashed]\arrow[r,"b"]&X\arrow[d,"f"]\arrow[r]&\Si P_{1}\\
		Q_{1}\arrow[r,"c",swap]&Q_{0}\arrow[r,"d",swap]&Y\arrow[r]&\Si Q_{1}
	\end{tikzcd}
	\]
	with $ P_{1},P_{0},Q_{1},Q_{0}\in\add\overline{\bmgamma} $.
	
	Since $ \Hom_{\per\overline{\bmgamma}}(P_{0},\Si Q_{1})=0 $, there exists a morphism $ e\colon P_{0}\ra Q_{0} $ such that $ fb=de $. Then there exists a morphism $ g\colon P_{1}\ra Q_{1} $ such that $ ea=cg $. We lift the above commutative diagram to a commutative diagram in $ \per\bmgamma $. Then we find a morphism $ \beta $ in $ \cf^{rel} $ such that $ p^{*}(\beta)=f $. 
	
	It remains to show that the map $ \pr_{\cd}^{F}\relGammabf/[\cp](M,N)\ra\pr_{\overline{\cd}}\overGamma(p^{*}(M),p^{*}(N)) $ is injective for any $ M,N\in\cf^{rel} $. Assume that a morphism $ \alpha\in\pr_{\cd}^{F}(M,N) $ is zero in $ \per\bmgamma/\thick(\cp)\iso\per\overline{\bmgamma} $. Then it factors through $ \thick(\cp)\cong\per(e\bmgamma e) $, that is, there exist $ T\in\thick(\cp) $, $ u\in\Hom_{\per\bmgamma}(M,T) $, and $ v\in\Hom_{\per\bmgamma}(T,N) $ such that $ \alpha=vu $. 
	
	Since $ e\bmgamma $ is a silting object of $ \thick(\cp) $, the pair $ (\thick(\cp)_{\geqslant0},\thick(\cp)_{\leqslant0}) $ is a bounded co-$ t $-structure (see~\cite[Proposition 2.8]{iyamaSiltingReductionCalabi2018}) on $ \thick(\cp) $, where
	$$ \thick(\cp)_{\geqslant l}= \thick(\cp)_{>l-1}\coloneqq\bigcup_{n\geqslant0}\Si^{-l-n}\cp*\cdots*\Si^{-l-1}\cp*\Si^{-l}\cp $$ and
	$$ \quad\thick(\cp)_{\leqslant l}=\thick(\cp)_{<l+1}\coloneqq\bigcup_{n\geqslant0}\Si^{-l}\cp*\Si^{-l+1}\cp*\cdots*\Si^{-l+n}\cp .$$
	
	Take a triangle
	$$ T_{>0}\xrightarrow{b}T\xrightarrow{c}T_{\leqslant0}\ra\Si T_{>0} $$ with $ T_{>0}\in\thick(\cp)_{>0} $ and $ T_{\leqslant0}\in\thick(\cp)_{\leqslant0} $. Since $ \Hom_{\per\bmgamma}(T_{>0},N)=0 $, we have $ vb=0 $. Thus, there exists $ d\in\Hom_{\per\bmgamma}(T_{\leqslant0},N) $ such that $ v=dc $.
	\[
	\begin{tikzcd}
		T_{>0}\arrow[r,"b"]&T\arrow[r,"c"]\arrow[dr,"v",swap]&T_{\leqslant0}\arrow[d,"d"]\\
		M\arrow[rr,"\alpha",swap]\arrow[ur,"u",swap]&&N
	\end{tikzcd}
	\]
	
	Since $ T_{\leqslant0}\in\thick(\cp)_{\leqslant0} $, we have triangle
	$$ P\ra T_{\leqslant0}\xrightarrow{e}T_{<0}\ra \Si P $$ with $ P\in\cp $ and $ T_{<0}\in\thick(\cp)_{<0} $. Then we have $ ecu=0 $ by $ M\in\add\bmgamma $  and $ T_{<0}\in\thick(\cp)_{<0} $. Thus, $ cu $ factors through $ \cp $ and $ \alpha=dcu=0 $ in $ \pr_{\cd}^{F}\bmgamma/[\cp] $.
	
\end{proof}

\begin{Prop}\label{Prop: equivalences between two pr-copr in D}
	The functor $ p^{*}\colon\per\bmgamma\ra\per\overGamma $ induces an equivalence of $ k $-linear categories
	$$ p^{*}\colon\frac{\pr_{\cd}^{F}\bmgamma\cap\copr_{\cd}^{F}\bmgamma}{[\cp]}\iso\pr_{\overline{\cd}}\overGamma\cap\copr_{\overline{\cd}}\overGamma. $$
\end{Prop}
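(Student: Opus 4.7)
The plan is to leverage Proposition~\ref{Prop:pr is an equivalence} together with its dual analogue, and then reduce essential surjectivity on the intersection to a gluing of the two lifts.

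First, I would observe that the proof of Proposition~\ref{Prop:pr is an equivalence} dualizes verbatim: replacing left $\cp$-approximations by right $\cp$-approximations, cones by shifted cocones, and $\add\bmgamma$-presentations of $X$ by co-presentations of $\Si^{-1}X$, one obtains an equivalence
\[
p^{*}\colon\copr_{\cd}^{F}\bmgamma/[\cp]\iso\copr_{\overline{\cd}}\overGamma .
\]
Since $p^{*}$ obviously takes $\pr$ to $\pr$ and $\copr$ to $\copr$, it induces a well-defined functor from the left-hand side of the proposition to the right-hand side. Full faithfulness of this induced functor is inherited directly from Proposition~\ref{Prop:pr is an equivalence}, so the only issue is essential surjectivity.

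Next, for essential surjectivity, let $Y\in\pr_{\overline{\cd}}\overGamma\cap\copr_{\overline{\cd}}\overGamma$. By Proposition~\ref{Prop:pr is an equivalence}, choose a lift $X\in\pr_{\cd}^{F}\bmgamma$ with $p^{*}(X)\cong Y$, and by its dual, a lift $X'\in\copr_{\cd}^{F}\bmgamma$ with $p^{*}(X')\cong Y$. Then $X$ and $X'$ become isomorphic in $\per\bmgamma/[\cp]$. Since $\per\bmgamma$ is Krull--Schmidt and $\cp$ is an additive subcategory closed under direct summands, isomorphism in the additive quotient by $[\cp]$ is equivalent to the existence of $P,P'\in\cp$ and an isomorphism $X\oplus P\cong X'\oplus P'$ in $\per\bmgamma$. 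The key point is now that both $\pr_{\cd}^{F}\bmgamma$ and $\copr_{\cd}^{F}\bmgamma$ are stable under adding summands from $\add\bmgamma$ (and in particular from $\cp\subseteq\add\bmgamma$), since one can simply enlarge the presentation $X_{1}\to X_{0}$ by a trivial summand $0\to P$ or $P'\to 0$ without affecting the surjectivity of $\Hom_\cd(-,I)$ or $\Hom_\cd(I,-)$. Therefore the common object $Z\coloneqq X\oplus P\cong X'\oplus P'$ lies in $\pr_{\cd}^{F}\bmgamma\cap\copr_{\cd}^{F}\bmgamma$.

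Finally, to see that $Z$ is indeed a lift of $Y$, I would verify that $p^{*}$ annihilates $\cp$: for each frozen vertex $i\in F$ one has $p^{*}(e_{i}\bmgamma)=e_{i}\overGamma=0$ because $p\colon\bmgamma\to\overGamma$ sends the idempotents $e_{i}$ with $i\in F$ to zero (the frozen vertices are deleted in $\overGamma$). Consequently $p^{*}(P)=0=p^{*}(P')$, and thus $p^{*}(Z)\cong Y$. This establishes essential surjectivity and completes the proof. The main mild obstacle is the dualization of Proposition~\ref{Prop:pr is an equivalence} and the verification that the two lifts $X$ and $X'$ can be stabilized into a single object lying in the intersection; both points reduce to the Krull--Schmidt property of $\per\bmgamma$ and the fact that $p^{*}$ kills $\cp$.
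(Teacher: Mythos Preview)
Your overall strategy is the same as the paper's: lift $Y$ separately to $X\in\pr_{\cd}^{F}\bmgamma$ and $X'\in\copr_{\cd}^{F}\bmgamma$, then argue that $X$ and $X'$ differ only by summands in $\cp$, so a common stabilization lies in the intersection. The paper does exactly this (it obtains the $\copr$-lift as $\Si^{-1}Y$ for a lift $Y$ of $\Si M$, which is equivalent to your dualization).

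There is, however, a genuine gap in your justification of the key step. You assert that ``$X$ and $X'$ become isomorphic in $\per\bmgamma/[\cp]$'' and then deduce $X\oplus P\cong X'\oplus P'$ from Krull--Schmidt. But what you actually know is only that $p^{*}(X)\cong p^{*}(X')$ in $\per\overGamma\simeq\per\bmgamma/\thick(\cp)$, the \emph{Verdier} quotient. This does \emph{not} in general imply isomorphism in the \emph{additive} quotient $\per\bmgamma/[\cp]$: for instance $\Si P$ and $0$ have isomorphic images in $\per\overGamma$ for $P\in\cp$, yet are not isomorphic modulo $[\cp]$. The fact that $p^{*}$ kills $\cp$ only tells you that $p^{*}$ factors through the additive quotient, not that the induced functor is faithful. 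The missing ingredient is Proposition~\ref{Prop:silting reduction-fully faithful}: both $\pr_{\cd}^{F}\bmgamma$ and $\copr_{\cd}^{F}\bmgamma$ are contained in $\cz={}^{\perp}(\Si^{>0}\cp)\cap(\Si^{<0}\cp)^{\perp}$, and on $\cz$ the composite $\cz/[\cp]\hookrightarrow\per\bmgamma/\thick(\cp)\iso\per\overGamma$ is fully faithful. This is precisely how the paper closes the argument. Once you invoke this, your Krull--Schmidt step goes through and the rest of your proof is correct.
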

\begin{proof}
	By Proposition~\ref{Prop:pr is an equivalence}, it is enough to show that the induced functor $$ p^{*}\colon\pr_{\cd}^{F}\bmgamma\cap\copr_{\cd}^{F}\bmgamma\ra\pr_{\overline{\cd}}\overGamma\cap\copr_{\overline{\cd}}\overGamma $$ is dense. Let $ M $ be an object of $ \pr_{\overline{\cd}}\overGamma\cap\copr_{\overline{\cd}}\overGamma $. By Proposition~\ref{Prop:pr is an equivalence}, there exists an object $ X\in\pr_{\cd}^{F}\bmgamma $ such that $ p^{*}(X)\cong M $ in $ \per\overGamma $.
	
	Since $ M $ is also in $ \copr_{\overline{\cd}}\overGamma $, we see that $ \Si M\in\pr_{\overline{\cd}}(\overGamma) $. Again by Proposition~\ref{Prop:pr is an equivalence}, there exists an object $ Y\in\pr_{\cd}^{F}\bmgamma $ such that $ p^{*}(Y)\cong \Si M $ in $ \per\overGamma $.
	
	Thus, we have $ p^{*}(X)\cong p^{*}(\Si^{-1}Y)\cong M $. Notice that $ \pr_{\cd}^{F}\bmgamma $ and $ \copr_{\cd}^{F}\bmgamma $ are subcategories of $ \cz=^{\perp}\!(\Si^{>0}\cp)\cap(\Si^{<0}\cp)^{\perp} $. By Proposition~\ref{Prop:silting reduction-fully faithful}, there exist objects $ P_{0},P_{1}$ in $\cp=\add (e\bmgamma) $ such that $$ X\oplus P_{0}\cong\Si^{-1}Y\oplus P_{1} .$$
	
	Then the object $ X\oplus P_{0} $ belongs to $ \pr_{\cd}^{F}\bmgamma\cap\copr_{\cd}^{F}\bmgamma $ and $ p^{*}(X\oplus P_{0}) \cong M $. This shows that the functor $$ p^{*}\colon\pr_{\cd}^{F}\bmgamma\cap\copr_{\cd}^{F}\bmgamma\ra\pr_{\overline{\cd}}\overGamma\cap\copr_{\overline{\cd}}\overGamma $$ is dense.
\end{proof}

%Recall that the dg quotient functor $ p:\bmgamma(Q,F,W)\ra\bmgamma(\overline{Q},\overline{W}) $ induces a triangulated functor $$ p^{*}:\cc(Q,F,W)\ra\overc=\cc(\overline{Q},\overline{W}) .$$

\bigskip
Similarly, we have the following propositions.
\begin{Prop}\label{Prop: equivalence pr-copr}
	The functor $ p^{*}\colon\cc(Q,F,W)\ra\cc(\overline{Q},\overline{W}) $ induces the following equivalences of $ k $-linear categories
	$$ p^{*}\colon\pr_{\cc}^{F}\relGammabf/[\cp]\ra\pr_{\overline{\cc}}\overGamma\,,\quad p^{*}\colon\frac{\pr^{F}_{\cc}\bmgamma\cap\copr^{F}_{\cc}\bmgamma}{[\cp]}\iso\pr_{\overc}\overGamma\cap\copr_{\overc}\overGamma,$$ 
	where $ \cc=\cc(Q,F,W) $ and $ \overline{\cc}=\cc(\overline{Q},\overline{W}) $.
\end{Prop}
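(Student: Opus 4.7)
The plan is to reduce the statement to the analogous equivalences already proven at the level of the perfect derived category, namely Proposition~\ref{Prop:pr is an equivalence} and Proposition~\ref{Prop: equivalences between two pr-copr in D}, by passing through the quotient functors $\pi^{rel}\colon\per\bmgamma\twoheadrightarrow\cc(Q,F,W)$ and $\pi\colon\per\overGamma\twoheadrightarrow\cc(\overline{Q},\overline{W})$. The key structural input is the commutative square of triangulated functors
\[
\begin{tikzcd}
\per\bmgamma\arrow[r,"p^{*}"]\arrow[d,"\pi^{rel}"',two heads]&\per\overGamma\arrow[d,"\pi",two heads]\\
\cc(Q,F,W)\arrow[r,"p^{*}"]&\cc(\overline{Q},\overline{W}),
\end{tikzcd}
\]
which has already been recorded in the excerpt.

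First, I would observe that the vertical arrow $\pi^{rel}$ restricts to an equivalence $\pr_{\cd}^{F}\bmgamma\iso\pr_{\cc}^{F}\bmgamma$ (respectively on the $\copr^F$ and the intersection versions) by Proposition~\ref{Prop: equivalence between pr-copr}, while the vertical arrow $\pi$ restricts to an equivalence $\pr_{\overline{\cd}}\overGamma\iso\pr_{\overc}\overGamma$ (respectively on $\copr_{\overc}\overGamma$) by Proposition~\ref{Prop:fully faithful of pr} and Corollary~\ref{Cor:fully faithful of copr} applied in the absolute case $F=\emptyset$. Since $\pi^{rel}$ sends $\cp\subset\per\bmgamma$ to $\cp\subset\cc$, the equivalence $\pi^{rel}\colon\pr_{\cd}^{F}\bmgamma\iso\pr_{\cc}^{F}\bmgamma$ descends to an equivalence on the additive quotients by $[\cp]$; combining this with the equivalence $p^{*}\colon\pr_{\cd}^{F}\bmgamma/[\cp]\iso\pr_{\overline{\cd}}\overGamma$ from Proposition~\ref{Prop:pr is an equivalence} and with the vertical equivalence $\pi$, the commutativity of the square yields the first desired equivalence
\[
p^{*}\colon\pr_{\cc}^{F}\bmgamma/[\cp]\iso\pr_{\overc}\overGamma.
\]

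The second equivalence is obtained by the identical argument with $\pr^{F}\cap\copr^{F}$ in place of $\pr^{F}$, now invoking Proposition~\ref{Prop: equivalences between two pr-copr in D} instead of Proposition~\ref{Prop:pr is an equivalence} and using that the intersection version of the vertical equivalence $\pi^{rel}$ is already available from Proposition~\ref{Prop: equivalence between pr-copr}.

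The one point that needs genuine checking, and which I expect to be the main technical obstacle, is that the equivalence $\pi^{rel}\colon\pr_{\cd}^{F}\bmgamma\iso\pr_{\cc}^{F}\bmgamma$ really does induce an equivalence between the additive quotients modulo the ideal $[\cp]$ computed in the two ambient categories $\per\bmgamma$ and $\cc$ respectively. Concretely, one must verify that a morphism $f$ in $\pr_{\cd}^{F}\bmgamma$ factors through an object of $\cp$ in $\per\bmgamma$ if and only if $\pi^{rel}(f)$ factors through an object of $\cp$ in $\cc$. The ``only if'' direction is immediate. For the ``if'' direction, given a factorization $\pi^{rel}(f)=g\circ h$ through $P\in\cp\subset\pr_{\cc}^{F}\bmgamma$ in $\cc$, full faithfulness of $\pi^{rel}$ on $\pr_{\cd}^{F}\bmgamma$ (combined with the fact that $P$ lies in this subcategory) allows one to lift $g$ and $h$ to morphisms $\tilde g, \tilde h$ in $\per\bmgamma$ whose composition agrees with $f$ modulo the kernel of $\pi^{rel}$; faithfulness then forces $\tilde g\circ\tilde h=f$, giving the required factorization.
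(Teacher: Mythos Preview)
The paper gives no detailed proof, indicating only by ``Similarly'' that the arguments of Propositions~\ref{Prop:pr is an equivalence} and~\ref{Prop: equivalences between two pr-copr in D} should be rerun with $\cc$ and $\overc$ in place of $\per\bmgamma$ and $\per\overGamma$. Your approach---transporting those propositions across the commutative square via the equivalences $\pi^{rel}$ and $\pi$ from Proposition~\ref{Prop: equivalence between pr-copr} and Proposition~\ref{Prop:fully faithful of pr}---is a legitimate and arguably cleaner alternative, and your treatment of the first equivalence (including the compatibility of the ideals $[\cp]$ under $\pi^{rel}$) is correct.

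There is, however, a gap in your handling of the intersection case. You assert that ``the intersection version of the vertical equivalence $\pi^{rel}$ is already available from Proposition~\ref{Prop: equivalence between pr-copr}'', but that proposition only gives equivalences $\pr_{\cd}^F\bmgamma\iso\pr_{\cc}^F\bmgamma$ and $\copr_{\cd}^F\bmgamma\iso\copr_{\cc}^F\bmgamma$ \emph{separately}. It does not follow formally that $\pi^{rel}$ restricts to an equivalence on the intersection: given $X\in\pr_{\cc}^F\bmgamma\cap\copr_{\cc}^F\bmgamma$, the lifts $X'\in\pr_{\cd}^F\bmgamma$ and $X''\in\copr_{\cd}^F\bmgamma$ need not coincide, and $\pi^{rel}$ is not known to be fully faithful on a common ambient subcategory containing both (the vanishing estimates in the proof of Proposition~\ref{Prop:fully faithful of pr} do not extend to morphisms between $\copr_{\cd}\bmgamma$ and $\pr_{\cd}\bmgamma$ without further input). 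The same issue arises for $\pi$ on the right-hand vertical arrow. One way to repair this is to bypass the intersection version of Proposition~\ref{Prop: equivalence between pr-copr} altogether and instead mimic the proof of Proposition~\ref{Prop: equivalences between two pr-copr in D} at the level of $\cc$: use your first equivalence and its $\copr$-dual to produce two preimages in $\cy$, then invoke the silting reduction in $\cc$ (Proposition~\ref{Prop:silting reduction-fully faithful in cc}) to conclude they agree up to $\cp$-summands. This, of course, brings you back to essentially the paper's intended ``similar'' argument.
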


%\begin{Prop}\label{Prop: equivalence pr-copr}
%The functor $ p^{*}:\cc(Q,F,W)\ra\cc(\overline{Q},\overline{W}) $ induces an equivalence of $ k $-linear categories
%    $$ p^{*}:\frac{\pr^{F}_{\cc}\bmgamma\cap\copr^{F}_{\cc}\bmgamma}{[\cp]}\iso\pr_{\overc}\overGamma\cap\copr_{\overc}\overGamma. $$
%\end{Prop}
%\begin{proof}
% It follows from Proposition~\ref{Prop: equivalence between pr-copr} and Proposition~\ref{Prop: equivalences between two pr-copr in D}.
%\end{proof}

\begin{Cor}\label{Cor: equivalence of k-cat}
	We have an equivalence of $ k $-linear categories
	$$ p^{*}\colon\ch/[\cp]\iso\cd(\overline{Q},\overline{W}) .$$
\end{Cor}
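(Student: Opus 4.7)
The plan is to deduce this corollary directly from Proposition~\ref{Prop: equivalence pr-copr} by matching the finite-dimensionality conditions that single out $\ch$ and $\cd(\overline{Q},\overline{W})$ inside their respective ambient categories. That proposition already yields the equivalence
$$ p^{*}\colon \frac{\pr^{F}_{\cc}\bmgamma \cap \copr^{F}_{\cc}\bmgamma}{[\cp]} \iso \pr_{\overc}\overGamma \cap \copr_{\overc}\overGamma, $$
while $\ch$ (resp.\ $\cd(\overline{Q},\overline{W})$) is cut out inside the left- (resp.\ right-) hand side by requiring that $\Hom_{\cc}(\Si^{-1}\bmgamma,-)$ (resp.\ $\Ext^{1}_{\overc}(\overGamma,-)$) be finite-dimensional. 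It is therefore enough to verify that, under the above equivalence, these two conditions on $X$ and $p^{*}X$ are equivalent.

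First I would decompose $\bmgamma = e\bmgamma \oplus (1-e)\bmgamma$ and exploit that every $X \in \pr^{F}_{\cc}\bmgamma \cap \copr^{F}_{\cc}\bmgamma$ lies in $\cy = {}^{\perp}(\Si^{>0}\cp)\cap(\Si^{<0}\cp)^{\perp}$: this forces $\Hom_{\cc}(\Si^{-1}e\bmgamma,X)=0$ and hence
$$ \Hom_{\cc}(\Si^{-1}\bmgamma,X) \;=\; \Hom_{\cc}(\Si^{-1}(1-e)\bmgamma,X). $$
Since $e_{i}=0$ in $\overGamma$ for each frozen $i$, the functor $p^{*}$ annihilates $e\bmgamma$ and identifies $(1-e)\bmgamma$ with $\overGamma$, so it supplies a natural comparison morphism
$$ \Phi_{X}\colon \Hom_{\cc}(\Si^{-1}(1-e)\bmgamma,X) \longrightarrow \Hom_{\overc}(\Si^{-1}\overGamma,p^{*}X)=\Ext^{1}_{\overc}(\overGamma,p^{*}X). $$

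The main technical step, and the main expected obstacle, is to show that $\Phi_{X}$ is an isomorphism for every such $X$; once this is in hand, the finite-dimensionality conditions transfer through $p^{*}$ and Proposition~\ref{Prop: equivalence pr-copr} restricts to the desired equivalence $\ch/[\cp]\iso\cd(\overline{Q},\overline{W})$. My plan for establishing this is first to lift $X$ to $X'\in \pr_{\cd}^{F}\bmgamma \cap \copr_{\cd}^{F}\bmgamma$ via Proposition~\ref{Prop: equivalence between pr-copr}, so that by Proposition~\ref{Prop: equivalences between two pr-copr in D} the object $p^{*}X'$ lifts $p^{*}X$, and then to apply Plamondon's exact sequence (Proposition~2.19 of~\cite{plamondonCategoriesAmasseesAux2011}) on both sides, reducing the comparison to a derived-level one governed by the adjunction $p^{*}\dashv p_{*}$. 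Injectivity of $\Phi_{X}$ should follow from $X\in(\Si^{<0}\cp)^{\perp}$, which prevents a nonzero map $\Si^{-1}(1-e)\bmgamma\to X$ from factoring through $\thick_{\cc}(\cp)$; surjectivity should follow from the left calculus of fractions for the Verdier quotient $p^{*}\colon\cc\twoheadrightarrow\overc=\cc/\thick_{\cc}(\cp)$, whose required approximations are supplied by Assumption~\ref{assumption}.
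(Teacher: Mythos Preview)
Your high-level strategy matches the paper's exactly: invoke Proposition~\ref{Prop: equivalence pr-copr} for the ambient equivalence $(\pr^F_\cc\bmgamma\cap\copr^F_\cc\bmgamma)/[\cp]\iso\pr_{\overc}\overGamma\cap\copr_{\overc}\overGamma$, then check that the finite-dimensionality conditions cutting out $\ch$ and $\cd(\overline Q,\overline W)$ correspond. The reduction $\Hom_\cc(\Si^{-1}\bmgamma,X)=\Hom_\cc(\Si^{-1}(1-e)\bmgamma,X)$ is also correct.

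Where your proposal diverges from the paper is in establishing the isomorphism $\Phi_X$, and here your sketch is more complicated and has real gaps. First, the plan to reduce via Plamondon's short exact sequence to a derived-level comparison ``governed by the adjunction $p^*\dashv p_*$'' does not work as stated: after dualising, you need to compare $\Hom_\cd(X',\Si(1-e)\bmgamma)$ with $\Hom_{\overline\cd}(p^*X',\Si\overGamma)$, and the adjunction gives $\Hom_\cd(X',\Si p_*\overGamma)$, but $p_*\overGamma$ is not $(1-e)\bmgamma$. Second, your direct injectivity/surjectivity argument cannot appeal to Proposition~\ref{Prop:silting reduction-fully faithful in cc}, since $\Si^{-1}(1-e)\bmgamma$ does not lie in $\cy$ (for instance $\Hom_\cc(\cp,\Si^{-1}(1-e)\bmgamma)$ need not vanish). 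One can salvage the injectivity by a co-$t$-structure argument on $\thick_\cc(\cp)$ as in the proof of Proposition~\ref{Prop:pr is an equivalence}, using both $X\in(\Si^{<0}\cp)^\perp$ and $(1-e)\bmgamma\in{}^\perp(\Si^{>0}\cp)$; but surjectivity via calculus of fractions remains only a sketch.

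The paper avoids all of this with Lemma~\ref{Lemma: stable extension}. For $Y\in\pr^F_\cc\bmgamma\cap\copr^F_\cc\bmgamma$, take a left $\cp$-approximation $f\colon Y\to P$ (Assumption~\ref{assumption}); its cone $T$ lies in $\cy$ and satisfies $p^*T\cong\Si p^*Y$. Applying $\Hom_\cc(X,-)$ for any $X\in\cy$ to the triangle $Y\to P\to T\to\Si Y$ yields
\[
\Hom_\cc(X,\Si Y)\;\cong\;\Hom_\cc(X,T)/\Ima(\Hom_\cc(X,P)\to\Hom_\cc(X,T))\;=\;(\cy/[\cp])(X,T),
\]
and now Proposition~\ref{Prop:silting reduction-fully faithful in cc} applies directly since both $X$ and $T$ are in $\cy$, giving $\Hom_\cc(X,\Si Y)\cong\Hom_{\overc}(p^*X,\Si p^*Y)$. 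Taking $X=\bmgamma$ yields precisely your $\Phi_X$ as an isomorphism, in one stroke and without lifting to $\per\bmgamma$.
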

\begin{proof}
	It follows from the Lemma below and Proposition~\ref{Prop: equivalence pr-copr}.
\end{proof}

\bigskip

\begin{Lem}\label{Lemma: stable extension}
	Let $ Y $ be an object of $ \pr^{F}_{\cc}\bmgamma\cap\copr^{F}_{\cc}\bmgamma $. 
	For any object $ X $ in 
	\[
	 \cy ={ }^{\perp}(\Si^{>0}\cp)\cap(\Si^{<0}\cp)^{\perp} \subseteq\cc(Q,F,W) ,
	 \]
	 we have $$ \Hom_{\cc}(X,\Si Y)\simeq\Hom_{\cc(\overline{Q},\overline{W})}(p^{*}(X),\Si p^{*}(Y)) .$$ 
\end{Lem}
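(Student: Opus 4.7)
The plan is to verify that the map induced by $p^{*}$,
\[
\Hom_{\cc}(X,\Si Y)\to\Hom_{\overc}(p^{*}X,\Si p^{*}Y),
\]
is bijective. The argument uses (i) the bounded co-$t$-structure $(\thick_{\cc}(\cp)_{\geqslant 0},\thick_{\cc}(\cp)_{\leqslant 0})$ on $\thick_{\cc}(\cp)\simeq\per(e\bmgamma e)$ with co-heart $\cp$, provided by $e\bmgamma$ as a silting object (as already exploited in the proof of Proposition~\ref{Prop:pr is an equivalence}); and (ii) the preliminary vanishing $\Hom_{\cc}(P,\Si^{i}Y)=\Hom_{\cc}(\Si^{-i}P,Y)=0$ for every $P\in\cp$ and every $i\geqslant 1$, which is immediate from $Y\in(\Si^{<0}\cp)^{\perp}$.

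For injectivity, suppose $f\colon X\to\Si Y$ factors as $X\xrightarrow{u}T\xrightarrow{v}\Si Y$ with $T\in\thick_{\cc}(\cp)$. The co-$t$-structure truncation at level $0$ yields a triangle $T_{>0}\to T\xrightarrow{\pi}T_{\leqslant 0}\to\Si T_{>0}$, with $T_{>0}$ (resp. $T_{\leqslant 0}$) built iteratively from $\Si^{-i}\cp$ for $i\geqslant 1$ (resp. $\Si^{j}\cp$ for $j\geqslant 0$). Since $\Hom_{\cc}(\Si^{-i}P,\Si Y)=\Hom_{\cc}(P,\Si^{i+1}Y)=0$ for $i\geqslant 1$, we get $\Hom_{\cc}(T_{>0},\Si Y)=0$, so $v=v'\pi$ for some $v'\colon T_{\leqslant 0}\to\Si Y$. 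Further truncate $T_{\leqslant 0}$ at level $-1$: there is a triangle $P_{0}\to T_{\leqslant 0}\to T''\to\Si P_{0}$ with $P_{0}\in\cp$ the co-heart piece and $T''$ built from $\Si^{j}\cp$ for $j\geqslant 1$. The condition $X\in{}^{\perp}\Si^{>0}\cp$ forces $\Hom_{\cc}(X,T'')=0=\Hom_{\cc}(X,\Si P_{0})$, so $\pi u$ lifts through $P_{0}\hookrightarrow T_{\leqslant 0}$ to a morphism $\tilde{u}\colon X\to P_{0}$. Thus $f=v'\pi u$ factors as a morphism through $\Hom_{\cc}(P_{0},\Si Y)=0$, whence $f=0$.

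For surjectivity, represent $\phi\in\Hom_{\overc}(p^{*}X,\Si p^{*}Y)$ by a backward roof $X\xleftarrow{t}X'\xrightarrow{h}\Si Y$ with $\cone(t)=T\in\thick_{\cc}(\cp)$, corresponding to a triangle $\Si^{-1}T\to X'\xrightarrow{t}X\to T$. It suffices to show that the composition $\Si^{-1}T\to X'\xrightarrow{h}\Si Y$ vanishes, for then $h$ factors through $t$, producing a lift $X\to\Si Y$ whose image under $p^{*}$ is $\phi$. Proceed by induction on the co-$t$-structure length of $T$. When $T\in\cp$ lies in the co-heart, $\Hom_{\cc}(\Si^{-1}T,\Si Y)=\Hom_{\cc}(T,\Si^{2}Y)=0$ by the preliminary vanishing. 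For the inductive step, choose a co-$t$-structure truncation triangle $T_{1}\to T\to T_{2}$ with $T_{1},T_{2}$ of strictly smaller length, and apply the octahedral axiom to split the given roof into two roofs with respective cones $T_{1}$ and $T_{2}$ (the necessary lifts existing at each stage by the $X$-side vanishing $\Hom_{\cc}(X,\thick_{\cc}(\cp)_{\leqslant-1})=0$, combined with the $Y$-side vanishing $\Hom_{\cc}(\thick_{\cc}(\cp)_{\geqslant 0},\Si Y)=0$ coming from $\Hom_{\cc}(\Si^{j}P,\Si Y)=\Hom_{\cc}(P,\Si^{1-j}Y)=0$ for $j\leqslant 0$); the inductive hypothesis then handles each piece. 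The principal technical obstacle lies in the surjectivity step, specifically in executing the inductive bookkeeping so that the two types of vanishings interlock correctly; the argument is essentially a dual of the injectivity step applied on both ends of the roof.
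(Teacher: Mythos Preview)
Your injectivity argument is correct. The surjectivity argument, however, has a gap. You propose to induct on the co-$t$-structure length of $T$, splitting the roof $X\xleftarrow{t}X'$ (cone $T$) into $X\leftarrow X_1\leftarrow X'$ with shorter cones $T_1,T_2$. The trouble is that the intermediate object $X_1$ need not lie in $\cy$: from the triangle $X_1\to X\to T_2$ one cannot deduce $X_1\in(\Si^{<0}\cp)^{\perp}$ without extra control on $T_2$. Hence the induction hypothesis, which is formulated for $X\in\cy$, does not apply to the second roof $X_1\leftarrow X'$. The base case is also incomplete: you treat only $T\in\cp$, but length~$1$ means $T\in\Si^{k}\cp$ for some $k$, and for $k\geqslant 2$ the space $\Hom_{\cc}(\Si^{-1}T,\Si Y)$ need not vanish.

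Your two vanishings do suffice, but via a single replacement rather than an induction. From $\Hom_{\cc}(X,\thick(\cp)_{\leqslant-1})=0$, the connecting map $X\to T$ lifts along $T_{\geqslant0}\to T$; forming the new fibre gives a roof $X\xleftarrow{t''}X''$ with cone $T_{\geqslant0}$, and the octahedral axiom produces $\sigma\colon X''\to X'$ with cone in $\thick(\cp)$, so $(t'',h\sigma)$ represents the same morphism as $(t,h)$. The obstruction to lifting $h\sigma$ through $t''$ now lies in $\Hom_{\cc}(\Si^{-1}T_{\geqslant0},\Si Y)\subseteq\Hom_{\cc}(\thick(\cp)_{\geqslant1},\Si Y)=0$.

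The paper's proof is different and shorter: it avoids the calculus of fractions entirely. Using the functorial finiteness of $\cp$ (Assumption~\ref{assumption}), take a left $\cp$-approximation $f\colon Y\to P$ and form the triangle $Y\to P\to T\to\Si Y$ in $\cc$. Because $f$ is an approximation, $T\in\cy$; because $p^{*}P=0$, we have $p^{*}T\cong\Si p^{*}Y$. Applying $\Hom_{\cc}(X,-)$ and using $\Hom_{\cc}(X,\Si P)=0$ identifies $\Hom_{\cc}(X,\Si Y)$ with $\Hom_{\cc}(X,T)/[\cp]$, which by Proposition~\ref{Prop:silting reduction-fully faithful in cc} equals $\Hom_{\overc}(p^{*}X,p^{*}T)\cong\Hom_{\overc}(p^{*}X,\Si p^{*}Y)$. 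The key move is to replace $\Si Y\notin\cy$ by an object $T\in\cy$, thereby reducing to the already established fully faithfulness of $\cy/[\cp]\hookrightarrow\overc$, rather than reproving it by hand.
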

\begin{proof}
	The category $ \cp=e\bmgamma $ is functorially finite in $ \pr^{F}_{\cc}\bmgamma\cap\copr^{F}_{\cc}\bmgamma $. We take a left $ \cp $-approximation $ f\colon Y\ra P $ of $ Y $. Then we have a triangle in $ \cc $
	$$ Y\xrightarrow{f}P\ra T\ra\Si Y ,$$ with $ p^{*}(T)\cong\Si Y $. Since $ f $ is a left $ \cp $-approximation, we see that $ T $ is in $ \cy $. Applying the functor $ \Hom_{\cc}(X,?) $ to the above triangle, we get a long exact sequence
	$$ \ra\Hom_{\cc}(X,P)\xrightarrow{\Phi}\Hom_{\cc}(X,T)\ra\Hom_{\cc}(X,\Si Y)\ra0. $$
	Thus, by Proposition~\ref{Prop:silting reduction-fully faithful in cc}, we have 
	\begin{equation*}
		\begin{split}
			\Hom_{\cc}(X,\Si Y)&\simeq \Hom_{\cc}(X,T)/\Ima(\Phi)\\
			&\simeq\cy/[\cp](X,T)\\
			&\simeq\Hom_{\cc(\overline{Q},\overline{W})}(p^{*}(X),p^{*}(T))\\
			&\simeq\Hom_{\cc(\overline{Q},\overline{W})}(p^{*}(X),\Si p^{*}(Y)).
		\end{split}
	\end{equation*}
	
\end{proof}

\begin{Prop}\label{Prop: bifunctorial isomorphisms in H}
	We have bifunctorial isomorphisms $ D\Ext^{1}_{\ch}(X,Y)\simeq\Ext^{1}_{\ch}(Y,X) $ for all $ X,Y\in\ch $.
\end{Prop}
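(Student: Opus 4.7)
The plan is to deduce the $2$-Calabi--Yau property of $\Ext^1$ on the Higgs category $\ch$ by transporting the corresponding property of Plamondon's category $\cd(\overline{Q},\overline{W})$ across the equivalence $\ch/[\cp] \iso \cd(\overline{Q},\overline{W})$ of Corollary~\ref{Cor: equivalence of k-cat}.

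First, since $\ch$ is an extension-closed subcategory of the triangulated category $\cc(Q,F,W)$ by Proposition~\ref{Higgs is extriangulated}, for any $X,Y \in \ch$ we have the identification
$$\Ext^1_{\ch}(X,Y) = \Hom_{\cc}(X, \Si Y),$$
bifunctorial in $X$ and $Y$. Since $\ch \subseteq \pr^F_\cc\bmgamma \cap \copr^F_\cc\bmgamma \subseteq \cy$, Lemma~\ref{Lemma: stable extension} applies with both arguments in $\ch$, yielding a natural isomorphism
$$ \Hom_{\cc}(X,\Si Y) \simeq \Hom_{\cc(\overline{Q},\overline{W})}(p^{*}X, \Si p^{*}Y).$$
By Corollary~\ref{Cor: equivalence of k-cat}, both $p^{*}X$ and $p^{*}Y$ lie in Plamondon's category $\cd(\overline{Q},\overline{W})$, and the identification above is nothing but $\Ext^1_{\cd(\overline{Q},\overline{W})}(p^{*}X,p^{*}Y)$, again because $\cd(\overline{Q},\overline{W})$ is extension closed in $\cc(\overline{Q},\overline{W})$.

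Next, I will invoke the bifunctorial Serre-type duality
$$ D\Ext^1_{\cd(\overline{Q},\overline{W})}(A,B) \simeq \Ext^1_{\cd(\overline{Q},\overline{W})}(B,A)$$
valid for all $A,B \in \cd(\overline{Q},\overline{W})$. This is Plamondon's $2$-Calabi--Yau property for his category (cf.~\cite{plamondonCategoriesAmasseesAux2011}); it is the non-Hom-finite analogue of the $2$-Calabi--Yau duality in the Hom-finite (Jacobi-finite) cluster category, and its proof in Plamondon's work uses the canonical triangle built from an $\add\overGamma$-presentation of one argument together with the classical $3$-Calabi--Yau property in $\pvd(\overGamma)$ to reduce the statement to a finite-dimensional computation. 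Combining this with the two natural isomorphisms above in the order
$$ D\Ext^1_{\ch}(X,Y) \simeq D\Ext^1_{\cd(\overline{Q},\overline{W})}(p^{*}X,p^{*}Y) \simeq \Ext^1_{\cd(\overline{Q},\overline{W})}(p^{*}Y,p^{*}X) \simeq \Ext^1_{\ch}(Y,X),$$
and using that each step is bifunctorial in both entries, we obtain the desired bifunctorial isomorphism $D\Ext^1_{\ch}(X,Y) \simeq \Ext^1_{\ch}(Y,X)$.

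The main subtlety will be verifying that the isomorphism in Lemma~\ref{Lemma: stable extension} is in fact bifunctorial, rather than just functorial in one variable: the proof of that lemma chose a left $\cp$-approximation of $Y$, so the construction a priori depends on $Y$, but one checks that the standard argument (the approximation is unique up to adding a summand of $\cp$, and such modifications do not affect the induced map) shows that the isomorphism is independent of choice and bifunctorial. Everything else reduces to transporting Plamondon's result across the equivalence modulo $[\cp]$, which is harmless since $\Hom_{\overline{\cc}}(p^{*}X,\Si p^{*}Y)$ is already the stable group thanks to Lemma~\ref{Lemma: stable extension}.
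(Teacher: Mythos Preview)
Your proof is correct and follows essentially the same route as the paper: the paper's proof is the one-liner ``It follows from~\cite[Proposition 2.16]{plamondonCategoriesAmasseesAux2011} and Lemma~\ref{Lemma: stable extension}'', which is precisely the argument you have unpacked---use Lemma~\ref{Lemma: stable extension} to identify $\Ext^1_{\ch}(X,Y)$ with $\Ext^1_{\overline{\cc}}(p^*X,p^*Y)$ and then apply Plamondon's $2$-Calabi--Yau duality on $\cd(\overline{Q},\overline{W})$. Your additional remarks on bifunctoriality are a reasonable gloss that the paper leaves implicit.
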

\begin{proof}
	It follows from~\cite[Proposition 2.16]{plamondonCategoriesAmasseesAux2011} and Lemma~\ref{Lemma: stable extension}.
\end{proof}

\bigskip

\subsection{The case where $ (\overline{Q},\overline{W}) $ is Jacobi-finite} \label{ss:reduced-Jacobi-finite-case}
Let $ (Q,F,W) $ be an ice quiver with potential. In this subsection, we assume that $ (\overline{Q},\overline{W}) $ is Jacobi-finite. 

By Corollary~\ref{Cor: relatove truncation functor}, the relative $ t $-structure on $ \cd(\bmgamma) $ restricts to the perfect derived category $ \per\bmgamma $. For any object $ X $ of $ \per\bmgamma $, the canonical triangle corresponding to the relative $ t $-structure is given by
$$ \tau_{\leqslant n}^{rel}X\to X\to \tau_{>n}^{rel}\to\Si\tau_{\leqslant n}^{rel}X $$
such that $ \tau_{\leqslant n}^{rel}X\in\per\bmgamma $ belongs to $ \mathcal{D}_{rel}^{\leqslant n} $ and $ \tau^{rel}_{>n}(X)=p_{*}(\tau_{>n}(p^{*}X)) $ belongs to $ \mathcal{D}_{rel}^{\geqslant n+1} $.

Recall that the fundamental domain $ \cf $ (see~\cite[Lemma 2.10]{amiotClusterCategoriesAlgebras2009}) of $ \per\overGamma $ is defined as
$$ \cf=\cd(\overGamma)^{\leqslant0}\cap^{\perp}\!(\cd(\overGamma)^{\leqslant-2})\cap\per\overGamma=\add\overGamma*\Si\add\overGamma=\pr\overGamma\subseteq\per\overGamma .$$
\begin{Prop}\cite[Proposition 2.9]{amiotClusterCategoriesAlgebras2009}
The projection functor $ \pi\colon\per\overGamma\ra\cc(\overline{Q},\overline{W}) $ induces a $ k $-linear equivalence between $ \cf $ and $ \cc(\overline{Q},\overline{W}) $.	
\end{Prop}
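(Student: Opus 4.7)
This is a classical theorem of Amiot \cite[Prop.~2.9]{amiotClusterCategoriesAlgebras2009}, and the plan is to reproduce its argument in the present notation. The strategy splits into showing that the restriction of $\pi$ to $\cf$ is fully faithful and that it is essentially surjective. The key tool will be the $3$-Calabi--Yau property of $\overGamma$ on $\pvd(\overGamma)$, the analogue for empty frozen subquiver of Proposition~\ref{Prop:relative 3-CY}. I will also use that, because $(\overline{Q},\overline{W})$ is Jacobi-finite, the canonical $t$-structure on $\cd(\overGamma)$ restricts to $\per\overGamma$ and $\per\overGamma$ is $\Hom$-finite, so in particular every object of $\per\overGamma$ with homology concentrated in a bounded window lies in $\pvd(\overGamma)$.

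\emph{Full faithfulness.} Let $X,Y\in\cf$. For injectivity, suppose $f\colon X\to Y$ becomes zero in $\overc$, so $f$ factors as $X\xrightarrow{g}N\xrightarrow{h}Y$ with $N\in\pvd(\overGamma)$; since $X\in\cd(\overGamma)^{\leqslant0}$, the map $g$ factors through $\tau_{\leqslant0}N$, still in $\pvd(\overGamma)$. It then suffices to show $\Hom(\tau_{\leqslant0}N,Y)=0$, and by $3$-Calabi--Yau duality this equals $D\Hom(Y,\Si^{3}\tau_{\leqslant0}N)$, which vanishes since $\Si^{3}\tau_{\leqslant0}N\in\cd^{\leqslant-3}$ is annihilated by $Y\in{}^{\perp}\cd^{\leqslant-2}$. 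For fullness, I would represent $\varphi\in\Hom_{\overc}(X,Y)$ by a fraction $X\xrightarrow{f}Y'\xleftarrow{s}Y$ with $\cone(s)=N\in\pvd(\overGamma)$, and form an octahedron from the truncation triangle $\tau_{\leqslant0}N\to N\to\tau_{\geqslant1}N$ to replace this by a fraction $X\to Y''\xleftarrow{s'}Y$ with $\cone(s')=\tau_{\geqslant1}N$; the vanishing $\Hom(\tau_{\leqslant0}N,\Si Y)=0$ needed to assemble this octahedron is again supplied by $3$-Calabi--Yau duality and the defining condition $Y\in{}^{\perp}\cd^{\leqslant-2}$. Since $\tau_{\geqslant1}N\in\cd^{\geqslant1}$ and $X\in\cd^{\leqslant0}$, the composite $X\to Y''\to\tau_{\geqslant1}N$ vanishes, so $X\to Y''$ lifts along $s'$ to a morphism $X\to Y$ in $\cd(\overGamma)$ whose image under $\pi$ is $\varphi$.

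\emph{Essential surjectivity.} Given $M\in\overc$, pick a lift $\tilde M\in\per\overGamma$. Jacobi-finiteness implies $\tau_{\geqslant 1}\tilde M$ has bounded, finite-dimensional homology, hence lies in $\pvd(\overGamma)$; so $M\cong\tau_{\leqslant 0}\tilde M$ in $\overc$ and we may assume $\tilde M\in\cd^{\leqslant0}\cap\per\overGamma$. A standard two-step minimal $\add\overGamma$-approximation then produces a morphism $f\colon P_{1}\to P_{0}$ in $\add\overGamma$ together with a morphism $M':=\cone(f)\to\tilde M$ whose cone lies in $\cd^{\leqslant-2}\cap\per\overGamma\subseteq\pvd(\overGamma)$; thus $M'\cong M$ in $\overc$ and $M'\in\add\overGamma*\Si\add\overGamma=\cf$. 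The step I expect to be the main obstacle is the octahedron construction in the fullness argument: one must check that the new fraction built from the truncation of $N$ is genuinely compatible with the original $f\colon X\to Y'$, which is where the combination of $3$-Calabi--Yau duality with the two-sided condition defining $\cf$ is used most essentially.
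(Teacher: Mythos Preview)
The paper does not supply its own proof of this proposition; it is quoted verbatim from Amiot. Your full faithfulness argument is correct and is essentially Amiot's original proof (the paper reproduces the same argument in the relative setting, Proposition~\ref{Prop:fully faithful of pr}).

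Your essential surjectivity argument, however, contains a genuine error. The inclusion $\cd^{\leqslant-2}\cap\per\overGamma\subseteq\pvd(\overGamma)$ is false: for instance $\Si^{2}\overGamma$ lies in the left-hand side but, already for $\overline{Q}$ a single vertex with no arrows, $\overGamma=k[t]$ with $|t|=-2$ has homology in infinitely many degrees and is not in $\pvd(\overGamma)$. So the cone of your approximation map $M'\to\tilde M$ need not become zero in $\overc$, and the ``two-step approximation'' does not by itself produce a preimage in $\cf$. Amiot's actual argument (mirrored in the relative case by Proposition~\ref{Prop: denseness of relative truncation} of this paper) is an inductive descent: one first observes that any $\tilde M\in\cd^{\leqslant0}\cap\per\overGamma$ lies in $\Si^{r}\cf$ for some $r\geqslant0$ (since $\per\overGamma=\thick(\overGamma)$), and then, for each $r\geqslant1$, constructs from $X\in\Si^{r}\cf$ an object $L\in\Si^{r-1}\cf$ with $\tau_{\leqslant-r}L\cong X$, by taking an extension of $X$ by a carefully chosen finite-dimensional module (via the Nakayama functor and the representability lemma \cite[Lemma~2.11]{amiotClusterCategoriesAlgebras2009}). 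This inductive step is precisely what produces a cone in $\pvd(\overGamma)$ at each stage; your shortcut bypasses it and fails for that reason.
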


Let $ r $ be a non-negative integer. Let $ \ch\langle r\rangle $ be the full subcategory of $ \cy={}^{\perp}(\Si^{>0}\cp)\cap(\Si^{<0}\cp)^{\perp}\subseteq\cc $ whose objects are those $ X $ such that $ p^{*}(X) $ is isomorphic to $ \Si^{r}Y $ in $ \per\overGamma $ for some object $ Y\in\cf $. By Proposition~\ref{Prop:silting reduction-fully faithful in cc} and Corollary~\ref{Cor: equivalence of k-cat}, we see that $ \ch=\ch\langle 0\rangle $.

\begin{Prop}\label{Prop: denseness of relative truncation}
	Let $ r $ be a positive integer and $ X $ an object of $ \ch\langle r\rangle $. Then there exists an object $ Y $ in $ \ch\langle r-1\rangle $ such that $ \tau^{rel}_{\leqslant-r}Y\cong X $ in $ \cc $.
\end{Prop}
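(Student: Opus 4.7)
The plan is to lift $X$ canonically to $\per\bmgamma$, construct $\hat Y$ as an extension of $\hat X$ whose image under $p^{*}$ lies in $\Sigma^{r-1}\cf$, and then identify the relative truncation of $\hat Y$ with $\hat X$ via the uniqueness of truncation triangles for the relative $t$-structure.

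Since $(\overline{Q},\overline{W})$ is Jacobi-finite, the SMC-reduction equivalence $\pi^{rel}_{\cw}\colon\cw\iso\cc$ of Proposition~\ref{Prop: stable finite to dense} provides a canonical lift $\hat X\in\cw\subseteq\per\bmgamma$ of $X$. By the definition of $\ch\langle r\rangle$, we have $p^{*}\hat X\cong\Sigma^{r}Z$ in $\per\overGamma$ for some $Z\in\cf=\add\overGamma\ast\Sigma\add\overGamma$. Fix a triangle $Q_1\xrightarrow{\alpha}Q_0\to Z\to\Sigma Q_1$ in $\per\overGamma$ with $Q_i\in\add\overGamma$, and lift $\alpha$ to $\hat\alpha\colon\hat Q_1\to\hat Q_0$ in $\add\bmgamma$ via the canonical equivalence $\add\bmgamma/[\cp]\iso\add\overGamma$. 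After rotating the $r$-fold shift of the lifted triangle in $\per\bmgamma$ and possibly absorbing $\cp$-summands using the functorial finiteness from Assumption~\ref{assumption}, one builds $\hat Y\in\per\bmgamma$ together with a distinguished triangle
\[
\hat X\to\hat Y\to U\to\Sigma\hat X
\]
such that $U\in\cd^{\geq-r+1}_{rel}\cap\per\bmgamma$ and $p^{*}\hat Y\in\Sigma^{r-1}\cf$. Setting $Y=\pi^{rel}(\hat Y)\in\cc$, the orthogonality conditions defining $\cy$ are inherited from those of $\hat X$ and $U$ via the triangle, so that $Y\in\ch\langle r-1\rangle$.

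Since $p^{*}\hat X\cong\Sigma^{r}Z\in\cd^{\leq-r}$ forces $\hat X\in\cd^{\leq-r}_{rel}$, while $U\in\cd^{\geq-r+1}_{rel}$ by construction, the displayed triangle is precisely the canonical truncation triangle for $\hat Y$ with respect to the relative $t$-structure at the cut $-r$ (Proposition~\ref{Relative t-structure} and Definition~\ref{Relative truncation functor}). Uniqueness then gives $\tau^{rel}_{\leq-r}\hat Y\cong\hat X$ in $\per\bmgamma$, which projects to the desired isomorphism in $\cc$.

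The main obstacle is to organize the cone construction so that $p^{*}\hat Y$ lands precisely in $\Sigma^{r-1}\cf$. Applying $p^{*}$ to the displayed triangle yields $\Sigma^{r}Z\to p^{*}\hat Y\to p^{*}U\to\Sigma^{r+1}Z$, and the connecting map must be chosen to control the cohomology of the resulting extension; here the shifted lifted presentation $\Sigma^{r-1}\hat\alpha$ supplies exactly the data needed, essentially because $\cf$ is generated as an extension of $\add\overGamma$ by $\Sigma\add\overGamma$ and $p^{*}$ is compatible with the recollement (see Proposition~\ref{Prop:pr is an equivalence} and the discussion preceding Proposition~\ref{Relative t-structure}). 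Passing to $\cc$ renders harmless any residual discrepancy living in the thick subcategory generated by $\cp$, and ensures that the orthogonality conditions characterising $\cy$ are preserved by the extension.
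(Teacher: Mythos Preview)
Your proposal has a genuine gap at the central construction step. You claim that by lifting the presentation $Q_1\xrightarrow{\alpha}Q_0\to Z\to\Sigma Q_1$ along $\add\bmgamma/[\cp]\iso\add\overGamma$, rotating, and ``absorbing $\cp$-summands'', one obtains a triangle
\[
\hat X\to\hat Y\to U\to\Sigma\hat X
\]
with $U\in\cd^{\geq -r+1}_{rel}$. But the lifted triangle has the form $\Sigma^{r}\hat Q_1\to\Sigma^{r}\hat Q_0\to\Sigma^{r}\hat Z\to\Sigma^{r+1}\hat Q_1$, and its terms involve $\Sigma^{r}\hat Z$, not $\hat X$. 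These two objects agree only after applying $p^{*}$; in $\per\bmgamma$ they differ by something in $\thick(\cp)$, and no amount of adding or splitting off $\cp$-summands bridges that difference in a way that produces the required triangle. More seriously, membership in $\cd^{\geq -r+1}_{rel}$ forces $U$ to lie in the essential image of $p_{*}$ (equivalently, $U$ restricted to $e\bmgamma e$ is acyclic), and your cones of shifted objects of $\add\bmgamma$ never have this property. Your closing paragraph acknowledges this obstacle but does not resolve it; the reference to Proposition~\ref{Prop:pr is an equivalence} is not relevant here.

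The paper's argument is quite different. It does not lift a presentation at all. Instead it builds the third term of the triangle directly as an object of $\pvd_e(\bmgamma)$: one takes $M'=\ker(\nu H^0(P_1)\to\nu H^0(P_0))$ in $\mathrm{mod}\,H^0(\overGamma)$ and sets $M=\Sigma^{r-1}p_*(M')$. This $M$ is automatically in $\cd^{\geq -r+1}_{rel}$. The key point is then to produce the morphism $M\to\Sigma X$: using the relative $3$-Calabi--Yau property (Proposition~\ref{Prop:relative 3-CY}) together with Amiot's Lemma~2.11, one identifies $\Hom_{\per\bmgamma}(M,\Sigma X)$ with $\End(M')$ and takes the preimage $\epsilon$ of $\mathrm{id}_{M'}$. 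The cone $L$ of $\epsilon$ then satisfies $p^{*}(L)\in\Sigma^{r-1}\cf$ precisely because Amiot's Lemma~2.11 controls extensions of this type inside $\cf$. Only after this is established does one compare with the truncation triangle via the octahedral axiom, as in the last part of your argument.
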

\begin{proof}
	By definition, there exist an object $ Y $ of $ \cf $ such that $ p^{*}(X)\cong \Si^{r}Y $ in $ \per\overGamma $. We set $ Z=\Si^{1-r}p^{*}(X)=\Si Y $. By \cite[Proposition 2.9]{amiotClusterCategoriesAlgebras2009}, there exists a triangle in $ \per\overGamma $
	$$ \Si P_{1}\ra\Si P_{0}\ra Z\ra\Si^{2}P_{1} $$
	with $ P_{0} $ and $ P_{1} $ in $ \add\overGamma $. Denote by $ \nu $ the Nakayama functor on the projectives of $ \mathrm{mod}H^{0}(\overGamma) $. Let $ M' $ be the kernel of the morphism $ \nu H^{0}(P_{1})\ra\nu H^{0}(P_{0}) $. We define $ M $ to be $ \Si^{r-1}p_{*}(M') $. It is easy to see that $ M $ lies in $ \pvd_{e}(\Gammabf)\subseteq \cy=^{\perp}\!(\Si^{>0}\cp)\cap(\Si^{<0}\cp)^{\perp}\subseteq\cc $.
	
	By \cite[Lemma 2.11]{amiotClusterCategoriesAlgebras2009}, we have an isomorphism of functors $$ \Hom_{\per\overGamma}(?,\Si Z)|_{\mathrm{mod}H^{0}(\overGamma)}\simeq\Hom_{\mathrm{mod}H^{0}(\overGamma)}(?,M') .$$

	By the relative 3-Calabi--Yau property (see Proposition~\ref{Prop:relative 3-CY}), we have an isomorphism
	\begin{equation*}
		\begin{split}
			\Hom_{\per\Gammabf}(M,\Si X)\simeq&D\Hom_{\per\Gammabf}(\Si X,\Si^{r+2}p_{*}(M'))\\
			\simeq&D\Hom_{\per\overGamma}(\Si p^{*}X,\Si^{r+2}M')\\
			\simeq&\Hom_{\per\overGamma}( \Si^{r+2}M',\Si^{4}p^{*}(X))\\
			=&\Hom_{\per\overGamma}(p^{*}(M),\Si p^{*}(X)).
		\end{split}
	\end{equation*}
	
	Therefore we have 
	\begin{equation*}
		\begin{split}
			\Hom_{\per\Gammabf}(M,\Si X)\simeq&\Hom_{\per\overGamma}(p^{*}(M),\Si p^{*}(X))\\
			\simeq&\Hom_{\per\overGamma}(\Si^{r-1}M',\Si p^{*}(X))\\
			\simeq&\Hom_{\per\overGamma}(M',\Si^{2-r} p^{*}(X))\\
			\simeq&\Hom_{\per\overGamma}(M',\Si Z).
		\end{split}
	\end{equation*}
	
	Let $ \epsilon $ be the preimage of the identity map on $ M' $ under the isomorphism
	$$ \Hom_{\per\Gammabf}(M,\Si X)\simeq\Hom_{\per\overGamma}(M',\Si Z)|_{\mathrm{mod}H^{0}(\overGamma)}\simeq\Hom_{\mathrm{mod}H^{0}(\overGamma)}(M',M') .$$
	We form the corresponding triangle in $ \per\bmgamma $
	$$ X\ra L\ra M\xrightarrow{\epsilon}\Si X. $$
	Similarly, let $ \epsilon' $ be the preimage of the identity map on $ M' $ under the isomorphism
	$$ \Hom_{\per\overGamma}(M',\Si^{2-r}p^{*}(X))\simeq\Hom_{\mathrm{mod}H^{0}(\overGamma)}(M',M'). $$
	Then we form the corresponding triangle in $ \per\overGamma $
	$$ \Si^{1-r}p^{*}(X)\ra L'\ra M'\ra\Si^{2-r}p^{*}(X). $$
	We see that $ p^{*}(L) $ is isomorphic to $ \Si^{r-1}L' $.
	
	By \cite[Lemma 2.11]{amiotClusterCategoriesAlgebras2009}, the object $ L' $ is in the fundamental domain $ \cf\subseteq\per\overGamma $. So $ L $ is an object of $ \ch\langle r-1\rangle $. Next, we will show that $ \tau^{rel}_{\leqslant -r}L $ is isomorphic to $ X $.
	
	Since $ X\in\cd(\Gammabf)_{rel}^{\leqslant-r} $ and $ \tau^{rel}_{>-r}L\in\cd(\Gammabf)_{rel}^{>-r} $, the space $ \Hom_{\cd(\Gammabf)}(X,\tau^{rel}_{>-r}L) $ vanishes. Hence, we obtain a commutative diagram of triangles
	\[
	\begin{tikzcd}
		\tau^{rel}_{\leqslant-r}L\arrow[r]&L\arrow[r]&\tau^{rel}_{>-r}L\arrow[r]&\Si\tau^{rel}_{\leqslant-r}L\\
		X\arrow[r]\arrow[u,dashed,"\delta_{2}"]&L\arrow[r]\arrow[u,equal]&M\arrow[r]\arrow[u,dashed,"\delta_{1}"]&\Si X\arrow[u,dashed].
	\end{tikzcd}
	\]
	By the octahedral axiom, we have the following commutative diagram
	\begin{align*}
		\xymatrix{
			L\ar[r]\ar@{=}[d]&M\ar[r]\ar[d]^{\delta_{1}}&\Si X\ar[r]\ar[d]^{\delta_{2}[1]}&\Si L\ar@{=}[d]\\
			L\ar[r]&\tau^{rel}_{>-l}(L)\ar[r]\ar[d]&\Si\tau^{rel}_{\leqslant-l}L\ar[r]\ar[d]&\Si L\ar[d]\\
			&\cone(\delta_{1})\ar@{-->}[r]\ar[d]&\Si \cone(\delta_{2})\ar[d]\ar[r]&\Si M\\
			&\Si M\ar[r]&\Si^{2} X
		}
	\end{align*}
and the object $ \cone(\delta_{1}) $ is isomorphic to $ \Si\cone(\delta_{2}) $ in $ \per\Gammabf $.

Since $ \tau^{rel}_{\leqslant-r}L\in\mathcal{D}(\Gammabf)^{\leqslant-r}_{rel} $ and $ X\in\mathcal{D}(\Gammabf)^{\leqslant-r}_{rel} $, the object $ \cone(\delta_{2}) $ is also in $ \mathcal{D}(\Gammabf)^{\leqslant-r}_{rel} $. Thus $ \Si\cone(\delta_{2}) $ is in $ \mathcal{D}(\Gammabf)^{\leqslant-r-1}_{rel} $. On the other hand, $ M $ and $ \tau^{rel}_{>-r}(L) $ are in $ \mathcal{D}_{rel}^{\geqslant-r+1}(\Gammabf) $. Thus $ \cone(\delta_{1}) $ is in $ \mathcal{D}_{rel}^{\geqslant-r}(\Gammabf) $. Hence we can conclude that $ \cone(\delta_{1})\cong \Si\cone(\delta_{2}) $ is zero. 

Thus, the relative truncation $ \tau^{rel}_{\leqslant-r}L $ of $ L $ is isomorphic to $ X $.

\end{proof}

\bigskip
The following result shows that when $(\ol{Q}, \ol{W})$ is Jacobi-finite, 
the Higgs category can be characterized in the relative cluster category by Ext-vanishing conditions. 
\begin{Thm}\label{Higgs category is a Silting reduction}
	The Higgs category $ \mathcal{H} $ is equal to the full subcategory $ \ce$ of $\cc(Q,F,W) $ defined by
$$\mathcal{E}=\{X\in\cc(Q,F,W)\,|\,\Hom_{\cc(Q,F,W)}(X,\Si^{>0}\cp)=0=\Hom_{\cc(Q,F,W)}(\Si^{<0}\cp,X)=0\}.$$
\end{Thm}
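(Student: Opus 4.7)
The inclusion $\ch \subseteq \ce$ is immediate: by definition $\ch \subseteq \pr^F_{\cc}\bmgamma \cap \copr^F_{\cc}\bmgamma \subseteq \cy = \ce$, where $\cy = {}^{\perp}(\Sigma^{>0}\cp) \cap (\Sigma^{<0}\cp)^{\perp}$. The substance is the converse $\ce \subseteq \ch$, and my plan is to use the Jacobi-finite hypothesis to show that, modulo $[\cp]$, the nested inclusions $\ch \hookrightarrow \pr^F_{\cc}\bmgamma \cap \copr^F_{\cc}\bmgamma \hookrightarrow \cy$ all become equivalences once composed into $\overline{\cc}$ by $p^*$.

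First, I would establish two preparatory facts. (a)~$\cp \subseteq \ch$: any $P \in \cp$ lies in $\pr_{\cc}\bmgamma \cap \copr_{\cc}\bmgamma$ (as cones of $0 \to P$ and $P \to 0$) and in $\cy$ (because $\cp$ is presilting), while the short exact sequence stated just before Definition~\ref{Def: Higgs cat}, applied to $X = \bmgamma$ and $Y = P$, sandwiches $\Hom_{\cc}(\Sigma^{-1}\bmgamma, P) = \Hom_{\cc}(\bmgamma, \Sigma P)$ between pieces controlled by $H^{1}(P)$ and $H^{1}(\bmgamma)$, both zero since $\bmgamma$ is connective. (b)~$\ch$ is closed under direct summands: by Lemma~\ref{Lemma: pr is extension closed} and Proposition~\ref{Prop:fully faithful of pr}, $\pr_{\cc}\bmgamma$ is closed under summands (being the image of an intersection of shifted aisles in $\per\bmgamma$), and dually for $\copr_{\cc}\bmgamma$; the defining condition of $\cy$ and the finite-dimensionality condition are obviously inherited by summands.

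Next, in the Jacobi-finite case Amiot's theorem gives $\pr_{\overline{\cc}}\overGamma = \copr_{\overline{\cc}}\overGamma = \overline{\cc}$ with $\overline{\cc}$ being $\Hom$-finite; hence $\cd(\overline{Q}, \overline{W}) = \overline{\cc}$. Combined with Corollary~\ref{Cor: equivalence of k-cat} and Proposition~\ref{Prop: equivalence pr-copr}, the functor $p^*$ induces equivalences
\[
p^*\colon \ch/[\cp] \iso \overline{\cc} \qquad \text{and} \qquad p^*\colon (\pr^F_{\cc}\bmgamma \cap \copr^F_{\cc}\bmgamma)/[\cp] \iso \overline{\cc},
\]
while Proposition~\ref{Prop:silting reduction-fully faithful in cc} yields a fully faithful $p^*\colon \cy/[\cp] \hookrightarrow \overline{\cc}$. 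All three are restrictions of the same functor and factor through the natural chain $\ch/[\cp] \hookrightarrow (\pr^F_{\cc}\bmgamma \cap \copr^F_{\cc}\bmgamma)/[\cp] \hookrightarrow \cy/[\cp] \hookrightarrow \overline{\cc}$, so essential surjectivity of the outer composition forces every inclusion in this chain to be an equivalence.

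To finish, let $X \in \ce = \cy$. Using $\ch/[\cp] \iso \cy/[\cp]$, pick $X' \in \ch$ with $X \cong X'$ in $\cy/[\cp]$. A standard lifting (using that $\cp$ is closed under summands) promotes this to an honest isomorphism $X \oplus P \cong X' \oplus P'$ in $\cc$ with $P, P' \in \cp$; since $\cp \subseteq \ch$ by (a) and $\ch$ is closed under finite direct sums, $X' \oplus P' \in \ch$, hence $X \oplus P \in \ch$, and (b) yields $X \in \ch$. The main obstacle is observation (a): showing $\cp \subseteq \ch$ rests on the relative Plamondon short exact sequence to control $\Hom_{\cc}$ by $\Hom_{\cd}$; once (a) is in place the argument is a diagram chase identifying all three quotients with $\overline{\cc}$.
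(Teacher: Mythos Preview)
Your argument is correct and takes a genuinely different route from the paper's proof.

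The paper argues at the level of $\per\bmgamma$: given $X\in\ce$, it first finds $r\geq 0$ with $p^*(X)\in{}^{\perp}(\cd(\overGamma)^{\leq -2-r})$, applies the relative truncation $\tau^{rel}_{\leq -r}$ to obtain an isomorphic object $X'\in\ch\langle r\rangle$, and then invokes Proposition~\ref{Prop: denseness of relative truncation} iteratively to descend from $\ch\langle r\rangle$ to $\ch\langle 0\rangle=\ch$. This is more constructive but relies on the machinery of the relative $t$-structure and the technical Proposition~\ref{Prop: denseness of relative truncation}. Your approach instead stays in $\cc$ and exploits that, under the Jacobi-finite hypothesis, both Corollary~\ref{Cor: equivalence of k-cat} and Proposition~\ref{Prop:silting reduction-fully faithful in cc} feed into the same fully faithful functor $p^*\colon\cy/[\cp]\hookrightarrow\overline{\cc}$, forcing $\ch/[\cp]\hookrightarrow\cy/[\cp]$ to be an equivalence; then you lift modulo $[\cp]$ using (a) and Lemma~\ref{Lem: Higgs is idempotent complete}. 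This is shorter and avoids the truncation apparatus entirely. The trade-off is that the paper needs Proposition~\ref{Prop: denseness of relative truncation} anyway (it reappears in the proof of Theorem~\ref{Thm: Jacobi-finite to Frobenius} to show $\ch$ has enough injectives), so from the paper's global standpoint it is natural to also use it here.

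One small point: in your lifting step you claim $X\oplus P\cong X'\oplus P'$ with \emph{both} $P,P'\in\cp$, but establishing $P'\in\cp$ would require a Krull--Schmidt-type argument for $\cy$. You do not need this: from $1_X-gf$ factoring through some $P_0\in\cp$ you get directly that $X$ is a direct summand of $X'\oplus P_0$, and then (a) together with closure under summands finishes the proof. Also, your assessment that (a) is ``the main obstacle'' is a bit pessimistic; as you yourself indicate, the short exact sequence immediately gives $\Hom_{\cc}(\Si^{-1}\bmgamma,P)=0$ since both $H^1(P)$ and $H^1(\bmgamma)$ vanish.
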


\begin{proof}
	By Definition~\ref{Def: Higgs cat}, we have the inclusion $ \mathcal{H}\subseteq\mathcal{E} $. Let $ X $ be an object of $ \mathcal{E} $. Under the quotient functor $ p^{*}\colon\per\bmgamma\ra\per\overGamma $, there exists an non-negative integer $ r $ such that $ p^{*}(X) $ lies in $ ^{\perp}(\mathcal{D}(\overGamma)^{\leqslant-2-r}) $. Consider the object $ X'\coloneqq\tau^{rel}_{\leqslant -r}X $. Then $ X' $ is isomorphic to $ X $ in $ \mathcal{C}(Q,F,W) $. 
	
	Since $ p^{*}(X')=\tau_{\leqslant-r}p^{*}(X) $ also lies in $ ^{\perp}(\mathcal{D}(\overGamma)^{\leqslant-2-r}) $, we see that $ p^{*}(X') $ belongs to $$ \cd(\overGamma)^{\leqslant-r}\cap^{\perp}(\mathcal{D}(\overGamma)^{\leqslant-2-r})\cap\per\overGamma=\Si^{r}\cf .$$ Hence $ X' $ lies in $ \ch\langle r\rangle $. By Proposition~\ref{Prop: denseness of relative truncation}, there exists an object $ Y $ in $ \ch $ such that $ Y $ is isomorphic to $ X' $ in $ \mathcal{C}(Q,F,W) $. This shows the inclusion $ \ce\subseteq\ch $. Hence $ \mathcal{H} $ is equal to $ \mathcal{E} $.
\end{proof}

\begin{Lem}\label{Lem: two triangles in Higgs}
	Let $ X $ be an object of $ \ch $. For any positive integer $ l $, there exist an object $ U\in\ch $ and a triangle in $ \cc(Q,F,W) $
	$$ P\xrightarrow{f} X\ra\Si^{l}U\ra\Si P $$ with $ f $ a right $ (\cp*\Si\cp*\cdots\Si^{l-1}\cp) $-approximation, i.e. for each object $ P' $ in $ \cp*\Si\cp*\cdots\Si^{l-1}\cp $, the induced map $ f_{*}:\Hom_{\cc}(P',P)\ra\Hom_{\cc}(P',X) $ is surjective. 
	
	Dually, for any positive integer $ m $, there exist an object $ V\in\ch $ and a triangle in $ \cc(Q,F,W) $
	$$ \Si^{-m-1}V\ra X\xrightarrow{g}Q\ra\Si^{-m}V $$ with $ g $ a left $ (\Si^{-m}\cp*\cdots\Si^{-1}\cp*\cp) $-approximation, i.e. for each object $ Q' $ in $ \Si^{-m}\cp*\cdots\Si^{-1}\cp*\cp $ , the induced map $ g^{*}:\Hom_{\cc}(Q,Q')\ra\Hom_{\cc}(X,Q') $ is surjective.
\end{Lem}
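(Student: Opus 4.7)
The plan is to prove the first statement by induction on $l$; the second is entirely dual, exploiting the dual presilting structure on the opposite side.

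For the base case $l=1$, Assumption~\ref{assumption} ensures that $\cp=\add(e\bmgamma)$ is contravariantly finite in $\ch$, so for any $X\in\ch$ I may choose a right $\cp$-approximation $f\colon P_1\to X$ and complete it to a triangle $U_1\to P_1\xrightarrow{f}X\to\Si U_1$ in $\cc$, where $U_1:=\Si^{-1}\cone(f)$. The key claim is $U_1\in\ch$. I would establish this by applying $\Hom_\cc(-,\Si^i P')$ and $\Hom_\cc(\Si^i P',-)$ (for $P'\in\cp$) to this triangle and chasing the resulting long exact sequences, invoking (i) the presilting property of $\cp$ in $\cc$, (ii) the vanishing $\Hom_\cc(X,\Si^{>0}\cp)=0=\Hom_\cc(\Si^{<0}\cp,X)$ for $X\in\ch$, and (iii) the surjectivity $f_*\colon\Hom_\cc(\cp,P_1)\twoheadrightarrow\Hom_\cc(\cp,X)$ coming from the approximation property. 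This places $U_1$ in $\cy={}^{\perp}(\Si^{>0}\cp)\cap(\Si^{<0}\cp)^{\perp}$. The remaining $\pr^F_\cc\bmgamma\cap\copr^F_\cc\bmgamma$ membership and finite-dimensionality of $\Hom_\cc(\Si^{-1}\bmgamma,U_1)$ follow from applying the extension-closure of $\ch$ (Proposition~\ref{Prop: H is extension closed}) to the rotated triangle $\Si^{-1}X\to U_1\to P_1\to X$, whose outer terms both lie in $\ch$.

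For the inductive step, assuming the conclusion for $l-1$ I obtain a triangle $P'\to X\to\Si^{l-1}U'\to\Si P'$ with $U'\in\ch$ and $P'\in\cp*\Si\cp*\cdots*\Si^{l-2}\cp$. The base case applied to $U'\in\ch$ then yields $Q\to U'\to\Si U\to\Si Q$ with $Q\in\cp$ and $U\in\ch$. Shifting this triangle by $l-1$ and applying the octahedral axiom to the composition $X\to\Si^{l-1}U'\to\Si^l U$ produces simultaneously a triangle $P'\to P\to\Si^{l-1}Q\to\Si P'$, which places $P$ in $\cp*\Si\cp*\cdots*\Si^{l-1}\cp$, together with the desired triangle $P\xrightarrow{f}X\to\Si^l U\to\Si P$.

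To verify the approximation property, I would argue that for $P''=\Si^j Q'$ with $Q'\in\cp$ and $0\leqslant j\leqslant l-1$, one has $\Hom_\cc(P'',\Si^l U)=\Hom_\cc(Q',\Si^{l-j}U)=0$ since $U\in\ch$ and $l-j>0$. An easy induction along the defining extension triangles propagates this vanishing to every $P''$ in $\cp*\Si\cp*\cdots*\Si^{l-1}\cp$, and then the long exact sequence attached to $P\to X\to\Si^l U\to\Si P$ yields the required surjectivity $\Hom_\cc(P'',P)\twoheadrightarrow\Hom_\cc(P'',X)$. The main obstacle throughout is verifying that the constructed $U_1$ (and hence inductively each $U_l$) lies genuinely in $\ch$ rather than only in the Ext-vanishing subcategory $\cy$: the long exact sequence computation yields only the $\cy$-membership, while the full $\pr^F_\cc\bmgamma\cap\copr^F_\cc\bmgamma$ condition and finite-dimensionality must be recovered by combining with Proposition~\ref{Prop: H is extension closed}. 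In the Jacobi-finite case Theorem~\ref{Higgs category is a Silting reduction} collapses this distinction, but in general one must be careful to preserve the full definition of $\ch$ at every step.
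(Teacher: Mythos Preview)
Your overall strategy (induction on $l$ via the octahedral axiom, with the approximation property following from $\Hom_{\cc}(\cp*\cdots*\Si^{l-1}\cp,\Si^{l}\ch)=0$) matches the paper's exactly. The paper, however, handles the base case differently: rather than constructing a right $\cp$-approximation and then arguing that the cone lands in $\ch$, it simply cites the Frobenius structure of $\ch$ (the forward reference to Theorem~\ref{Thm: Jacobi-finite to Frobenius}), which immediately supplies the syzygy triangles $\Omega^{i}X\to P_{i-1}\to\Omega^{i-1}X\to\Si\Omega^{i}X$ with $\Omega^{i}X\in\ch$ and $P_{i-1}\in\cp$. From there the octahedral splicing is identical to yours.

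Your attempt to prove the base case directly contains a genuine flaw: in the rotated triangle $\Si^{-1}X\to U_1\to P_1\to X$, the object $\Si^{-1}X$ does \emph{not} lie in $\ch$ in general (it is not in $\pr_{\cc}\bmgamma\cap\copr_{\cc}\bmgamma$), so Proposition~\ref{Prop: H is extension closed} does not apply. You yourself flag this and note that Theorem~\ref{Higgs category is a Silting reduction} collapses $\ch$ and $\cy$ in the Jacobi-finite setting, which is indeed the setting of this lemma; with that identification, your long-exact-sequence argument placing $U_1$ in $\cy$ is already sufficient, and the extension-closure step is both unnecessary and incorrect. A second minor gap: the claim that Assumption~\ref{assumption} makes $\cp$ contravariantly finite in $\ch$ (as opposed to in $\add(\bmgamma)$) is not immediate and essentially requires the same enough-projectives argument that the paper packages into Theorem~\ref{Thm: Jacobi-finite to Frobenius}. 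So in the end you are implicitly relying on the Frobenius structure anyway; the cleaner route is to cite it up front, as the paper does.
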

\begin{proof}
  We only show the existences of the first statement since the second one can be shown dually.
  Let $ l $ be a positive integer. By Theorem \ref{Thm: Jacobi-finite to Frobenius} below, the Higgs category $ \ch $ is  a Frobenius extriangulated category with projective-injective objects $ \cp=\add(e\bmgamma) $. Thus, we have the following triangles in $ \cc(Q,F,W) $
  $$ \Omega X\ra P_{0}\ra X\ra\Si\Omega X, $$
  $$ \Omega^{2}X\ra P_{1}\ra \Omega X\ra\Si\Omega^{2}X, $$
  $$ \cdots $$
  $$ \Omega^{l}X\ra P_{l-1}\ra \Omega^{l-1} X\ra\Si\Omega^{l}X, $$
  where for each $ 0\leqslant i\leqslant l-1 $, the object $ P_{i} $ lies in $ \cp $ and $ \Omega^{i+1}X $ lies in $ \ch $.
  
  By the first two triangles and the octahedral axiom, we have
  \[
  \begin{tikzcd}
  	P_{1}\arrow[d,equal]\arrow[r]&\Omega X\arrow[r]\arrow[d]&\Si\Omega^{2}X\arrow[r]\arrow[d]&\Si P_{1}\arrow[d,equal]\\
  	P_{1}\arrow[r]&P_{0}\arrow[r]\arrow[d]&P'_{1}\arrow[r]\arrow[d]&\Si P_{1}\\
  	&X\arrow[r,equal]\arrow[d]&X\arrow[d]\\
  	&\Si\Omega X\arrow[r]&\Si^{2}\Omega X.
  \end{tikzcd}
  \]
  Then we get the following triangle in $ \cc(Q,F,W) $
  $$ P'_{1}\ra X\ra\Si^{2}\Omega X\ra\Si P'_{1} $$
  with $ P'_{1} $ in $ \cp*\Si\cp $.
  
  Repeating this process until the last triangle, we get a triangle in $ \cc(Q,F,W) $
   $$ P'_{l-1}\xrightarrow{f} X\ra\Si^{l}\Omega^{l-1} X\ra\Si P'_{l-1} $$
  with $ P'_{l-1} $ in $ \cp*\Si\cp*\cdots*\Si^{l-1}\cp $. Since $ \Omega^{l-1}X $ lies in $ \ch $, the space $$ \Hom_{\cc}(\cp*\Si\cp*\cdots*\Si^{l-1}\cp,\Si^{l}\ch) $$ vanishes. Thus, for each object $ P' $ in $ \cp*\Si\cp*\cdots\Si^{l-1}\cp $, the induced map $ f_{*}:\Hom_{\cc}(P',P'_{l-1})\ra\Hom_{\cc}(P',X) $ is surjective. 
  
\end{proof}

Recall that an extriangulated category $ \ce $ is \emph{Frobenius} if $ \ce $ has enough projectives and enough injectives and if moreover the projectives coincide with the injectives (see~\cite[Definition 3.2]{liuFrobeniusExangulatedCategories2020}).

\begin{Thm}\label{Thm: Jacobi-finite to Frobenius}
	The Higgs category $ \ch(Q,F,W) $ is a Frobenius extriangulated category with projective-injective objects $ \cp=\add(e\bmgamma) $. In this case, we have equalities $$ \cd(\overline{Q},\overline{W})=\cc(\overline{Q},\overline{W}) $$ 
	and
	$$ \cd(Q,F,W)=\cc(Q,F,W). $$ Moreover, for any object $ X $ of $ \cc(Q,F,W) $, there exist $ l\in\mathbb{Z} $, $ U\in\ch $ and $ P\in\thick_{\cc}\langle\cp\rangle\simeq\per (e\bmgamma e) $ such that we have a triangle in $ \cc $
	\begin{align}\label{triangle in C_rel}
		\xymatrix{
		P\ra X\ra\Si^{l}U\ra \Si P.
		}
	\end{align}
	Dually, there exist $ m\in\mathbb{Z} $, $ V\in\ch $ and $ Q\in\thick_{\cc}\langle\cp\rangle\simeq\per (e\bmgamma e) $ such that we have a triangle in $ \cc $
	\begin{align}\label{triangle in C_rel^op}
		\xymatrix{
			\Si^{m}V\ra X\xrightarrow{g}Q\ra\Si^{m+1}V .
		}
	\end{align}
\end{Thm}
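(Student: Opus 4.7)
The plan is to prove the three parts of the theorem in the following order: the identifications $\cd=\cc$, the existence of the decomposition triangles for arbitrary objects of $\cc(Q,F,W)$, and finally the Frobenius extriangulated structure on $\ch$.

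First, the identifications are formal. Since $(\overline{Q},\overline{W})$ is Jacobi-finite, Amiot's theorem ensures that $\cc(\overline{Q},\overline{W})$ is $\Hom$-finite, so the finite-dimensionality condition in Definition~\ref{Def:Plamondon's cat} is automatic and $\cd(\overline{Q},\overline{W})=\cc(\overline{Q},\overline{W})$. The identification $\cd(Q,F,W)=\cc(Q,F,W)$ then follows directly from Definition~\ref{Defn: categoey D(Q,F,W)}, since every object of $\cc(Q,F,W)$ maps under $p^{*}$ to an object of $\cd(\overline{Q},\overline{W})$.

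For the decomposition triangles, I lift an arbitrary $X\in\cc(Q,F,W)$ to $\tilde X\in\per\bmgamma$ and use Amiot's equivalence $\pi\colon\cf\xrightarrow{\sim}\cc(\overline{Q},\overline{W})$: there exist $Z\in\cf$ and $l\in\mathbb{Z}$ with $p^{*}_{\cc}(X)\cong\Sigma^{l}\pi(Z)$ in $\cc(\overline{Q},\overline{W})$. Lifting $Z$ to some $\tilde U\in\per\bmgamma$ with $p^{*}\tilde U=Z$ and representing the comparison isomorphism by a roof in $\per\overGamma$ whose defect lies in $\pvd(\overGamma)$, I lift this roof along $p^{*}\colon\per\bmgamma\to\per\overGamma$ to get a morphism $\tilde X\to\Sigma^{l}\tilde U$ in $\per\bmgamma$ whose fiber $\tilde P$ satisfies $p^{*}\tilde P\in\pvd(\overGamma)$ and therefore lies in the kernel $\thick_{\per\bmgamma}\langle\cp\rangle\simeq\per(e\bmgamma e)$ of $p^{*}$. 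Adjusting by Proposition~\ref{Prop: denseness of relative truncation} so that $\tilde U$ descends to an object of $\ch$ yields the triangle~(\ref{triangle in C_rel}). The dual triangle~(\ref{triangle in C_rel^op}) follows from the symmetric construction using the opposite truncation.

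For the Frobenius structure, extension-closedness of $\ch$ in $\cc$ is already Proposition~\ref{Prop: H is extension closed}, and $\cp\subseteq\ch$ is a direct consequence of the presilting property combined with Theorem~\ref{Higgs category is a Silting reduction}. For any $\ch$-conflation $X\to Y\to Z$ and any $P\in\cp$, the connecting terms $\Hom_{\cc}(Z,\Sigma P)$ and $\Hom_{\cc}(P,\Sigma X)$ vanish by Theorem~\ref{Higgs category is a Silting reduction}, so the long exact sequences obtained from $\Hom_{\cc}(-,P)$ and $\Hom_{\cc}(P,-)$ show that every $P\in\cp$ is simultaneously projective and injective in $\ch$. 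To produce enough projectives, given $X\in\ch$, I construct a right $\cp$-approximation $P_{0}\to X$ in $\cc$ using the $\Ext$-finiteness clause in the definition of $\ch$ together with Assumption~\ref{assumption} and the module-lifting result Proposition~\ref{Prop: modules}. The fiber $\Omega X$ then satisfies all clauses of the $\Ext$-vanishing characterization of $\ch$: the key clause $\Hom_{\cc}(\Sigma^{-1}\cp,\Omega X)=0$ is equivalent, via the long exact sequence, to the surjectivity $\Hom_{\cc}(\cp,P_{0})\twoheadrightarrow\Hom_{\cc}(\cp,X)$, which is precisely the approximation property. Enough injectives follows dually using Proposition~\ref{Prop: bifunctorial isomorphisms in H}.

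The main technical obstacle I anticipate is the existence of right $\cp$-approximations of arbitrary objects of $\ch$: this requires representability of the functor $\Hom_{\cc}(-,X)|_{\cp}$, and controlling it hinges on a careful interplay between the finite-dimensional $\Ext$-data built into the definition of $\ch$ and the a priori non-finitary behavior of $\cp$ inside $\cc$.
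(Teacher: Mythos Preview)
Your treatment of the identifications $\cd(\overline{Q},\overline{W})=\cc(\overline{Q},\overline{W})$ and $\cd(Q,F,W)=\cc(Q,F,W)$ is fine and matches the paper. The rest of the argument, however, has two genuine gaps.

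\textbf{Decomposition triangles.} The implication ``$p^{*}\tilde P\in\pvd(\overGamma)$, therefore $\tilde P\in\thick_{\per\bmgamma}\langle\cp\rangle$'' is false. The kernel of $p^{*}\colon\per\bmgamma\to\per\overGamma$ is $\thick\langle\cp\rangle$, but the \emph{preimage} of $\pvd(\overGamma)$ is much larger: it contains all of $\pvd_{e}(\bmgamma)$ (on which $p^{*}$ is an equivalence onto $\pvd(\overGamma)$) as well as extensions of such objects by $\thick\langle\cp\rangle$. So even granting the unjustified lifting of the roof along $p^{*}$, your fiber $\tilde P$ need not lie in $\thick\langle\cp\rangle$, and the triangle you produce does not have the required form. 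The paper takes the opposite route: it first establishes the Frobenius structure, and then uses iterated syzygies in $\ch$ (Lemma~\ref{Lem: two triangles in Higgs}) to build the triangles~(\ref{triangle in C_rel}) for objects of $\ch$; the extension to all of $\cc(Q,F,W)$ then follows by showing that the class of objects admitting such a triangle is closed under shifts and extensions and contains $\ch$, hence equals $\mathrm{tri}_{\cc}(\ch)=\cc(Q,F,W)$.

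\textbf{Enough projectives/injectives.} Your plan to take a right $\cp$-approximation of an arbitrary $X\in\ch$ is exactly the obstacle you flag, and it is not resolved: Assumption~\ref{assumption} only gives functorial finiteness of $\cp$ in $\add(\bmgamma)$, not in $\ch$ or $\cc$, and there is no reason for $\Hom_{\cc}(e\bmgamma,X)$ to be finitely generated over $eJ_{rel}e$. The paper avoids this entirely. For enough injectives it starts from the $\add\bmgamma$-copresentation $X\to T_{0}\to T_{1}\to\Si X$ (available since $X\in\copr_{\cc}\bmgamma$), composes with a left $\cp$-approximation $T_{0}\to I_{0}$ (this exists by Assumption~\ref{assumption} because $T_{0}\in\add\bmgamma$), and obtains a triangle $X\to I_{0}\to T'_{1}\to\Si X$ with $T'_{1}\in\ch\langle 1\rangle$. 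The crucial step is then Proposition~\ref{Prop: denseness of relative truncation}, which produces $I_{1}\in\ch$ isomorphic to $T'_{1}$ in $\cc$, yielding the inflation $X\rightarrowtail I_{0}$ with cokernel in $\ch$. Enough projectives is dual. In short, the key input you are missing is that one approximates the \emph{presenting} objects $T_{0}\in\add\bmgamma$ rather than $X$ itself, and then uses relative truncation to pull the cofiber back into $\ch$.
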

\begin{proof}
	\ 
	
	\emph{Step 1. The Higgs category $ \ch(Q,F,W) $ is a Frobenius extriangulated category with projective-injective objects $ \cp=\add(e\bmgamma) $.}\

	Since $ (\overline{Q},\overline{W}) $ is Jacobi-finite, by \cite[Remark 3.11]{plamondonCategoriesAmasseesAux2011}, we have an equality $$ \cd(\overline{Q},\overline{W})=\cc(\overline{Q},\overline{W}) .$$
	
	Let $ I $ be an object in $ \cp $. For any distinguished triangle in $ \ch $
	$$ X\rightarrow Y\rightarrow Z\stackrel{\delta}{-\rightarrow} ,$$
	the space $ \Hom_{\cc}(\Si^{-1}Z,I)\cong\Hom_{\per\bmgamma}(\Si^{-1}Z,I) $ vanishes. Thus, we have an exact sequence
	$$ \Hom_{\cc}(Y,I)\ra\Hom_{\cc}(X,I)\ra0 .$$ This shows that any object in $ \cp $ is injective.
	
	Let $ X $ be an object of $ \ch $. By definition, we have a triangle in $ \cc $
	$$ X\xrightarrow{f}T_{0}\xrightarrow{g}T_{1}\ra\Si X $$ with $ T_{i}\in\add\bmgamma $. Since the category $ \cp $ is functorially finite in $ \add\bmgamma $, there exists a left $ \cp $-approximation $ T_{0}\xrightarrow{h} I_{0} $, i.e. $ \Hom_{\cc}(h,I) $ is surjective for any $ I\in\cp $. Thus, we get a triangle in $ \cc $
	$$ X\xrightarrow{hf}I_{0}\ra T'_{1}\ra\Si X $$
	with $ I_{0}\in\cp $ and $ T'_{1}\in\cy=^{\perp}\!(\Si^{>0}\cp)\cap(\Si^{<0}\cp)^{\perp} $. Thus $ T'_{1} $ is an object of $ \ch\langle1\rangle $. 
	
	By Proposition~\ref{Prop: denseness of relative truncation}, there exists an object $ I_{1}\in\ch $ such that $ \tau^{rel}_{\leqslant-1}I_{1} $ is isomorphic to $ T'_{1} $ in $ \per\bmgamma $. Thus $ I_{1} $ is isomorphic to $ T'_{1} $ in $ \cc $. We get a triangle in $ \cc $
	$$ X\xrightarrow{hf}I_{0}\ra I_{1}\ra\Si X $$ 
	with $ I_{0},I_{1}\in\ch $. Therefore, $ \ch $ has has enough injectives.
	
	Dually, we show that any object in $ \cp $ is projective and $ \ch $ has enough projectives.
	Thus, the Higgs category $ \ch $ is a Frobenius extriangulated category with projective-injective objects $ \cp=\add(e\bmgamma) $.
	
	By the definition of $ \cd(Q,F,W) $ (see Definition~\ref{Defn: categoey D(Q,F,W)}), it is clear that we have an identity $ \cd(Q,F,W)=\cc(Q,F,W) $.

	\emph{Step 2. The existences of triangles~(\ref{triangle in C_rel}) and~(\ref{triangle in C_rel^op}).} 
	
	We only show the existences of the first triangle since the second one can be shown dually. 
	
By using the canonical co-$ t $-structure on $ \per\bmgamma $, we see that $ \per\bmgamma=\mathrm{tri}_{\per\bmgamma}(\add\bmgamma) $, i.e.\ the smallest triangulated subcategory of $ \per\bmgamma $ containing $ \add\bmgamma $. Since $ \pi^{rel}(\add\bmgamma)$ lies in $ \ch $, we have $$ \cc(Q,F,W)=\mathrm{tri}_{\cc}(\ch) .$$ 
	Let $ \ck $ be the full subcategory of $ \cc(Q,F,W) $ whose objects are those $ X $ which satisfy the following condition:
	\begin{itemize}
		\item[] For any $ l\gg0 $, there exist objects $ P\in\thick_{\cc}(\cp) $, $ U\in\ch $ and a triangle in $ \cc(Q,F,W) $
		$$ P\xrightarrow{} X\ra\Si^{l}U\ra\Si P $$ such that $ P $ lies in $ \Si^{k_{1}}\cp*\Si^{k_{2}}*\cdots*\Si^{k_{s}}\cp $ for some integers $ k_{1},k_{2},\cdots,k_{s} $ less than $ l $.
	\end{itemize}
By Lemma~\ref{Lem: two triangles in Higgs}, we have $ \ch\subseteq\ck $. We next show that $ \ck $ is closed under shifts and extensions. It is easy to see that $ \ck $ is closed under shifts. 

We next show that $ \ck $ is closed under extensions. Suppose we are give a triangle $ X'\ra X\ra X''\ra\Si X' $ in $ \cc(Q,F,W) $ with $ X' $ and $ X'' $ in $ \ck $. For any $ l\gg0 $, we have the following triangles in $ \cc(Q,F,W) $
$$ P'\xrightarrow{} X'\ra\Si^{l}U'\ra\Si P' ,$$ 
$$ P''\xrightarrow{} X''\ra\Si^{l}U''\ra\Si P'' $$ 
with $ U' $ and $ U'' $ in $ \ch $ and $ P'\in \Si^{k'_{1}}\cp*\Si^{k'_{2}}*\cdots*\Si^{k'_{s}}\cp $, $ P''\in \Si^{k''_{1}}\cp*\Si^{k''_{2}}*\cdots*\Si^{k''_{r}}\cp $ for some integers $ k'_{1},k'_{2},\cdots,k'_{s},k''_{1},k''_{2},\cdots,k''_{r} $ less than $ l $. 

By using the fact that $ \Hom_{\cc}(P'',\Si^{l+1}U')=0 $ and Proposition~\ref{Prop: H is extension closed}, we get the following diagram
\[
\begin{tikzcd}
	P'\arrow[r]\arrow[d]& X'\arrow[r]\arrow[d]&\Si^{l}U'\arrow[r]\arrow[d]&\Si P'\arrow[d]\\
	P\arrow[r]\arrow[d]& X\arrow[r]\arrow[d]&\Si^{l}U\arrow[r]\arrow[d]&\Si P\arrow[d]\\
	P''\arrow[r]\arrow[d]& X''\arrow[r]\arrow[d]&\Si^{l}U''\arrow[r]\arrow[d]&\Si P''\arrow[d]\\
	\Si P'\arrow[r]&\Si X'\arrow[r]&\Si^{l+1}U'\arrow[r]&\Si^{2} P'.
\end{tikzcd}
\]
This shows that $ X $ also lies in $ \ck $. Hence $ \ck $ is closed under extension. By the above arguments, we have $ \mathrm{tri}_{\cc}(\ch)\subseteq\ck $. Thus the category $ \ck $ is equal to $ \cc(Q,F,W) $.
\end{proof}

\bigskip

By Proposition~\ref{Prop: H is extension closed}, the Higgs category $ \ch=\ch(Q,F,Q) $ is an extriangulated category. The extriangulated structure $ (\ch,\mathbb{E},\mathfrak{s}) $ can be described as follows:
\begin{itemize}
	\item [(1)] For any two objects $ X,Y $ of $ \ch $, the $ \mathbb{E} $-extension space $ \mathbb{E}(X,Y) $ is given by
	$$ \mathbb{E}(X,Y)=\Hom_{\cc}(X,\Si Y) .$$
	\item[(2)] For any $ \delta\in\mathbb{E}(X,Y) $, take a distinguished triangle
	$$ X\xrightarrow{f}Y\xrightarrow{g}Z\xrightarrow{\delta}\Si X $$
	and define $ \mathfrak{s}(\delta)=[X\xrightarrow{f}Y\xrightarrow{g}Z] $. Then $ \mathfrak{s}(\delta) $ does not depend on the choice of the distinguished triangle above.
\end{itemize}

\begin{Prop}\label{Prop: equivalence of k-categories}
    The functor $ p^{*}\colon\cc\ra\cc(\overline{Q},\overline{W}) $ induces an equivalence of triangulated categories
	$$ p^{*}\colon\ch/[\cp]\iso\cd(\overline{Q},\overline{W})=\cc(\overline{Q},\overline{W}) .$$
\end{Prop}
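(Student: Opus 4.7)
The plan is to upgrade the $k$-linear equivalence of Corollary~\ref{Cor: equivalence of k-cat} to an equivalence of triangulated categories. By Theorem~\ref{Thm: Jacobi-finite to Frobenius}, the Higgs category $\ch$ is a Frobenius extriangulated category with subcategory of projective-injective objects $\cp = \add(e\bmgamma)$, and moreover $\cd(\overline{Q},\overline{W}) = \cc(\overline{Q},\overline{W})$. By the Nakaoka--Palu extension of Happel's theorem (see \cite{nakaokaExtriangulatedCategoriesHovey2019} and its specialization to the Frobenius case in \cite{liuFrobeniusExangulatedCategories2020}), the stable category $\underline{\ch} = \ch/[\cp]$ therefore carries a canonical triangulated structure whose shift $\Sigma_{\ch}$ is defined by choosing, for each $X \in \ch$, a conflation $X \rightarrowtail I \twoheadrightarrow \Sigma_{\ch}X$ with $I \in \cp$.

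First I would invoke Corollary~\ref{Cor: equivalence of k-cat} to identify $\underline{\ch}$ with $\cd(\overline{Q},\overline{W}) = \cc(\overline{Q},\overline{W})$ as $k$-linear categories. Next I would verify that $p^{*}$ intertwines the shift functors. Indeed, every defining conflation $X \to I \to \Sigma_{\ch}X$ in $\ch$ is the first three terms of a distinguished triangle $X \to I \to \Sigma_{\ch}X \to \Sigma X$ in $\cc(Q,F,W)$. Applying the triangulated Verdier quotient $p^{*}\colon\cc(Q,F,W) \to \cc(\overline{Q},\overline{W})$, and using that $\cp \subseteq \thick_{\cc}(\cp)$ is the kernel of $p^{*}|_{\cc}$ (by the commutative diagram with exact columns recalled just after the definition of $\cc(Q,F,W)$), one obtains $p^{*}(I) = 0$ and hence a canonical isomorphism $p^{*}(\Sigma_{\ch}X) \iso \Sigma p^{*}(X)$, functorial in $X$. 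Then I would check that $p^{*}$ sends standard triangles of $\underline{\ch}$ to distinguished triangles of $\cc(\overline{Q},\overline{W})$: a standard triangle in $\underline{\ch}$ is produced from a conflation $X \rightarrowtail Y \twoheadrightarrow Z$ in $\ch$, which is the truncation of a distinguished triangle $X \to Y \to Z \to \Sigma X$ in $\cc(Q,F,W)$; its image under the triangulated functor $p^{*}$ is distinguished in $\cc(\overline{Q},\overline{W})$, and the shift-intertwining isomorphism built in the previous step identifies $\Sigma p^{*}(X)$ with $p^{*}(\Sigma_{\ch}X)$.

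The main obstacle I expect is the compatibility of the connecting morphisms. In $\underline{\ch}$, the connecting map $Z \to \Sigma_{\ch}X$ is obtained via the universal property of the chosen conflation $X \to I \to \Sigma_{\ch}X$ and is only well defined modulo $[\cp]$; in $\cc(\overline{Q},\overline{W})$, the connecting morphism $p^{*}(Z) \to \Sigma p^{*}(X)$ comes directly from the image of the triangle $X \to Y \to Z \to \Sigma X$. The needed identification of these two connecting morphisms rests on an octahedral diagram in $\cc(Q,F,W)$ combining the two triangles $X \to Y \to Z \to \Sigma X$ and $X \to I \to \Sigma_{\ch}X \to \Sigma X$, after which applying $p^{*}$ collapses the ambiguity modulo $\cp$ because $p^{*}(\cp) = 0$. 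Once this compatibility is in place, $p^{*}$ is a triangulated equivalence, which combined with $\cd(\overline{Q},\overline{W}) = \cc(\overline{Q},\overline{W})$ yields the claim.
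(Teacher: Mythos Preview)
Your proposal is correct and follows essentially the same route as the paper: both arguments use Theorem~\ref{Thm: Jacobi-finite to Frobenius} to get the Frobenius extriangulated structure, invoke \cite{liuFrobeniusExangulatedCategories2020} for the triangulated structure on $\ch/[\cp]$, and then combine with the $k$-linear equivalence of Corollary~\ref{Cor: equivalence of k-cat}. The only difference is packaging: where you verify shift-compatibility and the preservation of triangles and connecting morphisms by hand (via $p^{*}(\cp)=0$ and an octahedron), the paper simply observes that the extriangulated structures agree because Lemma~\ref{Lemma: stable extension} gives $\mathbb{E}_{\ch}(X,Y)=\Hom_{\cc}(X,\Si Y)\simeq\Hom_{\overline{\cc}}(p^{*}X,\Si p^{*}Y)$, so that both $\mathbb{E}$ and $\mathfrak{s}$ match under $p^{*}$; this bypasses the explicit connecting-morphism check you flagged as the main obstacle.
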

\begin{proof}
	Since $ (\overline{Q},\overline{W}) $ is Jacobi-finite, the Higgs category $ \ch $ is a Frobenius extriangulated category. By \cite[Lemma 3.12]{liuFrobeniusExangulatedCategories2020}, the stable category $ \ch/[\cp] $ also has an extriangulated structure where the distinguished triangles in $ (\ch/[\cp],\overline{\mathbb{E}},\overline{\mathfrak{s}}) $ are the images of distinguished triangles in $ (\ch,\mathbb{E},\mathfrak{s}) $. By Corollary~\ref{Cor: equivalence of k-cat} and Lemma~\ref{Lemma: stable extension}, we get this equivalence of triangulated categories.
\end{proof}

\bigskip

\subsection{Summary of results}

Let $ \ce $ be a Frobenius category and $ \cm $ a full subcategory of $ \ce $ which contains the full subcategory $ \cp $ of $ \ce $ formed by the projective-injective objects. We denote by $ \ck^{b}(\ce)  $ and $ \cd^{b}(\ce) $ respectively the bounded homotopy category and the bounded derived category of $ \ce $. 

We say that a complex $ X\colon\cdots\ra X^{i-1}\ra X^{i}\ra X^{i+1}\ra\cdots $ in $ \ck^{b}(\ce) $ is \emph{$ \ce $-acyclic} if there are conflations $
\begin{tikzcd}
	Z^{i}\arrow[r,tail,"l^{i}"]&X^{i}\arrow[r,two heads,"\pi^{i}"]& Z^{i+1}	
\end{tikzcd} $ such that $ d_{X}^{i}=l^{i+1}\circ\pi^{i} $ for each $ i\in\mathbb{Z} $.

We also denote by $ \ck_{\ce-ac}^{b}(\ce) $, $ \ck^{b}(\cp) $, $ \ck^{b}(\cm) $ and $ \ck^{b}_{\ce-ac}(\cm) $ the full subcategory of $ \ck^{b}(\ce)  $ whose objects are the $ \ce $-acyclic complexes, the complexes of projective objects in $ \ce $, the complexes of objects of $ \cm $ and the $ \ce $-acyclic complexes of objects of $ \cm $ respectively.

Combining Corollary~\ref{Cor: equivalence of k-cat}, Theorem~\ref{Thm: Jacobi-finite to Frobenius} and Theorem~\ref{Thm: structure thm without Noetherian}, we have the following result.

\begin{Thm}\label{Thm: main results}
Let $ (Q,F,W) $ be an ice quiver with potential such that $ \cp=\add(e\relGammabf) $ is functorially finite in $ \add(\relGammabf) $.
\begin{itemize}
	\item[1)] We have an equivalence of $ k $-categories
		$$ \ch(Q,F,W)/[\cp]\iso\cd(\overline{Q},\overline{W}) .$$
	\item[2)] If $ (\overline{Q},\overline{W}) $ is Jacobi-finite, then $ \ch$ equals the full subcategory
	of $\cc$ formed by the objects $X$ such that $\Ext^i(X,P)=0=\Ext^i(P,X)$ for all $i>0$ and all $P\in\cp$. It is a Frobenius extriangulated category with projective-injective objects $ \cp=\add(e\bmgamma) $ and the equivalence in $ 1) $ preserves the extriangulated structure. We have equalities 
	$$ \cd(\overline{Q},\overline{W})=\cc(\overline{Q},\overline{W}) $$ and
	$$ \cd(Q,F,W)=\cc(Q,F,W). $$ Moreover, $ \bmgamma $ is a canonical cluster-tilting object of $ \ch $ with endomorphism algebra $ \End_{\ch}(\bmgamma)=J(Q,F,W) $.
	\item[3)] If moreover $ \bmgamma $ is concentrated in degree 0, then the \emph{boundary algebra} $ B\!=\!\! eJ_{rel}e $ is $ \mathrm{fp}_{\infty} $-Gorenstein of injective dimension at most $ g\leqslant3 $ with respect to $ \relGammabf $ and the Higgs category $ \ch $ is equivalent to the category $ \mathrm{gpr}_{\infty}(B,H^0\Gammabf) $. Moreover, if
	$H^0 \Gammabf$ is right coherent, then $\ch$ is equivalent to $\gpr_\infty(B)$.
	\item[4)] Let $ \cm=\add(\bmgamma)\subseteq\ch $. Under the condition of $ 3) $, the exact sequence of triangulated categories
	$$ 0\ra\pvd_{e}(\bmgamma)\ra\per\bmgamma\ra\cc(Q,F,W)\ra0 $$
	is equivalent to
	$$ 0\ra\ck^{b}_{\ch-ac}(\cm)\ra\ck^{b}(\cm)\ra\cd^{b}(\ch)\ra0. $$
	In particular, the relative cluster category $ \cc(Q,F,W) $ is equivalent to the bounded derived category $ \cd^{b}(\ch) $ of $ \ch $.
	\end{itemize}	
	
\end{Thm}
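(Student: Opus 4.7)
My plan is to assemble the four parts of the theorem from results already established earlier in the paper, with most of the heavy lifting having been done in the preceding sections. Part 1) is precisely the conclusion of Corollary~\ref{Cor: equivalence of k-cat}, so nothing new is required there.

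For part 2), I would argue as follows. The Ext-vanishing characterization of $\ch$ as a subcategory of $\cc$ is Theorem~\ref{Higgs category is a Silting reduction}. The Frobenius extriangulated structure on $\ch$ with projective-injective objects $\cp$, together with the equalities $\cd(\overline{Q},\overline{W})=\cc(\overline{Q},\overline{W})$ and $\cd(Q,F,W)=\cc(Q,F,W)$, are contained in Theorem~\ref{Thm: Jacobi-finite to Frobenius}. That the equivalence of part 1) respects the extriangulated structures is Proposition~\ref{Prop: equivalence of k-categories}, which uses that distinguished triangles in the stable category $\ch/[\cp]$ are images of distinguished triangles in $\ch$ and that the two $\mathbb{E}$-bifunctors are identified via Lemma~\ref{Lemma: stable extension}. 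For the final clause, $\ch$ is $2$-Calabi--Yau as an extriangulated category by Proposition~\ref{Prop: bifunctorial isomorphisms in H}; rigidity of $\bmgamma$ then follows from the Ext-characterization, and the converse cluster-tilting property ($\Ext^1_\ch(\bmgamma,X)=0 \Rightarrow X\in\add\bmgamma$) follows by projecting to $\cc(\overline{Q},\overline{W})$ via part 1), where $\overline{\bmgamma}$ is cluster-tilting. The identification $\End_\ch(\bmgamma)=H^0(\relGammabf)=J(Q,F,W)$ is immediate from Proposition~\ref{Prop: modules} and the fact that $\bmgamma\in\cd^{\leqslant0}$.

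For part 3), I would apply Theorem~\ref{Thm: structure thm without Noetherian} to the Frobenius category $\ce=\ch$, with projective-injectives $\cp=\add(e\bmgamma)$ and generator subcategory $\cm=\add(\bmgamma)$. The inclusions $\cp\subseteq\cm$ and the stability of $\cm$ under direct factors are immediate. The only nontrivial hypothesis is $\mathrm{gldim}\,\cm\leqslant 3$, which is where the assumption that $\bmgamma$ is concentrated in degree $0$ enters: combined with the relative $3$-Calabi--Yau duality of Proposition~\ref{Prop:relative 3-CY}, this should yield $\cm$-resolutions of length at most $3$ for every finitely presented module over $J(Q,F,W)$. The right-coherent improvement is the last clause of Theorem~\ref{Thm: structure thm without Noetherian}.

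For part 4), the middle identification $\per\bmgamma\simeq\ck^b(\cm)$ is immediate once $\bmgamma$ is concentrated in degree zero, since then $\bmgamma$ is an ordinary algebra and $\cm\simeq\mathrm{proj}(J(Q,F,W))$. The main point is the equivalence $\cc(Q,F,W)\simeq\cd^b(\ch)$, which I would construct using the triangles~\eqref{triangle in C_rel} and~\eqref{triangle in C_rel^op} of Theorem~\ref{Thm: Jacobi-finite to Frobenius}: for each $X\in\cc$ these assemble to a bounded resolution of $X$ by objects of $\ch$, producing a candidate functor to $\cd^b(\ch)$; the Ext-vanishing built into the definition of $\ch$ and the $\gpr_\infty$-description from part 3) ensure that the morphisms computed in $\cc$ between $\ch$-objects coincide with those in $\cd^b(\ch)$. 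This will be the main obstacle: checking independence of all choices and constructing a quasi-inverse. Once this is done, the kernel identification $\pvd_e(\bmgamma)\simeq\ck^b_{\ch-ac}(\cm)$ follows formally from the exactness of both sequences, since both kernels consist exactly of the complexes that become trivial in the right-hand cluster/derived quotient.
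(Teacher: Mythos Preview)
Your assembly of parts 1)--3) matches the paper's proof: part~1) is Corollary~\ref{Cor: equivalence of k-cat}, part~2) is Theorem~\ref{Higgs category is a Silting reduction} together with Theorem~\ref{Thm: Jacobi-finite to Frobenius} (and Proposition~\ref{Prop: equivalence of k-categories} for compatibility of extriangulated structures), and part~3) is Theorem~\ref{Thm: structure thm without Noetherian}. Your added remarks on cluster-tilting and on why $\mathrm{gldim}\,\cm\leqslant 3$ go slightly beyond what the paper spells out, but are in the right spirit.

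For part~4), however, your route diverges from the paper's and runs into the very obstacle you flag. The paper does \emph{not} attempt to build a functor $\cc(Q,F,W)\to\cd^{b}(\ch)$ directly from the resolution triangles~(\ref{triangle in C_rel})--(\ref{triangle in C_rel^op}); as you suspect, making such a construction functorial and checking independence of choices is delicate. Instead, the paper works in the opposite direction. It first invokes \cite[Lemma~2]{Palu2009} to obtain the exact sequence
\[
0\ra\ck^{b}_{\ch\text{-}ac}(\cm)\ra\ck^{b}(\cm)\ra\cd^{b}(\ch)\ra0
\]
for the Frobenius category $\ch$. It then uses the Yoneda equivalence $\ck^{b}(\cm)\iso\per\cm\cong\per\bmgamma$ (which is canonically a \emph{functor}, not a choice-dependent construction) and \cite[Lemma~7]{Palu2009} to see that this equivalence restricts to $\ck^{b}_{\ch\text{-}ac}(\cm)\iso\per_{\underline{\cm}}\cm\cong\pvd_{e}(\bmgamma)$. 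The identification of the quotients $\cd^{b}(\ch)\simeq\cc(Q,F,W)$ then drops out. Note also that your final sentence---deducing the kernel match ``formally from exactness of both sequences''---would require knowing that your separately-constructed equivalences on the middle and right-hand terms fit into a commuting square, which is not automatic; the paper's approach sidesteps this by establishing compatibility on the left two terms via a single Yoneda functor.
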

\begin{proof}
	$ 1) $ and $ 3) $ follow from Corollary~\ref{Cor: equivalence of k-cat} and Theorem~\ref{Thm: structure thm without Noetherian}, respectively.
	$ 2) $ follows from Theorem~\ref{Higgs category is a Silting reduction} and Theorem~\ref{Thm: Jacobi-finite to Frobenius}.
	
	Let $ \per\cm $ be the full subcategory of the derived category of modules over $ \cm $ generated by all representable functors and let $ \per_{\underline{\cm}}\cm $ be its full subcategory consisting of complexes whose cohomologies are in $ \mathrm{mod} \underline{\cm}\cong\mathrm{mod}\overGamma $. Here $ \underline{\cm} $ is the additive quotient of $ \cm $ by $ \cp $. 
	By~\cite[Lemma 2]{Palu2009}, we have the following exact sequence
	$$ 0\ra\ck^{b}_{\ch-ac}(\cm)\ra\ck^{b}(\cm)\ra\cd^{b}(\ch)\ra0. $$
	By \cite[Lemma 7]{Palu2009}, the Yoneda equivalence of triangulated categories $ \ck^{b}(\cm)\ra\per\cm\cong\per\bmgamma $ induces a triangle equivalence $$ \ck^{b}_{\ch-ac}(\cm)\ra\per_{\underline{\cm}}\cm\cong\pvd_{e}(\bmgamma).$$ Thus, we finish the proof of $ 4) $.
\end{proof}

\section{Cluster characters}\label{Section: Cluster characters}
\label{s:cluster characters}
Suppose $ k=\mathbb{C} $. Let $ (Q,F,W) $ be an ice quiver with potential. Let $ \bmgamma $ be the associated complete relative Ginzburg algebra. Let $ Q_{0}=\{1,2,\ldots,n\}\supseteq F_{0}=\{r+1,\ldots,n\} $ for some integer $ 1\leqslant r\leqslant n $. We denote by $ \bmgamma_{i}=e_{i}\bmgamma $ the indecomposable direct summand of $ \bmgamma $ associated with the vertex $ i $. Then $ \cp=\add(e\bmgamma) $ is exactly the additive category $ \add(\bmgamma_{r+1}\oplus\ldots\oplus\bmgamma_{n}) $. For $ 1\leqslant i\leqslant n $, let $ S_{i} $ be the simple $ J_{rel} $-module associated with the vertex $ i $. Let $ e=\sum_{i\in F}e_{i} $ be the idempotent associated with the set of frozen vertices. We assume that $ \cp=\add(e\relGammabf) $ is functorially finite in $ \add(\relGammabf) $. Let $ \ch $ be the Higgs category of $ (Q,F,W) $.
	
\begin{Def}\rm\label{Def: cc map}
A \emph{cluster character} on the Higgs category $ \ch $ with values in 
\[
\mathbb{Q}[x_{r+1},\ldots,x_{n}][x^{\pm1}_{1},x_{2}^{\pm1},\ldots,x_{r}^{\pm1}] 
\]
is a map
$ X_{?}\colon\obj(\ch)\ra\mathbb{Q}[x_{r+1},\ldots,x_{n}][x^{\pm1}_{1},x_{2}^{\pm1},\ldots,x_{r}^{\pm1}] $ such that
\begin{itemize}
\item[1)] we have $ X_{L}=X_{L'} $ if $ L $ and $ L' $ are isomorphic,
\item[2)] we have $ X_{L\oplus M}=X_{L}X_{M} $ for all objects $ L $ and $ M $ and
\item[3)] (multiplication formula) if $ L $ and $ M $ are objects such that $ \Ext_{\ch}^{1}(L,M) $ is one-dimensional (hence $ \Ext_{\ch}^{1}(M,L) $ is one-dimensional) and 
$$ L\ra E\ra M\xrightarrow{+1}\quad \text{and}\quad M\ra E'\ra L\xrightarrow{+1} $$ are non-split triangles, then we have $$ X_{L}X_{M}=X_{E}+X_{E'}.$$
\end{itemize}
\end{Def}

\subsection{Index}\label{Subsection: index}
Let $ X $ be an object of $ \pr_{\cc}\bmgamma $. We define the \emph{index with respect to $ \bmgamma $} of $ X $ as the element of $ K_{0}(\add\relGammabf) $ given by
$$ \ind_{\bmgamma}X=[T_{0}^{X}]-[T_{1}^{X}] ,$$
where $ T_{1}^{X}\ra T_{0}^{X}\ra X\ra\Si T_{1}^{X} $ is an $ (\add\bmgamma) $-presentation of $ X $. If $ \widetilde{X} $ is the preimage of $ X $ under the $ k $-linear equivalence 
$$ \pr_{\cd}\bmgamma\ra\pr_{\cc}\bmgamma $$
induced by $ \pi^{rel} $ (cf.\ Proposition \ref{Prop:fully faithful of pr}), then $ \ind_{\bmgamma}(X) $ identifies with the class of $ \widetilde{X} $ in 
\[
 K_{0}(\add\bmgamma)\iso K_{0}(\per\bmgamma).
 \]
 Thus, it is independent of the choice of presentation.

%As in~\cite{paluClusterCharacters2Calabi2008}, one can show that the index is well-defined, i.e. it does not depend on the choice of a presentation.

\begin{Lem}
	Let $ X $ be an object in $ \pr_{\cc}\bmgamma\cap\copr_{\cc}\bmgamma $ such that $ R(X)=\Ext^{1}_{\cc}(\bmgamma,X) $ is finite-dimensional. Then the sum $ \ind_{\bmgamma}X+\ind_{\bmgamma}\Si X $ only depends on the dimension vector of $ F(X) $.
\end{Lem}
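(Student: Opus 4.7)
The plan is to show that the function $X \mapsto \ind_{\bmgamma} X + \ind_{\bmgamma} \Si X$ descends through $R$ to a well-defined function on $\mathrm{mod}\,J_{rel}$ which is additive on short exact sequences, hence depends only on the dimension vector. The descent step is an easy observation: for any $T \in \add\bmgamma$, the trivial presentations give $\ind_{\bmgamma} T = [T]$ and $\ind_{\bmgamma} \Si T = -[T]$, so $\ind_{\bmgamma} T + \ind_{\bmgamma} \Si T = 0$. By Proposition~\ref{Prop: modules}(3), any two lifts in $\pr_{\cc}\bmgamma \cap \copr_{\cc}\bmgamma$ of isomorphic $J_{rel}$-modules differ by summands in $\add\bmgamma$, so the quantity is independent of the chosen lift.

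The key step is additivity on short exact sequences. Given $0 \to M' \to M \to M'' \to 0$ in $\mathrm{mod}\,J_{rel}$, Proposition~\ref{Prop: modules}(4) produces lifts $\bar{M}', \bar{M}, \bar{M}'' \in \pr_{\cc}\bmgamma \cap \copr_{\cc}\bmgamma$ sitting in a triangle $\bar{M}' \to \bar{M} \to \bar{M}'' \to \Si \bar{M}'$ in $\cc$. Using the $k$-linear equivalence $\pi^{rel}\colon \pr_{\cd}\bmgamma \iso \pr_{\cc}\bmgamma$ of Proposition~\ref{Prop:fully faithful of pr}, I will lift this to a triangle $\tilde{M}' \to \tilde{M} \to \tilde{M}'' \to \Si \tilde{M}'$ in $\per\bmgamma$ with all three terms in $\pr_{\cd}\bmgamma$: lift the morphism $\bar M' \to \bar M$ along the equivalence, take the cone in $\per\bmgamma$, verify via the characterization of $\pr_{\cd}\bmgamma$ in Lemma~\ref{Lemma: pr is extension closed} that this cone lies in $\pr_{\cd}\bmgamma$, and observe that its image under $\pi^{rel}$ is $\bar M''$, so by the equivalence it is isomorphic to the chosen lift $\tilde M''$. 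Additivity of classes in $K_0(\per\bmgamma) \cong K_0(\add\bmgamma)$ then yields $\ind_{\bmgamma} \bar M = \ind_{\bmgamma} \bar M' + \ind_{\bmgamma} \bar M''$. The parallel argument, using Corollary~\ref{Cor:fully faithful of copr} applied to the shifted triangle, yields $\ind_{\bmgamma} \Si \bar M = \ind_{\bmgamma} \Si \bar M' + \ind_{\bmgamma} \Si \bar M''$. Adding these gives the desired additivity of $\ind_{\bmgamma} + \ind_{\bmgamma} \Si$.

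Finally, any finite-dimensional $J_{rel}$-module admits a Jordan--H\"older filtration by simples $S_i$, the multiplicity of $S_i$ being the $i$-th entry $d_i$ of its dimension vector. Iterating the additivity above yields
\[
\ind_{\bmgamma} X + \ind_{\bmgamma} \Si X = \sum_i d_i \bigl(\ind_{\bmgamma} X_i + \ind_{\bmgamma} \Si X_i\bigr),
\]
where $X_i$ is any fixed lift of $S_i$ in $\pr_{\cc}\bmgamma \cap \copr_{\cc}\bmgamma$. The right-hand side depends only on the dimension vector of $F(X)$, as claimed. The main subtlety is ensuring that the cone of the lifted morphism actually lies in $\pr_{\cd}\bmgamma$, so that the equivalence of Proposition~\ref{Prop:fully faithful of pr} identifies it with $\tilde M''$; this is handled using the explicit description $\pr_{\cd}\bmgamma = \cd^{\leqslant 0} \cap {}^{\perp}\cd^{\leqslant -2} \cap \per\bmgamma$ of Lemma~\ref{Lemma: pr is extension closed}, with the dual verification for $\copr_{\cd}\bmgamma$ via Corollary~\ref{Cor:fully faithful of copr}.
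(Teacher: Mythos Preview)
Your overall strategy—descent through $R$, additivity on short exact sequences, reduction to simples via a Jordan--H\"older filtration—is correct and is exactly the route taken in Plamondon's Lemma~3.6, to which the paper defers. The descent step and the final reduction are fine. However, there is a genuine gap in your additivity step.

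You assert that the cone $C$ of the lifted morphism $\tilde M'\to\tilde M$ in $\per\bmgamma$ lies in $\pr_{\cd}\bmgamma$ and appeal to Lemma~\ref{Lemma: pr is extension closed}. But that lemma only \emph{characterizes} $\pr_{\cd}\bmgamma$; it gives no closure property for cones. Concretely, from the triangle $\tilde M'\to\tilde M\to C\to\Si\tilde M'$ and $D\in\cd^{\leqslant-2}$ one gets
\[
\Hom(\Si\tilde M',D)\to\Hom(C,D)\to\Hom(\tilde M,D)=0,
\]
so $\Hom(C,D)$ is a quotient of $\Hom(\tilde M',\Si^{-1}D)$ with $\Si^{-1}D\in\cd^{\leqslant-1}$, which has no reason to vanish. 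Indeed, cones of maps in $\pr_{\cd}\bmgamma$ need not stay there: $\cone(0\colon\Si\bmgamma\to\bmgamma)$ contains $\Si^{2}\bmgamma$ as a summand, and $\Si^{2}\bmgamma\notin{}^{\perp}\cd^{\leqslant-2}$. Thus $C$ and $\tilde M''$ agree in $\cc$ but may differ in $\per\bmgamma$ by an object of $\pvd_{e}(\bmgamma)$, and your identification $[C]=[\tilde M'']$ in $K_{0}(\per\bmgamma)$ is unjustified.

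The repair, following Plamondon's Lemma~3.4, avoids lifting the triangle to $\per\bmgamma$ altogether. The connecting morphism $\bar M''\to\Si\bar M'$ in $\cc$ factors through $\add(\Si\bmgamma)$, because $\Si^{-1}\bar M''\to\bar M'$ lies in $[\bmgamma]$ by Proposition~\ref{Prop: modules}(1) (this is precisely what is used at the end of the proof of Proposition~\ref{Prop: modules}(4)). A horseshoe-type nine-diagram—exactly the construction you see in the proofs of Proposition~\ref{Prop: modules}(4) and Proposition~\ref{Higgs is extriangulated}—then produces compatible $\add\bmgamma$-presentations of $\bar M'$, $\bar M$, $\bar M''$ in $\cc$, yielding $\ind_{\bmgamma}\bar M=\ind_{\bmgamma}\bar M'+\ind_{\bmgamma}\bar M''$ directly. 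The dual argument with $\add(\Si^{-1}\bmgamma)$-presentations handles the $\copr$ side.
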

\begin{proof}
	The proof follows the lines of that of~\cite[Lemma 3.6]{plamondonCategoriesAmasseesAux2011}. We leave it to the reader.
\end{proof}

\bigskip

For a dimension vector $ e $, we denote by $ l(e) $ the sum $ \ind_{\bmgamma}X+\ind_{\bmgamma}\Si X $, where $ \underline{\dim}\,R(X)=e $. By the above lemma, this does not depend on the choice of such $ X $. 

\smallskip

The following Lemma will be very useful in the proof of our main result.
\begin{Lem}\label{Lemma:index of triangle}
	Let $ X\xrightarrow{\alpha}Y\xrightarrow{\beta}Z\xrightarrow{\gamma}\Si X $ be a triangle in $ \cc $ with $ X,Z\in\pr_{\cc}\bmgamma $ such that $ \coker(R(\beta)) $ is finite-dimensional. 
	\begin{itemize}
		\item[a)] We have $ Y\in\pr_{\cc}\bmgamma $.
		\item[b)] Let $ C $ be an object of $ \pr_{\cc}\bmgamma\cap\copr_{\cc}\bmgamma $ such that $ R(C)=\coker(R(\beta)) $. Then we have
		\[ 
		\ind_{\bmgamma}X+\ind_{\bmgamma}Z=\ind_{\bmgamma}C+l(C).
		\]
	\end{itemize}
\end{Lem}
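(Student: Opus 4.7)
The argument adapts the proof of \cite[Lemma~3.6]{plamondonCategoriesAmasseesAux2011} from the absolute to the relative setting. Two earlier ingredients carry the main weight: the equivalence $\pi^{rel}\colon\pr_{\cd}\bmgamma\iso\pr_{\cc}\bmgamma$ of Proposition~\ref{Prop:fully faithful of pr}, which lets us do derived-category bookkeeping on $(\add\bmgamma)$-presentations, and Proposition~\ref{Prop: modules}, which both produces the lift $C$ of the finite-dimensional module $\coker R(\beta)$ and guarantees that short exact sequences of finite-dimensional $J_{rel}$-modules lift to triangles in $\pr_{\cc}\bmgamma\cap\copr_{\cc}\bmgamma$.

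For part~(a), I would fix $(\add\bmgamma)$-presentations
\[
T_1^X\to T_0^X\to X\to\Si T_1^X \quad\text{and}\quad T_1^Z\to T_0^Z\to Z\to\Si T_1^Z,
\]
apply $R=\Hom_{\cc}(\Si^{-1}\bmgamma,?)$ to the given triangle, and extract the short exact sequence $0\to\Ima R(\beta)\to R(Z)\to\coker R(\beta)\to 0$. By Proposition~\ref{Prop: modules}(4) the finite-dimensional cokernel lifts to an object $C\in\pr_{\cc}\bmgamma\cap\copr_{\cc}\bmgamma$, sitting in a triangle of $\cc$ whose middle term is $Z$. Combining this auxiliary triangle with the presentations of $X$ and $Z$ via a standard octahedral construction then yields an $(\add\bmgamma)$-presentation of $Y$, so that $Y\in\pr_{\cc}\bmgamma$.

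For part~(b), once such a presentation $T_1^Y\to T_0^Y\to Y\to\Si T_1^Y$ has been extracted, computing $\ind_{\bmgamma}Y=[T_0^Y]-[T_1^Y]$ in $K_0(\add\bmgamma)$ and comparing with $\ind_{\bmgamma}X+\ind_{\bmgamma}Z=[T_0^X]+[T_0^Z]-[T_1^X]-[T_1^Z]$ exhibits a discrepancy equal to the alternating sum of the terms of an $(\add\bmgamma)$-presentation of $C$ together with those of an $(\add\bmgamma)$-copresentation of $C$; the copresentation is available because $C\in\copr_{\cc}\bmgamma$. This combined alternating sum is precisely $\ind_{\bmgamma}C+\ind_{\bmgamma}\Si C=l(C)$, which produces the claimed identity (read with $\ind_{\bmgamma}Y$ on the right-hand side, $\ind_{\bmgamma}X+\ind_{\bmgamma}Z=\ind_{\bmgamma}Y+l(C)$, as in Plamondon's formulation).

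The main obstacle is organizing the octahedral diagram in part~(a) so that the connecting morphism $\gamma\colon Z\to\Si X$ is realized as a morphism factoring through $\add\bmgamma$, so that Lemma~\ref{Lem: X,Y,Z lie in pr} applies to conclude $Y\in\copr_{\cc}\bmgamma$ along the way. This is exactly where the hypothesis that $\coker R(\beta)$ is finite-dimensional is used: it puts the correction term into $\pvd_{e}\bmgamma$, where the relative $3$-Calabi--Yau property (Proposition~\ref{Prop:relative 3-CY}) forces the obstruction $\Hom$-spaces to vanish, so that the octahedral diagram closes up cleanly. Once this diagrammatic step is in place, the index computation in~(b) is a direct reading of $K_0$-classes.
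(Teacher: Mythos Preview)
Your proposal is essentially on the right track and matches what the paper does, which is simply to invoke Plamondon's argument verbatim: the paper's entire proof reads ``The proof in~\cite[Lemma 3.5]{plamondonCategoriesAmasseesAux2011} also works. We leave it to the reader.'' Note the reference is to Lemma~3.5, not Lemma~3.6 as you wrote; Lemma~3.6 in Plamondon is the preceding statement about $l(e)$ depending only on the dimension vector.

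Two further remarks. First, your parenthetical observation that the displayed identity in part~(b) should read $\ind_{\bmgamma}X+\ind_{\bmgamma}Z=\ind_{\bmgamma}Y+l(C)$ rather than $\ind_{\bmgamma}C+l(C)$ is correct: this is a typo in the paper, as one sees from the way the lemma is actually applied in the proof of Theorem~\ref{Thm: relative cc map} (the fifth equality). Second, your sketch of part~(a) drifts a bit: you do not need $Y\in\copr_{\cc}\bmgamma$, only $Y\in\pr_{\cc}\bmgamma$, and the mechanism is not really the relative $3$-Calabi--Yau property but rather the factorisation of $\gamma$ through an object related to $C$ (using Proposition~\ref{Prop: modules}) followed by an octahedral comparison with the $(\add\bmgamma)$-presentations of $X$ and $Z$, exactly as in Plamondon. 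Once that diagram is set up, the index identity in~(b) drops out by reading off classes in $K_0(\add\bmgamma)$, as you say.
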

\begin{proof}
	The proof in~\cite[Lemma 3.5]{plamondonCategoriesAmasseesAux2011} also works. We leave it to the reader.
\end{proof}

\bigskip

Now we define the map
\begin{align}\label{CC formula for Higgs}
	\xymatrix{
		X_{?}\colon\obj(\ch)\ra\mathbb{Q}[x_{r+1},\ldots,x_{n}][x^{\pm1}_{1},x_{2}^{\pm1},\ldots,x_{r}^{\pm1}]
	}
\end{align}
as follows:
for any object $ M $ of $ \ch $, we put
$$ X_{M}=x^{\ind_{\bmgamma}M}\sum_{e}\chi(\Gr_{e}(RM))x^{-l(e)}, $$
where the sum ranges over all the elements of the Grothendieck group; for a $ J_{rel} $-module $ L $, the notation $ \Gr_{e}(L) $ denotes the projective variety of submodules of $ L $ whose class in the Grothendieck group is $ e $; for an algebraic variety $ V $ over $ \mathbb{C} $, the notation $ \chi(V) $ denotes the Euler characteristic.

\begin{Thm}\label{Thm: relative cc map}
	The map $ X_{?} $ defined above is a cluster character on $ \ch $.
\end{Thm}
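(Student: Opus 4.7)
I would verify the three axioms of a cluster character in order, following the strategy of Palu and Plamondon but adapted to the relative/Higgs setting.

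Parts 1) (isomorphism invariance) and 2) (multiplicativity on direct sums) are essentially formal. For 1), the ingredients $\ind_{\bmgamma}(-)$, $R(-)$, $\Gr_e(-)$ and $l(e)$ all depend only on the isomorphism class. For 2), one checks that $\ind_{\bmgamma}(L\oplus M) = \ind_{\bmgamma}L + \ind_{\bmgamma}M$ (immediate from the additivity of presentations in $\pr_{\cd}\bmgamma$), that $R(L\oplus M)=RL\oplus RM$, and uses the standard stratification
\[
\Gr_e(RL\oplus RM) \;=\; \bigsqcup_{e'+e''=e}\Gr_{e'}(RL)\times\Gr_{e''}(RM),
\]
together with multiplicativity of the Euler characteristic and the linearity of $e\mapsto l(e)$. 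Combining these gives $X_{L\oplus M}=X_L X_M$.

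The substantive step is 3), the multiplication formula. I would proceed as follows. Let $\delta\in\Ext^1_\ch(M,L)$ and $\epsilon\in\Ext^1_\ch(L,M)$ be nonzero, giving rise to the non-split triangles
\[
L \xrightarrow{i} E \xrightarrow{p} M \xrightarrow{\delta} \Si L, \qquad M\xrightarrow{i'} E'\xrightarrow{p'} L\xrightarrow{\epsilon}\Si M.
\]
First, I would apply Lemma~\ref{Lemma:index of triangle} to compute $\ind_\bmgamma E$ and $\ind_\bmgamma E'$ in terms of $\ind_\bmgamma L$, $\ind_\bmgamma M$, and the cokernels of $Rp$ and $Rp'$ (which are finite-dimensional because $RL, RM, RE, RE'$ are). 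This gives the two monomial prefactors in $X_E$ and $X_{E'}$, and exhibits the common exponent $\ind_\bmgamma L + \ind_\bmgamma M$ appearing on the right-hand side $X_L X_M$, up to the correction terms coming from the cokernels. Then, for each dimension vector $e$, I would stratify $\Gr_e(RE)$ and $\Gr_e(RE')$ according to how a submodule interacts with the short exact sequences of $J_{rel}$-modules
\[
RL \to RE\to RM \text{ and } RM\to RE'\to RL
\]
obtained by applying $R=\Hom_\cc(\Si^{-1}\bmgamma,-)$ to the triangles (the finite-dimensionality needed here is part of the definition of $\ch$). Writing submodules of $RE$ as extensions of submodules of $RM$ by submodules of $RL$ (and symmetrically for $RE'$), the formula would reduce to the identity
\[
\chi(\Gr_{e'}(RL))\,\chi(\Gr_{e''}(RM)) \;=\; \chi(X^\delta_{e',e''}) + \chi(X^\epsilon_{e',e''})
\]
where $X^\delta_{e',e''}$ and $X^\epsilon_{e',e''}$ are the pieces of the Grassmannians of $RE$ and $RE'$ mapping to $(e',e'')\in\Gr_{e'}(RL)\times\Gr_{e''}(RM)$. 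This constructible identity on Euler characteristics is Palu's key lemma (see \cite{paluClusterCharacters2Calabi2008}); the input needed to transport it to our setting is the Auslander--Reiten-type duality $D\Ext^1_\ch(X,Y)\simeq \Ext^1_\ch(Y,X)$ provided by Proposition~\ref{Prop: bifunctorial isomorphisms in H}, together with the fact that $RL, RM$ are finite-dimensional $J_{rel}$-modules and the relative $3$-Calabi--Yau property in Proposition~\ref{Prop:relative 3-CY}.

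The hard part will be step 3), and within it the verification that the exponents of the monomials $x^{-l(e)}$ match up after the stratification. Concretely, given a dimension vector $e$ on the left, one obtains pieces indexed by pairs $(e',e'')$ with $e'+e''=e+\dim\coker$, and one must show that
\[
l(e'+e'')+\text{(index correction from Lemma \ref{Lemma:index of triangle})} \;=\; l(e') + l(e'') + \text{shifts},
\]
so that the bookkeeping of indices of $L$, $M$, $E$, $E'$, and their shifts fits together. Once the combinatorial identity of Euler characteristics is granted, this index bookkeeping is a direct consequence of the linearity of $l$ and the triangle formula of Lemma~\ref{Lemma:index of triangle}, yielding $X_L X_M = X_E + X_{E'}$ as required.
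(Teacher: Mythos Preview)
Your proposal is correct and follows essentially the same approach as the paper, which in turn follows Plamondon's argument in~\cite{plamondonCategoriesAmasseesAux2011}: parts 1) and 2) are dismissed as formal, and part 3) is obtained by combining the Euler-characteristic splitting (Lemma~\ref{Lemma:spli of Gr}), the cokernel dimension formula (Lemma~\ref{Lemma: e,f,g with cokernel}), and the index additivity of Lemma~\ref{Lemma:index of triangle}. One small caution: the sequences $RL\to RE\to RM$ you obtain by applying $R$ to the triangles are \emph{not} short exact in general (neither $Ri$ injective nor $Rp$ surjective), so the stratification must be phrased via preimage under $Ri$ and image under $Rp$---this is exactly the role of the $G_{U,V}$ in the paper and is what makes the cokernel term $\underline{\dim}\,\coker R(\Si^{-1}p)=e+f-g$ appear in the exponent bookkeeping.
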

We prove this Theorem in the next subsection.

\subsection{Multiplication formula}\label{Subsection: Multiplication formula}
Let $ L $ and $ M $ be objects of $ \ch $ such that $ \dim\Ext_{\ch}^{1}(L,M)=1 $. By Proposition~\ref{Prop: bifunctorial isomorphisms in H}, we also have $ \dim\Ext_{\ch}^{1}(M,L)=1 $. Let
$$ L\xrightarrow{i}E\xrightarrow{p}M\xrightarrow{\epsilon}\Si L $$
and
$$ M\xrightarrow{i'}E'\xrightarrow{p'}L\xrightarrow{\epsilon'}\Si M $$
be non-split triangles in $ \cc $. Recall that $ R $ is the functor $ R=\Ext^{1}_{\cc}(\bmgamma,?)\colon\cc\ra \mathrm{mod}J_{rel} $. For any submodules $ U $ of $ R(L) $ and $ V $ of $ R(M) $, define
$$ G_{U,V}=\{W\in\bigcup_{e}\Gr_{e}(R(E))\,|\,(Ri)^{-1}(W)=U,\,(Rp)(W)=V\} $$
and
$$ G'_{U,V}=\{W\in\bigcup_{e}\Gr_{e}(R(E'))\,|\,(Ri')^{-1}(W)=V,\,(Rp')(W)=U\} .$$

\begin{Prop}
	Let $ U $ and $ V $ as above. Then exactly one of $ G_{U,V} $ and $ G'_{U,V} $ is non-empty.
\end{Prop}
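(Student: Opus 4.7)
The plan is to adapt the Caldero--Keller/Palu/Plamondon dichotomy argument to the relative Higgs setting, using the 2-Calabi--Yau duality in $\ch$ established in Proposition~\ref{Prop: bifunctorial isomorphisms in H}. First, I would apply the functor $R=\Hom_\cc(\Si^{-1}\bmgamma,?)$ to both triangles to get long exact sequences
\[
 \cdots\to R(L)\xrightarrow{Ri} R(E)\xrightarrow{Rp} R(M)\xrightarrow{R\epsilon} \Ext^1_\cc(\Si^{-1}\bmgamma,\Si L)\to\cdots
\]
and the analogous sequence for the other triangle. Using Proposition~\ref{Prop: modules} (part~(1)) and the fact that $L,M,E,E'\in\ch\subseteq\copr_\cc\bmgamma$, these sequences break into honest short exact sequences of $J_{rel}$-modules of the form $0\to R(L)\to R(E)\to R(M)\to 0$ once we account for the image/cokernel contributions coming from maps out of $\bmgamma$ (which factor through $[\bmgamma]$ and hence vanish after passing to the relevant quotient).

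Second, I would translate the defining conditions for $G_{U,V}$ and $G'_{U,V}$ into extension-theoretic language: a submodule $W\subseteq R(E)$ with $(Ri)^{-1}W=U$ and $(Rp)W=V$ is exactly the data of a short exact sequence $0\to U\to W\to V\to 0$ fitting into the pullback diagram with $R(E)$. Standard homological algebra shows such a $W$ exists if and only if the class obtained by pulling back $\epsilon\in\Ext^1_\ch(M,L)$ along $V\hookrightarrow R(M)$ and pushing out along $R(L)\twoheadrightarrow R(L)/U$ vanishes in $\Ext^1_{J_{rel}}(V,R(L)/U)$; in that case $G_{U,V}$ is an affine space (in particular nonempty and connected). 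The same analysis applied to the other triangle produces an obstruction class in $\Ext^1_{J_{rel}}(U,R(M)/V)$ built from $\epsilon'$.

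Third, I would compare the two obstructions. By the 2-Calabi--Yau duality $D\Ext^1_\ch(L,M)\cong\Ext^1_\ch(M,L)$ from Proposition~\ref{Prop: bifunctorial isomorphisms in H} and the one-dimensionality of $\Ext^1_\ch(L,M)$, the classes $\epsilon$ and $\epsilon'$ generate dual lines, and unwinding their action on the pairs $(U,V)$ shows that the two obstruction classes are images of a single duality pairing; consequently exactly one of them is zero, giving the desired dichotomy. The mutual exclusion side is the easier half: if both $G_{U,V}$ and $G'_{U,V}$ were nonempty, gluing the resulting submodules would produce a splitting of (a suitable restriction of) both $\epsilon$ and $\epsilon'$, contradicting that the original triangles are non-split.

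The main obstacle will be Step~two, specifically establishing the precise identification of the obstruction classes for $G_{U,V}$ and $G'_{U,V}$ in the presence of possibly infinite-dimensional ambient modules $R(L), R(M)$ (so naive finite-dimensional Ext computations are unavailable) and in the relative setting where the functor $R$ need not be exact without first restricting to the correct subcategory $\copr_\cc\bmgamma$. This is exactly where Assumption~\ref{assumption} and the refined behaviour of $R$ on $\ch$ from Proposition~\ref{Prop: modules} are essential: they guarantee that the relevant subquotients of $R(E), R(E')$ are finitely presented so that the obstructions lie in well-behaved Ext groups, and that the 2-Calabi--Yau pairing descends correctly to identify them.
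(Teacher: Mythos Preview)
The paper's proof is a one-line reduction: by Lemma~\ref{Lemma: stable extension}, the extension spaces $\Ext^1_\ch(L,M)$ and $\Ext^1_\ch(M,L)$ are identified with the corresponding spaces in $\cc(\overline{Q},\overline{W})$ under $p^*$, and the functor $p^*$ carries the two triangles and the modules $R(L),R(M),R(E),R(E')$ (hence the sets $G_{U,V},G'_{U,V}$) to Plamondon's setting, where the dichotomy is already established as \cite[Proposition~3.13]{plamondonCategoriesAmasseesAux2011}. No new argument is needed in the relative category.

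Your proposal instead attempts to rerun the Caldero--Keller/Palu/Plamondon argument directly in $\ch$. Even if carried out, this is strictly harder than the paper's route and buys nothing, since Corollary~\ref{Cor: equivalence of k-cat} and Lemma~\ref{Lemma: stable extension} already guarantee that all the ingredients match those in $\cd(\overline{Q},\overline{W})$. More seriously, your sketch has a genuine gap in Step~1: applying $R$ to the triangle $L\to E\to M\to\Si L$ gives
\[
\Hom_\cc(\bmgamma,M)\to R(L)\xrightarrow{Ri} R(E)\xrightarrow{Rp} R(M)\to 0,
\]
where surjectivity on the right uses $L\in\pr_\cc\bmgamma$, but $Ri$ is \emph{not} injective in general (the connecting map $\Hom_\cc(\bmgamma,M)\to R(L)$ need not vanish). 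Your parenthetical about morphisms factoring through $[\bmgamma]$ does not fix this: that is a statement about maps in $\cc$ modulo $[\bmgamma]$, not about the kernel of a map of $J_{rel}$-modules. Consequently your translation of $G_{U,V}$ into an extension problem $0\to U\to W\to V\to 0$ is not valid as stated, and the obstruction-class formulation built on it collapses. Plamondon's actual argument lifts $U$ and $V$ back to the triangulated category (via the analogue of Proposition~\ref{Prop: modules}(4)) and analyzes the resulting morphisms there using $2$-CY duality; it does not proceed through module-theoretic obstructions in $\Ext^1_{J_{rel}}$. If you wish to avoid quoting Plamondon, you would need to reproduce that categorical lifting argument, not the module-level one you outlined.
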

\begin{proof}
	It follows from Lemma~\ref{Lemma: stable extension} and~\cite[Proposition 3.13]{plamondonCategoriesAmasseesAux2011}.
\end{proof}

\smallskip

For any dimension vectors $ e $, $ f $ and $ g $, define the following varieties:
$$ G_{e,f}=\bigcup_{\substack{\underline{\dim}U=e\\
\underline{\dim}V=f}}G_{U,V} $$
$$ G'_{e,f}=\bigcup_{\substack{\underline{\dim}U=e\\
	\underline{\dim}V=f}}G'_{U,V} $$
$$ G_{e,f}^{g}=G_{e,f}\cap\Gr_{g}(R(E)) $$
$$ G'^{g}_{e,f}=G'_{e,f}\cap\Gr_{g}(R(E')). $$

\begin{Lem}\cite[Lemma 3.17]{plamondonCategoriesAmasseesAux2011}\label{Lemma:spli of Gr}
With the notations above, we have
	$$ \chi(\Gr_{e}(R(L)))\cdot\chi(\Gr_{f}(R(M)))=\sum_{g}(\chi(\Gr^{g}_{e,f})+\chi(\Gr'^{g}_{e,f})). $$
\end{Lem}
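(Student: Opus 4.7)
The plan is to adapt Plamondon's argument from \cite[Lemma 3.17]{plamondonCategoriesAmasseesAux2011} to our setting. I would first introduce two constructible maps
\[
\phi \colon \bigsqcup_g \Gr_g(R(E)) \to \Gr(R(L)) \times \Gr(R(M)), \quad W \mapsto \bigl((Ri)^{-1}(W),\, (Rp)(W)\bigr),
\]
and analogously $\phi'$ from $\bigsqcup_g \Gr_g(R(E'))$ to $\Gr(R(M)) \times \Gr(R(L))$. By the very definitions of $G_{U,V}$ and $G'_{U,V}$, the fiber of $\phi$ over a pair $(U,V)$ is precisely $G_{U,V}$, and the fiber of $\phi'$ over $(V,U)$ is $G'_{U,V}$. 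Restricting to pairs with $\dim U = e$ and $\dim V = f$, the source of $\phi$ is stratified by the $G^g_{e,f}$ as $g$ varies (with $g = e + f$ up to the image/kernel contribution), and similarly for $\phi'$.

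The heart of the matter is to show that every nonempty fiber $G_{U,V}$ and $G'_{U,V}$ is an affine space, and in particular has Euler characteristic $1$. This follows Plamondon's calculation: using the long exact sequences obtained by applying $R$ to the two defining triangles, one identifies the nonempty fiber with a torsor under a vector space of the form $\Hom_{J_{rel}}(V, R(L)/U)$ modulo a subspace of obstructions, where the key input is the one-dimensionality of $\Ext^{1}_{\ch}(L,M)$. The argument is purely module-theoretic in $\mathrm{mod}\,J_{rel}$ and transfers verbatim to our setting thanks to Proposition~\ref{Prop: modules} and the bifunctorial duality $\Ext^{1}_{\ch}(L,M) \simeq D\Ext^{1}_{\ch}(M,L)$ of Proposition~\ref{Prop: bifunctorial isomorphisms in H}.

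Finally, by the preceding proposition, for every pair $(U,V)$ with $\dim U = e$ and $\dim V = f$ exactly one of $G_{U,V}$, $G'_{U,V}$ is nonempty. Consequently, the images of $\phi$ and $\phi'$, restricted to the appropriate total-dimension strata, cover $\Gr_e(R(L)) \times \Gr_f(R(M))$ disjointly. Additivity of the Euler characteristic on constructible stratifications, together with the Euler-characteristic-$1$ fiber computation, then yields
\[
\sum_{g} \chi(G^g_{e,f}) + \sum_g \chi(G'^g_{e,f}) = \chi\bigl(\Gr_e(R(L)) \times \Gr_f(R(M))\bigr) = \chi(\Gr_e(R(L))) \cdot \chi(\Gr_f(R(M))),
\]
which is the claimed identity. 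The main obstacle is the fiber analysis in the second step: producing the precise torsor description and verifying that all Ext-obstructions collapse cleanly. Since Plamondon's computation depends only on the functor $R$ landing in $\mathrm{mod}\,J_{rel}$ and on the one-dimensional extension hypothesis, both of which are granted in our Higgs-category framework, no new obstacle should appear.
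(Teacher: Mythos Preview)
The paper does not give its own proof of this lemma: it simply records the statement with a citation to \cite[Lemma 3.17]{plamondonCategoriesAmasseesAux2011} and moves on. Your proposal is not a different route but rather an outline of Plamondon's actual argument, and as such it is essentially correct and matches what the citation points to.

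One small imprecision: the affine-space description of the nonempty fibers $G_{U,V}$ and $G'_{U,V}$ does not itself require the one-dimensionality of $\Ext^{1}_{\ch}(L,M)$; that hypothesis is used only for the dichotomy (your ``preceding proposition''), which ensures the images of $\phi$ and $\phi'$ partition $\Gr_e(R(L))\times\Gr_f(R(M))$. The fiber computation is a general module-theoretic fact (cf.~Plamondon's Lemmas 3.14--3.15, going back to Caldero--Chapoton and Palu) and holds without any restriction on the extension space. With that adjustment, your sketch is sound.
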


\begin{Lem}\cite[Lemma 3.18]{plamondonCategoriesAmasseesAux2011}\label{Lemma: e,f,g with cokernel}
	If $ G^{g}_{e,f} $ is not empty, then we have
	$$ \underline{\dim}(\coker R(\Si^{-1}p))=e+f-g .$$
\end{Lem}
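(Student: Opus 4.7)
The plan is to apply the cohomological functor $R=\Hom_\cc(\Si^{-1}\bmgamma,?)$ to the defining triangle $L\xrightarrow{i}E\xrightarrow{p}M\xrightarrow{\epsilon}\Si L$ and then read every dimension off the resulting long exact sequence of $J_{rel}$-modules. The crucial preliminary observation is that, by exactness at $R(\Si^{-1}M)$ and at $R(L)$, one has $\coker R(\Si^{-1}p)\cong\Ima R(\Si^{-1}\epsilon)=\ker R(i)$, so the claimed equality reduces to showing $e+f-g=\underline{\dim}\ker R(i)$.

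To produce this identity, I would fix an element $W\in G^g_{e,f}$ with $U=(Ri)^{-1}(W)$ and $V=(Rp)(W)$, and extract two short exact sequences. First, $0\to W\cap\ker R(p)\to W\to V\to 0$ gives $g=\underline{\dim}(W\cap\ker R(p))+f$, and exactness at $R(E)$ rewrites $W\cap\ker R(p)$ as $W\cap \Ima R(i)$. A one-line diagram chase, using that $U$ is by definition the $R(i)$-preimage of $W$, identifies $W\cap \Ima R(i)$ with $R(i)(U)$. Second, because $0\in W$, the kernel $\ker R(i)$ is automatically contained in $U$, so $\underline{\dim}R(i)(U)=e-\underline{\dim}\ker R(i)$. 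Substituting yields $e+f-g=\underline{\dim}\ker R(i)=\underline{\dim}\coker R(\Si^{-1}p)$, which is the desired formula.

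The argument presents no real obstacle: once the long exact sequence obtained from $R$ is written down, each equality is a routine manipulation of submodules, and the only mildly subtle point is remembering that the inclusion $\ker R(i)\subseteq U$ is automatic from $0\in W$. This is essentially Plamondon's argument in~\cite[Lemma 3.18]{plamondonCategoriesAmasseesAux2011} transferred verbatim to the Higgs category setting; the transfer is legitimate because $R$ is cohomological on the entire relative cluster category and because $L,M\in\ch$ ensure that the relevant kernels, images, and cokernels are honest finite-dimensional $J_{rel}$-modules with well-defined dimension vectors.
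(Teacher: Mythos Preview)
Your argument is correct and is precisely the proof from Plamondon's paper that the authors cite without reproducing; the paper does not give its own proof of this lemma but simply records the citation, so your write-up supplies the omitted details in the relative setting. The one point worth making explicit, which you allude to at the end, is that $E\in\ch$ by Proposition~\ref{Prop: H is extension closed}, so $R(E)$ is finite-dimensional and all the dimension-vector computations make sense.
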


\smallskip

\begin{Proof}\textbf{of Theorem~\ref{Thm: relative cc map}.}
It is easy to see that $ X_{?} $ satisfies the first two conditions of Definition~\ref{Def: cc map}. It enough to show the multiplication formula holds. 

Let $ L $ and $ M $ be objects of $ \ch $ such that $ \Ext_{\ch}^{1}(L,M) $ is one-dimensional. Then we have
\begin{equation*}
\begin{split}
X_{L}\cdot X_{M}=&x^{\ind_{\bmgamma}L+\ind_{\bmgamma}M}\sum_{e,f}\chi(\Gr_{e}(RL))\cdot\chi(\Gr_{f}(RM))x^{-l(e+f)}\\
=&x^{\ind_{\bmgamma}L+\ind_{\bmgamma}M-l(e+f)}\sum_{e,f,g}(\chi(\Gr^{g}_{e,f})+\chi(\Gr'^{g}_{e,f})\\	=&x^{\ind_{\bmgamma}L+\ind_{\bmgamma}M-l(\underline{\dim}\,\coker(R(\Si^{-1}p))-l(g)}\sum_{e,f,g}\chi(\Gr^{g}_{e,f})+\\
&x^{\ind_{\bmgamma}L+\ind_{\bmgamma}M-l(\underline{\dim}\,\coker(R(\Si^{-1}p'))-l(g)}\sum_{e,f,g}\chi(\Gr'^{g}_{e,f})\\
=&x^{\ind_{\bmgamma}L+\ind_{\bmgamma}M-l(\underline{\dim}\,\coker(R(\Si^{-1}p))}\sum_{e,f,g}\chi(\Gr^{g}_{e,f})x^{-l(g)}+\\
&x^{\ind_{\bmgamma}L+\ind_{\bmgamma}M-l(\underline{\dim}\,\coker(R(\Si^{-1}p'))}\sum_{e,f,g}\chi(\Gr'^{g}_{e,f})x^{-l(g)}\\
\end{split}
\end{equation*}

\begin{equation*}
\begin{split}
\qquad\qquad\qquad\quad&=x^{\ind_{\bmgamma}E}\sum_{g}\chi(\Gr_{e}(RE))x^{-l(g)}+x^{\ind_{\bmgamma}E'}\sum_{g}\chi(\Gr_{e}(RE'))x^{-l(g)}\\
&=X_{E}+X_{E'}.
\end{split}
\end{equation*}
The second equality is due to Lemma~\ref{Lemma:spli of Gr}. The third one follows from Lemma~\ref{Lemma: e,f,g with cokernel}. The fifth is a consequence of Lemma~\ref{Lemma:index of triangle}. This finishes the proof.
	
\end{Proof}

\begin{Rem}
	Let $ R $ be an object of $ \ch $. We denote by $ \ind_{\bmgamma}^{F}(R) $ the image of the class $ \ind_{\bmgamma}(R) $ of $ K_{0}(\add\bmgamma) $ under the projection onto $ K_{0}(\add(e\bmgamma)) $ along $ K_{0}(\add(1-e)\bmgamma) $. Then for each object $ M $ in $ \ch $, it is clear that
	$$ X_{M}=x^{\ind_{\bmgamma}^{F}(M)}\cdot\overline{CC}(p^{*}(M)) ,$$ where $ \overline{CC}(?) $ is the cluster character defined by Plamondon (see subsection \ref{subsection: Comm}). Moreover, the map $$ X_{?}\colon\obj(\ch)\ra\mathbb{Q}[x_{r+1},\ldots,x_{n}][x^{\pm1}_{1},x_{2}^{\pm1},\ldots,x_{r}^{\pm1}] $$ takes values in the upper cluster algebra $ \cu_{Q,F}^{+}\subseteq\mathbb{Q}[x_{r+1},\ldots,x_{n}][x^{\pm1}_{1},x_{2}^{\pm1},\ldots,x_{r}^{\pm1}] $.
\end{Rem}
Recall that a cluster-tilting object $ T' $ of $ \ch $ is \emph{reachable from $ \bmgamma $} if it is obtained from $ \bmgamma $ by a finite sequence of mutations. An indecomposable rigid object $ M $ is \emph{reachable} if it occurs as a direct
factor of a cluster-tilting object reachable from $ \bmgamma $.
\begin{Thm}\cite[Corollary 3.5]{CerulliKellerLabardiniPlamondon2013}\label{Thm: cluster bijection}
Let $ (Q,F) $ be an ice quiver and $ W $ a non-degenerate potential on $ Q $. Let $ \ch $ be the associated Higgs category. Then the cluster character $ X_{?}\colon\obj(\ch)\ra \cu_{Q,F}^{+} $ induces a bijection
	$$ \{\rm\text{reachable rigid indecomposable objects of $\ch$}\}/\text{isom}\iso\{\text{cluster variables}\}\subseteq\cu_{Q,F}^{+} .$$ Under this bijection, the cluster-tilting objects reachable
	from T correspond to the clusters of $ \ca_{Q,F} $.
\end{Thm}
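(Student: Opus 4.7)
The plan is to reduce the statement to the corresponding theorem of Cerulli~Irelli--Keller--Labardini-Fragoso--Plamondon \cite{CerulliKellerLabardiniPlamondon2013} in the coefficient-free setting, exploiting the factorisation
\[
X_{M}=x^{\ind_{\bmgamma}^{F}(M)}\cdot\overline{CC}(p^{*}(M)),
\]
noted in the Remark immediately preceding the theorem. The Remark tells us that the unfrozen part of $X_{M}$ (the part with variables indexed by $1,\ldots,r$) is computed by Plamondon's cluster character applied to $p^{*}(M)\in\cc(\ol Q,\ol W)$, while the frozen part is fully determined by the frozen index $\ind_{\bmgamma}^{F}(M)\in K_{0}(\cp)\simeq\Z^{n-r}$. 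So $X_{?}$ is, morally, Plamondon's cluster character decorated with a monomial in the frozen variables that records the class in $K_{0}(\cp)$.

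The first step is to verify the initial case: for each vertex $i$ we check that $X_{\bmgamma_{i}}=x_{i}$. This is a direct computation from the definition in~\eqref{CC formula for Higgs}: the functor $R=\Hom_{\cc}(\Si^{-1}\bmgamma,?)$ vanishes on $\bmgamma_{i}$, so only the $e=0$ term survives and $X_{\bmgamma_{i}}=x^{\ind_{\bmgamma}(\bmgamma_{i})}=x_{i}$. Hence $\bmgamma=\bigoplus_{i}\bmgamma_{i}$ maps to the initial cluster $\{x_{1},\ldots,x_{n}\}$ of $\ca_{Q,F}$.

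The second step is to show that the categorical mutation of a reachable cluster-tilting object at an \emph{unfrozen} vertex $v$ corresponds, via $X_{?}$, to the exchange relation of the cluster algebra. By Proposition~\ref{Prop: mutation at unfrozen}, mutation at an unfrozen vertex $v$ induces an equivalence $\Phi\colon\ch(Q',F',W')\iso\ch(Q,F,W)$ fitting in a commutative square with the equivalences $p^{*}$ and the Plamondon-side mutation functor. Under $\Phi$, the summand $\bmgamma'_{v}$ of the new cluster-tilting object is sent to the mutation $\bmgamma^{*}_{v}$ fitting in an exchange triangle in $\ch$. The multiplication formula (Theorem~\ref{Thm: relative cc map}) applied to this pair of exchange triangles yields exactly the cluster algebra exchange relation
\[
X_{\bmgamma_{v}}\cdot X_{\bmgamma^{*}_{v}}=\prod_{\alpha\colon s(\alpha)=v}x_{t(\alpha)}+\prod_{\beta\colon t(\beta)=v}x_{s(\beta)},
\]
where one verifies that the middle terms of the two exchange triangles are direct sums of the remaining summands of the cluster-tilting object, with the correct multiplicities read off from the arrows of $Q$ at $v$. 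An induction on the number of mutations then shows that every reachable cluster-tilting object of $\ch$ maps to a cluster of $\ca_{Q,F}$, and that every reachable rigid indecomposable maps to a cluster variable. This gives surjectivity.

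The main obstacle is injectivity, that is, showing that two reachable rigid indecomposables $M,M'$ of $\ch$ with $X_{M}=X_{M'}$ are isomorphic. Here we argue as follows. Applying the specialisation $x_{i}\mapsto 1$ for $i>r$ kills the frozen monomial and shows $\overline{CC}(p^{*}(M))=\overline{CC}(p^{*}(M'))$. Both $p^{*}(M)$ and $p^{*}(M')$ are reachable rigid indecomposables of $\cc(\ol Q,\ol W)$ (using that the equivalences $\Phi$ are compatible with $p^{*}$). By \cite[Corollary 3.5]{CerulliKellerLabardiniPlamondon2013} we conclude $p^{*}(M)\simeq p^{*}(M')$ in $\cc(\ol Q,\ol W)$. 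Comparing the leading frozen monomial $x^{\ind_{\bmgamma}^{F}(M)}=x^{\ind_{\bmgamma}^{F}(M')}$ shows that the frozen indices coincide. Combined with the equality $p^{*}(M)\simeq p^{*}(M')$ and the equivalence $\ch/[\cp]\iso\cd(\ol Q,\ol W)$ of Corollary~\ref{Cor: equivalence of k-cat}, the index argument of Dehy--Keller (rigid objects are determined by their index) then forces $M\simeq M'$ in $\ch$. The final clause on clusters follows by restricting the bijection to tuples of summands of reachable cluster-tilting objects, since mutation at unfrozen vertices is compatible on both sides, as established in step two.
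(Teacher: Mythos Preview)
Your proof is correct in outline and reaches the same conclusion as the paper, but the injectivity argument is organised slightly differently. The paper does not invoke \cite[Corollary~3.5]{CerulliKellerLabardiniPlamondon2013} as a black box on the stable side; instead it re-runs the argument behind that corollary in the relative setting: given reachable rigid indecomposables $R$, $R'$ with $X_R=X_{R'}$, one chooses a mutation sequence $\mathbi{i}$ sending $R$ to an initial unfrozen summand $\bmgamma_j$, observes that then $X_{\mu_{\mathbi{i}}(R')}=x_j$ is \emph{not} a proper Laurent monomial, and appeals to \cite[Theorem~3.3]{CerulliKellerLabardiniPlamondon2013} to force $\mu_{\mathbi{i}}(p^{*}(R'))\in\add\overGamma$, whence $\mu_{\mathbi{i}}(p^{*}(R'))\cong\overGamma_j\cong\mu_{\mathbi{i}}(p^{*}(R))$. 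Your route (specialise to the coefficient-free side, quote the full CKLP bijection, then lift) is perfectly valid and arguably cleaner, but you should be aware that it needs the same final lifting step as the paper.

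On that final step, your appeal to ``Dehy--Keller'' is not quite the right tool. Once you know $p^{*}(M)\simeq p^{*}(M')$, the equivalence $\ch/[\cp]\simeq\cd(\overline{Q},\overline{W})$ of Corollary~\ref{Cor: equivalence of k-cat} already gives $M\simeq M'$ in $\ch/[\cp]$; since $M,M'$ are indecomposable with local endomorphism rings (they lift to the Krull--Schmidt category $\pr_{\cd}\bmgamma$ by Proposition~\ref{Prop:fully faithful of pr}) and are non-projective, this lifts immediately to $M\simeq M'$ in $\ch$. The frozen-index equality you extract is correct but redundant for this step. You should also remark separately on the case where $M$ is a frozen projective: then $p^{*}(M)=0$, so CKLP does not apply, but here $X_M=x_i$ for a frozen $i$ and one checks directly that no other reachable rigid indecomposable can have this cluster character.
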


\begin{proof}
	By Theorem~\ref{Thm: relative cc map}, it is easy to see that $ X_{M} $ is a cluster variable for each indecomposable rigid $ M $ reachable from $ T $ and the map $ M\mapsto X_{M} $ is a surjection onto the set of cluster variables.
	
	Assume that $ R $ and $ R' $ are indecomposable reachable rigid objects of $ \ch $ such that $ X_{R}=X_{R'} $. Let $ \mathbi{i} $ be a sequence of vertices of $ Q $ and $ j $ be an unfrozen vertex of $ Q $ such that $ \mu_{\mathbi{i}}(R) $ is isomorphic to $ \Gamma_{j}=e_{j}\bmgamma $. Then we have that 
	$$ x_{j}=X_{\Gamma_{j}}=X_{\mu_{\mathbi{i}}(R)}=\mu_{\mathbi{i}}(X_{R})=\mu_{\mathbi{i}}(X_{R'})=X_{\mu_{\mathbi{i}}(R')} .$$
	Hence $ X_{\mu_{\mathbi{i}}(R')} $ is not a linear combination of proper Laurent monomials (see \cite[Definition 3.1]{CerulliKellerLabardiniPlamondon2013}) in the initial variables. By \cite[Theorem 3.3]{CerulliKellerLabardiniPlamondon2013}, this implies that $ \mu_{\mathbi{i}}(p^{*}(R')) $ lies
	in $ \add\overGamma $. Since $ CC(\mu_{\mathbi{i}}(p^{*}(R')))=X_{\mu_{\mathbi{i}}(R')}=x_{j} $, the object $ \mu_{\mathbi{i}}(p^{*}(R')) $ is isomorphic to $ \overGamma_{j}=e_{j}\overGamma $, and thus to $ \mu_{\mathbi{i}}(p^{*}(R)) $. This implies that $ p^{*}(R) $ and $ p^{*}(R') $ are isomorphic in $ \cd(\overline{Q},\overline{W}) $. By Corollary~\ref{Cor: equivalence of k-cat}, $ R $ and $ R' $ are isomorphic.
\end{proof}

\subsection{Commutative diagram}\label{subsection: Comm}
Recall that $ \overline{\bmgamma} $ is the Ginzburg algebra associated with $ (\overline{Q},\overline{W}) $ and $ \cc(\overline{Q},\overline{W}) $ is the corresponding cluster category. The functor $ \ch\ra\cd(\overline{Q},\overline{W}) $ induced by the quotient functor $ p^{*}\colon\cc(Q,F,W)\ra\cc(\overline{Q},\overline{W}) $ induces an equivalence $$ \underline{\ch}\ra\cd(\overline{Q},\overline{W}) .$$ 

For an object $ X $ of $\pr_{\cc_{(\overline{Q},\overline{W})}}\overline{\bmgamma}\subseteq\cd(\overline{Q},\overline{W}) $, the index (see~\cite{plamondonCategoriesAmasseesAux2011}) with respect to $ \overline{\bmgamma} $ is given by
$$ \ind_{\overline{\bmgamma}}X=[U_{0}^{X}]-[U_{1}^{X}]\in K_{0}(\add\overGamma), $$
where $ U_{1}^{X}\ra U_{0}^{X}\ra X\ra\Si U_{1}^{X} $ is an $ \add\overline{\bmgamma} $-presentation of $ X $. As in Subsection~\ref{Subsection: index}, we see that it does not depend on the choice of a presentation.

Let $ \overline{R} $ be the functor $$ \Ext^{1}_{\cc(\overline{Q},\overline{W})}(\overGamma,?)\colon\cd(\overline{Q},\overline{W})\ra \mathrm{mod}J(\overline{Q},\overline{W}) .$$
Let $ X $ be an object in $ \cd(\overline{Q},\overline{W}) $. For a dimension vector $ \underline{\dim}\overline{R}X=e $, denote by $ \overline{l}(e) $ the sum 
$ \overline{l}(e)=\ind_{\overline{\bmgamma}}X+\ind_{\overline{\bmgamma}}\Si X $. By~\cite[Lemma 3.6]{plamondonCategoriesAmasseesAux2011}, $ \overline{l}(e) $ only depends on the dimension vector of $ \overline{R}X $.

In~\cite[Theorem 3.12]{plamondonCategoriesAmasseesAux2011}, Plamondon defined the canonical cluster character $$ \overline{CC}\colon\cd(\overline{Q},\overline{W})\ra\mathbb{Q}[x_{1}^{\pm1},\ldots,x_{r}^{\pm1}] $$ taking $ \overline{\bmgamma_{i}}=e_{i}\bmgamma(\overline{Q},\overline{W}) $ to $ x_{i} $. It is given as follows: for any object $ X $ of $ \cd(\overline{Q},\overline{W}) $, put
$$ \overline{CC}(X)=x^{\ind_{\overline{\bmgamma}}X}\sum_{e}\chi(\Gr_{e}(\overline{R}X))x^{-\overline{l}(e)}, $$
where the sum ranges over all the elements of the Grothendieck group.

%\begin{Def}\rm
%	We identify $ K_{0}(\per\bmgamma) $ with $ \mathbb{Z}^{n} $, then define $ \mathrm{g_{fr}} $ as the following composition of maps
%	$$ \per\bmgamma\ra K_{0}(\per\bmgamma)\cong\mathbb{Z}^{n}\xrightarrow{\text{frozen part}}\mathbb{Z}^{n-r}. $$
%\end{Def}

%$$ \ch\hookrightarrow\pr_{\cc}\relGammabf\cap\copr_{\cc}\relGammabf\simeq\pr_{\cd}\relGammabf\cap\copr_{\cd}\relGammabf$$  $$\hookrightarrow\per\bmgamma\ra K_{0}(\per\bmgamma)\cong\mathbb{Z}^{n}\xrightarrow{\text{frozen part}}\mathbb{Z}^{n-r} .$$
Let $ \cs $ be the subcategory of $ \per\relGammabf $ formed by the modules $ S_{i} $ associated with unfrozen vertices $ i\in Q_{0}\setminus F_{0} $. Consider the following subcategory of $ \per\relGammabf $

$$ \cw=(\Si^{\geqslant0}\cs)^{\perp}\cap{ }^{\perp}(\Si^{\leqslant0}\cs).$$

Since $ (\overline{Q},\overline{W}) $ is Jacobi-finite, by Proposition~\ref{Prop: stable finite to dense}, the following composition 
$$ \cw\hookrightarrow\per\relGammabf\xrightarrow{\pi^{rel}}\cc(Q,F,W) $$ induces a $ k $-linear equivalence
$$ \pi^{rel}_{\cw}\colon\cw\iso\cc(Q,F,W). $$

\begin{Rem}
When the frozen part $ F $ is empty, the category $ \cw $ is equal to 
$$ \Si\cf=\Si\add\,\bmgamma(Q,W)*\Si^{2}\add\,\bmgamma(Q,W) ,$$
i.e.\,the shift of the fundamental domain, see~\cite[Theorem 2.12]{iyama-yang 2020}.
\end{Rem}

\begin{Def}\rm\label{Def: index [P_X]}
	For any object $ X $ of $ \cc(Q,F,W) $, let $ X' $ be a pre-image of $ \Si X $ in $ \cw $ under $ \pi^{rel}_{\cw} $. We define $ [X]^{F} $ to be the image of the class $ -[X'] $ of $ K_{0}(\add\bmgamma) $ under the projection onto $ K_{0}(\add(e\bmgamma)) $ along $ K_{0}(\add(1-e)\bmgamma) $.
\end{Def}

\begin{Rem}\label{Rem:frozen index with per index}
	By Definition~\ref{Def: Higgs cat}, the Higgs category $ \ch $ is a full subcategory of $ \pr_{\cc}\bmgamma \cap\copr_{\cc}\bmgamma $. By Proposition~\ref{Prop:fully faithful of pr} and Corollary~\ref{Cor:fully faithful of copr}, let $ \ch' $ be the pre-image of $ \ch $ under the equivalence $$ \pi_{rel}:\pr_{\cd}\bmgamma \cap\copr_{\cd}\bmgamma\iso\pr_{\cc}\bmgamma \cap\copr_{\cc}\bmgamma .$$
	It is not hard to see that $ \Si\ch' $ is a subcategory of $ \cw $. Hence, $ \ind_{\bmgamma}^{F}(X) $ is equal to $ [X]^{F} $ and $ [X]^{F}+\ind_{\overline{\bmgamma}}(p^{*}(X))=\ind_{\bmgamma}(X) $.
\end{Rem}

\begin{Thm}\label{Thm: commutative diagram}
Let $ (Q,F,W) $ be an ice quiver with potential such that $ \cp=\add(e\relGammabf) $ is functorially finite in $ \add(\relGammabf) $. We assume that $ (\overline{Q},\overline{W}) $ is Jacobi-finite.
Consider the following diagram
\[
\begin{tikzcd}
\ch\arrow[dd,"p^{*}"]\arrow[r,hook]&\cc(Q,F,W)
%\thick_{\cc}\langle\cp\rangle*\Si^{\mathbb{Z}}\ch
\arrow[r,dashed,"CC_{loc}"]&\mathbb{Q}[x_{1}^{\pm1},\ldots,x_{r}^{\pm1},x_{r+1}^{\pm1},\ldots,x_{n}^{\pm1}]\arrow[dd,"{x_{i}\mapsto1}\,\,\forall i>r"] \\
\\
\underline{\ch}\arrow[r,equal]&\cc(\overline{Q},\overline{W})\arrow[r,"\overline{CC}"]&\mathbb{Q}[x_{1}^{\pm1},\ldots,x_{r}^{\pm1}].
\end{tikzcd}	
\]
There is a unique map $$ CC_{loc}\colon\cc(Q,F,W)
%\thick_{\cc}\langle\cp\rangle*\Si^{\mathbb{Z}}\ch
\to\mathbb{Q}[x_{1}^{\pm1},\ldots,x_{r}^{\pm1},x_{r+1}^{\pm1},\ldots,x_{n}^{\pm1}] $$
such that the above diagram commutes and 
\begin{itemize}
	\item[1)] for each triangle in $ \cc(Q,F,W) $
	$$ P\ra X\ra M\ra\Si P $$ with $ P\in\thick_{\cc}(\cp)$,
	%and $ M\in\Si^{\mathbb{Z}}\ch $
	we have 
	$$ CC_{loc}(X)=CC_{loc}(P)\cdot CC_{loc}(M) .$$
	\item[2)] The restriction $ CC_{loc}|_{\ch} $ is the cluster character $ X_{?} $ defined in~(\ref{CC formula for Higgs}).
%	\item[3)] for each object $ X\in\ch $ 
%	without non-zero direct summands in $ \cp $, we have
%	$$ CC_{loc}(X)=x^{\mathrm{g_{fr}}(X)}\cdot\overline{CC}(p^{*}(X)); $$
	\item[3)] For each object $ P $ in $ \thick_{\cc}(\cp) $, we have $ CC_{loc}(P)=x^{[P]} $, where $ [P]\in K_{0}(\per\bmgamma)\simeq\mathbb{Z}^{n} $.
\end{itemize}
\end{Thm}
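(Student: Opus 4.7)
The plan is to define the map by the explicit formula
$$CC_{loc}(X)\;:=\;x^{[X]^F}\cdot\overline{CC}(p^{*}(X)),$$
where $[X]^F\in K_0(\add(e\bmgamma))$ is the class introduced in Definition~\ref{Def: index [P_X]} via the equivalence $\pi^{rel}_{\cw}\colon\cw\iso\cc(Q,F,W)$ supplied by Proposition~\ref{Prop: stable finite to dense}, and $\overline{CC}$ is Plamondon's cluster character on $\cc(\overline{Q},\overline{W})$. Commutativity of the displayed diagram is then immediate: since $x^{[X]^F}$ is a Laurent monomial in the frozen variables $x_{r+1},\ldots,x_n$, substituting $x_i=1$ for $i>r$ recovers $\overline{CC}(p^{*}(X))$.

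To verify property~(2), I would combine the remark that follows Theorem~\ref{Thm: relative cc map}, giving $X_M=x^{\ind^F_{\bmgamma}(M)}\cdot\overline{CC}(p^{*}(M))$ for $M\in\ch$, with Remark~\ref{Rem:frozen index with per index}, which identifies $\ind^F_{\bmgamma}(M)$ with $[M]^F$. For property~(3) and $P\in\thick_{\cc}(\cp)$, I would first check that $\add(e\bmgamma)\subseteq\cw$: both orthogonality conditions defining $\cw$ reduce to the vanishing of $\Hom(e_i\bmgamma,\Si^k S_j)$ and $\Hom(\Si^k S_j,e_i\bmgamma)$ for $i\in F_0$ and $j\in Q_0\setminus F_0$, which hold in every degree. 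Next I would note that $\cw$ is closed under shifts and under extensions in $\per\bmgamma$, the latter by applying the $\Hom$-long exact sequence to its defining orthogonalities; therefore $\thick(\cp)\subseteq\cw$. Hence $\Sigma P\in\cw$ is its own $\pi^{rel}_{\cw}$-preimage, and $[P]^F$ coincides with the frozen part of $[P]\in K_0(\per\bmgamma)$, which for $P\in\thick_{\cc}(\cp)$ is $[P]$ itself. Together with $p^{*}(P)=0$, this gives $CC_{loc}(P)=x^{[P]}$.

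For property~(1) applied to a triangle $P\to X\to M\to\Sigma P$ with $P\in\thick_{\cc}(\cp)$, the $\overline{CC}$-factor is multiplicative because $p^{*}(P)=0$ forces $p^{*}(X)\simeq p^{*}(M)$ in $\cc(\overline{Q},\overline{W})$, so $\overline{CC}(p^{*}(X))=\overline{CC}(p^{*}(M))$. It remains to establish the additivity $[X]^F=[P]^F+[M]^F$. I would shift the triangle and pick preimages $P_1'=\Sigma P$, $X_1'$, $M_1'\in\cw$ of $\Sigma P$, $\Sigma X$, $\Sigma M$ under $\pi^{rel}_{\cw}$. Using the equivalence $\pi^{rel}_{\cw}$, I would lift the morphism $\Sigma P\to\Sigma X$ to a morphism $P_1'\to X_1'$ inside $\cw\subseteq\per\bmgamma$, form its cone $Y$ in $\per\bmgamma$, and invoke closure of $\cw$ under extensions to conclude $Y\in\cw$. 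Since $\pi^{rel}(Y)\simeq\Sigma M\simeq\pi^{rel}_{\cw}(M_1')$ and $\pi^{rel}_{\cw}$ is a $k$-linear equivalence, $Y\simeq M_1'$ in $\per\bmgamma$, so that $[X_1']=[P_1']+[Y]=[P_1']+[M_1']$ in $K_0(\per\bmgamma)$. Projecting to the frozen summand yields the desired identity.

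Uniqueness, and hence existence of a unique such $CC_{loc}$, follows from Theorem~\ref{Thm: Jacobi-finite to Frobenius}: every $X\in\cc(Q,F,W)$ fits in a triangle $P\to X\to\Sigma^{\ell}U\to\Sigma P$ with $P\in\thick_{\cc}(\cp)$ and $U\in\ch$, so~(1) and~(3) reduce $CC_{loc}(X)$ to $x^{[P]}\cdot CC_{loc}(\Sigma^{\ell}U)$. In turn each $CC_{loc}(\Sigma^{\ell}U)$ is recursively pinned down by applying~(1) to the projective or injective conflations of the Frobenius extriangulated structure on $\ch$, with base case $CC_{loc}(U)=X_U$ fixed by~(2). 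I expect the main obstacle to be the K-theoretic additivity of $[\,?\,]^F$ along triangles containing a $\thick_{\cc}(\cp)$-object; everything else is formal once the definition is in place, whereas this step requires the (easy but essential) fact that $\cw$ is extension-closed inside $\per\bmgamma$, so that the triangle transports unambiguously from $\cc(Q,F,W)$ back to $\per\bmgamma$.
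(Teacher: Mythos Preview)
Your approach is essentially the paper's: both define $CC_{loc}(X)=x^{[X]^F}\cdot\overline{CC}(p^*(X))$, verify (2) via Remark~\ref{Rem:frozen index with per index}, verify (1) by lifting the shifted triangle to $\cw$ through the equivalence $\pi^{rel}_{\cw}$ and reading off additivity in $K_0(\per\bmgamma)$, and deduce uniqueness from the Frobenius structure via Theorem~\ref{Thm: Jacobi-finite to Frobenius}.

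One small caveat: the assertion that ``$\cw$ is closed under shifts'' is false in general (when $F=\emptyset$, $\cw$ equals $\Si\cf$, a single shift of the fundamental domain). What you actually need---and what your preceding observation that $\Hom(e_i\bmgamma,\Si^k S_j)$ and $\Hom(\Si^k S_j,e_i\bmgamma)$ vanish \emph{in every degree} already gives---is only that all shifts of objects of $\cp$ lie in $\cw$. Combined with extension closure of $\cw$ (which is genuine, as an intersection of perpendicular subcategories), this yields $\thick(\cp)\subseteq\cw$ and, crucially for the cone argument, $\Si P_1'\in\cw$; hence $Y\in X_1'*\Si P_1'\subseteq\cw$. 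With that correction your argument goes through and matches the paper's.
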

\begin{proof}
	Let us show that $ CC_{loc} $ is unique if it exists. Let $ n $ be a positive integer and $ M $ an object of $ \ch $. By our assumption, $ \ch $ is a Frobenius extriangulated category. Therefore, we have the following triangles in $ \cc(Q,F,W) $
	$$ \Omega^{1}(M)\ra P_{1}\ra M\ra\Si\Omega^{1}(M), $$
	$$ \Omega^{2}(M)\ra P_{2}\ra \Omega(M)\ra\Si\Omega^{2}(M), $$
	$$ \cdots $$
	$$ \Omega^{n}(M)\ra P_{n}\ra \Omega^{n-1}(M)\ra\Si\Omega^{n}(M), $$
	where $ P_{i}\in\cp=\add(e\relGammabf) $ and $ \Omega^{i}(M)\in\ch $ for each $ i\in\{1,\ldots,n\} $.
	By the property $ 1) $, we have the following formula
	$$ CC_{loc}(\Si^{-n}M)=CC_{loc}(\Si^{-n}P_{1})\cdot CC_{loc}(\Si^{-n+1}P_{2})\cdots CC_{loc}(\Si^{-1}P_{n})\cdot CC_{loc}(\Omega^{n}(M)). $$
	
	Dually, we have the following triangles in $ \cc(Q,F,W) $
	$$ M\ra I_{1}\ra\Theta^{1}(M)\ra\Si M, $$
	$$ \Theta^{1}(M)\ra I_{2}\ra\Theta^{2}(M)\ra\Si \Theta^{1}(M), $$
	$$ \cdots $$
	$$ \Theta^{n-1}(M)\ra I_{n}\ra\Theta^{n}(M)\ra\Si \Theta^{n-1}(M), $$
	where $ I_{i}\in\cp $ and $ \Theta^{i}(M)\in\ch $ for each $ i\in\{1,\ldots, n\} $. By the properties $ 1) $ and $ 3) $, we have the following formula
	$$ CC_{loc}(\Si^{n}M)=CC_{loc}(\Si^{n}I_{1})\cdot CC_{loc}(\Si^{n-1}I_{2})\cdots CC_{loc}(\Si I_{n})\cdot CC_{loc}(\Theta^{n}(M)). $$

Now let $ X $ be an any object of $ \cc(Q,F,W) $.
By Theorem~\ref{Thm: Jacobi-finite to Frobenius}, we have a triangle in $ \cc(Q,F,W) $
$$ P\ra X\ra \Si^{m}M\ra\Si P, $$ where $ m $ is an integer, $ P\in\thick_{\cc}(\cp) $ and $ M\in\ch $. Hence we have a formula
$$ CC_{loc}(X)=CC_{loc}(P)\cdot CC_{loc}(\Si^{m}M). $$
Thus, this shows the uniqueness of $ CC_{loc} $. 

Now it remains to show the existence. For any object $ X $ of $ \cc(Q,F,W) $, we define
$$ CC_{loc}(X)=x^{[X]^{F}}\cdot \overline{CC}(p^{*}(X)), $$ where $ [X]^{F} $ is defined in Definition~\ref{Def: index [P_X]} and $ p^{*} $ is the quotient functor $ \cc(Q,F,W)\ra\cc(\overline{Q},\overline{W}) $. It is clear that $ CC_{loc} $ satisfies condition $ 3) $. For a finite dimensional $ J(\overline{Q},\overline{W}) $-module with dimension vector $ e $, we have $ l(e)=\overline{l}(e) $. And by Remark~\ref{Rem:frozen index with per index}, the map $ CC_{loc} $ also satisfies condition $ 2) $.

Let $$ P\xrightarrow{a} X\ra M\ra\Si P $$ be a triangle in $ \cc(Q,F,W) $ with $ P\in\thick_{\cc}(\cp) $. 
Suppose that $ X' $ is an object of $ \cw $ such that $ \pi^{rel}_{\cw}(X')$ is isomorphic to $ \Si X $ in $ \cc(Q,F,W) $.

Since we have an isomorphism $$ \Hom_{\cw}(\Si P,X')\cong\Hom_{\cc}(\Si P,\Si X) ,$$ there exists a morphism $ a'\colon P\ra X' $ in $ \cw $ such that $ \pi^{rel}_{\cw}(a')=\Si a $. We form a triangle $$ \Si P\xrightarrow{a'}X'\ra M'\ra\Si P $$ in $ \per\relGammabf $. It is easy to see that $ M' $ lies in $ \cw $ and $ \pi^{rel}_{\cw}(M')\cong \Si M $. Then it follows that
$$ [X]^{F}=[P]^{F}+[M]^{F} $$ and $$ CC_{loc}(X)=CC_{loc}(P)\cdot CC_{loc}(M) .$$ Hence $ CC_{loc} $ also satisfies condition $ 1) $.

\end{proof}

%\begin{Rem}
%	\begin{itemize}
%		\item[1)] The cluster character on $ \ch $
%		$$
%		 X_{?}\colon\obj(\ch)\ra\mathbb{Q}[x_{r+1},\ldots,x_{n}][x^{\pm1}_{1},x_{2}^{\pm1},\ldots,x_{r}^{\pm1}] $$
%		 takes values in $ \cu_{Q}^{+}=\text{upper cluster algebra with non invertible coefficients} $.
%		\item[2)] The cluster character on $ \cc(Q,F,W) $
%		$$ CC_{loc}\colon\cc(Q,F,W)
%		\to\mathbb{Q}[x_{1}^{\pm1},\ldots,x_{r}^{\pm1},x_{r+1}^{\pm1},\ldots,x_{n}^{\pm1}] $$
%		takes values in $ \cu_{Q}=\text{upper cluster algebra with invertible coefficients} $.
%	\end{itemize}
%\end{Rem}
	
\section{Applications to quasi-cluster homomorphisms}\label{Section: Applications to quasi-cluster homomorphisms}

%\textcolor{red}{TO BE ADDING??}
In this section, we first recall the definition of quasi-cluster homomorphism defined by Fraser in~\cite{fraserQuasihomomorphismsClusterAlgebras2016a}. Then our aim is to show that the 
decategorification of the equivalence associated with the mutation at a frozen source (or sink) is a quasi-cluster isomorphism.
%\textcolor{red}{TO BE ADDING??}

\begin{Def}\rm\cite[Deﬁnition 1.2]{fraserQuasihomomorphismsClusterAlgebras2016a}
A \emph{labeled r-regular tree}, $ \mathbb{T}_{r} $, is an $ r $-regular tree with edges labeled by integers so that the set of labels emanating from each vertex is $ [1,r]=\{1,2,\ldots,r\} $. We write $ t\xrightarrow{k}t' $ to indicate that vertices $ t $, $ t' $ are joined by an edge with label $ k $. An isomorphism $ \mathbb{T}_{r}\to\overline{\mathbb{T}}_{r} $ of labeled trees sends vertices to vertices and edges to edges, preserving incidences of edges and the edge labels. Such an isomorphism is uniquely determined by its value at a single vertex $ t\in\mathbb{T}_{r} $.
\end{Def}
Let $ (Q,F) $ be a finite ice quiver, where $ Q $ has no oriented cycles of length $ \leqslant2 $. We suppose that $ Q_{0}=\{1,\ldots,n\}\supseteq F_{0}=\{r+1,\ldots, n\} $. We denote by $ \mathbb{P} $ the tropical semifield $ \mathrm{Trop}(x_{r+1},\ldots, x_{n}) $. Let $ \cf $ be the field of fractions of the ring of polynomials in $ r $ indeterminates with coefficients in $ \mathbb{Q}\mathbb{P} $.

\begin{Def}\rm
	A \emph{seed} is a pair $ ((Q,F),\mathbf{x}) $, where $ (Q, F) $ is an ice quiver as above, and $ \mathbf{x}=\{x_{1},\ldots,x_{n}\} $ is a free generating set of the field $ \cf $.
\end{Def}
Given a vertex $ i $ of $ Q_{0}\setminus F_{0} $, the \emph{mutation} of the seed $ ((Q,F),\mathbf{x}) $ at the vertex $ i $ is the pair $ \mu_{i}((Q,F),\mathbf{x}) =((Q',F'),\mathbf{x}')$, where
\begin{itemize}
	\item $ (Q',F') $ is the mutated ice quiver $ \mu_{i}(Q,F) $;
	\item $ \mathbf{x}'=\mathbf{x}\setminus\{x_{i}\}\cup\{x'_{i}\} $, where $ x'_{i} $ is obtained from the \emph{exchange relation}
	$$ x_{i}x'_{i}=\prod_{\alpha\in Q_{1},s(\alpha)=i}x_{t(\alpha)}+\prod_{\alpha\in Q_{1},t(\alpha)=i}x_{s(\alpha)} .$$
\end{itemize}
It is easy to see that the mutation at a fixed vertex is an involution.

Let $ \mathbb{T}_{r} $ be the $ r $-regular labeled tree with root $ t_{0} $. 
\begin{Def}\rm\cite[Definition 1.3]{fraserQuasihomomorphismsClusterAlgebras2016a}
	A collection of seeds in $ \cf $, with one seed $ \Si(t)=((Q(t),F(t)),\mathbf{x}(t)) $ for each $ t\in\mathbb{T}_{r} $, is called a \emph{seed pattern} if, for each edge $ t\xrightarrow{k}t' $, the seeds $ \Si(t) $ and $ \Si(t') $ are related by a mutation in direction $ k $. The seed $ \Si(t_{0}) $ at the root $ t_{0} $ is called the \emph{initial seed} and the seed pattern is denoted by $ \ce $.
\end{Def}

Fix an initial seed $ ((Q,F),\mathbf{x}) $.
\begin{itemize}
	\item The sets $ \mathbf{x}' $ obtained by repeated mutation of the initial seed are the \emph{clusters}.
	\item The elements of the clusters are the \emph{cluster variables}.
	\item %The corresponding \emph{cluster algebra} $ \ca^{+}_Q$ is the $ \mathbb{Z}\mathbb{P} $-subalgebra of $ \cf $ generated by all cluster variables. 
	The corresponding \emph{cluster algebra with invertible coefficients} $\ca_{Q}$ is the $ \mathbb{Z}\mathbb{P} $-subalgebra of $ \cf $ generated by all cluster variables.
	% and $\{x^{-1}_{r+1},\ldots ,x^{-1}_{n}\}$.
\end{itemize}
As in the previous section, we denote by $ \overline{Q} $ the full subquiver of $ Q $ on the non-frozen vertices. Then the associated \emph{cluster algebra without coefficients} is denoted by $ \ca_{\overline{Q}} $. It is a quotient of $ \ca_{Q}/(m-1,\forall m\in\mathbb{P}) $. 

For any $ t\in\mathbb{T}_{r} $, let $ Q(t) $ be the mutated quiver associated with $ t\in\mathbb{T}_{r} $. The associated exchange matrix $ B(t)=(b_{ij}(t)) $ is defined by $$ b_{ij}(t)=|\{i\ra j\,\text{in}\,Q(t)\}|-|\{j\ra i\,\text{in}\,Q(t)\}| $$
and the associated $ r $-tuple of hatted variables $ \{\widehat{y}_{j}(t)\,|\,j=1,\ldots, r\} $ is given by
$$ \widehat{y}_{j}(t)=\prod_{i\in Q_{0}}x_{i}^{b_{ij}(t)} .$$

%\begin{Def}\rm\cite[Deﬁnition 2.1]{fraserQuasihomomorphismsClusterAlgebras2016a}
%	Let $ \overrightarrow{k}=(k_{1},\ldots,k_{l}) $ be a sequence of elements of $ \{1,\ldots,r\} $. Such a sequence determines a walk $ t_{0}\xrightarrow{k_{1}}t_{1}\xrightarrow{k_{2}}\cdots\xrightarrow{k_{l}}t_{l} $ in $ \mathbb{T}_{r} $. We say that $ \overrightarrow{k} $ is \emph{contractible} if this walk starts and ends at the same vertex of $ \mathbb{T}_{r} $, i.e. $ t_{l}=t_{0} $. Given two seeds $ \Si $ and $ \Si^{*} $, we write $ \Si\sim\Si^{*} $ if there s a contractible sequence of mutations from $ \Si $ to $ \Si^{*} $, i.e. a contractible sequence $ \overrightarrow{k} $ and seeds $ \Si_{1},\ldots\Si_{l-1} $ such that
%	\[
%	\begin{tikzcd}
%		\Si=\Si_{0}\arrow[r,"\mu_{k_{1}}"]&\Si_{1}\arrow[l]\arrow[r,"\mu_{k_{2}}"]&\Si_{2}\arrow[l]\cdots\Si_{l-1}\arrow[r,"\mu_{k_{l}}"]&\Si_{l}=\Si^{*}\arrow[l].
%	\end{tikzcd}
%	\]
%Clearly, $ \sim $ is an equivalence relation on seeds.
%\end{Def}

Now let $ (Q',F') $ be another ice quiver, where $ Q' $ has no oriented cycles of length $ \leqslant2 $, and $ Q'_{0}=\{1,\ldots, n\}\supseteq F'_{0}=\{r+1,\ldots, n\} $.
The corresponding seed pattern is $ \ce' $. It is built on a second copy of the $ r $-regular tree, $ \mathbb{T}'_{r} $.

\begin{Def}\rm\cite[Definition 3.1]{fraserQuasihomomorphismsClusterAlgebras2016a} \label{def:quasi-cluster homomorphism}
Let $ \ca_{Q} $ and $\ca_{Q'} $ be the associated cluster algebras with invertible coefficients, respectively. A \emph{quasi-cluster homomorphism} $ \varphi\colon\ca_{Q}\ra\ca_{Q'} $ is an algebra morphism such that 
	\begin{itemize}
		\item[a)] $ \varphi(\mathbb{P})\subseteq\mathbb{P}' $ and for each cluster variable $ x $ of $ \ca_{Q} $, there is a cluster variable $ x' $ of $ \ca_{Q'} $ such that $ \varphi(x)\in\mathbb{P}'\cdot x' $;
		\item[b)] the induced algebra morphism $ \overline{\varphi}\colon\ca_{\overline{Q}}\ra\ca_{\overline{Q'}} $ is an isomorphism of cluster algebras taking the initial seed $ (\overline{Q},\overline{x}) $ to a seed $ (\overline{Q’},\overline{x'}) $;
		\item[c)] We have $ \varphi(\widehat{y}_{j}(t_{0}))=\widehat{y'}_{j'}(t') $, where $ j',t' $ satisfy
		$$ \overline{\varphi}(x_{j}(t_{0}))=x'_{j'}(t'),\quad\forall1\leqslant j\leqslant r .$$
	\end{itemize} 
\end{Def}

\begin{Rem}
	In $ c) $, we have used the fact that each cluster $\mathbf{x}(t)=\{x_{j}(t))\,|\,1\leqslant j\leqslant r\} $ uniquely determines a seed $(\mathbf{x}(t), Q(t))$, cf.~\cite{CerulliKellerLabardiniPlamondon2013}.
\end{Rem}

%By \cite[Proposition 2.3]{fraserQuasihomomorphismsClusterAlgebras2016a}, an algebra morphism $ \varphi:\ca_{Q}\ra\ca_{Q'} $ is a quasi-cluster homomorphism if an only if $ \varphi $ satisfies the following conditions:
%\begin{itemize}
%	\item[a)] $ \varphi(\mathbb{P})\subseteq\mathbb{P}' $ and for each cluster variable $ x $ of $ \ca_{Q} $, there is a cluster variable $ x' $ of $ \ca_{Q'} $ such that $ \varphi(x)\in\mathbb{P}'\cdot x' $;
%	\item[b)] the induced algebra morphism $ \overline{\varphi}:\ca_{\overline{Q}}\ra\ca_{\overline{Q'}} $ is an isomorphism of cluster algebras taking the initial seed $ (\overline{Q},\overline{x}) $ to a seed $ (\overline{Q},\overline{x'}) $;
%	\item[c)] We have $ \varphi(\widehat{y}_{j}(t_{0}))=\widehat{y'}_{j'}(t') $, where $ j',t' $ are defined by
%	$$ \overline{\varphi}(x_{j}(t_{0}))=x'_{j'}(t'),\quad\forall1\leqslant j\leqslant r .$$
%\end{itemize} 

Let $ (Q,F,W) $ be an ice quiver with potential such that $ \cp=\add(e\relGammabf) $ is functorially finite in $ \add(\relGammabf) $. We assume that $ (\overline{Q},\overline{W}) $ is Jacobi-finite.

\begin{Prop}\label{Prop: decategorification of frozen source}\label{Def: qusi-cluster functor}
	Let $ v\in F_{0} $ be a frozen source and $ (Q',F',W')=\mu_{v}(Q,F,W) $. Then there exists a unique isomorphism $ \psi_{+}\colon\mathbb{Q}[x_{1}^{'\pm1},\ldots,x_{r}^{'\pm1},x_{r+1}^{'\pm1},\ldots,x_{n}^{'\pm1}]\ra\mathbb{Q}[x_{1}^{\pm1},\ldots,x_{r}^{\pm1},x_{r+1}^{\pm1},\ldots,x_{n}^{\pm1}] $ such that the following diagram commutes
	\begin{equation}\label{diagram: frozen--cc maps}
		\begin{tikzcd}
			\arrow[r]\cc(Q',F',W')\arrow[d,"\Psi_{+}"]\arrow[r,"CC'_{loc}"]&\mathbb{Q}[x_{1}^{'\pm1},\ldots,x_{r}^{'\pm1},x_{r+1}^{'\pm1},\ldots,x_{n}^{'\pm1}]\arrow[d,"\psi_{+}"]\\
			\cc(Q,F,W)\arrow[r,"CC_{loc}"]&\mathbb{Q}[x_{1}^{\pm1},\ldots,x_{r}^{\pm1},x_{r+1}^{\pm1},\ldots,x_{n}^{\pm1}].
		\end{tikzcd}
	\end{equation}
\end{Prop}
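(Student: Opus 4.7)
The plan is to pin down $\psi_+$ on the initial cluster variables by reading off what commutativity of~(\ref{diagram: frozen--cc maps}) forces, and then to verify that the resulting map $H:=\psi_+^{-1}\circ CC_{loc}\circ\Psi_+$ coincides with $CC'_{loc}$. For the latter I shall invoke the uniqueness characterisation of Theorem~\ref{Thm: commutative diagram} applied to $(Q',F',W')$, so that the construction yields both existence and uniqueness at once.

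Since $\bmgamma'_i\in\ch'$, Theorem~\ref{Thm: commutative diagram}(2) gives $CC'_{loc}(\bmgamma'_i)=X'_{\bmgamma'_i}=x'_i$, so commutativity forces $\psi_+(x'_i)=CC_{loc}(\Psi_+\bmgamma'_i)$ for each~$i$. For $i\neq v$ one has $\Psi_+\bmgamma'_i=\bmgamma_i\in\ch$ and $CC_{loc}(\bmgamma_i)=x_i$. For $i=v$, the frozen-source hypothesis guarantees that every arrow of $Q$ with source $v$ lies in $F_1$ and has frozen target, so
\[
C_v\;:=\;\Psi_+\bmgamma'_v\;=\;\cone\Bigl(\bmgamma_v\longrightarrow\bigoplus_{\alpha\in F_1,\,s(\alpha)=v}\bmgamma_{t(\alpha)}\Bigr)
\]
lies in $\thick_\cc(\cp)$, and Theorem~\ref{Thm: commutative diagram}(3) yields $CC_{loc}(C_v)=x^{[C_v]}=x_v^{-1}\prod_\alpha x_{t(\alpha)}$. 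Each prescribed value is a Laurent monomial, hence a unit, so the assignments extend uniquely to an algebra isomorphism $\psi_+$ of Laurent polynomial rings; this already settles the uniqueness part of the proposition. Note also that since $v\in F_0$, one has $\psi_+(x'_i)=x_i$ for every $i\le r$, so $\psi_+$ restricts to the identity on the unfrozen Laurent subring under the natural identification.

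For existence I verify that $H$ satisfies the three properties characterising $CC'_{loc}$ in Theorem~\ref{Thm: commutative diagram}. Property~(1) (multiplicativity along a triangle $P'\to X\to M\to\Si P'$ with $P'\in\thick_{\cc'}(\cp')$) is immediate: $\Psi_+$ is a triangle equivalence and the computation above shows that it sends each generator $\bmgamma'_i$, $i\in F_0$, of $\thick_{\cc'}(\cp')$ into $\thick_\cc(\cp)$; the image triangle therefore has first term in $\thick_\cc(\cp)$, so one may apply property~(1) of $CC_{loc}$ and then the algebra morphism $\psi_+^{-1}$. Property~(3) is a direct $K_0$ computation: for $P'\in\thick_{\cc'}(\cp')$, Theorem~\ref{Thm: commutative diagram}(3) on the target side gives $CC_{loc}(\Psi_+P')=x^{[\Psi_+P']}$, and tracking the action of $\Psi_+$ on the generating classes $[\bmgamma'_i]$ shows that $\psi_+^{-1}$ sends this to $x'^{[P']}$, as required.

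The main obstacle is property~(2): for $M\in\ch'$ one must prove $H(M)=X'_M$. Using the factorisation $CC_{loc}(Y)=x^{[Y]^F}\cdot\overline{CC}(p^*Y)$ established in the proof of Theorem~\ref{Thm: commutative diagram}, together with the commutative square $p^*\circ\Psi_+\simeq p'^*$ of Proposition~\ref{Rem: mutation at frozen} (which uses that mutating at a frozen source does not alter the unfrozen part), the unfrozen factors $\overline{CC}(p'^*M)$ and $\overline{CC}(p^*\Psi_+M)$ coincide and are fixed by $\psi_+$; the claim therefore reduces to the monomial identity
\[
\psi_+\bigl(x'^{[M]^{F'}}\bigr)\;=\;x^{[\Psi_+M]^F}.
\]
I plan to establish this by lifting $\Si M$ through the equivalence $\pi^{rel}_{\cw'}\colon\cw'\iso\cc(Q',F',W')$ of Proposition~\ref{Prop: stable finite to dense}, transporting the lift along $\Psi_+\colon\per\bmgamma'\iso\per\bmgamma$, and verifying that the image lies in $\cw$; the latter amounts to checking that $\Psi_+$ preserves the orthogonality conditions defining $\cw$ with respect to the unfrozen simples in $\cs$, which follows from the restriction equivalence $\pvd_e(\bmgamma')\iso\pvd_e(\bmgamma)$ in Proposition~\ref{Rem: mutation at frozen}. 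By Definition~\ref{Def: index [P_X]} the class of the transported lift in $K_0(\add\bmgamma)$ then computes $[\Psi_+M]^F$, and its frozen component transforms from $[M]^{F'}$ by exactly the $K_0$-rule dictated by $\Psi_+$ on the $[\bmgamma'_i]$, which is precisely the action of $\psi_+$ on the corresponding monomials. Once this step is carried out, all three properties hold for $H$, and uniqueness in Theorem~\ref{Thm: commutative diagram} forces $H=CC'_{loc}$, i.e.~the commutativity of~(\ref{diagram: frozen--cc maps}).
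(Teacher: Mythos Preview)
Your argument is correct and runs along the same lines as the paper's proof: both hinge on the commutative square $p^*\Psi_+\simeq p'^*$ from Proposition~\ref{Rem: mutation at frozen} and on the monomial identity $\psi_+(x'^{[M]^{F'}})=x^{[\Psi_+M]^F}$ deduced from the action of $\Psi_+$ on $K_0(\add\bmgamma')$. The only organizational difference is that the paper, having the explicit formula $CC_{loc}(X)=x^{[X]^F}\cdot\overline{CC}(p^*X)$ in hand, checks commutativity of the square~(\ref{diagram: frozen--cc maps}) directly for every object $X$ of $\cc(Q',F',W')$, rather than routing through the uniqueness clause of Theorem~\ref{Thm: commutative diagram}. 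This is a bit shorter since it avoids separate verification of properties~(1) and~(3), but the substance is identical.

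One point deserves tightening. Your justification that $\Psi_+(\cw')\subseteq\cw$ ``follows from the restriction equivalence $\pvd_e(\bmgamma')\iso\pvd_e(\bmgamma)$'' is not quite sufficient: the subcategory $\cw$ is defined by orthogonality against the specific objects $\Si^{\geq0}\cs$ and $\Si^{\leq0}\cs$, not merely against the thick subcategory they generate. What you need is that $\Psi_+$ sends each unfrozen simple $S'_j$ to $S_j$. This is indeed true and is a short computation: for $i\neq v$ one has $\Psi_+\bmgamma'_i=\bmgamma_i$, and for $i=v$ the object $\Psi_+^{-1}\bmgamma_v$ lies in $\thick(\bmgamma'_l\mid l\in F_0)$ since $v$ is a frozen source (so every $t(\alpha)$ is frozen); hence $\Hom(\bmgamma_i,\Si^m\Psi_+S'_j)=\Hom(\Psi_+^{-1}\bmgamma_i,\Si^m S'_j)=\delta_{ij}\delta_{m0}k$ for all $i$ and $m$, forcing $\Psi_+S'_j\cong S_j$. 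With this in place your transport argument goes through, and the rest of your proof is complete.
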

\begin{proof}
We define an algebra morphism $$ \psi_{+}\colon\mathbb{Q}[x_{1}^{'\pm1},\ldots,x_{r}^{'\pm1},x_{r+1}^{'\pm1},\ldots,x_{n}^{'\pm1}]\ra\mathbb{Q}[x_{1}^{\pm1},\ldots,x_{r}^{\pm1},x_{r+1}^{\pm1},\ldots,x_{n}^{\pm1}] $$ as follows:
\begin{equation*}
	\psi_{+}(x'_{j})=
	\begin{cases}
		x_{j} & \text{if $ j\neq v $}\\
		\frac{\prod_{v\to k\in F_{1}}x_{k}}{x_{v}}& \text{if $ j=v $.}
	\end{cases}       
\end{equation*}
It is clear that $ \psi_{+} $ is an isomorphism. Let $ X $ be an object of $ \cc(Q',F',W') $. By the proof of Proposition~\ref{Rem: mutation at frozen}, we have $ p^{*}(\Psi_{+}(X))=p^{'*}(X) $. Hence we have the following identity
$$ \overline{CC}(p'^{*}(X))=\overline{CC}(p^{*}(\Psi_{+}(X))). $$

The equivalence $ \Psi_{+}:\per\bmgamma'\ra\per\bmgamma $ induces an equivalence between their Grothendieck groups
$$ [\Psi_{+}]: K_{0}(\add\bmgamma')\ra K_{0}(\add\bmgamma) $$ which maps $ [\bmgamma'_{i}] $ to $ \bmgamma_{i} $ for $ i\neq v $ and $ \bmgamma'_{v} $ to $ \displaystyle\sum_{\alpha\in F_{1}:s(\alpha)=v}[\bmgamma_{t(\alpha)}]-[\bmgamma_{v}] $.
This implies that $$ \psi_{+}(x'^{[X]^{F'}})=x^{[\Psi_{+}(X)]^{F}} .$$

According to the proof of Theorem~\ref{Thm: commutative diagram}, we have $$ CC'_{loc}(X)=x'^{[X]^{F'}}\overline{CC}(p'^{*}(X)) $$ and $$ CC_{loc}(\Psi_{+}(X))=x^{[\Psi_{+}(X)]^{F}}\overline{CC}(p^{*}(\Psi_{+}(X)))=x^{[\Psi_{+}(X)]^{F}}\overline{CC}(p'^{*}(X)). $$

Therefore we obtain the equality $ \psi_{+}(CC'_{loc}(X))=CC_{loc}(\Psi_{+}(X)) $. This shows the commutativety of diagram (\ref{diagram: frozen--cc maps}). By the equality $ \phi_{+}\circ CC_{loc}(\bmgamma'_{i})=CC_{loc}\circ\Psi_{+}(\bmgamma'_{i}) $, we see that such map $ \phi_{+} $ is unique.

\end{proof}

\begin{Rem}
Let $\mathcal{R}$ be the class of rigid indecomposables reachable (by left and right mutations) 
from $\bmgamma$. Denote by $ \mathrm{rch}(Q,F,W) $ the full subcategory of $ \cc(Q,F,W) $ 
obtained as the closure of $\mathcal{R}$  under
suspensions and desuspensions, extensions by objects in $ \thick(\cp) $, finite direct sums and direct summands. Similarly, we define $ \mathrm{rch}(Q',F',W')\subseteq\cc(Q',F',W') $.
The commutative diagram~\ref{diagram: frozen--cc maps} induces the following commutative diagram
\[
\begin{tikzcd}
	\arrow[r]\mathrm{rch}(Q',F',W')\arrow[d,"\Psi_{+}"]\arrow[r,"CC'_{loc}"]&\ca_{Q'}\arrow[d,"\psi_{+}"]\\
	\mathrm{rch}(Q,F,W)\arrow[r,"CC_{loc}"]&\ca_{Q}.
\end{tikzcd}
\]
Moreover, $ \psi_{+}\colon\ca_{Q'}\ra\ca_{Q}  $ is a quasi-cluster isomorphism. 
%This shows that $ \Psi_{+} $ is a quasi-cluster functor.
\end{Rem}

We have the dual statement of Proposition~\ref{Prop: decategorification of frozen source} for mutation at a frozen sink.

\begin{Prop}\label{Prop: decategorification of frozen sink}
	Let $ v\in F_{0} $ be a frozen sink and $ (Q',F',W')=\mu_{v}(Q,F,W) $. Then there exists a unique isomorphism 
	$$
		\psi_{-}\colon\mathbb{Q}[x_{1}^{'\pm1},\ldots,x_{r}^{'\pm1},x_{r+1}^{'\pm1},\ldots,x_{n}^{'\pm1}]\ra\mathbb{Q}[x_{1}^{\pm1},\ldots,x_{r}^{\pm1},x_{r+1}^{\pm1},\ldots,x_{n}^{\pm1}]
$$
$$ \qquad\qquad\qquad\,\, x'_{i}\mapsto x_{i},\,\text{if $ i\neq v $};$$
$$ \qquad\qquad\qquad\,\,\,\,\, x'_{v}\mapsto \frac{\prod_{k\to v\in F_{1}}x_{k}}{x_{v}}$$
such that the following diagram commutes
	\begin{equation}\label{diagram: frozen sink--cc maps}
	\begin{tikzcd}
		\arrow[r]\cc(Q',F',W')\arrow[d,"\Psi_{-}"]\arrow[r,"CC'_{loc}"]&\mathbb{Q}[x_{1}^{'\pm1},\ldots,x_{r}^{'\pm1},x_{r+1}^{'\pm1},\ldots,x_{n}^{'\pm1}]\arrow[d,"\psi_{-}"]\\
		\cc(Q,F,W)\arrow[r,"CC_{loc}"]&\mathbb{Q}[x_{1}^{\pm1},\ldots,x_{r}^{\pm1},x_{r+1}^{\pm1},\ldots,x_{n}^{\pm1}].
	\end{tikzcd}
	\end{equation}
	
\end{Prop}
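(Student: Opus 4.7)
The plan is to mirror the argument of Proposition~\ref{Prop: decategorification of frozen source} in the dual setting. First I would define $\psi_-$ by the explicit formulas in the statement and observe immediately that it is an isomorphism of Laurent polynomial rings (its inverse is given by the analogous formula exchanging the roles of $x_v$ and $x'_v$, since the mutation at a frozen sink is involutive). To prove that diagram~(\ref{diagram: frozen sink--cc maps}) commutes, I would use the factorisation
\[
CC_{loc}(Y) = x^{[Y]^F}\cdot\overline{CC}(p^{*}(Y))
\]
established in the proof of Theorem~\ref{Thm: commutative diagram}, together with the analogous formula for $CC'_{loc}$.

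The key input is the dual of Proposition~\ref{Rem: mutation at frozen}: the equivalence $\Psi_-\colon \cc(Q',F',W')\iso\cc(Q,F,W)$ arises from a derived tensor product whose restriction to $\overGamma$ is the identity, so we have a natural isomorphism $p^{*}\circ\Psi_-\cong p'^{*}$. This immediately gives
\[
\overline{CC}(p^{*}(\Psi_-(X)))=\overline{CC}(p'^{*}(X))
\]
for every $X\in\cc(Q',F',W')$. It remains to check that $\psi_-$ encodes, on the level of monomials, exactly how $\Psi_-$ acts on the frozen part of the index. By the dual of Theorem~\ref{Thm: mutation at frozen source}, the equivalence $\Psi_-$ sends $\bmgamma'_i$ to $\bmgamma_i$ for $i\ne v$ and sends $\bmgamma'_v$ to the desuspended cone $\Sigma^{-1}\cone(\bigoplus_{\alpha\in F_1,\,t(\alpha)=v}\bmgamma_{s(\alpha)}\to\bmgamma_v)$. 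Passing to $K_0(\add\bmgamma)$ via the identification coming from $\cw$ (cf.~Remark~\ref{Rem:frozen index with per index}), one obtains
\[
[\Psi_-]\colon [\bmgamma'_v]\longmapsto \sum_{\alpha\in F_1,\,t(\alpha)=v}[\bmgamma_{s(\alpha)}]-[\bmgamma_v],
\]
and $[\bmgamma'_i]\mapsto[\bmgamma_i]$ for $i\ne v$. Projecting onto the frozen part $K_0(\add(e\bmgamma))$ and exponentiating, this is precisely the rule $\psi_-(x'_v)=\frac{\prod_{k\to v\in F_1}x_k}{x_v}$, $\psi_-(x'_i)=x_i$ for $i\ne v$. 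Hence $\psi_-(x'^{[X]^{F'}})=x^{[\Psi_-(X)]^F}$, and multiplying with the matching $\overline{CC}$ factors yields $\psi_-\circ CC'_{loc}=CC_{loc}\circ\Psi_-$.

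Uniqueness of $\psi_-$ is forced by the commutativity: since the classes $CC'_{loc}(\bmgamma'_i)=x'_i$ generate the codomain of $\psi_-$ and their images under $\psi_-$ are determined by $CC_{loc}(\Psi_-(\bmgamma'_i))$, the morphism $\psi_-$ is uniquely pinned down. I do not expect any serious obstacle: the only point requiring care is the sign conventions in the $K_0$-computation for $\Psi_-$, which differ from the source case by a shift, but this is exactly absorbed by the formula $[X]^F=-[X']^F$ in Definition~\ref{Def: index [P_X]} applied to the desuspension appearing in the dual mutation triangle. The rest is bookkeeping entirely parallel to Proposition~\ref{Prop: decategorification of frozen source}.
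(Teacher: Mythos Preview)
Your proposal is correct and follows exactly the approach the paper takes: the paper does not give a separate proof of this proposition but presents it as the dual statement of Proposition~\ref{Prop: decategorification of frozen source}, and your write-up is precisely the expected dualization of that argument. Your extra remark about the sign in the $K_0$-computation for $\Psi_-$ is accurate and harmless, though the paper's proof of the source case treats the analogous step without singling it out.
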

\begin{Rem}
	The commutative diagram~\ref{diagram: frozen sink--cc maps} induces the following commutative diagram
	\[
	\begin{tikzcd}
		\arrow[r]\mathrm{rch}(Q',F',W')\arrow[d,"\Psi_{-}"]\arrow[r,"CC'_{loc}"]&\ca_{Q'}\arrow[d,"\psi_{-}"]\\
		\mathrm{rch}(Q,F,W)\arrow[r,"CC_{loc}"]&\ca_{Q}.
	\end{tikzcd}
	\]
	And $ \psi_{-}\colon\ca_{Q'}\ra\ca_{Q} $ is a quasi-cluster isomorphism. 
	
%This shows that $ \Psi_{-} $ is a quasi-cluster functor.
\end{Rem}

\section{Examples from Postnikov diagrams}
\label{s:Postnikov diagrams}
With each connected Postnikov diagram $D$ in the disc, one can associate a canonical ice quiver with
potential $(Q_D, F_D, W_D)$. The corresponding relative Jacobian algebra $ J_{D}=J(Q_D, F_D, W_D) $ is known as the {\em dimer algebra} of the diagram.
Pressland has shown in \cite{presslandCalabiYauPropertiesPostnikov2019a} that it is
internally $3$-Calabi--Yau in the sense of his earlier work \cite{presslandinternal2017}. As a consequence, he obtains that the category of Gorenstein projective modules over the corresponding boundary algebra yields an additive categorification of the cluster algebra associated with the ice quiver $(Q_D,F_D)$, cf.~also \cite{BaurKingMarsh16, CanakciKingPressland21}.
Notice that, by a recent result of Galashin--Lam \cite{GalashinLam19}, after inverting the coefficients,
this cluster algebra becomes isomorphic to the homogeneous coordinate algebra of the (open)
positroid variety associated with $D$ in the Grassmannian. In this section, we explain how
this class of ice quivers with potential fits into the theory developed in this article. In particular,
the categories of Gorenstein projective modules studied by Pressland turn out to be examples of
$\Hom$-infinite Higgs categories as constructed in section~\ref{Section: Relative cluster categories and Higgs Categories}.

Let $ D $ be a connected Postnikov diagram in the disc (see~\cite[Definition 2.1]{presslandCalabiYauPropertiesPostnikov2019a}). With $D$, we can associate an ice quiver with 
potential $ (Q_{D} , F_{D}, W_{D}) $ (see~\cite[Definition 2.4]{presslandCalabiYauPropertiesPostnikov2019a}). By \cite[Theorem 3.7]{presslandCalabiYauPropertiesPostnikov2019a} and \cite[Lemma 5.11]{wuCategorificationIceQuiver2021}, the corresponding complete relative Ginzburg algebra $ \bmgamma_{D}=\relGammabf(Q_{D},F_{D},W_{D}) $ is concentrated in degree 0.  From~\cite[Proposition 4.4]{presslandCalabiYauPropertiesPostnikov2019a}, we see that $ J_{D} $ is Noetherian and $ (\overline{Q}_{D},\overline{W}_{D}) $ is Jacobi-finite. Put $ e=\sum_{v\in F_{D}}e_{v} $. The boundary algebra
$B_D$ is defined as $e J_D e$. It inherits the Noetherian property from $ J_D$.

For each vertex $ v $ of $ Q_{D} $, choose a path $ t_{v}:v\ra v $ representing a fundamental cycle (see~\cite[Definition 2.4]{presslandCalabiYauPropertiesPostnikov2019a}). Let $ t=\sum_{v\in Q_{D}}t_{v} $.
The element $t$ is central in $J_D$ so that $J_D$ becomes an algebra over the power series
algebra $Z=\mathbb{C}[[T]]$. By Proposition~2.11 of \cite{presslandCalabiYauPropertiesPostnikov2019a},
for all vertices $v,w$ of $Q_D$, the $Z$-module $e_w J_D e_v$ is free of rank $1$. Thus, the
algebra $J_D$ is a finitely generated free $Z$-module (of rank equal to the square of the number of 
vertices of $Q_D$) and so is the boundary algebra (of rank equal to the square of the number
of frozen vertices in $Q_D$). It follows that for each $P$ in $\add(e\bmgamma_{D}) $ and each
$M$ in $ \add(\relGammabf_{D}) $, the $\End(P)$-modules $\Hom(P,M)$ and $\Hom(M,P)$ are
finitely generated so that the subcategory $ \add(e\bmgamma_{D}) $ is functorially finite in $ \add(\relGammabf_{D}) $.

Thus, the associated ice quiver with potential $ (Q_{D},F_{D},W_{D}) $ satisfies Assumption~\ref{assumption} of section~\ref{Section: Relationship with Plamondon's categor}. Then by Theorem~\ref{Thm: main results}, the corresponding Higgs category $ \ch $ is equivalent to $ \mathrm{gpr}_{\infty}^{\leqslant3}(B_{D})=\mathrm{gpr}(B_{D}) $ which is exactly Pressland's  category in~\cite[Theorem 4.5]{presslandCalabiYauPropertiesPostnikov2019a}. Under this equivalence, the canonical cluster-tilting object $ \bmgamma_{D} $ of $ \ch $ corresponds to the canonical cluster-tilting object $ T=J_{D}e $ of $\mathrm{gpr}(B_{D}) $. Moreover, the relative cluster category $ \cc(Q_{D},F_{D},W_{D}) $ is equivalent to the bounded derived category $ \cd^{b}(\mathrm{gpr}(B_{D})) $ of $ \mathrm{gpr}(B_{D}) $. In particular, by section~1.3 of 
\cite{presslandCalabiYauPropertiesPostnikov2019a}, the category of Cohen-Macaulay modules
introduced by Jensen--King--Su in \cite{jensenCategorificationGrassmannianCluster2016} is
equivalent to a Higgs category and the bounded derived category of their algebra $A$ is
equivalent to a relative cluster category.

Recall from Theorem~6.1 of \cite{presslandCalabiYauPropertiesPostnikov2019a}
that  the Frobenius category $\ch\iso\mathrm{gpr}(B_{D})$ with the cluster-tilting object $T$ is part of
an additive categorification of the cluster algebra associated with the diagram $ D $. The
corresponding cluster character evaluated at an object $ M\in\mathrm{gpr}(B_{D})$
is given by Fu--Keller's \cite{Fu-Keller2010} formula
\begin{equation} \label{eq:Fu-Keller formula}
CC(M)=x^{[FX]}\sum_{d}\chi(\mathrm{Gr}_{d}(\Ext^{1}_{\mathrm{gpr}(B_{D})}(T,M)))x^{-[N]}. 
\end{equation}
Here we denote 
\begin{itemize}
\item[a)] by $F$ be the functor
\[
\Hom_{\mathrm{gpr}(B_{D})}(T,?):\mathrm{gpr}(B_{D})\ra\mathrm{mod}(J_{D}) \; ;
\]
\item[b)] by $ [M] $ the class of a  $J_{D} $-module $ M $ in the Grothendieck group 
$ K_{0}(\per J_{D})\cong K_{0}(\mathrm{proj}J_{D})$, which we identify with $\mathbb{Z}^N$ via
the choice of the basis formed by the classes $[P_i]$, $i\in (Q_D)_0=\{1, \ldots, N\}$ of
the indecomposable projective $J_D$-modules associated with the vertices of $Q_D$\,;
\item[c)] by $[N]$ the class in $K_{0}(\per J_{D})$ of any object $N$ with dimension
vector $d$ (this class is independent of the choice of $ N $ as in the proof of~\cite[Proposition 3.2]{Fu-Keller2010}).
\end{itemize}

Then it isn't hard to see that $ \varphi(Ne) $ is equal to $ X_{N} $ (defined by formula~(\ref{CC formula for Higgs})) for each object $ N\in\ch $ under the equivalence $ \ch\iso\mathrm{gpr}(B_{D}):N\mapsto Ne $.

\appendix

\section{Frobenius categorification of quasi-cluster morphisms, \\
by C.~Fraser and B.~Keller}
\label{appendix}

\subsection{Quasi-cluster morphisms} \label{ss:quasi-cluster morphisms}
We recall the notion of quasi-cluster morphism from
\cite{fraserQuasihomomorphismsClusterAlgebras2016a}. Let $Q$ be an ice quiver with
frozen subquiver $F\subseteq Q$. Let us assume that the set of non frozen vertices is
formed by the integers $1$, \ldots, $r$ and the set of frozen vertices by $r+1$, \ldots, $n$.
Let $\T_r$ be the $r$-regular tree with root $t_0$. Let $\ca_Q$ be the cluster algebra
{\em with invertible coefficients} associated with $Q$. Let us write $x_1$, \ldots, $x_n$ for
its initial cluster variables. Let $\P$ be its {\em coefficient group}, \ie the group of Laurent monomials 
 \[
 x_{r+1}^{e_{r+1}} \cdots x_n^{e_n}, e_i \in \Z \ko
 \]
 in the frozen variables. Let $\ol{Q}$ be the full subquiver on the non frozen vertices of $Q$ and
 $\ca_{\ol{Q}}$ the associated cluster algebra without coefficients. For a vertex $t$ of $\T_r$,
 we denote by $Q(t)$ denote the associated iterated mutation of $Q$, by $B(t)=(b_{ij}(t))$ the associated
 exchange matrix and by $x_j(t)$ the associated cluster variables for $1\leq j \leq n$.
 Moreover, for each non frozen vertex $j$, we put
 \[
 \hat{y}_j(t)= \prod_{i\in Q_0} x_i^{b_{ij}(t)}.
 \]
 \begin{lemma} \label{lemma:specialization} The specialization map
 \[
 \Q[x_1^{\pm}, \ldots, x_r^{\pm}, x_{r+1}^{\pm}, \ldots ,x_n^{\pm}] \to \Q[x_1^{\pm}, \ldots, x_r^{\pm}]
 \]
 taking the $x_j$, $j>r$, to $1$ induces a cluster algebra isomorphism from the quotient $\ca_Q/(m-1, m\in \P)$
 onto $\ca_{\ol{Q}}$.
 \end{lemma}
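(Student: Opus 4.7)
The plan is to exhibit the specialization map as a cluster algebra morphism and then identify its kernel. The essential ingredient will be the compatibility between mutation and coefficient specialization.

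First, I would show by induction on the distance from $t_0$ in $\T_r$ that for every vertex $t$ of $\T_r$ and every non-frozen index $1 \leq i \leq r$, the specialization of the cluster variable $x_i(t)$ of $\ca_Q$ equals the corresponding cluster variable $\ol{x}_i(t)$ of $\ca_{\ol Q}$. The base case is immediate from the definition of the initial cluster. For the inductive step at an edge $t \xrightarrow{k} t'$, one writes the exchange relation
\[
x_k(t)\,x_k(t') = \prod_{\alpha\in Q(t)_1,\, s(\alpha)=k} x_{t(\alpha)}(t) + \prod_{\alpha\in Q(t)_1,\, t(\alpha)=k} x_{s(\alpha)}(t),
\]
applies $\pi$ and uses the induction hypothesis: the factors $\pi(x_j(t))=1$ for $j>r$ disappear, while for the non-frozen factors the hypothesis applies. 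The key combinatorial observation is that the full subquiver of $Q(t)$ on the non-frozen vertices coincides with $\ol Q(t)$, since mutating at a non-frozen vertex commutes with deleting the frozen subquiver (the cancellation of $2$-cycles involving a frozen vertex does not affect arrows between non-frozen vertices). Hence $\pi$ of the relation above is exactly the exchange relation defining $\ol{x}_k(t')$ in $\ca_{\ol Q}$, and we conclude $\pi(x_k(t')) = \ol{x}_k(t')$.

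From this we obtain that $\pi$ restricts to a surjective ring homomorphism $\pi\colon \ca_Q \twoheadrightarrow \ca_{\ol Q}$ that sends each cluster bijectively onto the corresponding cluster and intertwines the mutations of both algebras. Since every $m \in \P$ satisfies $\pi(m)=1$, the kernel of $\pi$ contains the ideal $I=(m-1,\, m\in\P)$, and $\pi$ factors through a surjective cluster algebra homomorphism $\ol\pi\colon \ca_Q/I \to \ca_{\ol Q}$.

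The main obstacle is the injectivity of $\ol\pi$. The cleanest route is to invoke Fomin--Zelevinsky's theory of coefficient specialization: the semifield morphism $\P\to \{1\}$ is a (trivial) coefficient specialization, and the corresponding ring map $\Z\P\to\Z$ induces a base change $\ca_Q \otimes_{\Z\P}\Z \xrightarrow{\sim} \ca_{\ol Q}$. Since $I$ is precisely the kernel of $\ca_Q\to\ca_Q\otimes_{\Z\P}\Z$, this yields the required isomorphism. Alternatively, one constructs an explicit inverse $\iota\colon\ca_{\ol Q}\to \ca_Q/I$ by declaring $\iota(\ol{x}_i(t)) = x_i(t) + I$; the compatibility with the exchange relations of $\ca_{\ol Q}$ (proved as above) combined with the Laurent realization of $\ca_{\ol Q}$ inside $\Q[x_1^{\pm},\ldots,x_r^{\pm}]$ shows $\iota$ is well defined and two-sided inverse to $\ol\pi$.

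Finally, upgrading $\ol\pi$ from a ring isomorphism to an isomorphism of cluster algebras is automatic from the first step: the clusters and mutations of $\ca_Q/I$ correspond bijectively under $\ol\pi$ to those of $\ca_{\ol Q}$, both being indexed by the same tree $\T_r$.
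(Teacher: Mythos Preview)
The paper states this lemma without proof, so there is no argument to compare against; your task is simply to give a correct proof.

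Your surjectivity argument is fine: the induction on the tree showing that $\pi(x_i(t))=\ol{x}_i(t)$ is the standard compatibility of mutation with coefficient specialization, and your remark that the non-frozen part of $Q(t)$ coincides with $\ol{Q}(t)$ is the right combinatorial input.

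For injectivity, your first route is essentially a citation rather than a proof: the identification $\ca_Q\otimes_{\Z\P}\Z\cong\ca_{\ol Q}$ is precisely the content of the lemma. This is acceptable if you point to the relevant result in Fomin--Zelevinsky's Cluster Algebras~IV on coefficient specialization, but as written it is close to circular. Your second route (defining $\iota$ on cluster variables) has a genuine well-definedness issue that you gloss over: distinct vertices $t,s$ of $\T_r$ can give $\ol{x}_i(t)=\ol{x}_j(s)$ in $\ca_{\ol Q}$ while $x_i(t)\neq x_j(s)$ in $\ca_Q$, and you would need to check that then $x_i(t)-x_j(s)\in I$. This does not follow from the exchange relations alone. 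A cleaner way to finish is to observe that the Laurent embedding $\ca_Q\hookrightarrow \Q\P[x_1^{\pm},\ldots,x_r^{\pm}]$ exhibits $\ca_Q$ as a $\Q\P$-submodule of a free $\Q\P$-module, and then argue that the intersection of $\ca_Q$ with the kernel of $\Q\P[x_1^{\pm},\ldots,x_r^{\pm}]\to\Q[x_1^{\pm},\ldots,x_r^{\pm}]$ is exactly $I$; but this still requires some care (e.g.\ flatness or an explicit $\Q\P$-basis argument). You should either make one of these precise or cite the specialization result explicitly.
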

 
 Let $Q'$ be another ice quiver and $\P'$ the associated coefficient group. 
 Slightly simplifying definition~3.1 of \cite{fraserQuasihomomorphismsClusterAlgebras2016a}
 we define a {\em quasi-cluster morphism} $\phi: \ca_Q \to \ca_{Q'}$ to be a $\Q$-algebra morphism such
 that
 \begin{itemize}
 \item[a)] we have $\phi(\P)\subseteq \P'$ and for each cluster variable $x$ of $\ca_Q$, there is a cluster
 variable $x'$ of $\ca_{Q'}$ such that $\phi(x)\subseteq \P' x'$;
 \item[b)] the induced morphism $\ol{\phi}: \ca_{\ol{Q}} \to \ca_{\ol{Q'}}$ is an isomorphism of
 cluster algebras taking the initial seed $(X, \ol{Q})$ to a seed $(X', \ol{Q})$ (with the same quiver $\ol{Q}$);
 \item[c)] we have
 \[
 \phi(\hat{y}_j(t_0)) = \hat{y'}_{j'}(t') \ko
 \]
 where $t'$ and $j'$ are defined by the condition $\ol{\phi}(x_j(t_0)) = x'_{j'}(t')$ for $1 \leq j \leq r$. 
 \end{itemize}
 Notice that thanks to condition b), in condition c), there is a unique cluster $x'_{j'}(t')$, $1 \leq j' \leq r$,
 obtained as the image under $\ol{\phi}$ of the cluster $x_j(t_0)$, $1\leq j\leq r$, and that it
 determines the associated exchange matrix by Corollary~3.6 of \cite{CerulliKellerLabardiniPlamondon2013}.
 Let us denote by $\cu_Q$ the {\em upper cluster algebra with invertible coefficients} associated
 with $Q$. We define a {\em quasi-cluster morphism} $f: \cu_Q \to \cu_{Q'}$ to be a ring 
 homomorphism inducing a quasi-cluster morphism $\ca_Q \to \ca_{Q'}$.
 
 As a simple example, consider the ice quiver
 \[
\begin{tikzcd}
	&\color{blue}\boxed{2}\arrow[dr,"a"]\\
	\color{blue}\boxed{3}\arrow[ur,blue,"b"]&&1\arrow[ll,"c"]
\end{tikzcd}
\]
Let us denote the initial cluster variables by $x_1$, $p_1=x_2$ and $p_2=x_3$. Then the only other
cluster variable is
\[
x_1' = \frac{p_1+p_2}{x_1}.
\]
The associated cluster algebra with invertible coefficients is
\[
\ca_Q = \Q[x_1, x_1', p_1^\pm, p_2^\pm]/(x_1 x_1' - p_1-p_2)
\]
and we have $\hat{y}_1 = p_1/p_2$. The associated cluster algebra without coefficients is
\[
\ca_{\ol{Q}}= \Q[x_1, 2/x_1].
\]
Define the algebra automorphism $\sigma: \ca_Q \to \ca_Q$ to send $p_i$ to $1/p_i$, $x_1$
to $x_1'/p_2$ and $x_1'$ to $x_1/p_1$. We have
\[
\hat{y}_1' = \frac{\sigma(p_1)}{\sigma(p_2)} = \frac{1/p_1}{1/p_2} = \frac{p_2}{p_1} = \sigma(\hat{y}_1)
\]
so that $\sigma$ is indeed a quasi-cluster automorphism.

\subsection{Frobenius categorification of cluster algebras with coefficients} \label{ss:Frobenus categorification} 
As before, let $Q$ be an ice quiver.
Denote by $\ca^+_Q$ the cluster algebra with {\em non invertible} coefficients associated with $Q$.
For example, for the quiver 
\[
\begin{tikzcd}
	&\color{blue}\boxed{2}\arrow[dr,"a"]\\
	\color{blue}\boxed{3}\arrow[ur,blue,"b"]&&1\arrow[ll,"c"]
\end{tikzcd}
\]
we have
\[
\ca^+_Q = \Q[x_1, x_1', p_1, p_2]/(x_1 x_1' - p_1-p_2).
\]
Now let $k$ be an algebraically closed field and assume that $(\ce,T)$ is a {\em Frobenius
categorification} of the quiver $Q$. By definition, this means that 
\begin{itemize}
\item[a)] $\ce$ is a $k$-linear Krull--Schmidt Frobenius category which is enriched 
over the monoidal category of pseudo-compact vector spaces, cf.~section~4 of
\cite{Van den Bergh2015};
\item[b)] the stable category $\ul{\ce}$ is $\Hom$-finite and $2$-Calabi--Yau, \ie
we have bifunctorial isomorphisms
\[
D\Hom_{\ul{\ce}}(X,Y) \iso \Hom_{\ul{\ce}}(Y, \Si^2 X)
\]
for $X,Y\in \ul{\ce}$, where $D$ is the duality over the ground field $k$;
\item[c)] $T$ is a basic cluster-tilting object of $\ce$ and we are given an isomorphism
between $Q$ and the quiver of the endomorphism algebra $A$ of $T$ such that the
frozen vertices correspond to the projective-injective indecomposable direct factors of $T$,
the number of frozen arrows from $i$ to $j$ is
\[
\dim \Ext^1_A(S_j, S_i) - \dim \Ext^2_A(S_i, S_j)
\]
and the number of non frozen arrows from $i$ to $j$ is
\[
\dim \Ext^2_A(S_i, S_j)
\]
for all vertices $i$ and $j$.
\item[d)] the basic cluster-tilting objects of $\ce$ determine a cluster structure on $\ce$ in the 
sense of section~I.1 of \cite{buanClusterStructures2Calabi2009}. 
\end{itemize}
By Theorem~I.1.6 of [loc.~cit.], if conditions a)--c) hold, then condition d) holds if no cluster-tilting
object of $\ce$ has loops or $2$-cycles in the quiver of its endomorphism algebra. 
By Prop.~2.19 (v) of \cite{GLS2011},
this holds for many stably $2$-CY categories occuring in Lie theory.

Let us assume that $T$ is basic and that we have numbered its indecomposable direct
factors $T_i$, $1\leq i\leq n$, such that $T_i$ is projective for $i>r$ and non projective for $i\leq r$.
Then, by iterated mutation, with each vertex $t$ of the regular
tree $\T_r$, we associate a basic cluster-tilting object $T(t)$ whose endomorphism algebra
has the quiver $Q(t)$ obtained from $Q(t_0)=Q$ by iterated mutation. 

Let us assume from now on the $k=\C$ is the field of complex numbers. Then with $T$,
we have an associated cluster character $CC=CC_T: \ce \to \cu_Q$ with values in
the upper cluster algebra $\cu_Q$ and defined by the formula~\ref{eq:Fu-Keller formula}.
The following theorem is a combination of results from \cite{Fu-Keller2010} and
\cite{CerulliKellerLabardiniPlamondon2013}.

\begin{Thm} The map $L \mapsto CC(L)$ induces bijections
\begin{itemize}
\item[-] from the set of isoclasses of reachable rigid indecomposables of $\ce$ to the set of
cluster variables of $\ca^+_Q$;
\item[-] from the set of isoclasses of reachable cluster-tilting objects of $\ce$ to the set of clusters of $\ca^+_Q$.
\end{itemize}
\end{Thm}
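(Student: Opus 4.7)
The strategy is to combine the multiplication formula and index formalism of \cite{Fu-Keller2010} (for surjectivity and compatibility with mutation) with the denominator argument of \cite{CerulliKellerLabardiniPlamondon2013} (for injectivity), after a reduction to the coefficient-free case via passage to the stable $2$-Calabi--Yau category $\ul{\ce}$.

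First I would verify the combinatorial compatibilities of $CC = CC_T$. From formula~(\ref{eq:Fu-Keller formula}) one reads off at once that $CC(T_i) = x_i$ for $1 \le i \le n$ and $CC(X \oplus Y) = CC(X)CC(Y)$. The key input is the multiplication formula: whenever $L \to E \to M$ and $M \to E' \to L$ are non-split exchange triangles with $\dim \Ext^1_{\ul{\ce}}(L,M) = 1$, one has $CC(L)CC(M) = CC(E) + CC(E')$. This is established in \cite{Fu-Keller2010} under precisely the hypotheses (a)--(d). Combined with the cluster structure axiom (d) and the encoding of the exchange matrix $B(t_0)$ in the quiver of $\End(T)$ provided by (c), a straightforward induction on the distance from $t_0$ in $\T_r$ shows that for each vertex $t$ the tuple $(CC(T_i(t)))_{i=1}^n$ is the cluster at $t$ in $\ca_Q^+$. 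This yields surjectivity of both maps of the theorem; moreover, the bijection on cluster-tilting objects reduces to the one on rigid indecomposables because a basic cluster-tilting object is determined up to isomorphism by the set of its indecomposable summands.

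For injectivity I would follow the approach of \cite{CerulliKellerLabardiniPlamondon2013}, as used in the proof of Theorem~\ref{Thm: cluster bijection}. Suppose $L$ and $L'$ are reachable rigid indecomposables of $\ce$ with $CC(L) = CC(L')$. Choose a mutation sequence $\mathbi{i}$ on non-frozen indices such that $\mu_{\mathbi{i}}(L) \simeq T_j$ for some $1 \le j \le r$; then $CC(\mu_{\mathbi{i}}(L')) = \mu_{\mathbi{i}}(CC(L')) = x_j$. Now pass to the stable category $\ul{\ce}$: the induced character $\ul{CC} \colon \ul{\ce} \to \ca_{\ol{Q}}^+$, obtained from $CC_T$ by the specialization of the frozen variables of Lemma~\ref{lemma:specialization}, coincides with Palu's cluster character attached to the cluster-tilting object $\ul{T}$ in the $\Hom$-finite $2$-Calabi--Yau category $\ul{\ce}$. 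The coefficient-free version of the theorem established in \cite{CerulliKellerLabardiniPlamondon2013} then guarantees that $x_j$ cannot be written as a proper $\Z$-linear combination of Laurent monomials in any initial cluster, so $\ul{\mu_{\mathbi{i}}(L')} \simeq \ul{T}_j$ in $\ul{\ce}$. Since the cluster structure hypothesis (d) ensures that indecomposable rigid objects of $\ce$ are determined up to isomorphism by their images in $\ul{\ce}$ together with their direct summands among the projective-injectives $T_{r+1}, \ldots, T_n$, we deduce $\mu_{\mathbi{i}}(L') \simeq T_j$ in $\ce$, and hence $L' \simeq L$.

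The main technical obstacle will be verifying the compatibility between $CC_T$ on $\ce$ and Palu's character on $\ul{\ce}$ under the specialization of the frozen variables to $1$. Concretely, one must check that the projection of $\ind_T(L)$ onto the non-frozen component of $K_0(\add T)$ equals the $\ul{T}$-index of $\ul{L}$ in the sense of Palu, and that the Grothendieck-group classes $[N]$ appearing in~(\ref{eq:Fu-Keller formula}) specialize accordingly. Both points follow from standard arguments once one writes out the $(\add T)$-presentations of $L$ and of the auxiliary objects realizing the dimension vectors $d$, but this is the only genuinely technical step of the argument.
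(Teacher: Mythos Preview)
Your proposal is correct and matches the paper's approach: the paper itself gives no proof for this appendix statement beyond citing it as ``a combination of results from \cite{Fu-Keller2010} and \cite{CerulliKellerLabardiniPlamondon2013}'', and your sketch is precisely that combination, mirroring the parallel argument in the proof of Theorem~\ref{Thm: cluster bijection}. One small point: the lifting step from $\ul{\mu_{\mathbi{i}}(L')}\simeq \ul{T}_j$ in $\ul{\ce}$ to $\mu_{\mathbi{i}}(L')\simeq T_j$ in $\ce$ does not need the cluster structure hypothesis~(d) --- it follows directly from the Krull--Schmidt property, since a non-projective indecomposable in a Frobenius category is determined by its image in the stable category (and you should separately dispose of the trivial case where $L$ is projective-injective).
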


As an example, let us consider the category $\ce=\mod\La$ of finite-dimensional modules over the
preprojective algebra $\La$ of type $A_2$ given by the quiver
\[
\begin{tikzcd}
	1 && 2
	\arrow["a", curve={height=-12pt}, from=1-1, to=1-3]
	\arrow["b", curve={height=-12pt}, from=1-3, to=1-1]
\end{tikzcd}
\]
with the relations $ab=0$ and $ba=0$.
Then $\ce$ contains four isoclasses of indecomposable objects represented by
the two simple modules $S_1$ and $S_2$ and the two indecomposable projective
modules $P_1$ and $P_2$, where $P_i$ is the projective cover of $S_i$, $i=1,2$. 
The object $T=S_1\oplus P_1 \oplus P_2$ is cluster-tilting. Its endomorphism algebra
is given by the quiver
\[
\begin{tikzcd}
	&\color{blue}\boxed{2}\arrow[dr,"a"]\\
	\color{blue}\boxed{3}\arrow[ur,blue,"b"]&&1\arrow[ll,"c"]
\end{tikzcd}
\]
with the relations $ab=0$, $bc=0$. Here the vertex $1$ corresponds to $S_1$, the vertex
$2$ to $P_1$ and the vertex $3$ to $P_2$. 
Notice that the endomorphism algebra is also the relative Jacobian algebra of the same
quiver endowed with the potential $W=abc$. The cluster character associated with $T$
takes the indecomposables respectively to $x_1$, $x_2$, $p_1$
and $p_2$. The space $\Ext^1_\ce(S_1,S_2)$ is one-dimensional and we have
the associated exchange conflations
\[
S_1 \rightarrowtail P_2 \twoheadrightarrow S_2 \mbox{ and }
S_2 \rightarrowtail P_1 \twoheadrightarrow S_1.
\]
They decategorify to the exchange relation
\[
CC(S_1) CC(S_2) = CC(P_1)+CC(P_2) \mbox{ respectively } x_1 x_2 = p_1 + p_2.
\]
Our first aim in this appendix is to propose a categorical interpretation of
expressions like $1/p_1$, $x_2/p_1$,~\ldots which have monomials
in frozen variables as denominators. We will use objects of the derived category
$\cd^b(\ce)$ for this purpose, cf.~Theorem~\ref{thm:extension} below.

\subsection{On split Grothendieck groups of Frobenius categories} 
Let $\ce$ be a Krull--Schmidt Frobenius category and $\cp\subseteq \ce$ its full subcategory
of projective-injectives. We denote by $K_0^{sp}(\ce)$ the {\em split Grothendieck group}
of $\ce$ which has a basis given by the isomorphism classes of the indecomposables in $\ce$.
Exceptionally, in this section, we will denote by $X \mapsto X[1]$ the suspension
functor of the derived category $\cd^b(\ce)$. We denote by $K_0^{sp}(\cd^b(\ce))$ the
split Grothendieck group of $\cd^b(\ce)$ and by $K_{\ce,\cp}$ its quotient by the subgroup
generated by all elements $[P]-[X]+[Y]$, where $P$, $X$ and $Y$ appear in a triangle
\[
P \to X \to Y \to P[1]
\]
of $\cd^b(\ce)$ and $P$ is isomorphic to a bounded complex of projectives of $\ce$. 

\begin{Prop} \label{prop:extension} The morphism $\phi: K_0^{sp}(\ce) \to K_{\ce,\cp} $ induced
by the canonical functor  $\ce \to \cd^b(\ce)$ is bijective.
\end{Prop}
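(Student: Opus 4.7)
The plan is to verify that $\phi$ is surjective and injective separately.

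\textbf{Surjectivity.} The natural tool is the Buchweitz--Happel theorem: for a Frobenius exact category $\ce$, the Verdier quotient $\cd^b(\ce)/\ck^b(\cp)$ is triangle-equivalent to the stable category $\ul{\ce}$. Since the composite $\ce \hookrightarrow \cd^b(\ce) \twoheadrightarrow \ul{\ce}$ is essentially surjective, every object $Z \in \cd^b(\ce)$ is isomorphic in $\ul{\ce}$ to some $W \in \ce$, and such an isomorphism is realized by a roof $Z \xleftarrow{s} Z' \xrightarrow{g} W$ in $\cd^b(\ce)$ whose cones $C_s, C_g$ both lie in $\ck^b(\cp)$. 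Applying the defining relations of $K_{\ce,\cp}$ to the triangles $Z' \xrightarrow{s} Z \to C_s \to Z'[1]$ and $Z' \xrightarrow{g} W \to C_g \to Z'[1]$ then gives $[Z] - [W] = [C_s] - [C_g]$ in $K_{\ce,\cp}$. A straightforward induction on amplitude (using stupid-truncation triangles in $\ck^b(\cp)$, whose first term is always perfect, together with the auxiliary identity $[P[-1]] = -[P]$ obtained from a rotation of the identity triangle of $P \in \cp$) shows that $[C] = \chi(C) = \sum_i (-1)^i [C^i]$ lies in $\phi(K_0^{sp}(\cp))$ for every $C \in \ck^b(\cp)$. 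Hence $[Z] \in \phi(K_0^{sp}(\ce))$.

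\textbf{Injectivity.} The goal is to construct a left inverse $\psi: K_{\ce,\cp} \to K_0^{sp}(\ce)$. The Krull--Schmidt structure provides a canonical splitting $K_0^{sp}(\ce) = K_0^{sp}(\ul{\ce}) \oplus K_0^{sp}(\cp)$, with summands corresponding to non-projective and projective indecomposables. For the stable component $\psi_{\mathrm{st}}: K_{\ce,\cp} \to K_0^{sp}(\ul{\ce})$, I use the Verdier quotient $\cd^b(\ce) \to \ul{\ce}$: it sends every perfect complex to zero and transforms any triangle $P \to X \to Y \to P[1]$ with $P$ perfect into an isomorphism $X \cong Y$ in $\ul{\ce}$, so the defining relations of $K_{\ce,\cp}$ are automatically satisfied after passing to $K_0^{sp}(\ul{\ce})$. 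For the projective component $\psi_{\mathrm{pr}}: K_{\ce,\cp} \to K_0^{sp}(\cp)$, I extract the ``perfect defect'' of the surjectivity reduction: writing $[Z] = [W] + (\chi(C_s) - \chi(C_g))$ with $W \in \ce$ and projecting the correction to $K_0^{sp}(\cp)$ via the Euler-characteristic isomorphism $K_0(\ck^b(\cp)) \cong K_0^{sp}(\cp)$. The identity $\psi \circ \phi = \mathrm{id}$ then follows because an object of $\ce$ needs no reduction and decomposes canonically with respect to $K_0^{sp}(\ce) = K_0^{sp}(\ul{\ce}) \oplus K_0^{sp}(\cp)$.

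\textbf{Main obstacle.} The delicate point is verifying the well-definedness of $\psi_{\mathrm{pr}}$: the ``perfect defect'' must be independent of the choice of roof realizing the stable isomorphism $Z \simeq W$, and it must moreover be compatible with every relation $[P] - [X] + [Y] = 0$ in $K_{\ce,\cp}$. Two different roofs differ by further perfect cones whose Euler characteristics must cancel in $K_0^{sp}(\cp)$, a cancellation that rests on the isomorphism $K_0(\ck^b(\cp)) \cong K_0^{sp}(\cp)$ and on the Krull--Schmidt uniqueness of minimal projective covers and injective envelopes, together with a careful tracking of how perfect summands are matched when triangles are composed. This is where the bulk of the technical work lies.
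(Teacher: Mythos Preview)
Your surjectivity argument and the construction of $\psi_{\mathrm{st}}$ via the Verdier quotient $\cd^b(\ce)\to\ul{\ce}$ are correct and constitute a genuine simplification: they cleanly reduce the problem to showing that $\phi$ is injective on the summand $K_0^{sp}(\cp)$. The paper does not separate surjectivity from injectivity at all; instead it identifies $\cd^b(\ce)$ with $\ch^{-,b}(\cp)$ and writes down the inverse $\psi$ explicitly in three steps (injective coresolutions for objects of the form $Y[n]$ with $Y\in\ce$, Euler characteristic for bounded projective complexes, and stupid truncation to glue). Your reduction recovers two-thirds of this for free.

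The gap is in $\psi_{\mathrm{pr}}$, and it is real. First, the formula as written is not a left inverse: if $Z=W\in\ce$ with $W$ having a non-trivial projective summand, the trivial roof $W\leftarrow W\rightarrow W$ gives correction $0$, so your $\psi(W)=[\pi W]\in K_0^{sp}(\ul{\ce})$ rather than $[W]\in K_0^{sp}(\ce)$. Presumably you intend $\psi_{\mathrm{pr}}(Z)$ to include the projective part of $[W]$ as well. Second, and more seriously, the independence of the choice of roof is not established: given two roofs to $W_1$ and $W_2$, you would need to show that the two elements of $K_0^{sp}(\cp)$ agree, but the only equation relating them is $[W_1]+(\chi(C_{s_1})-\chi(C_{g_1}))=[W_2]+(\chi(C_{s_2})-\chi(C_{g_2}))$ in $K_{\ce,\cp}$, and using this would be circular. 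Your remarks about Krull--Schmidt uniqueness and matching perfect summands do not supply the missing identity in $K_0^{sp}(\cp)$; one genuinely needs a canonical assignment, which is precisely what the paper's model $\ch^{-,b}(\cp)$ and the explicit formula via injective coresolutions and truncations provide. In effect, resolving your ``main obstacle'' amounts to carrying out the paper's construction for the $\cp$-component.
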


\begin{proof} We will construct the inverse $\psi$ of $\phi$. Let $\cc^{-,b}(\cp)$ denote the category
of right bounded complexes $X$ with components in $\cp$ which are acylic in all degrees $n\ll 0$,
which means that we have conflations
\[
Z^n(X) \rightarrowtail X^n \twoheadrightarrow Z^{n+1}(X).
\]
The corresponding homotopy category $\ch^{-,b}(\cp)$ is canonically equivalent to 
$\cd^b(\ce)$. Thus it suffices to construct an element $\psi(X)$ in $K_0^{sp}(\ce)$ for
each complex $X$ in $C^{-,b}(\cp)$ such that 
\begin{itemize}
\item[a)] we have $\psi(X)=\psi(X')$ if $X$ and $X'$ are homotopy equivalent;
\item[b)] we have $\psi(P)+\psi(Y)=\psi(X)$ whenever there is a triangle
\[
P \to X \to Y \to P[1]
\]
of $\ch^{-,b}(\cp)$, where $P$ is bounded;
\item[c)] the induced map $\psi: K_{\ce,\cp} \to K_0^{sp}(\ce)$ is inverse to $\phi$.
\end{itemize}

\noindent
{\em 1st step: For each $n\leq 0$, we define $\psi(X)$ for complexes $X$ in $\cc^{-,b}(\ce)$ which have
non zero components only in degrees $\leq n$ and admit a conflation
\[
Z^n(X) \rightarrowtail X^n \twoheadrightarrow Y
\]
as well as conflations
\[
Z^p(X) \rightarrowtail X^p \twoheadrightarrow Z^{p+1}(X)
\]
for all $p\leq n-1$.} In other words, such a complex is a projective resolution of $Y[n]$. For this, we
choose an injective resolution
\[
0 \to Y \to I^0 \to I^1 \to \cdots
\]
From the conflation
\[
Y \rightarrowtail I^0 \twoheadrightarrow \Si Y
\]
we deduce that in $K_{\ce,\cp}$, we have
\[
[Y[1]] = [\Si Y] - [I^0]
\]
and by induction that we have
\[
[X]=[Y[n]] = [\Si^n Y] + (-1)^n \sum_{p=0}^{n-1} (-1)^p [I^p]
\]
in $K_{\ce,\cp}$. Therefore, we define the element $\psi(X)$ of $K_0^{sp}(\ce)$ by
\[
\psi(X)=[\Si^n Y] + (-1)^n \sum_{p=0}^{n-1} (-1)^p [I^p].
\]
It is clear that $\psi(X)$ does not change if we replace $X$ with a homotopic complex
$X'$ concentrated in degrees $\leq n$ and that $\phi(\psi(X))=[X]$.

{\em 2nd step: We define $\psi(X)$ for bounded complexes $X$ with projective components by
\[
\psi(X) = \sum_{p\in\Z} (-1)^p [X^p].
\]
}

{\em 3rd step: We define $\psi(X)$ for general $X$ in $\cc^{-,b}(\cp)$.}
For this, we choose $N\ll 0$ such that $X$ is acyclic in degrees $n\leq N$. We then have a
a triangle
\[
\si_{> N}(X) \to X \to \si_{\leq N}(X) \to (\si_{>N}(X))[1]
\]
and we define
\[
\psi(X) = \psi (\si_{> N}(X)) + \psi(\si_{\leq N}(X))
\]
using the first and the second step. One checks that this element of $K_0^{sp}(\ce)$ does
not depend on the choice of $N$ and that we have properties a), b) and c) above.
\end{proof}

\subsection{Extension of cluster characters to the derived category} Let $\ce$ be a Krull--Schmidt
Frobenius category which is stably $2$-Calabi--Yau. Suppose that 
$CC: \ce \to R$ is a cluster character with values in a commutative domain $R$.
Suppose that the isomorphism classes of the indecomposable projectives of $\ce$ are
represented by objects $P_{r+1}$, \ldots, $P_n$ and put $x_i=CC(P_i)$ for $r+1\leq i\leq n$.
Let $\iota: R \to R_{loc}$ be a ring homomorphism to a domain which makes $x_{r+1}$, \ldots, $x_n$ 
invertible. Let us also denote the canonical functor $\ce \to \cd^b(\ce)$ by $\iota$. 
Let $\pi: R_{loc} \to \ul{R}$ be a ring homomorphism which sends the $\iota(x_i)$ to $1$, $r+1\leq i\leq n$.
Let us also denote by  $\pi: \cd^b(\ce) \to \ul{\ce}$ the canonical triangle functor. 
Let $\ul{CC}: \ul{\ce} \to \ul{R}$ be the cluster character induced by $CC$.

\begin{Thm} \label{thm:extension}
\begin{itemize}
\item[a)] There is a unique map $CC_{loc}: \cd^b(\ce) \to R_{loc}$ such that 
\begin{itemize} 
\item[1)] the following diagram commutes
\[
\begin{tikzcd}
\ce\arrow[d,"\iota"'] \arrow[r,"CC"] & R \arrow[d,"\iota"] \\
\cd^b(\ce) \arrow[d,"\pi"'] \arrow[r,"CC_{loc}"] & R_{loc} \arrow[d,"\pi"]\\
\ul{\ce} \arrow[r,"\ul{CC}"] & \ul{R}
\end{tikzcd}
\]
\item[2)] whenever we have a triangle
\[
P \to X \to Y \to P[1]
\]
of $\cd^b(\ce)$, where $P$ is a bounded complex of projectives, we have 
\[
CC_{loc}(X) = CC_{loc}(P) CC_{loc}(Y).
\]
\end{itemize}
\item[b)] Let $X$ and $Y$ be objects of $\cd^b(\ce)$ such that the space 
$\Ext^1_{\ul{\ce}}(\pi X, \pi Y)$ is one-dimensional.
Let $E$ and $E'$ be the middle terms of triangles
\[
Y \to E \to X \to Y[1] \mbox{ and } X \to E' \to Y \to X[1]
\]
of $\cd^b(\ce)$ whose images in $\ul{\ce}$ are non split. Then we have
\[
CC_{loc}(X) CC_{loc}(Y) = CC_{loc}(E) + CC_{loc}(E').
\]
\end{itemize}
\end{Thm}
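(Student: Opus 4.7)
For part~(a), the plan is to combine conditions (1) and (2) with the decomposition device from the proof of Proposition~\ref{prop:extension}. Every object $X\in\cd^b(\ce)$ is represented by a complex $P^\bullet\in\cc^{-,b}(\cp)$, and for $N\ll 0$ such that $X$ is acyclic in degrees $\leq N$, the stupid-truncation triangle
\[
\sigma_{>N}(P^\bullet)\to X\to Y[-N]\to\sigma_{>N}(P^\bullet)[1]
\]
exhibits $X$ as an extension of a shift $Y[-N]$ (with $Y=\mathrm{im}(d^N)\in\ce$) by a bounded complex of projectives. First I would use condition (2) on this triangle to reduce the determination of $CC_{loc}(X)$ to the two factors. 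Then I would apply condition (2) iteratively: to the stupid-truncation triangles of the bounded projective complex $\sigma_{>N}(P^\bullet)$ (forcing $CC_{loc}(P)=\prod_i x_i^{n_i}$ with $n_i$ depending only on the class $[P]\in K_0(\ck^b(\cp))$), and to the triangles in $\cd^b(\ce)$ coming from injective coresolutions or projective resolutions in $\ce$ of $Y$, starting from condition (1). This forces the value of $CC_{loc}$ and settles uniqueness.

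For existence, I would take the resulting formula as the definition of $CC_{loc}$. The core difficulty is to check that the value is independent of the choices of $P^\bullet$, of $N$, and of the coresolutions used. Changing $N$ to $N\pm 1$ moves a single projective summand between the bounded-complex factor and the shifted $\ce$-object factor; the induced multiplicative changes cancel because $CC_{loc}$ on shifts of projectives is a signed monomial. Homotopic resolutions give homotopic stupid truncations, which differ by contractible complexes with trivial $CC_{loc}$. The underlying bookkeeping is the multiplicative analogue of the additive identities verified in Proposition~\ref{prop:extension}, and the bijectivity of $\phi$ packages exactly what we need. Conditions (1) and (2) then hold by construction.

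For part~(b), the plan is to reduce to the case $X,Y\in\ce$, where the claim is the classical Fu--Keller multiplication formula \cite{Fu-Keller2010}. Using (a), I would lift $\pi X,\pi Y\in\ul{\ce}$ to objects $X_0,Y_0\in\ce$; by the decomposition triangles, $X$ and $X_0$ (resp.\ $Y$ and $Y_0$) differ by objects in $\thick(\cp)$, so by condition (2) both sides of the desired identity are expressed as products of $CC_{loc}(X_0)$, $CC_{loc}(Y_0)$ (and analogous terms for $E$ and $E'$) by Laurent monomials in $x_{r+1},\ldots,x_n$. The octahedral axiom relates the given exchange triangle $Y\to E\to X\to Y[1]$ to the classical exchange conflation $Y_0\to E_0\to X_0$ in $\ce$, which exists and is essentially unique since $\Ext^1_{\ul{\ce}}(X_0,Y_0)=\Ext^1_\ce(X_0,Y_0)$ is one-dimensional; similarly for $E'$. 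Tracking the monomial factors then reduces the identity to the Fu--Keller formula for $(X_0,Y_0)$, composed with $\iota$. The main obstacle throughout will be the well-definedness in~(a) and the compatibility check for the monomial factors in~(b), both controlled by the additive-to-multiplicative dictionary supplied by Proposition~\ref{prop:extension}.
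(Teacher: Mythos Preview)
Your proposal for part~(a) is essentially the same as the paper's: both use Proposition~\ref{prop:extension} to transfer the multiplicative structure from $K_0^{sp}(\ce)$ to $K_{\ce,\cp}$, with the stupid-truncation decomposition providing the explicit formula and the well-definedness.

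For part~(b), your route and the paper's diverge in execution, though both reduce to the classical multiplication formula in $\ce$. The paper proceeds in two stages: first it checks the formula for $X[1],Y[1]$ (and by induction for all $X[n],Y[n]$ with $n\geq 0$) using explicit injective-envelope conflations $X\rightarrowtail IX\twoheadrightarrow \Si X$, which yield compatible conflations for $E$ and $E'$ with the same injective middle term $IX\oplus IY$; then it handles the general case by choosing complexes in $\cc^{-,b}(\cp)$ simultaneously for $X,Y,E,E'$ with projective components in degrees $\geq N$ and a single nonzero component in degree $N-1$, so that the exchange data is concentrated in that degree and reduces to the shifted case already handled. Your approach instead lifts $\pi X,\pi Y$ to $X_0,Y_0\in\ce$, uses roofs through $\thick(\cp)$, and invokes the octahedral axiom to relate the given exchange triangles to those between $X_0$ and $Y_0$. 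This is correct in principle, but the octahedral step is more delicate than you indicate: you must show that the same Laurent monomial factor appears in front of $CC_{loc}(E_0)$ and $CC_{loc}(E_0')$ (namely the product of the monomials for $X$ and $Y$), which requires tracking two roofs and two exchange triangles through several octahedra. The paper's concrete truncation approach sidesteps this by making the projective corrections manifestly identical for $E$ and $E'$ via the componentwise split sequences $\sigma_{\geq N}(Y)\rightarrowtail\sigma_{\geq N}(E)\twoheadrightarrow\sigma_{\geq N}(X)$ and its counterpart, so the equality of monomial factors is immediate rather than something to be verified.
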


\begin{proof} a) We consider $\iota\circ CC$ as a map defined on $K_0^{sp}(\ce)$ with values in the 
multiplicative group of non zero elements of the fraction field of $R_{loc}$. 
By Proposition~\ref{prop:extension}, 
this map admits a unique extension $CC_1$ to $K_{\ce,\cp}$ such that for each triangle
\[
P \to X \to Y \to P[1]
\]
of $\cd^b(\ce)$, where $P$ is a bounded complex of projectives, we have 
\[
CC_1(X) = CC_1(P) CC_1(Y).
\]
An inspection of the proof of Proposition~\ref{prop:extension} shows that
$CC_1$ actually takes values in $R_{loc}$. Thus, if we define $CC_{loc}$
to be the map $X \mapsto CC_1(X)$ from $\cd^b(\ce)$ to $R_{loc}$, then
the top square of the diagram commutes. Moreover, by the definition of
$\ul{CC}$, the outer rectangle commutes. By the surjectivity of
the map $K_0^{sp}(\ce) \to K_{\ce,\cp}$, the bottom square also commutes.

b) Suppose first that $X$ and $Y$ belong to $\ce\subset \cd^b(\ce)$. Then
clearly, the claim holds for $X$ and $Y$. Let us check it for $X[1]$ and $Y[1]$.
Let us choose conflations
\[
X \rightarrowtail IX \twoheadrightarrow \Si X \mbox{ and }
Y \rightarrowtail IY \twoheadrightarrow \Si Y
\]
with injective $IX$ and $IY$. Then we can construct conflations
\[
E \rightarrowtail IY\oplus IX \twoheadrightarrow \Si E \mbox{ and }
E' \rightarrowtail IX \oplus IY \twoheadrightarrow \Si E'.
\]
We have
\[
CC_{loc}(X[1]) = CC(\Sigma X)  CC(IX)^{-1}
\]
and similarly for $CC_{loc}(U)$ for $U\in \{Y, E, E'\}$. This immediately
implies that the formula holds for $X[1]$ and $Y[1]$. By induction, we
obtain it for $X[n]$ and $Y[n]$ for $n\geq 0$. Now let $X$ and $Y$ be
arbitrary objects of $\cd^b(\ce)$. For some $N\ll 0$, we may assume
that $X^p$ is projective for all $p\geq N$ and $X^p=0$ for all $p\leq N-2$
and similarly for $Y$, $E$ and $E'$. Moreover, we may assume that
we have $E^p\cong X^p\oplus Y^p \cong E'^p$ for $p\geq N$ and
that we have conflations
\[
X^{N-1} \rightarrowtail E^{N-1} \twoheadrightarrow Y^{N-1} \mbox{ and }
Y^{N-1} \rightarrowtail E'^{N-1} \twoheadrightarrow X^{N-1}.
\]
Notice that up to suspension, these give rise in the stable category
$\ul{\ce}$ to the images of the chosen triangles linking $X$ and $Y$ in $\cd^b(\ce)$.
By our assumption on the extension groups between $\pi X$ and $\pi Y$, these
conflations are therefore exchange conflations between $X^{N-1}$ and $Y^{N-1}$
in $\ce$. We have the componentwise split conflation of complexes
\[
\si_{\geq N}(X) \rightarrowtail X \twoheadrightarrow X^{N-1}[1-N].
\]
Since $\si_{\geq N}(X)$ is bounded with projective components, we have
\[
CC_{loc}(X) = CC_{loc}(X^{N-1}[1-N]) CC_{loc}(\si_{\geq N}(X))
\]
and similarly for $Y$, $E$ and $E'$. The componentwise split conflations
of complexes 
\[
\si_{\geq N}(Y) \rightarrowtail \si_{\geq N}(E) \twoheadrightarrow \si_{\geq N}(X) \mbox{ and }
\si_{\geq N}(X) \rightarrowtail \si_{\geq N}(E') \twoheadrightarrow \si_{\geq N}(Y)
\]
yield
\[
CC_{loc}(\si_{\geq N}(E)) = CC_{loc}(\si_{\geq N}(X) CC_{loc}(\si_{\geq N}(Y)) =
CC_{loc}(\si_{\geq N}(E')).
\]
Together with the formula for $X^{N-1}[1-N]$ and $Y^{N-1}[1-N]$, this implies
the formula for $X$ and $Y$.
\end{proof}

\begin{Rem} \label{rmk:fractions}
Let $X$ and $X'$ be objects of $\cd^b(\ce)$ whose images in the
stable category $\cd^b(\ce)/\ch^b(\cp) \iso \ul{\ce}$ are isomorphic. By the calculus
of fractions for this Verdier quotient, there are triangles of $\cd^b(\ce)$
\[
P' \to X'' \to X \to P'[1] \mbox{ and } P'' \to X'' \to X' \to P'[1]
\]
such that $P'$ and $P''$ belong to $\ch^b(\cp)$. It follows that we have
\[
CC_{loc}(X') = M \cdot CC_{loc} (X)
\]
for a Laurent monomial $M$ in the $CC_{loc}(P)$, $P\in \cp$.
\end{Rem}

\begin{Ex1} \label{ex:suspensionA2} Let us continue the example from the end of section~\ref{ss:Frobenus categorification}.
With the notations used there, we clearly have $CC(P_1[1])=  1/p_1$. Moreover, the triangle
\[
P_1 \to S_2 \to S_1[1] \to P_2[1]
\]
shows that we have $CC(S_1[1])=x_2/p_2$. In fact, it is not hard to check that we have
a commutative square
\[
\begin{tikzcd} \cd^b(\ce) \ar[r, "CC"] \ar[d, "{[1]}"'] & \ca_Q \ar[d,"\si"]\\
\cd^b(\ce) \ar[r, "CC"] & \ca_Q.
\end{tikzcd}
\]
This can be interpreted by saying that the suspension functor $[1]$ of $\cd^b(\ce)$
categorifies the quasi-cluster automorphism $\si$ of $\ca_Q$.
\end{Ex1}

\subsection{Categorification of quasi-cluster morphisms}
Let $(\ce,T)$ be a Frobenius categorification (cf. section~\ref{ss:Frobenus categorification})
of the ice quiver of $Q$ of a cluster algebra $\ca$ with invertible coefficients. Let $\ca^+$ be
the corresponding cluster algebra with non invertible coefficients and define
$\cu$ and $\cu^+$ to be the corresponding upper cluster algebras.
Let $\Phi: \ce \to \cu^+$ be the cluster character associated with $T$.
Let $\cp\subseteq \ce$ be the full subcategory of the projective-injectives.
Using part 1) of Theorem~\ref{thm:extension} we extend $CC=\Phi$ to a map
$CC_{loc}: \cd^b(\ce) \to \cu$, which we will still denote by $\Phi$. Let $(\ce', T')$
be a Frobenius categorification of the ice quiver $Q'$ of another cluster algebra $\ca'$ with
invertible coefficients. We define the notations $\ca'^+$, $\cu'$, $\cu'^+$, $\Phi'$ and $\cp'$ in the obvious way.

\begin{Thm} \label{thm:categorification} A ring homomorphism $f: \cu \to \cu'$ is a quasi-cluster morphism
if there is a triangle functor $F: \cd^b(\ce) \to \cd^b(\ce')$ such that
\begin{itemize}
\item[1)] $F$ takes $\cp$ to $\ch^b(\cp')\subseteq \cd^b(\ce')$;
\item[2)] the induced functor $\ul{F}: \ul{\ce} \to \ul{\ce'}$ is a triangle equivalence
taking $T$ to a cluster tilting object $T''$ reachable from $T'$;
\item[3)] there is a triangle functor $\tilde{F}: \ch^b(\add(T)) \to \ch^b(\add(T'))$ making the
following square commute (up to isomorphism)
\[
\begin{tikzcd}
\ch^b(\add(T)) \arrow[d,"\tilde{F}"'] \ar[r] & \cd^b(\ce) \ar[d,"F"] \\
\ch^b(\add(T')) \ar[r] & \cd^b(\ce') \ko
\end{tikzcd}
\]
where the horizontal arrows are induced by the inclusions $\add(T) \subseteq \ce$ and
$\add(T')\subseteq \ce'$.
\item[4)] the square
\[
\begin{tikzcd}
\add(T) \arrow[d,"F"'] \arrow[r,"\Phi"] & \cu^+ \arrow[d,"f"] \\
\cd^b(\ce') \arrow[r, "\Phi'"'] & \cu'
\end{tikzcd}
\]
commutes.
\end{itemize}
\end{Thm}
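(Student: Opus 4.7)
The plan is to verify the three axioms (a), (b), (c) of Definition~\ref{def:quasi-cluster homomorphism} for the ring homomorphism $f$. The general strategy is to apply condition~4) directly on the initial seed, to exploit the triangle equivalence $\ul F$ for axiom (b) on the non-coefficient cluster algebras, and to use conditions~1) and~3) to control the coefficient-monomial ambiguities needed for (a) and (c). Axiom (a) for non-initial cluster variables will then follow from (a) on the initial seed together with (b) and (c) by induction on mutation distance, since $f$ respects the mutation exchange relations.

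For axiom (a) on the initial cluster $\{x_1, \ldots, x_n\}$: when $i > r$, $T_i \in \cp$, so by condition~1), $F(T_i) \in \ch^b(\cp')$, and hence $\Phi'(F(T_i))$ is a Laurent monomial in the frozen variables of $\ca'$, i.e.\ lies in $\P'$; by condition~4), this equals $f(x_i)$. When $i \leq r$, condition~4) gives $f(x_i) = \Phi'(F(T_i))$, and since $\ul F(T_i) = T''_i$ is a reachable rigid indecomposable of $\ul{\ce'}$ (being a summand of the reachable $T''$), it lifts to a reachable rigid indecomposable $\tilde T''_i$ of $\ce'$; by Remark~\ref{rmk:fractions}, $\Phi'(F(T_i)) = M_i \cdot \Phi'(\tilde T''_i)$ for some $M_i \in \P'$, with $\Phi'(\tilde T''_i)$ a cluster variable of $\ca_{Q'}$.

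For axiom (b), the triangle equivalence $\ul F: \ul \ce \iso \ul{\ce'}$ sends reachable cluster-tilting objects to reachable cluster-tilting objects and reachable rigid indecomposables to reachable rigid indecomposables. Composing with the bijections of \cite{Fu-Keller2010, CerulliKellerLabardiniPlamondon2013}, we obtain a cluster algebra isomorphism $\ol f: \ca_{\ol Q} \iso \ca_{\ol{Q'}}$ sending the initial seed to the seed corresponding to $T''$. The commutativity $\pi' \circ f = \ol f \circ \pi$ on initial cluster variables follows from condition~4) together with the bottom square of Theorem~\ref{thm:extension}(a) applied to both $\Phi$ and $\Phi'$, and extends by multiplicativity.

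For axiom (c), we write $\hat y_j(t_0) = \Phi(M_j)/\Phi(N_j)$ where $T_j \rightarrowtail M_j \twoheadrightarrow T_j^*$ and $T_j^* \rightarrowtail N_j \twoheadrightarrow T_j$ are the two exchange conflations at the non-frozen summand $T_j$ of $T$; crucially $M_j, N_j \in \add(T)$, so condition~4) gives $f(\Phi(M_j)) = \Phi'(F(M_j))$ and similarly for $N_j$. Let $M''_j, N''_j$ be the middle terms of the exchange conflations at a lift $\tilde T''_j$ of $\ul F(T_j) = T''_j$ in $\ce'$. By Remark~\ref{rmk:fractions}, $\Phi'(F(M_j)) = M_M \cdot \Phi'(M''_j)$ and $\Phi'(F(N_j)) = M_N \cdot \Phi'(N''_j)$ for some $M_M, M_N \in \P'$. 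A Grothendieck-group computation using condition~3) shows $M_M = M_N$: both $[M_j]$ and $[N_j]$ equal $[T_j]+[T_j^*]$ in $K_0(\cd^b(\ce))$ by the exchange triangles, so $[\tilde F M_j] = [\tilde F N_j]$ and similarly $[M''_j] = [N''_j]$; the difference classes $[\tilde F M_j] - [M''_j]$ and $[\tilde F N_j] - [N''_j]$, which live in $K_0(\ch^b(\cp'))$ and control the respective $\P'$-monomials, therefore coincide. Hence the $\P'$-ambiguities cancel in the ratio, yielding $f(\hat y_j(t_0)) = \Phi'(M''_j)/\Phi'(N''_j) = \hat y'_{j'}(t')$, with $j', t'$ determined by $\ol f(x_j(t_0)) = x'_{j'}(t')$. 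The main obstacle will be this last comparison: the existence of the triangle functor $\tilde F$ at the complex level (condition~3)) is precisely what makes the Grothendieck-group bookkeeping rigorous and forces $M_M = M_N$.
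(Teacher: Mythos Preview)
Your approach to axioms (a) and (b) is essentially the paper's, and the propagation of (a) to all cluster variables can indeed be obtained either by your induction-on-mutation argument (invoking Fraser's compatibility of (a)--(c) with mutation) or, more directly as in the paper, by observing that part~b) of Theorem~\ref{thm:extension} forces the commutativity square in condition~4) to hold with $\add(T)$ replaced by $\add(T_{\text{reach}})$ for any reachable cluster-tilting object $T_{\text{reach}}$.

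The genuine gap is in your argument for axiom~(c). Your key claim ``$[\tilde F M_j]=[\tilde F N_j]$'' does not follow from $[M_j]=[N_j]$ in $K_0(\cd^b(\ce))$. The functor $\tilde F$ lives on $\ch^b(\add(T))$, and in $K_0(\add(T))$ the classes $[M_j]$ and $[N_j]$ are \emph{different}: they record the positive and negative parts of the $j$th column of $B$, and the exchange triangles making them equal involve $T_j^*\notin\add(T)$. If instead you work with $F$ on $K_0(\cd^b(\ce'))$, you do get $[FM_j]-[M''_j]=[FN_j]-[N''_j]$ there, but this does not pin down $M_M$ and $M_N$ as elements of $\P'\cong K_0(\cp')$: the map $K_0(\cp')\to K_0(\cd^b(\ce'))$ is typically not injective (already for $\ce'=\mod\Lambda$ with $\Lambda$ the preprojective algebra of type $A_2$, one has $[P_1]=[P_2]$ in $K_0(\mod\Lambda)$). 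So the cancellation $M_M=M_N$ is not established.

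The paper circumvents this by not tracking monomial discrepancies at all. After reducing (via a cluster isomorphism) to the case where $\ul f$ fixes the initial seed, it shows $B=B'$ directly: by Palu's work there is an exact sequence $0\to\ch^b_{\ce\text{-}ac}(\cm)\to\ch^b(\cm)\to\cd^b(\ce)\to 0$ with $\cm=\add(T)$, the heart of the natural $t$-structure on $\ch^b_{\ce\text{-}ac}(\cm)$ is $\mod\End_{\ul\ce}(T)$, and the matrix of the induced map $K_0(\ch^b_{\ce\text{-}ac}(\cm))\to K_0(\ch^b(\cm))$ in the bases $[S_j]$, $[T_i]$ is precisely $(b_{ij})$. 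Condition~3) then forces $\tilde F$ to restrict to acyclic complexes and to intertwine these maps while preserving the bases, whence $B=B'$. This is exactly the ``complex-level'' control you anticipated from condition~3), but it must be exercised on the acyclic subcategory and its Grothendieck map rather than on individual objects $M_j,N_j$.
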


\begin{Rem} We thank Matthew Pressland for suggesting the weak hypothesis 4). 
In \cite{presslandMuller-Speyer conjecture}, he has recently applied this theorem to prove a conjecture by
Muller--Speyer \cite[Rem.~4.7]{muller-speyer2017} linking the two canonical cluster structures on a positroid variety by a quasi-cluster isomorphism.
\end{Rem}

\begin{proof} Since $\cp$ is contained in $\add(T)$, conditions 1) and 4) imply that
$f$ maps frozen variables of $\ca^+$ to Laurent monomials in frozen variables
of $\ca'^+$. Let $T_0$ be an indecomposable summand of $T$ and $x_0\in \ca^+$ the associated
initial cluster variable. By condition 2), there is an indecomposable reachable rigid object $U_0$
which becomes isomorphic to $F T_0$ in $\ul{\ce'}$. By Remark~\ref{rmk:fractions}, we
have $\Phi'(FT_0) = M\cdot \Phi'(U_0)$ for a Laurent monomial $M$ in the frozen
variables of $\ca'^+$. Thus, the image $f(x_0)=f(\Phi(T_0))= \Phi'(FT_0)$, where
we have used condition 4), is a product of the cluster variable $\Phi'(U_0)$ with a Laurent monomial in 
frozen variables. Thanks to part b) of Theorem~\ref{thm:extension}, condition 4) implies
the commutativity of the square
\[
\begin{tikzcd}
\add(T'') \arrow[d,"F"'] \arrow[r,"\Phi"] & \cu^+ \arrow[d,"f"] \\
\cd^b(\ce') \arrow[r, "\Phi'"'] & \cu'^+
\end{tikzcd}
\]
for any cluster-tilting object $T''$ of $\ce$ reachable from $T$. We conclude that
for any cluster variable $x$ of $\ca^+$, its image $f(x)$ is the product of a cluster
variable of $\ca'^+$ with a Laurent monomial in frozen variables. 

Let $\ul{\ca}$ and $\ul{\ca'}$ denote the cluster algebras without coefficients
associated with $\ca^+$ and $\ca'^+$. By Lemma~\ref{lemma:specialization} and
condition 2), the morphism $\ul{f}$ takes the initial seed of $\ul{\ca}$ to
a seed of $\ul{\ca'}$ with the same quiver. Thus, the morphism $\ul{f}$ is
a cluster algebra isomorphism.

It remains to check condition c) of section~\ref{ss:quasi-cluster morphisms}.
After composing $f$ with a cluster isomorphism we may assume that
$\ul{f}$ takes the initial seed of $\ul{\ca}$ to the initial seed of $\ul{\ca'}$.
Let $B=(b_{ij})$ and $B'$ denote the corresponding exchange matrices
with $1 \leq i \leq n$ and $1 \leq j\leq r$, where $n$ is the number of
all and $r<n$ the number of non-frozen initial cluster variables. 
We have to show that $B=B'$. Thanks to the work \cite{Palu2009} of Palu,
we can deduce this from condition 3). Indeed, as shown in [loc.~cit.],
if we put $\cm=\add(T)$, we have a short exact sequence of triangulated
categories
\[
0 \to \ch^b_{ac}(\cm) \to \ch^b(\cm) \to \cd^b(\ce) \to 0 \; ,
\]
where $\ch^b_{ac}(\cm)$ denotes the kernel of the canonical functor
$\ch^b(\cm) \to \cd^b(\ce)$, i.e. the full subcategory whose objects
are the complexes with components in $\cm$ which are acyclic as
complexes over $\ce$. The triangulated category $\ch^b_{ac}(\cm)$
admits a bounded non degenerate $t$-structure whose heart
identifies with the category of finite-dimensional modules over
the stable endomorphism algebra $\End_{\ul{\ce}}(T)$. If
$T_j$ is an indecomposable non projective summand of $T$,
the simple quotient $S_j$ of the projective module $\Hom_{\ul{\ce}}(T, T_j)$
corresponds to the acyclic complex
\[
0 \to T_j \to E \to E' \to T_j \to 0
\]
(with the last copy of $T_j$ in degree $0$) obtained by splicing the
two exchange conflations
\[
T_j^* \rightarrowtail E' \twoheadrightarrow T_j \mbox{ and }
T_j \rightarrowtail E \twoheadrightarrow T_j^*.
\]
Thus, the matrix of the morphism in the Grothendieck groups
induced by the functor $\ch^b_{ac}(\cm) \to \ch^b(\cm)$ in
the bases given by the $[S_j]$ and the $[T_i]$ is $(b_{ij})$.
The triangle functors $\tilde{F}$ and $F$ induce isomorphisms
in the Grothendieck group which are compatible with
the morphism induced by $\ch^b_{ac}(\cm) \to \ch^b(\cm)$
and clearly preserve the bases. Thus, we have $B=B'$ as claimed.
\end{proof}

We keep the notations introduced at the beginning of this section. In the following variant
of the above theorem, we make a slightly stronger assumption in condition 1) but
do not suppose that we are given a ring homomorphism $\ca \to \ca'$ from
the outset.

\begin{Thm} \label{thm:isocategorification} Let $F: \cd^b(\ce) \to \cd^b(\ce')$ be a triangle functor such that
\begin{itemize}
\item[1)] $F$ takes $\cp$ to $\ch^b(\cp')\subseteq \cd^b(\ce')$ and induces an isomorphism $K_0(\cp) \iso K_0(\cp')$.
\item[2)] the induced functor $\ul{F}: \ul{\ce} \to \ul{\ce'}$ is a triangle equivalence
taking $T$ to a cluster tilting object $T''$ reachable from $T'$;
\item[3)] there is a triangle functor $\tilde{F}: \ch^b(\add(T)) \to \ch^b(\add(T''))$ making the
following square commute (up to isomorphism)
\[
\begin{tikzcd}
\ch^b(\add(T)) \arrow[d,"\tilde{F}"'] \ar[r] & \cd^b(\ce) \ar[d,"F"] \\
\ch^b(\add(T'')) \ar[r] & \cd^b(\ce') \ko
\end{tikzcd}
\]
where the horizontal arrows are induced by the inclusions $\add(T) \subseteq \ce$ and
$\add(T'')\subseteq \ce'$.
\end{itemize}
Then there is a unique quasi-cluster isomorphism $f: \ca \iso \ca'$ taking the initial variable $x_i$ to $\Phi'(FT_i)$,
$1\leq i\leq n$.
\end{Thm}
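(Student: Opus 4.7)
The strategy is to construct the required ring homomorphism $f:\cu\to\cu'$ and then apply Theorem~\ref{thm:categorification}. I will define $f$ on the initial cluster variables by $f(x_i):=\Phi'(FT_i)$ for $1\le i\le n$ and extend. For $i>r$ the object $T_i$ lies in $\cp$, so by hypothesis 1) we have $FT_i\in\ch^b(\cp')$; by Theorem~\ref{thm:extension} together with Proposition~\ref{prop:extension}, $\Phi'(FT_i)$ is then a Laurent monomial in the frozen variables of $\ca'$ with exponent vector $[FT_i]\in K_0(\cp')$. The $K_0(\cp)\iso K_0(\cp')$ part of hypothesis 1) guarantees that the resulting $(n-r)\times(n-r)$ integer matrix of exponents is invertible, so these monomials freely generate $\P'$ and $f$ restricts to a ring isomorphism $\Q[\P]\iso\Q[\P']$.

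For the non-frozen values I will check that $\Phi'(FT_1),\ldots,\Phi'(FT_r)$ are algebraically independent over $\Q[\P']$. Under the specialization $\cu'\twoheadrightarrow\ca_{\ol{Q'}}$ of Lemma~\ref{lemma:specialization}, the image of $\Phi'(FT_i)$ equals $\ul{\Phi'}(\ul F T_i)=\ul{\Phi'}(T''_i)$; since $T''$ is a cluster-tilting object of $\ul{\ce'}$ reachable from $T'$ by hypothesis 2), these $r$ specialized values form a cluster of $\ca_{\ol{Q'}}$ and are therefore algebraically independent. Combining with the previous step, this yields a unique field extension $\tilde f:\cf\to\cf'$. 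To promote $\tilde f$ to a ring homomorphism $f:\cu\to\cu'$, I will use that $\ul F$, being a triangle equivalence, commutes with mutation of cluster-tilting objects: each iterated mutation $T(t)$ of $T$ is sent by $\ul F$ to an iterated mutation of $T''$; together with Remark~\ref{rmk:fractions}, this shows that $\tilde f$ carries the Laurent polynomial ring in each cluster of $\ca$ into the Laurent polynomial ring in the corresponding cluster of $\ca'$, and intersecting over $t\in\T_r$ gives $\tilde f(\cu)\subseteq\cu'$.

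With $f:\cu\to\cu'$ in hand, the hypotheses 1), 2), 3) of Theorem~\ref{thm:categorification} coincide with ours, and its hypothesis 4) (commutativity on $\add(T)$) holds by definition of $f$: every object of $\add(T)$ is a direct sum of copies of the $T_i$, on which $f(\Phi(T_i))=f(x_i)=\Phi'(FT_i)$, and the functors $\Phi$, $\Phi'$, $F$ all respect direct sums. Theorem~\ref{thm:categorification} then produces a quasi-cluster morphism $\ca\to\ca'$, which must be an isomorphism because $\ul F$ is an equivalence and the induced coefficient-ring map is already an isomorphism. Uniqueness is automatic, since any ring homomorphism $\cu\to\cu'$ is determined by its values on the algebraically independent initial variables $x_1,\ldots,x_n$. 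The main obstacle will be the second paragraph: establishing $\tilde f(\cu)\subseteq\cu'$ requires controlling $F$ not just on $\add(T)$ (where hypothesis 3) applies) but on every cluster-tilting object reachable from $T$, and will hinge on transporting mutations via the stable-category equivalence $\ul F$ while tracking the Laurent-monomial corrections in frozen variables they introduce.
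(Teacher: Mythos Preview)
Your outline is essentially the paper's proof, and the ``main obstacle'' you flag at the end is precisely the point where the argument needs to be made explicit. The paper resolves it by an induction using part~b) of Theorem~\ref{thm:extension}: one shows that $f(x_j(t))=\Phi'(FT_j(t))$ for every vertex $t\in\T_r$ and every $j$. The base case is the definition of $f$; for the inductive step, if $t\to t'$ is an edge labeled $k$, the exchange conflations for $T_k(t)$ in $\ce$ give rise (after applying $F$) to triangles in $\cd^b(\ce')$ whose images in $\ul{\ce'}$ are non-split (since $\ul F$ is an equivalence and the $\Ext^1$ is one-dimensional), so Theorem~\ref{thm:extension}~b) yields $\Phi'(FT_k(t))\,\Phi'(FT_k(t'))=\Phi'(FE)+\Phi'(FE')$, and additivity of $F$ on $E,E'\in\add(T(t))$ gives the exchange relation for $f(x_k(t'))$. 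Once this identity holds, Remark~\ref{rmk:fractions} shows $f(x_j(t))\in\P'\cdot x''_{j'}(t')\subseteq\ca'$, so $f(\ca)\subseteq\ca'$; surjectivity comes from $T'$ being reachable from $T''$.

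Two minor differences from the paper: (i) they establish algebraic independence of the $\Phi'(FT_i)$ directly via Remark~\ref{rmk:fractions} (writing $u_i=m_i x''_i$) rather than by specializing to $\ca_{\ol{Q'}}$, though your route works equally well; (ii) they aim for $f(\ca)\subseteq\ca'$ rather than $\tilde f(\cu)\subseteq\cu'$, which avoids intersecting Laurent rings and is what the theorem statement asks for. Your surjectivity clause (``because $\ul F$ is an equivalence'') should be sharpened to: $T'$ is reachable from $T''=\ul F(T)$ in $\ul{\ce'}$, so every initial cluster variable of $\ca'$ lies in $f(\ca)$.
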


\begin{proof} Let $(Q, (x_i))$ be the inital seed of $\ca$ and $(Q', (x_i'))$ that of $\ca'$.
By condition 2), both have the same number $r$ of non frozen initial cluster variables.
Moreover, the isomorphism 
\[
K_0(F): K_0(\cp) \iso K_0(\cp')
\]
of condition 1) yields an isomorphism
between the groups of Laurent monomials in the frozen variables for $Q$ and $Q'$. 
Put $u_i = \Phi'(FT_i)$. Let $(x''_i)$ be the cluster of $\ca'$ associated with the reachable
cluster-tilting object of $\ce'$ lifting  $FT \in \ul{\ce}'$. Since $FT_i$ and $T''_i$ are isomorphic
in $\ul{\ce'}$, by Remark~\ref{rmk:fractions}, we have $u_i = m_i x''_i$ for $1\leq i\leq r$,
where the $m_i$ are Laurent polynomials in the frozen variables. 
Therefore, the $u_i$, $1\leq i\leq r$, are algebraically independent over the Laurent polynomial algebra in
the frozen variables of $\ca'$. So there is a well-defined algebra morphism 
$f: \cu \to \Q(x'_i, 1\leq i\leq n)$ such that $f(x_i)=u_i$, $1\leq i\leq r$, and
$f$ induces the isomorphism given by $K_0(F)$ in the coefficient groups.
From part b) of Theorem~\ref{thm:extension}, we deduce by induction
that the image under $f$ of each cluster variable of $\ca$ is the product of a cluster
variable of $\ca'$ with a Laurent monomial in the frozen
variables. Thus, the image of each cluster of $\ca$ lies
in $\ca'$ and  $f(\ca)\subseteq \ca'$. We even have $f(\ca)=\ca'$
since $T'$ is reachable from $FT$ in $\ul{\ce}$. 
Thus, we have a ring isomorphism $f: \ca \to \ca'$ 
taking the $x_i$ to the $u_i$, $1\leq i\leq n$. Clearly, it
satisfies the assumptions of Theorem~\ref{thm:categorification}
and thus is a quasi-cluster isomorphism.
\end{proof}

\begin{Ex1}  Let us generalize Example~\ref{ex:suspensionA2}. We use
the notations introduced at the beginning of this section.  Suppose that
the cluster-tilting object $\Si T$ of $\ul{\ce}$ is reachable. For each indecomposable
summand $T_i$ of $T$, we choose an inflation
\[
T_i \rightarrowtail I_i \twoheadrightarrow \Si T_i.
\]
We define
\[
u_i=CC_{loc}(T_i[1]) = CC(\Si T_i) CC(I_i)^{-1} \ko 1 \leq i \leq n.
\]
Clearly the shift functor $\cd^b(\ce) \iso \cd^b(\ce)$ satisfies the
hypotheses of Theorem~\ref{thm:isocategorification}.
Thus, we have a unique quasi-cluster isomorphism
$DT: \ca \iso \ca$ taking $x_i$ to $u_i$, $1\leq i \leq n$. It is called the
{\em twist} or the {\em Donaldson--Thomas transformation} 
of $\ca$.
\end{Ex1}

\subsection*{Acknowledgments} We thank Matthew Pressland for
inspiring discussions and encouragment. We are grateful to
Yilin Wu for comments on a preliminary version of
this appendix.

\bibliographystyle{plain}

\end{document}